
\documentclass[letterpaper]{amsart}

\usepackage{hyperref}
\usepackage{mathrsfs}
\usepackage{amssymb}
\usepackage[curve]{xypic}

\title{Symmetric cubical sets}
\author{Samuel B. Isaacson}
\address{Department of Mathematics\\
The University of Western Ontario\\
London ON N6A 5B7 \\
Canada}
\email{sisaacso@uwo.ca}

\numberwithin{equation}{section}

\theoremstyle{plain}

\newtheorem{theorem}{Theorem}[section]
\newtheorem{lemma}[theorem]{Lemma}
\newtheorem{proposition}[theorem]{Proposition}
\newtheorem{corollary}[theorem]{Corollary}

\theoremstyle{definition}
\newtheorem{definition}[theorem]{Definition}
\newtheorem{example}[theorem]{Example}

\theoremstyle{remark}
\newtheorem{remark}[theorem]{Remark}

\newcommand{\adjoint}{\dashv}
\newcommand{\Cat}{\mathbf}
\newcommand{\Op}{\mathrm{op}}
\newcommand{\Nerve}{\mathrm{N}}

\newcommand{\sqparen}[1]{[\![#1]\!]}

\newcommand{\sSet}{s\Cat{Set}}
\newcommand{\qSet}{q\Cat{Set}}
\newcommand{\qsSet}{q_\Sigma\Cat{Set}}

\DeclareMathOperator{\QQ}{\mathscr{Q}}
\DeclareMathOperator{\qsigma}{\mathscr{Q}_\Sigma}
\DeclareMathOperator{\id}{id}
\DeclareMathOperator{\Hom}{Hom}

\DeclareMathOperator{\Ob}{ob}
\DeclareMathOperator*{\colim}{colim}
\DeclareMathOperator{\sk}{sk}
\DeclareMathOperator{\ck}{ck}
\DeclareMathOperator{\Ar}{ar}
\DeclareMathOperator{\Ho}{Ho}
\DeclareMathOperator*{\hocolim}{hocolim}

\DeclareMathOperator{\Aut}{Aut}
\DeclareMathOperator{\Sing}{Sing}

\DeclareMathOperator{\Stab}{Stab}
\DeclareMathOperator{\Cell}{Cell}
\DeclareMathOperator{\inj}{Inj}
\DeclareMathOperator{\Cyl}{Cyl}
\DeclareMathOperator{\Ch}{Ch}
\DeclareMathOperator{\CS}{cs}
\DeclareMathOperator{\CQ}{cq}

\begin{document}

\nocite{isaacson,goerssjardine,homotopicalalgebra}

\begin{abstract}
  We introduce a new cubical model for homotopy types.  More
  precisely, we'll define a category $\qsigma$ with the following
  features: $\qsigma$ is a \textsc{prop} containing the classical box
  category as a subcategory; the category $\qsSet$ of presheaves of
  sets on $\qsigma$ models the homotopy category; and combinatorial
  symmetric monoidal model categories with cofibrant unit have
  homotopically well behaved $\qsSet$ enrichments.
\end{abstract}

\maketitle

\tableofcontents

\section{Introduction}
Classically we have two models for the homotopy category: one can
start with the category $\Cat{Top}$ of (compactly generated weak
Hausdorff) topological spaces, associate a \textsc{cw} approximation
$\gamma X\to X$ to each space $X$, and take $\Ho\Cat{Top}(X,Y)$ to be
the homotopy classes of maps between $\gamma X$ and $\gamma Y$.  The
Whitehead theorem implies that weak equivalences between \textsc{cw}
complexes are homotopy equivalences, so $\Ho\Cat{Top}$ is the
localization of $\Cat{Top}$ at the category of weak homotopy
equivalences.  Alternatively, one can use the category $\sSet$ of
simplicial sets, Kan approximations, and $\Delta[1]$-homotopy---with
the proviso that Kan approximations are ``on the right''---to
construct a homotopy category of simplicial sets.  The geometric
realization-singular set adjunction
\begin{equation*}
  \xymatrix{
    |{-}|:\sSet \ar@<0.5ex>[r] &
    \Cat{Top} : \Sing  \ar@<0.5ex>[l]}
\end{equation*}
is a Quillen equivalence: it descends to an equivalence of homotopy
categories (and preserves the homotopy types of mapping spaces).
Any homotopy-theoretic result true in $\Cat{Top}$ is thus true in
$\sSet$, and vice versa,
so one can view $\Cat{Top}$ and $\sSet$ as two presentations of the
same ($(\infty,1)$-) category, using whichever is more convenient for
the application at hand.

One advantage of simplicial sets is that the category $\sSet$ is a
presheaf topos, unlike $\Cat{Top}$ (the obvious disadvantage is that
almost no space comes ``in nature'' as a simplicial set, and many
geometric constructions rely on $\Cat{Top}$).  In fact, the category
$\Delta$ of finite nonempty totally ordered sets is not the only site
upon which we may model the homotopy category.  For example, the
cubical category $\QQ$---the category of posets $\{0 < 1\}^n$, $n\ge
0$ with maps those maps given by deleting coordinates or inserting
$0$s and $1$s---also models spaces via the associated category $\qSet$
of presheaves on $\QQ$.  This result, in the language of model
categories, is relatively recent.  Denis-Charles Cisinski and Georges
Maltsiniotis, building on conjectures of Grothendieck, have given a
unified perspective of categorical homotopy theory and presheaf models
for the homotopy category in
\cite{cisinskithesis,maltsiniotis,pursuingstacks} (see
\cite{jardinecubes} as well)---one side benefit is a straightforward
demonstration that $\qSet$ is a model for the homotopy category.

One advantage of the cubical category $\QQ$ is that the product of two
cubes is again a cube: in $\sSet$, the product of two representable
functors (i.e., two simplices) is not itself representable.  This
considerably simplifies the project of finding a spatial enrichment in
an arbitrary homotopical category: the ``$n$-cubes'' of a cubical
mapping space are simply the $n$-fold homotopies.  Of course, cubical
sets come with their own disadvantages: without adding extra
degeneracies, the analogous Dold-Kan correspondence fails
\cite{brownhigginsdoldkan}; and the convolution monoidal structure is
not symmetric.  In order to remedy these there is a menagerie of
cubical categories containing $\QQ$ as a subcategory
\cite{grandismauri}.  In this paper, we'll add one more category
$\qsigma$ to the zoo, with some useful features:
\begin{enumerate}
\item $\qsigma$ is symmetric monoidal---in fact, it is a \textsc{prop}
  in $\Cat{Set}$---and hence the category $\qsSet =
  \Cat{Set}^{\qsigma^\Op}$ is symmetric monoidal.
\item There are left Quillen equivalences $i_!:\qSet \to \qsSet$ and
  $|{-}|_\Sigma:\qsSet \to \Cat{Top}$.  These are strong monoidal and
  strong symmetric monoidal, respectively (Theorem \ref{theorem:quillenequivalences})
\item Any combinatorial symmetric monoidal model category with
  cofibrant unit may be enriched over $\qsSet$.  (Theorem \ref{theorem:final})
\end{enumerate}
In a future paper, we'll show how (3) may be leveraged to show that
combinatorial monoidal model categories may be realized as
localizations of categories of presheaves of spaces with a convolution
model structure, giving a special case of a theorem of Daniel Dugger's
in \cite{dugger1}.

The plan of this paper is as follows.  In part \ref{part:one}, we'll
describe the category $\qsigma$ as a \textsc{prop} and show how to
lift the model structure on $\qSet$ to $\qsSet$ via a left Kan
extension $i_!:\qSet \to \qsSet$.  Along the way, we'll describe how
to decompose presheaves in $\qsSet$ as colimits of their skeleta; this
requires some careful combinatorial work, but is straightforward.
This relies in large part on the methods of Cisinski
\cite{cisinskithesis}.  In part \ref{part:two}, we'll discuss $\qsSet$
enrichments and some miscellaneous results: we'll show that the
cubical mapping spaces associated with $\qsSet$ enrichments have the
correct homotopy type, and we'll use an argument of Schwede and
Shipley to show that every combinatorial symmetric monoidal model
category with cofibrant unit has a $\qsSet$ enrichment.

\subsection{Acknowledgments}
This paper is a condensation of the first two chapters of my PhD
dissertation \cite{isaacson}, written under the supervision of Michael
J.~Hopkins.  I'd like to thank him along with Reid Barton, Clark
Barwick, Samik Basu, Mark Behrens, Andrew Blumberg, Tony Elmendorf,
Phil Hirschhorn, Dan Kan, Jacob Lurie, Peter May, and Haynes Miller,
for countless hours of discussion and criticism.

\part{Modeling spaces}\label{part:one}

\section{Day convolution}
Before we introduce cubical sets, we briefly review some enriched
category theory and introduce some notation.  Suppose $\mathscr{V}$ is
a closed symmetric monoidal category with all small limits and
colimits and $\mathscr{I}$ a small $\mathscr{V}$-category
\cite{kelly}.  Write $[{-}]:\mathscr{I}\to \widehat{\mathscr{I}}$ for
the enriched Yoneda embedding, $\widehat{\mathscr{I}} =
\mathscr{V}^{\mathscr{I}^\Op}$.  Recall that $[{-}]$ displays
$\widehat{\mathscr{I}}$ as the ``free cocompletion'' of
$\mathscr{\mathscr{I}}$: the category of indexed colimit-preserving
functors out of $\widehat{\mathscr{I}}$ is equivalent to the category
of functors out of $\mathscr{\mathscr{I}}$ \cite{kelly,maclane}.  Now
suppose $(\mathscr{I},\otimes,e)$ is monoidal.  By the universal
property of the Yoneda embedding we just mentioned, there is a
monoidal structure on $\widehat{\mathscr{I}}$, unique up to unique
isomorphism, with the following properties:
\begin{enumerate}
\item $[i] \otimes [j] = [i\otimes j]$ for $i,j\in \mathscr{I}$.
\item ${-} \otimes {-}$ is cocontinuous (i.e.,preserves all indexed
  colimits) in each variable.
\end{enumerate}
The canonical presentation of a presheaf as a colimit of
representables gives the coend formula
\begin{equation}
  (X \otimes Y)_k \cong \int^{i,j\in \mathscr{C}} \mathscr{I}(k, i\otimes
  j) \otimes (X_i \otimes Y_j).
\end{equation}
The unit is the representable presheaf $[e]$.  For fixed
$X\in\widehat{\mathscr{I}}$, the functors $X\otimes{-}$ and
${-}\otimes X$ both have right adjoints.  If $\mathscr{I}$ is
symmetric, then the product on $\widehat{\mathscr{I}}$ is
closed symmetric monoidal.  The hom functor $[{-},{-}]$ is given by
the end
\begin{equation}
  [X,Y]_i = \int_{\ell\in I} \mathscr{V}(X_\ell, Y_{i\otimes \ell}).
\end{equation}
The product $\otimes$ on $\widehat{\mathscr{I}}$ is known as \emph{Day
  convolution;} it was introduced in Day's thesis
\cite{dayconvolution}.  Im and Kelly prove the following result in
\cite{imkelly}; it is an application of the Yoneda lemma.
\begin{proposition}\label{prop:imkelly}
  Suppose $(\mathscr{C},\otimes,e)$ is a monoidal
  $\mathscr{V}$-category with small indexed colimits so that
  ${-}\otimes{-}$ preserves indexed colimits in each variable.  Given
  a strong monoidal functor $F:\mathscr{I}\to\mathscr{C}$, the unique
  cocontinuous extension $\widehat F:\widehat{\mathscr{I}} \to
  \mathscr{C}$ is strong monoidal.  If $\mathscr{C}$ and $\mathscr{I}$
  are symmetric monoidal categories and $F$ is symmetric strong
  monoidal, then $\widehat{F}$ is symmetric strong monoidal as well.
\end{proposition}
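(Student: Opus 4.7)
The plan is to compute $\widehat{F}(X \otimes Y)$ directly via coends, exploiting the fact that every presheaf admits the canonical representation $X \cong \int^{i\in\mathscr{I}} X_i \otimes [i]$ (where the tensor is the copower by $X_i \in \mathscr{V}$), and that both $\widehat{F}$ and the tensor products in sight preserve indexed colimits in each variable.

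First I would fix the comparison map. Since $\widehat{F}\circ[-] \cong F$ is strong monoidal on representables, there is a canonical isomorphism $\widehat{F}([i]\otimes[j]) = \widehat{F}([i\otimes j]) \cong F(i\otimes j) \cong F(i)\otimes F(j) \cong \widehat{F}([i])\otimes\widehat{F}([j])$. The bifunctor $({-})\otimes({-})$ on $\widehat{\mathscr{I}}$ is cocontinuous in each variable by construction of Day convolution, and $\otimes$ on $\mathscr{C}$ is cocontinuous in each variable by hypothesis, so both sides of the putative isomorphism $\widehat{F}(X\otimes Y) \cong \widehat{F}(X)\otimes\widehat{F}(Y)$ are cocontinuous in $X$ and in $Y$ separately. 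By the universal property of the Yoneda embedding (applied one variable at a time), any natural isomorphism between representables-valued bifunctors extends uniquely to a natural isomorphism on all presheaves, and this extension is the comparison morphism we want.

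To verify this concretely and to extract the unit constraint, I would chase the coend computation: expanding the Day convolution formula gives $\widehat{F}(X\otimes Y) \cong \widehat{F}\bigl(\int^{i,j} (X_i\otimes Y_j)\otimes[i\otimes j]\bigr) \cong \int^{i,j}(X_i\otimes Y_j)\otimes F(i\otimes j)$, and then strong monoidality of $F$ plus cocontinuity of $\otimes$ in $\mathscr{C}$ identifies this with $(\int^i X_i\otimes F(i))\otimes(\int^j Y_j\otimes F(j)) \cong \widehat{F}(X)\otimes\widehat{F}(Y)$. The unit is handled separately: $\widehat{F}([e]) \cong F(e) \cong e_{\mathscr{C}}$, with the isomorphism supplied by the monoidal structure on $F$.

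The remaining work is the coherence bookkeeping: one must check that the associator, left/right unitor, and (in the symmetric case) the symmetry of $\widehat{F}$ assemble into commutative pentagons, triangles, and hexagons. This is the main obstacle in that it is the only genuinely tedious part, but it is essentially formal because all of the coherence isomorphisms on $\widehat{\mathscr{I}}$ are the unique cocontinuous-in-each-variable extensions of the corresponding coherences on $\mathscr{I}$. Concretely, by the same extension-by-Yoneda argument used for the comparison map itself, two natural transformations between cocontinuous bifunctors (or trifunctors) that agree on representables must agree everywhere; so each coherence diagram for $\widehat{F}$ reduces, on representables, to the corresponding diagram for $F$, which commutes by hypothesis. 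The symmetric case is identical, with the hexagon for $\widehat{F}$ reducing to the hexagon for $F$.
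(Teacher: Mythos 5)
Your proof is correct and fills in exactly what the paper gestures at: the paper cites Im and Kelly without giving a proof, remarking only that the result is ``an application of the Yoneda lemma.'' Your argument---defining the monoidal constraint on representables, extending it uniquely by cocontinuity in each variable, and then checking each coherence diagram by the same one-variable-at-a-time Yoneda argument (two natural transformations between cocontinuous multifunctors that agree on representables agree everywhere)---is precisely that application, with the coend chase supplying the explicit formula.
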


\section{The symmetric cubical site}
Historically, there are several cubical categories, each generated by
a selection of face, degeneracy and possibly symmetry maps.  Grandis
and Mauri give a zoology of cubical sites in \cite{grandismauri}; our
site $\qsigma$, defined below, is a novel addition.  The cubical
category with the fewest maps has as objects the posets $\{0 < 1\}^n$,
$n\ge 0$; a map $\{0 < 1\}^n \to \{0 < 1\}^m$ may erase coordinates
(degeneracies) and insert $0$ or $1$, but may \emph{not} repeat
coordinates or change their order.  This is the classical ``box
category''; we denote it by $\QQ$ and write $\qSet$ for the associated
category of presheaves of sets on $\QQ$.  We write $\square^n$ for the
representable presheaf $\QQ({-},\sqparen{n})$.  The category $\QQ$ has
a monoidal structure given by concatenation.  Viewed as a \textsc{pro}
\cite{boardmanvogt}, its algebras in a monoidal category
$(\mathscr{C},\otimes,e)$ are diagrams
\begin{equation}\label{eq:interval}
  \vcenter{\xymatrix@C=4pc{
      e \amalg e \ar[r] \ar@/_/[rr]_{\id\amalg\id} & I \ar[r] & e.
    }}
\end{equation}
This may be found in \cite{cisinskithesis}.  We'll call diagrams of
the shape \eqref{eq:interval} \emph{intervals.}

Brown and Higgins introduced in \cite{brownhiggins1,brownhiggins2} a
cubical site with an extra degeneracy called a ``connection''; the
connection maps are generated by the logical conjunction
\begin{equation}\label{eq:conjunction}
  {-} \wedge {-} : \{0 < 1\}^2 \to \{0 < 1\}
\end{equation}
with $x\wedge y = 1$ if and only if $x = y = 1$.  Since the term
``connection'' is widely established in differential geometry, we'll
call these maps \emph{conjunction} maps instead.  Imposing the
structure of a conjunction map on an interval motivates the following
definition:
\begin{definition}
  Suppose $(\mathscr{C},\otimes,e)$ is a monoidal category.  A
  \emph{cubical monoid} in $\mathscr{C}$ is a diagram
  \begin{equation}
    \vcenter{\xymatrix@C=4pc{
      e \amalg e \ar[r]^{d_0\amalg d_1} \ar@/_/[rr]_{\id\amalg\id} & I \ar[r]^s & e.
    }}
  \end{equation}
  together with a map $\mu:X\otimes X \to X$ so that
  \begin{enumerate}
  \item The map $\mu$ makes $X$ an associative monoid with unit $d_1$.
  \item The map $s$ is a monoid map.
  \item The map $d_0:e\to X$ is absorbing, i.e., the diagram
    \begin{equation}
      \vcenter{\xymatrix@C=4pc{
          X\otimes e \ar[d]_{s\otimes\id_e} \ar[r]^{\id_X \otimes d_0} &
          X\otimes X \ar[d]^\mu & e\otimes X \ar[l]_{d_0\otimes\id_X}
          \ar[d]^{\id_e \otimes s} \\
          e \ar[r]_{d_0} & X & e\ar[l]^{d_0}
        }}
    \end{equation}
    commutes.
  \end{enumerate}
  We'll sometimes abuse notation and simply say that $I$ is a cubical
  monoid.  A map of cubical monoids $I\to J$ is a map in $\mathscr{C}$
  commuting with all the structure data.  We write
  $\Cat{qMon}(\mathscr{C})$ for the category of cubical monoids in
  $\mathscr{C}$.
\end{definition}
We have the following examples:
\begin{example}
  Suppose $(\mathscr{C},\otimes,e)$ is a monoidal category.
  \begin{enumerate}
  \item The unit $e$ is a cubical monoid with $d_0 = d_1 = s = \id_e$
    and $\mu:e\otimes e\to e$ given by the coherence isomorphisms of
    $\mathscr{C}$.  This is the terminal cubical monoid in
    $\mathscr{C}$.
  \item The coproduct $e\amalg e$ is a cubical monoid with $d_0$ and
    $d_1$ given by the inclusion of each summand.  The multiplication
    $\mu$ and degeneracy $s$ are forced.  This is the initial cubical
    monoid in $\mathscr{C}$.
  \item The $1$-simplex $\Delta[1]\in \sSet$ is a cubical monoid
    via the conjunction map \eqref{eq:conjunction}.
  \item If $F:\mathscr{C} \to \mathscr{D}$ is lax symmetric monoidal
    and $I$ is a cubical monoid, then $FI$ is a cubical monoid; so,
    for example, the normalized chains on $\Delta[1]$ are a cubical
    monoid in chain complexes.
  \end{enumerate}
\end{example}

There is an alternative description of cubical monoids pointed out to
the author by Reid Barton.  Note that the category $([1],\wedge, 1)$
(here $[1] = \{0 < 1\}$) has the structure of a monoidal category.
Suppose $(\mathscr{C},\otimes,e)$ is a monoidal category in which
$\mathscr{C}$ has all small colimits and ${-}\otimes{-}$ preserves
colimits in each variable.  We may then equip the category
$\mathscr{C}^{[1]}$ of arrows in $\mathscr{C}$ with the Day
convolution model structure.  If $f:A\to B$ and $g:X\to Y$ are arrows
in $\mathscr{C}$, their product $f\odot g$ is the usual
pushout-product
\begin{equation}
  f\odot g:A \otimes Y \amalg_{A\otimes X} B\otimes X \to B\otimes Y.
\end{equation}
The unit is the unique map $\emptyset\to e$.  Note that $\emptyset\to
e$ and $\id_e:e\to e$ are both monoids in $\mathscr{C}^{[1]}$.
\begin{proposition}\label{prop:reidbarton}
  The category $\Cat{qMon}(\mathscr{C})$ of cubical monoids in
  $\mathscr{C}$ is equivalent to the category of monoids of the form
  $d_0:e\to X$ intervening in a diagram
  \begin{equation}
    \vcenter{\xymatrix{
        \emptyset \ar[d] \ar[r] &
        e \ar[r]^{\id_e} \ar[d]^{d_0} & e \ar[d]^{\id_e} \\
        e \ar[r]_{d_1} & X \ar[r]_s & e}}
  \end{equation}
   of monoids in $\mathscr{C}^{[1]}$.
\end{proposition}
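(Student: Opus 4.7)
The plan is to unpack both sides of the claimed equivalence and match them piece by piece. First, since $-\otimes-$ preserves colimits in each variable, $\emptyset\otimes Z=\emptyset$ for all $Z$, so the monoidal unit $(\emptyset\to e)$ of $\mathscr{C}^{[1]}$ satisfies $(\emptyset\to e)\odot f\cong f$, and the pushout-product $(d_0)\odot(d_0)$ reduces to the arrow $X\amalg_e X\to X\otimes X$, where the pushout is formed along $d_0$ on both copies and the horizontal arrow is built from $d_0\otimes\id_X$ and $\id_X\otimes d_0$.

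Under this simplification, a monoid structure on $(d_0:e\to X)$ in $\mathscr{C}^{[1]}$ is determined by a unit $\eta:(\emptyset\to e)\to(d_0)$, which amounts to a map $d_1:e\to X$, and by a multiplication $(d_0)\odot(d_0)\to(d_0)$, which amounts to a map $\mu:X\otimes X\to X$ on the bottom together with a pair $(s_1,s_2):X\amalg_e X\to e$ on the top, subject to the squares $\mu\circ(d_0\otimes\id_X)=d_0\circ s_1$ and $\mu\circ(\id_X\otimes d_0)=d_0\circ s_2$; the associativity and unit axioms for this monoid become associativity of $\mu$ with two-sided unit $d_1$. An augmentation $(d_0)\to(\id_e)$ of the form depicted supplies a map $s:X\to e$ with $s\circ d_0=\id_e$ from commutativity of its defining square; asking this augmentation to be multiplicative forces $s\circ\mu=s\otimes s$ modulo the unit isomorphism (so $s$ is a monoid map) and forces $s_1=s_2=s$, since the top component of multiplication on $(\id_e)$ is $\id_e$. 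Finally, the interval identity $s\circ d_1=\id_e$ is automatic, because the horizontal composite of $\eta$ with the augmentation is a monoid morphism $(\emptyset\to e)\to(\id_e)$, and any such morphism is the unit of its target, which has $\id_e$ on the bottom. The resulting package is precisely a cubical monoid in $\mathscr{C}$.

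The reverse construction runs by the same formulas: a cubical monoid $(X,d_0,d_1,s,\mu)$ gives back the above data, with the top component of the multiplication on $(d_0)$ being the map $X\amalg_e X\to e$ induced by $(s,s)$, well-defined by $s\circ d_0=\id_e$. Functoriality is straightforward, since a morphism of cubical monoids is a map in $\mathscr{C}$ commuting with $d_0,d_1,s,\mu$, which is exactly a morphism in $\mathscr{C}^{[1]}$ on $(d_0)$ compatible with the monoid structure and the augmentation.

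The main technical point is the identification $s_1=s_2=s$, which relies on multiplicativity of the augmentation to $(\id_e)$; the remainder is a routine diagram chase, most of the axioms on each side being literal translations of their counterparts.
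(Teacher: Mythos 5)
The paper states this proposition without a proof, recording only the remark that the absorbing condition unpacks to commutativity of the multiplication square $(d_0)\odot(d_0)\to(d_0)$; your direct unraveling is correct and presumably what the author had in mind. You correctly identify where each piece of a cubical monoid lives in the diagram: $d_1$ in the unit, $\mu$ and $(s_1,s_2)$ in the target and source levels of the multiplication, $s\circ d_0=\id_e$ in the augmentation square, and the forcing $s_1=s_2=s$ together with $s$ being a monoid map in the multiplicativity of the augmentation to $(\id_e)$ --- the last step being exactly how the absorbing condition materializes. One small caution on phrasing: the monoid axioms on $(d_0)$ in $\mathscr{C}^{[1]}$ do not literally reduce to the axioms on the target-level $\mu$; the source-level components of the unit and associativity constraints impose conditions on $(s_1,s_2)$ as well, e.g.\ the unit axiom already gives $s_1\circ d_1=s_2\circ d_1=\id_e$. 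These turn out to be redundant in the presence of the augmentation, since $s\circ d_0=\id_e$ makes $d_0$ split monic and one can cancel it from the source-level squares, and the key instance $s\circ d_1=\id_e$ you in any case recover cleanly by observing that a monoid map out of the unit object must be the unit of its target. Your functoriality check is also right (compatibility with the augmentation forces any morphism of such diagrams to have $\id_e$ as its source-level component), so the stated equivalence of categories holds.
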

Note that the condition that $d_0:e\to X$ be absorbing, in the
language of the product $\odot$, becomes the commutativity of the diagram
\begin{equation*}
  \vcenter{\xymatrix@C=4pc{
      X\otimes e \amalg_{e\otimes e} e\otimes X \ar[r]^{d_0\odot d_0} \ar[d] &
      X\otimes X \ar[d] \\
      e \ar[r]_{d_0} & X.}}
\end{equation*}
It is forced by requiring $d_0:e\to X$ to be a monoid in
$\mathscr{C}^{[1]}$.

\begin{definition}
  The category $\qsigma$ is the \textsc{prop} whose category of
  algebras in a symmetric monoidal category $(\mathscr{C},\otimes,e)$
  is the category $\Cat{qMon}(\mathscr{C})$.
\end{definition}
Since each cubical monoid yields an interval by forgetting structure,
we have a strict monoidal functor $i:\QQ \to \qsigma$.  This
definition of $\qsigma$ is fairly opaque; below, we will give fairly
explicit description of its maps.  Note that cubical monoids are
not abelian.  The \textsc{pro} whose algebras are cubical
monoids in an arbitrary monoidal category is straightforward to
describe: it is the cubical site obtained from $\QQ$ by adjoining
conjunction maps and the appropriate relations (see
\cite{grandismauri} or below).  The construction of $\qsigma$ is
analogous to the symmetrization of a non-$\Sigma$ operad
\cite{maygeometry}.  Note however that $\qsigma$ is not freely
generated by an operad, since it includes a $1$--$0$ operation
corresponding to the degeneracy $s:X\to e$.  This makes symmetrization
more complicated: as we'll see below, permutations can be moved past
the map $s$, but not past connections.

\subsection{The category $\qsigma$}
\begin{definition}
  Suppose $S$ is a set of symbols not containing $0$ or $1$.  A
  \emph{formal cubical product} on $S$ is either
  \begin{enumerate}
  \item an ordered conjunction of elements of $S$, none occurring more
    than once (i.e., a list of symbols in $S$ separated by $\wedge$); or
  \item the numeral $0$ or $1$.
  \end{enumerate}
  A \emph{formal cubical $(m,n)$-product} is an $n$-tuple of formal
  cubical products on $\{x_1,\dotsc,x_m\}$ so that no symbol $x_i$
  occurs more than once in its concatenation.  Write
  $\qsigma(\sqparen{m},\sqparen{n})$ for the set of all formal
  cubical $(m,n)$ products.  By convention,
  $\qsigma(\sqparen{m},\sqparen{0})$ is a single point.
\end{definition}

For example, the following are formal cubical $(3,2)$-products:
\begin{align*}
  & (x_1,0) && (1,x_3 \wedge x_2) && (1,1) && (x_1 \wedge x_3, x_2).
\end{align*}
However, $(x_1,x_1)$ is not a formal cubical product as the symbol
$x_1$ occurs more than once.

\begin{definition}
  The \emph{identity formal $(n,n)$-product} is the $n$-tuple
  $(x_1,\dotsc,x_n)$.  Suppose $X$ and $Y$ are formal cubical
  $(\ell,m)$- and $(m,n)$-products, respectively.  The
  \emph{composition} $Y\circ X$ is defined as follows:
  \begin{enumerate}
  \item Replace any occurrence of the symbol $x_i$ in $Y$ with the
    $i$th entry of $X$.
  \item Delete each occurrence of the symbol $1$ in each conjunction 
    of length at least two.
  \item Replace each conjunction containing $0$ with the numeral $0$.
  \end{enumerate}
\end{definition}
For example, we have the following compositions:
\begin{align*}
  (x_3,x_1\wedge x_2) \circ (0,x_1,x_5) &= (x_5,0) &
  (x_2\wedge x_1)\circ (x_1\wedge x_2,x_3) &= (x_3\wedge x_1\wedge
  x_2) \\
  (x_1\wedge x_2) \circ (1,1) &= (1) &
  (0,x_1\wedge x_4) \circ (x_{10},0,0,1,x_3) &= (0,x_{10}).
\end{align*}
This makes $\qsigma$ a category with objects $\sqparen{n}$, $n\ge 0$
and maps $\sqparen{m}\to\sqparen{n}$ formal cubical $(m,n)$-products.
We call $\qsigma$ the \emph{extended cubical category} and presheaves
on $\qsigma$ \emph{extended cubical sets;} we notate the category of
extended cubical sets as $\qsSet$.  We write $\square^n_\Sigma$ for
the representable presheaf $\qsigma({-},\sqparen{n})$.  To complete
the description of $\qsigma$ as a \textsc{prop} we need its symmetric
strict monoidal structure:
\begin{definition}
  Define $\sqparen{m}\oplus \sqparen{n} = \sqparen{m+n}$.  Suppose
  $X_i$ is a formal cubical $(m_i,n_i)$-product, $i=1,2$.  Define $X_1
  \oplus X_2$ as follows:
  \begin{enumerate}
  \item Replace each $x_j$ in $X_2$ by $x_{j+m_1}$ to form $X'_2$.
  \item Concatenate $X_1$ and $X'_2$.
  \end{enumerate}
  The symmetry $\sqparen{m}\oplus\sqparen{n} \to
  \sqparen{n}\oplus\sqparen{m}$ is the formal product
  \begin{equation*}
    (x_{m+1},x_{m+2},\dotsc,x_{m+n},x_1,x_2,\dotsc,x_m).
  \end{equation*}
\end{definition}
For example,
\begin{equation*}
  \big( (x_1\wedge x_2):\sqparen{2}\to\sqparen{1}\big) \oplus
  \big( (0,x_1) : \sqparen{1} \to \sqparen{2}\big) =
  (x_1\wedge x_2,0, x_3):\sqparen{3} \to \sqparen{3}.
\end{equation*}

\subsection{Generators and relations in $\qsigma$}
In order to describe skeletal filtrations on extended cubical sets, we
need a presentation of $\qsigma$.  The relations we list are similar
to those in \cite{grandismauri}.
\begin{definition}
  Suppose $n > 0$, $1\le i\le n+1$, and $\varepsilon = 0,1$.  Define
  maps $\delta^{i,\varepsilon}_n$ and $\sigma^i_n$ by the formal products
  \begin{align}
    \delta_n^{i,\varepsilon} &= (x_1,\dotsc,x_{i-1},\varepsilon,
    x_i, \dotsc, x_n) : \sqparen{n} \to \sqparen{n+1} \\
    \sigma_n^i  &= (x_1,\dotsc,x_{i-1}, x_{i+1}, \dotsc,x_{n+1}):
    \sqparen{n+1} \to \sqparen{n}. \notag
  \end{align}
  For $n \ge 1$ and $i \le n$, we define $\gamma^i_n$ to be
  \begin{equation*}
    \gamma^i_n = (x_1,\dotsc,x_{i-1}, x_i \wedge x_{i+1},
    x_{i+2},\dotsc,x_{n+1}) : \sqparen{n+1} \to \sqparen{n}.
  \end{equation*}
  Finally, for $p\in\Sigma_n$, we let
  \begin{equation*}
    \pi_p = (x_{p^{-1}(1)}, \dotsc, x_{p^{-1}(n)}) : \sqparen{n} \to \sqparen{n}.    
  \end{equation*}
\end{definition}

Note that $\QQ$ is isomorphic to the subcategory of $\qsigma$
generated by the coface and codegeneracy maps
$\delta^{i,\varepsilon}_n$ and $\sigma^i_n$.  The inclusion $\QQ \to
\qsigma$ is the strict monoidal functor we described above in terms of
forgetting structure.

\begin{proposition}\label{prop:cocubicalidentities}
  The codegeneracy and coface maps satisfy the following relations:
  \begin{equation}\label{eq:cocubicalidentities}
      \begin{split}
        \delta^{j,\eta} \delta^{i,\varepsilon} &=
        \delta^{i,\varepsilon}\delta^{j-1,\eta}\quad\text{if $i < j$} \\
        \sigma^j \delta^{i,\varepsilon} &=
        \begin{cases}
          \delta^{i,\varepsilon} \sigma^{j-1} & \text{if $i < j$} \\
          \id & \text{if $i = j$} \\
          \delta^{i-1,\varepsilon}\sigma^j & \text{if $i > j$}
        \end{cases} \\
        \sigma^j \sigma^i &= \sigma^i \sigma^{j+1}\quad\text{if $i \le j$}
      \end{split}
    \end{equation}
  The conjunction maps satisfy the following relations
  \cite{grandismauri}:
  \begin{align}\label{eq:extendedcocubicalidentities}
    \gamma^j \gamma^i &=
    \begin{cases}
      \gamma^i \gamma^{j+1} & \text{if $j > i$} \\
      \gamma^i \gamma^{i+1} & \text{if $j = i$}
    \end{cases} &
    \sigma^j \gamma^i &= 
    \begin{cases}
      \gamma^{i-1}\sigma^j & \text{if $j < i$} \\
      \sigma^i \sigma^i & \text{if $j = i$} \\
      \gamma^i \sigma^{j+1} & \text{if $j > i$}
    \end{cases} \\
    \gamma^j \delta^{i,\varepsilon} &=
    \begin{cases}
      \delta^{i-1,\varepsilon} \gamma^j & \text{if $j < i-1$} \\
      \delta^{i,0} \sigma^i & \text{if $j = i-1,i$ and $\varepsilon = 0$} \\
      \id & \text{if $j = i-1,i$ and $\varepsilon = 1$} \\
      \delta^{i,\varepsilon} \gamma^{j-1} & \text{if $j > i$} \notag
    \end{cases}
  \end{align}
\end{proposition}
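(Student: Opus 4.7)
\medskip
\noindent\textbf{Proof proposal.} The plan is to verify each identity by direct computation with formal cubical products. Every generator $\delta^{i,\varepsilon}_n$, $\sigma^j_n$, $\gamma^i_n$ is given as an explicit tuple, and composition in $\qsigma$ is determined by the three-step rule from the definition: substitute variables, delete the symbol $1$ from each conjunction of length $\ge 2$, and collapse any conjunction containing $0$ to the numeral $0$. Each listed identity then reduces to evaluating both sides under this rule and checking that the resulting tuples agree.

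First I would dispatch the classical cocubical relations \eqref{eq:cocubicalidentities}. These involve only faces and degeneracies, so no variable is ever conjoined with a literal $0$ or $1$ and only the ``substitute variables'' step fires. Splitting into the cases $i<j$, $i=j$, $i>j$ and writing out both sides, a direct index shift matches them; via the embedding $\QQ\hookrightarrow\qsigma$ these are precisely the known relations defining the classical box category, so they may also simply be quoted.

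The conjunction relations \eqref{eq:extendedcocubicalidentities} divide into two groups. The identities $\gamma^j\gamma^i$, $\sigma^j\gamma^i$ with $j\ne i$, and $\gamma^j\delta^{i,\varepsilon}$ with $j<i-1$ or $j>i$ are again checked by pure substitution; one only needs that the formal conjunction of disjoint sets of symbols is associative and that the ``no symbol repeats'' condition is preserved at each step. The case $\sigma^i\gamma^i$ uses that substituting the output of $\gamma^i$ into $\sigma^i$ simultaneously removes the $x_i$ and $x_{i+1}$ slots, matching $\sigma^i\sigma^i$.

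The remaining cases, $\gamma^j\delta^{i,\varepsilon}$ with $j\in\{i-1,i\}$, are the only ones in which the $0$/$1$ reduction rules actually fire. The $i$th slot of $\delta^{i,\varepsilon}$ is the literal $\varepsilon$, so composing with $\gamma^j$ produces a formal conjunction containing $\varepsilon$. When $\varepsilon=1$ the ``delete $1$'' rule reduces the conjunction to its single surviving variable and the whole tuple becomes the identity; when $\varepsilon=0$ the ``absorb $0$'' rule collapses the affected slot to $0$ and the resulting tuple is recognizable as $\delta^{i,0}\sigma^i$. These four subcases are precisely where the absorbing and unital axioms of a cubical monoid enter the combinatorics. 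The argument is entirely mechanical; the main obstacle is simply keeping the index shifts straight across the many cases, not any conceptual difficulty.
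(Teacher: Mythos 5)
The paper never supplies a proof of this Proposition; the classical cocubical identities in \eqref{eq:cocubicalidentities} are standard for the box category, and the relations involving $\gamma$ are simply attributed to \cite{grandismauri}. Your proposal---verify every line by substituting the explicit tuples defining the generators into the three-step composition rule for formal cubical products---is exactly the right method and is the only reasonable one here.

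That said, because your write-up asserts the outcomes rather than displaying them, you have inherited an indexing error from the statement itself. In the subcase $j=i-1$, $\varepsilon=0$ the computation does \emph{not} give $\delta^{i,0}\sigma^i$. Explicitly, for $i=2$, $j=1$, $n=2$ one finds
\begin{equation*}
\gamma^1\delta^{2,0}=(x_1\wedge x_2,x_3)\circ(x_1,0,x_2)=(x_1\wedge 0,\,x_2)=(0,x_2),
\end{equation*}
whereas $\delta^{2,0}\sigma^2=(x_1,0)\circ(x_1)=(x_1,0)$. The tuple $(0,x_2)$ is instead $\delta^{1,0}\sigma^1$. In general $\gamma^{i-1}\delta^{i,0}=\delta^{i-1,0}\sigma^{i-1}$ and $\gamma^{i}\delta^{i,0}=\delta^{i,0}\sigma^{i}$, so the right-hand side should read $\delta^{j,0}\sigma^j$ for $j\in\{i-1,i\}$ rather than the $j$-independent $\delta^{i,0}\sigma^i$. (The companion case $\varepsilon=1$ really does give $\id$ for both values of $j$, so that line is fine.) The lesson is that a ``mechanical'' verification of this kind has to be carried out in full: the moment you actually write the two tuples side by side the misprint jumps out, while ``the resulting tuple is recognizable as~$\delta^{i,0}\sigma^i$'' silently absorbs it. Everything else in your outline --- the observation that the $0/1$ reduction rules fire only in these four subcases, and that the remaining identities are pure index bookkeeping under substitution --- is correct.
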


\begin{definition}
  Let $\qsigma^+$ be the subcategory of $\qsigma$ generated by coface
  maps $\delta^{i,\varepsilon}_n$ and cosymmetry maps $\pi_p$; let
  $\qsigma^-$ be the subcategory of $\qsigma$ generated by
  codegeneracy maps $\sigma^i_n$, conjunctions $\gamma^i_n$, and
  cosymmetry maps $\pi_p$.
\end{definition}

\begin{proposition}\label{prop:qsigmafactorization}
  Every map $f$ in $\qsigma$ admits a unique factorization of the form
  \begin{equation}\label{eq:qsigmafactorization}
    f = \delta^{i_1,\varepsilon_1}\dotsb\delta^{i_n,\varepsilon_n}
    \gamma^{k_1} \dotsb \gamma^{k_r} \pi_p
    \sigma^{j_1}_\ell\dotsb \sigma^{j_m}
  \end{equation}
  with 
  \begin{align*}
    i_1 &> i_2 > \dotsb > i_n &
    j_1 &< j_2 < \dotsb < j_m &
    k_1 &< k_2 < \dotsb < k_r
  \end{align*}
  and $p\in\Sigma_\ell$.  If $f\in\Ar \qsigma^+$, then $r = m = 0$; if
  $f \in\Ar \qsigma^{-}$, then $n = 0$.  If $f\in\Ar \QQ$, then $r =
  0$ and $\pi_p = \id$.
\end{proposition}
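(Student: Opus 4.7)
The plan is to bypass the cocubical identities of Proposition~\ref{prop:cocubicalidentities} entirely and prove the proposition by direct combinatorial inspection of formal cubical products. Given any $f = (y_1, \dotsc, y_t) : \sqparen{s} \to \sqparen{t}$, I will read the normal-form data off of $f$ itself and verify by substitution that the proposed word composes to $f$; uniqueness will then be automatic, since every piece of the factorization is determined by visible combinatorial features of the formal product.

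For existence, the data are extracted as follows. Let $p_1 < \dotsb < p_n$ be the positions where $y_{p_k} \in \{0,1\}$; setting $(i_k, \varepsilon_k) = (p_{n+1-k},\, y_{p_{n+1-k}})$ produces a $\delta$-block with $i_1 > \dotsb > i_n$ which, applied last, inserts each constant into its correct output slot. Let $D = \{d_1 < \dotsb < d_m\}$ be the indices of variables in $\{x_1, \dotsc, x_s\}$ that never appear in $f$; then $\sigma^{d_1} \dotsb \sigma^{d_m}$, applied innermost-first so that no reindexing occurs, deletes exactly those variables, leaving an intermediate object $\sqparen{\ell}$ with $\ell = s - m$. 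The surviving variables, listed in the original order from $\{x_1, \dotsc, x_s\}$, give a tuple $(z_1, \dotsc, z_\ell)$; reading the non-constant entries of $f$ left-to-right and flattening each conjunction left-to-right gives a permutation of $(z_1, \dotsc, z_\ell)$ that uniquely determines $p \in \Sigma_\ell$. Finally, if the non-constant entries of $f$ have lengths $a_1, \dotsc, a_{t-n}$ summing to $\ell$, the $\gamma$-indices $k_1 < \dotsb < k_r$ (with $r = \ell - (t-n)$) are precisely the elements of $\{1, \dotsc, \ell - 1\}$ that are not block boundaries $a_1,\, a_1 + a_2,\, \dotsc$, and when applied innermost-first they conjoin the adjacent entries within each block. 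A direct application of the composition rule of $\qsigma$ shows this word evaluates to $f$.

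Uniqueness is immediate from these observations: the $0$/$1$ entries of $f$ determine the $\delta$'s, the indices of variables missing from $f$ determine the $\sigma$'s, the block sizes of the conjunctions in $f$ determine the $\gamma$-indices, and the internal ordering of variables within the non-constant entries determines $\pi_p$. The three special cases follow because $\delta$ and $\pi$ neither delete variables nor create conjunctions, so a map in $\qsigma^+$ uses every source variable exactly once and has no conjoined entries, forcing $m = r = 0$; the generators of $\qsigma^-$ never produce a constant entry, forcing $n = 0$; and $\QQ$-maps involve no permutations or conjunctions, so variables appear in their original order without conjunction, forcing $\pi_p = \id$ and $r = 0$. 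The main obstacle will be the bookkeeping in the verification of existence: each $\gamma^{k_j}$ acts on the intermediate tuple left by $\pi_p$ and the previously applied $\gamma$'s, so one must carefully check that applying the $\gamma$'s in decreasing-index order keeps the block-boundary interpretation of the $k_j$'s consistent with the block structure of $f$.
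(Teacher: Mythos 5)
Your proposal is correct and takes essentially the same approach the paper itself uses (in the remark immediately following the proposition rather than a displayed proof): read the data off the formal cubical product, with the positions of $0$/$1$ giving the $\delta$'s, the omitted source symbols giving the $\sigma$'s, the left-to-right order of the surviving symbols giving $\pi_p$, and the conjunction block structure giving the $\gamma$'s. You supply more of the index bookkeeping (the ``innermost-first, no reindexing'' convention for $\sigma$'s and $\delta$'s, and the block-boundary characterization of the $k_j$'s), but the underlying argument is the same.
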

We can read the decomposition \eqref{eq:qsigmafactorization} off of a
formal cubical $(a,b)$-product as follows: the indices $j_1,\dotsc,j_m$
correspond to the symbols $x_{j_1},\dotsc,x_{j_m}$ omitted from the
formal cubical product.  The concatenation of the remaining indices
determines $\pi_p$ uniquely; the list $k_1,\dotsc,k_r$ corresponds to
the positions in which a concatenation is performed, and the list
$i_1,\dotsc,i_n$ corresponds to the positions containing a $0$ or
$1$.  For example, the $(5,4)$-product
\begin{equation*}
  (x_3, 1, x_1 \wedge x_5 \wedge x_2, 0)
\end{equation*}
decomposes uniquely as
\begin{equation*}
  \delta^{5,0} \delta^{2, 1} \gamma^2 \gamma^3 \pi_{(1243)} \sigma^4.
\end{equation*}
This is analogous to the decompositions given by Grandis and Mauri
\cite[Theorem 8.3]{grandismauri}.  However, Grandis and Mauri's
extended cubical category $\mathbb{K}$ has an additional degeneracy
operation given by disjunction ${-}\vee{-}$ and some additional
relations.  More importantly, in $\mathbb{K}$, the operations $\wedge$
and $\vee$ are commutative.  As a result, the permutation $p$ in
factorizations of the form \eqref{eq:qsigmafactorization} in
$\mathbb{K}$ is uniquely determined up to multiplication of a possibly
nontrivial subgroup of $\Sigma_\ell$.  This has the upshot that the
vertices functor $\mathbb{K}(\sqparen{0},{-}) :
\mathbb{K}\to\Cat{Set}$ is faithful.  However, the analogous vertices
functor $\qsigma(\sqparen{0},{-}):\qsigma\to \Cat{Set}$ is not
faithful.  This is a marked departure from most cubical sites.

\begin{corollary}\label{cor:qsigmareedy}
  Suppose $f:\sqparen{m} \to \sqparen{n}$ is a map in $\qsigma$.  Then
  $f$ admits a factorization $f = \delta \sigma$ with $\delta \in
  \qsigma^{+}$ and $\sigma \in \qsigma^{-}$.  Given any other
  factorization $f = \delta' \sigma'$, there is a unique map $\pi$
  such that
  \begin{equation*}
    \xymatrix{ & \sqparen{r} \ar[dd]^(0.3)\pi \ar[dr]^\delta \\
      \sqparen{m} \ar'[r]^f[rr] \ar[ur]^\sigma \ar[dr]_{\sigma'} && \sqparen{n} \\
      & \sqparen{r} \ar[ur]_{\delta'}}
  \end{equation*}
  commutes; in fact $\pi$ is always a cosymmetry isomorphism.
\end{corollary}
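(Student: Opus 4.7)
My plan is to derive existence directly from the normal form in Proposition \ref{prop:qsigmafactorization} and to handle uniqueness by a direct inspection of formal cubical products.

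For existence, given $f:\sqparen{m}\to\sqparen{n}$, write it in its unique normal form
\begin{equation*}
f = \delta^{i_1,\varepsilon_1}\dotsb\delta^{i_n,\varepsilon_n}\gamma^{k_1}\dotsb\gamma^{k_r}\pi_p\sigma^{j_1}\dotsb\sigma^{j_{m'}}.
\end{equation*}
I would set $\delta = \delta^{i_1,\varepsilon_1}\dotsb\delta^{i_n,\varepsilon_n}$, which is in $\qsigma^+$ by construction, and $\sigma = \gamma^{k_1}\dotsb\gamma^{k_r}\pi_p\sigma^{j_1}\dotsb\sigma^{j_{m'}}$, which is in $\qsigma^-$. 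Then $f = \delta\sigma$ is the desired factorization.

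For uniqueness, I would exploit the formal cubical product description of maps. Any map in $\qsigma^+$, being built from cofaces $\delta^{i,\varepsilon}$ and cosymmetries $\pi_p$, corresponds to a formal cubical product whose entries are either the numerals $0$, $1$ or a single variable $x_j$, with each $x_j$ occurring exactly once. A map in $\qsigma^-$ corresponds to a formal cubical product each of whose entries is a nonempty $\wedge$-conjunction (no numerals appear). The composition rule then implies that for $f = \delta\sigma$ with $\delta:\sqparen{r}\to\sqparen{n}$ in $\qsigma^+$ and $\sigma:\sqparen{m}\to\sqparen{r}$ in $\qsigma^-$, a coordinate of $\delta\sigma$ is a numeral if and only if the corresponding coordinate of $\delta$ is. So $r$ equals the number of coordinates of $f$ that are not numerals; it depends only on $f$.

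Now suppose $f = \delta\sigma = \delta'\sigma'$ are two such factorizations. The numeral entries of both $\delta$ and $\delta'$ are forced to agree with the numeral entries of $f$. At the remaining $r$ coordinates, $\delta$ places the variables $x_1,\dotsc,x_r$ in some order $q:\{1,\dotsc,r\}\to\{1,\dotsc,r\}$ and $\delta'$ places them in some order $q'$. Setting $\pi = \pi_{q^{-1}q'}:\sqparen{r}\to\sqparen{r}$ and computing both $\delta'\circ\pi$ and $\pi\circ\sigma$ via the formal substitution rule gives $\delta'\pi = \delta$ and $\sigma' = \pi\sigma$. The permutation $\pi$ is uniquely forced by either of these equations, and is manifestly a cosymmetry. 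The main obstacle is purely bookkeeping around the conventions for $\pi_p$ (and in particular verifying that the permutation determined by the $\delta$'s is the same one that aligns the $\sigma$'s); once a direction is fixed, both calculations reduce to substituting $\pi$'s entries into $\delta'$ and reading off the resulting formal product.
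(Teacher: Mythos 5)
Your proof is correct and takes the same route the paper intends: the paper leaves the corollary as an immediate consequence of Proposition~\ref{prop:qsigmafactorization} together with the accompanying description of how to read the normal form off a formal cubical product, which is exactly what you spell out (entries of a $\qsigma^+$ map are numerals or single variables with each source variable occurring once, entries of a $\qsigma^-$ map are nonempty conjunctions with no numerals, and these properties are stable under composition). The one step worth stating explicitly is why $\pi$ is the \emph{unique} map making the diagram commute (not merely the unique cosymmetry): this holds because $\delta'$, being a composite of cofaces and cosymmetries, is a monomorphism in $\qsigma$, or equivalently because the non-numeral entries of $\delta'\pi$ read off the entries of $\pi$ directly.
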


\section{The structure of extended cubical sets}\label{section:qisreedy}
In this section, we'll present some machinery that allows us to
decompose cubical sets and symmetric cubical sets as colimits of their
skeleta.  We'll first need a workable definition of skeleton.  There
is a general theory due to Cisinski of skeletal decompositions
generalizing the classical theory for simplicial sets in
\cite{eilenbergzilber,gabrielzisman}---see \cite[Chap\^\i{}tre
8]{cisinskithesis}.  Moerdijk and Berger also discuss an apparatus for
skeletal decomposition in \cite{bergermoerdijkreedy}---the theory of
Eilenberg-Zilber categories---which we'll apply to $\QQ$ and
$\qsigma$.

\subsection{Eilenberg-Zilber categories and decompositions}
Suppose $X$ is a simplicial set.  Given any $n$-simplex $f:\Delta[n]
\to X$, we may take a factorization
\begin{equation}\label{eq:ezindelta}
  \vcenter{\xymatrix{
      \Delta[n] \ar[rr]^f \ar[rd]_s && X \\
      & \Delta[r] \ar[ur]_g
    }}
\end{equation}
so that $s:[n]\to[r]$ is an epimorphism (i.e., a degeneracy map) and
$r$ is minimal among all such factorizations.  Of course, the
minimality of $r$ implies that simplex $g$ is
\emph{nondegenerate}---it does not factor through another degeneracy.
One feature of the combinatorics of $\Delta$ is that this
factorization is unique: if $f = g's'$ is another factorization with
$g'$ nondegenerate and $s'$ a degeneracy map, then $g' = g$ and $s' =
s$.  This seemingly innocuous observation allows us to identify the
$m$-skeleton of $X$ (usually given as the counit of the left Kan
extension/restriction adjunction along $\Delta_{\le m} \to \Delta$) as
the subpresheaf of $X$ whose $n$-simplices are those $n$-simplices $f$
so that $r \le m$ in the Eilenberg-Zilber decomposition
\eqref{eq:ezindelta} of $f$.  A simple induction argument then shows
that the maps $\partial \Delta[n] \to \Delta[n]$, $n \ge 0$ comprise a
cellular model for $\sSet$.

These sort of arguments also work in $\qSet$ (as we'll see below), but
not in $\qsSet$ without some modification.  The identity map
$\square_\Sigma^n \to \square_\Sigma^n$ is nondegenerate, in the sense
that it does not factor through any non-invertible degeneracies, but
any symmetry $\pi$ yields a factorization $\pi^{-1}\pi$.  Also, the
maps $\iota_n:\partial \square^n_\Sigma \to \square^n_\Sigma$ do not
comprise a cellular model for $\qsSet$: there is no way to form, e.g.,
$\Sigma_2 \backslash \square^2_\Sigma$ with iterated cobase changes,
transfinite compositions, and retracts of the maps $\iota_n$.  As it
turns out, these are the only two complications that arise when we try
to apply Gabriel and Zisman's theory to $\qsigma$.  We need to replace
uniqueness with a properly categorical notion---contractible
groupoids---and we need to keep track of the action of
$\Aut(\square^n_\Sigma)$ on $\square^n_\Sigma$.  The appropriate
generalization of $\Delta$ is the notion of an \emph{Eilenberg-Zilber}
category, which we introduce below.  This generalization is due to
Berger, Moerdijk and Cisinski
\cite{bergermoerdijkreedy,cisinskithesis}.

\begin{definition}\label{def:absolutecolimit}
  Suppose $\mathscr{C}$ is a category and $\mathscr{I}$ a small
  category.  Suppose further that $X:\mathscr{I}\to\mathscr{C}$ is a
  diagram and $X\to c_Y$ is a cocone on $X$ (here $c_Y$ is the
  constant $\mathscr{I}$-diagram on $Y$).  We say $X\to
  c_Y$ is an \emph{absolute colimit} if $FX \to c_{FY}$ is a colimit for
  all functors $F:\mathscr{C}\to\mathscr{D}$.
\end{definition}
Split coequalizers are examples of absolute colimits \cite{maclane}.
In Definition \ref{def:absolutecolimit}, it is necessary and
sufficient to check that $[X] \to c_{[Y]}$ is a colimit; this is due
to Par\'e \cite{pareabsolutecolimits}.
\begin{definition}[{\cite[Definition 6.6]{bergermoerdijkreedy}}]\label{def:ezcategory}
  An \emph{Eilenberg-Zilber category} (briefly \emph{\textsc{ez} category}) is
  a small category $\mathscr{R}$ together with a degree function
  $\deg:\Ob\mathscr{R} \to \mathbf{Z}_{\ge 0}$ such that
  \begin{enumerate}
  \item[(EZ1)] Monomorphisms preserve the degree if and only if they
    are invertible; they raise the degree if and only if they are
    non-invertible.
  \item[(EZ2)] Every morphism factors as a split epimorphism followed by a
    monomorphism.
  \item[(EZ3)] Suppose
    \begin{equation*}
      \xymatrix{s_1 & r \ar[l]_{\sigma_1} \ar[r]^{\sigma_2} & s_2}
    \end{equation*}
    is a pair of split epimorphisms.  Then there is an absolute pushout square
    \begin{equation*}
      \xymatrix{r \ar[r]^{\sigma_2} \ar[d]_{\sigma_1} & s_2
        \ar[d]^{\tau_2} \\
        s_1 \ar[r]_{\tau_1} & t}
    \end{equation*}
    in $\mathscr{R}$ in which $\tau_1$ and $\tau_2$ are split
    epimorphisms.
  \end{enumerate}
\end{definition}
Suppose $\mathscr{R}$ is an \textsc{ez} category whose only
isomorphisms are identity maps.  The factorization provided by EZ2 is
then unique by EZ3.  Moreover, since the
section of a split epimorphism is monic, non-identity split
epimorphisms lower degree.  In this special case, $\mathscr{R}$ is an
example of a \emph{Reedy category:}
\begin{definition}
  Suppose $\mathscr{C}$ is a category and $\mathscr{D}$ a subcategory
  of $\mathscr{C}$; we say $\mathscr{D}$ is \emph{lluf} if
  $\Ob\mathscr{D} = \Ob\mathscr{C}$ (this terminology is due to Peter
  Freyd).  A \emph{Reedy category} \cite{dhks,hirschhorn,hovey} is a
  small category $\mathscr{R}$ together with a degree function
  $\deg:\Ob\mathscr{R} \to \mathbf{Z}_{\ge 0}$ and two lluf
  subcategories $\mathscr{R}^{+}$ and $\mathscr{R}^{-}$ so that
  \begin{enumerate}
  \item[(R1)] Non-identity morphisms in $\mathscr{R}^+$ raise the degree;
    non-identity morphisms  in $\mathscr{R}^-$ lower the degree.
  \item[(R2)] Every morphism $f\in\Ar\mathscr{R}$ factors uniquely as $f = gh$ with
  $g\in\Ar\mathscr{R}^{+}$ and $h\in\Ar\mathscr{R}^{-}$.
  \end{enumerate}
\end{definition}
Not all Reedy categories are \textsc{ez} though. 
As expected, $\Delta$ is both.  The main result of this section is the
following:
\begin{proposition}\label{prop:qqisez}
  The categories $\QQ$ and $\qsigma$ are \textsc{ez} categories.
\end{proposition}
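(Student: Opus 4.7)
The plan is to take $\deg(\sqparen{n}) = n$ for both categories. The case of $\QQ$ is contained in that of $\qsigma$ (no conjunctions or cosymmetries appear in $\QQ^+$ or $\QQ^-$), so I focus on $\qsigma$.

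First I would identify the monomorphisms and split epimorphisms via the generators-and-relations. Each generator of $\qsigma^-$ is a split epimorphism: Proposition \ref{prop:cocubicalidentities} gives $\sigma^i \delta^{i,\varepsilon} = \id$, so $\sigma^i$ has sections $\delta^{i,0}$ and $\delta^{i,1}$; the relations \eqref{eq:extendedcocubicalidentities} give $\gamma^j \delta^{i,1} = \id$ for $j = i-1, i$, so $\gamma^i$ has sections $\delta^{i,1}$ and $\delta^{i+1,1}$; and cosymmetries are invertible. Since compositions of split epis are split epis, all of $\qsigma^-$ consists of split epimorphisms. Dually, each coface map is a monomorphism (directly from the formal-product description — cofaces are injective on vertices and this property is preserved under composition) and cosymmetries are isos, so $\qsigma^+$ consists of monomorphisms.

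EZ2 is then immediate from Corollary \ref{cor:qsigmareedy}. For EZ1, suppose $f: \sqparen{m} \to \sqparen{n}$ is a monomorphism and write $f = \delta\sigma$ with $\delta \in \qsigma^+$, $\sigma \in \qsigma^-$. I claim $\sigma$ is invertible. Otherwise, the canonical form \eqref{eq:qsigmafactorization} of $\sigma$ contains a non-invertible generator (some $\sigma^j$ or $\gamma^k$), each of which admits two distinct sections as noted above; composing with the remaining factors (and using that sections of split epis are split monos) yields two distinct sections $\hat\tau \ne \hat\tau'$ of $\sigma$. Then $f\hat\tau = \delta = f\hat\tau'$ contradicts that $f$ is mono. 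Hence monos lie in $\qsigma^+$; the canonical factorization of a map in $\qsigma^+$ preserves degree iff it contains no cofaces iff it is a cosymmetry iff it is invertible.

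The heart of the proof, and the main obstacle, is EZ3. Given a span of split epimorphisms $s_1 \xleftarrow{\sigma_1} r \xrightarrow{\sigma_2} s_2$, I must exhibit a commuting square with $\tau_1, \tau_2$ split epis that becomes a pushout in $\qsSet$ after applying the Yoneda embedding (which suffices for absoluteness by Par\'e's theorem). My approach is combinatorial: each $\sigma_i$ is represented by a formal product that groups the input coordinates into ordered conjunctions (or discards them). Define $t$ as the common coarsening of the two groupings (whose coordinates are conjunctions of variables in $\sqparen{r}$ refining both $\sigma_1$ and $\sigma_2$) and write down $\tau_1, \tau_2$ explicitly; verify the universal property pointwise in $\qsSet$ by checking that any cocone factors uniquely through $t$. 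The delicate point is the non-commutativity of $\wedge$ in $\qsigma$: the orderings of conjoined coordinates in $\sigma_1$ and $\sigma_2$ may disagree, and forcing them to agree is where cosymmetries enter $\tau_1$ and $\tau_2$ even when $\sigma_1$ and $\sigma_2$ contain none. This issue disappears entirely in $\QQ$, where split epis are just codegeneracies and the pushout is obtained by intersecting the coordinate subsets remembered by each $\sigma_i$.
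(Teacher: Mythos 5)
Your treatment of EZ1 and EZ2 is sound and matches the paper's (unstated) claim that they are routine; the observation that a non-invertible generator of $\qsigma^-$ has two distinct sections, so a non-invertible split epimorphism cannot be monic, is a clean way to see EZ1. Your approach to EZ3, however, departs from the paper's. The paper introduces \emph{split pushouts} (Definition~\ref{def:splitpushout})---explicit section data $d_0$, $d_1$, $d'_1$, $d'_2$ satisfying a short list of equations---and proves once and for all that split pushouts are absolute (Lemma~\ref{lemma:absolutepushout}, which is essentially Par\'e's classification). It then verifies EZ3 for the generating spans of split epimorphisms ($\sigma$--$\sigma$, $\gamma$--$\gamma$, $\gamma$--$\sigma$) by exhibiting concrete section data and checking the five identities each time, reducing the general case by composition of generators. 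Your route instead constructs the pushout object $\sqparen{t}$ directly as a ``common coarsening'' and checks that the square becomes a pointwise pushout in $\qsSet$, which by Par\'e's criterion (as the paper notes after Definition~\ref{def:absolutecolimit}) also certifies absoluteness. Both roads lead to Rome; the paper's has the advantage that the absoluteness verification is reduced to finitely many equations among generators, whereas yours requires a global understanding of the sets $\qsigma(\sqparen{k},\sqparen{t})$ for all $k$.

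That said, your proposal contains a genuine gap, and it is exactly where you flag the ``delicate point.'' You do not actually write down $\sqparen{t}$, $\tau_1$, $\tau_2$, nor carry out the pointwise pushout verification, and the difficulty you identify---that $\wedge$ is not commutative in $\qsigma$, so $\sigma_1$ and $\sigma_2$ may impose incompatible orderings on coordinates that must be conjoined---is precisely the step where the argument could fail without a concrete construction. The pushout object is only determined up to a cosymmetry isomorphism, so ``the'' common coarsening is not canonical: one must make a choice and then show that the choice produces a cocone satisfying the universal property, and the cosymmetries that arise in $\tau_1$, $\tau_2$ have to cancel correctly in the verification. The paper's split-pushout approach sidesteps this by building the orderings into the explicit section data (note that in the $\gamma$--$\gamma$ and $\gamma$--$\sigma$ lemmas, the choice of sections $\delta^{j,\varepsilon}$ encodes exactly the information needed to make the diagrams commute). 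Your sketch would become a proof only after you specify the coarsening construction unambiguously, handle the ordering ambiguity, and exhibit a factorization map with a uniqueness argument; as written it is a plan for a proof of EZ3, not a proof.
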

The proof of this, especially the verification of EZ3, is rather
technical and we postpone it to the end of the section.  Before we get
to it, we'll continue with a discussion of the properties of \textsc{ez}
categories.

\subsection{Skeleta, coskeleta, and cellular models}

\begin{definition}
  Let $\mathscr{R}$ be an \textsc{ez} category and suppose
  $X\in\widehat{\mathscr{R}}$.  We say a section $x\in X_r$ is
  \emph{degenerate} if there is a map $\sigma:s\to r$ in
  $\mathscr{R}^{-}$ and $y\in X_s$ so that $\sigma^\ast y = x$ and
  $\deg s < \deg r$.
\end{definition}

\begin{proposition}[{\cite[Proposition 6.7]{bergermoerdijkreedy}}]
  \label{prop:ezdecompositionsareunique}
  Let $\mathscr{R}$ be an \textsc{ez} category.
  \begin{enumerate}
  \item Suppose $X\in\widehat{\mathscr{R}}$.  Let $x\in X_r$,
    $r\in\Ob\mathscr{R}$.  The category of factorizations
    \begin{equation*}
      \xymatrix{[r] \ar[dr]_{\sigma_\ast} \ar[rr]^x && X
        \\
        & [s] \ar[ur]_y}
    \end{equation*}
    with $\sigma\in \mathscr{R}^{-}$ and $y$ nondegenerate is a
    contractible groupoid.
  \item If $f:r\to s$ is an arrow in $\mathscr{R}$, the category of
    factorizations
    \begin{equation*}
      \xymatrix{r \ar[dr]_{f^{-}} \ar[rr]^f && s
        \\
        & t \ar[ur]_{f^{+}}}
    \end{equation*}
    with $f^{-}$ a split epimorphism and $f^{+}$ a monomorphism is a
    contractible groupoid.
  \end{enumerate}
\end{proposition}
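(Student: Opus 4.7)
The plan is to deduce both parts uniformly from EZ3, leveraging a single observation: a non-invertible split epimorphism strictly lowers degree, since its section is then a non-invertible monomorphism, which by EZ1 raises degree.

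For part (2), given two factorizations $f = f^+_1 f^-_1 = f^+_2 f^-_2$ of a morphism $f:r\to s$ --- with $f^-_i:r\to t_i$ split epimorphisms and $f^+_i:t_i\to s$ monomorphisms --- I apply EZ3 to the span $t_1\xleftarrow{f^-_1} r\xrightarrow{f^-_2} t_2$ to produce an absolute pushout $u$ with split epimorphism legs $\tau_i:t_i\to u$. The pair $(f^+_1,f^+_2)$ is a cocone on this span with vertex $s$, inducing $h:u\to s$ with $h\tau_i = f^+_i$. Because $f^+_1 = h\tau_1$ is monic, so is $\tau_1$; a monic split epimorphism is necessarily invertible (from $\tau_1\iota=\id$ one has $\tau_1\iota\tau_1 = \tau_1$, and monicity forces $\iota\tau_1=\id$). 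Symmetrically $\tau_2$ is invertible, and $\pi = \tau_2^{-1}\tau_1$ is the desired isomorphism of factorizations. Uniqueness is immediate: a competing $\pi'$ would agree with $\pi$ after precomposition with the epimorphism $f^-_1$.

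For part (1), existence of a nondegenerate factorization is a straightforward induction on $\deg r$: if $x\in X_r$ is degenerate, descend via $x = \sigma^*y$ to a factorization problem for $y$ at strictly smaller degree, terminating at a nondegenerate section. For contractibility of the groupoid, suppose $x = \sigma_1^*(y_1) = \sigma_2^*(y_2)$ with $y_i\in X_{s_i}$ nondegenerate and $\sigma_i:r\to s_i$ split epimorphisms. Apply EZ3 to the span $s_1\xleftarrow{\sigma_1} r\xrightarrow{\sigma_2} s_2$; absoluteness of the resulting pushout $u$ in $\mathscr{R}$ means the Yoneda embedding sends it to a pushout of representables in $\widehat{\mathscr{R}}$. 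The pair $(y_1,y_2)$ is a cocone on $[s_1]\leftarrow [r]\to [s_2]$ valued in $X$, so it factors through a unique $y_u\in X_u$ with $\tau_i^*(y_u)=y_i$, where $\tau_i:s_i\to u$ are the split-epimorphism legs of the pushout. If $\tau_1$ were non-invertible it would lower degree by the key observation, so $y_1 = \tau_1^*(y_u)$ would witness degeneracy of $y_1$ --- contradiction. Thus $\tau_1$ and $\tau_2$ are invertible, and $\pi = \tau_2^{-1}\tau_1:s_1\to s_2$ is the desired morphism of factorizations; uniqueness again follows from $\sigma_1$ being epic.

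The main obstacle I anticipate is the careful passage, in part (1), from the absolute pushout in $\mathscr{R}$ to a pushout of representables in $\widehat{\mathscr{R}}$: this absoluteness is precisely what promotes a compatible pair $(y_1,y_2)$ of sections into a single section $y_u$ at $u$, and without it the nondegeneracy contradiction never materializes. Once that step is in place, both parts reduce uniformly to the degree-dropping observation together with the elementary fact that split epimorphisms are epimorphisms.
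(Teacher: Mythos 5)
Your argument is correct, and it follows the approach of Berger--Moerdijk (the paper itself only cites \cite[Proposition 6.7]{bergermoerdijkreedy} without reproducing a proof). In both parts you correctly reduce to EZ3 together with the observation that a non-invertible split epimorphism must lower degree (since its section is a non-invertible monomorphism), and the passage from the absolute pushout in $\mathscr{R}$ to a pushout of representables in $\widehat{\mathscr{R}}$ is exactly what is needed to manufacture the section $y_u$ in part (1); the uniqueness of the comparison isomorphism via epicness of the split-epi leg is also right.
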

Following \cite{bergermoerdijkreedy,gabrielzisman}, we call any such
factorization an \emph{\textsc{ez} decomposition} of $x$.  The
Proposition implies in particular that \textsc{ez} decompositions
exist.

\begin{definition}
  Suppose $\mathscr{R}$ is an \textsc{ez} category and $n\ge -1$.  Write
  $\mathscr{R}_{\le n}$ for the full subcategory of $\mathscr{R}$ with
  objects those of degree at most $n$.  The inclusion
  $j_n:\mathscr{R}_{\le n} \to \mathscr{R}$ yields adjunctions
  \begin{equation}\label{eq:skeletaandcoskeleta}
    \vcenter{
      \xymatrix@C=4pc{\widehat{\mathscr{R}_{\le n}}
      \ar@<0.5ex>[r]^{(j_n)_!} &
      \widehat{\mathscr{R}} \ar@<0.5ex>[l]^{(j_n)^\ast}
      \ar@<0.5ex>[r]^{(j_n)^\ast} &
      \widehat{\mathscr{R}_{\le n}} \ar@<0.5ex>[l]^{(j_n)_\ast}
    }}
  \end{equation}
  given by left and right Kan extension.  We define the
  \emph{$n$-skeleton} and \emph{$n$-coskeleton} of $X\in\widehat{\mathscr{R}}$ to be
  \begin{align*}
    \sk_n X &= (j_n)_!(j_n)^\ast X &&\text{and} &
    \ck_n X &= (j_n)_\ast(j_n)^\ast X
  \end{align*}
  respectively.  The counit and unit of the adjunctions in
  \eqref{eq:skeletaandcoskeleta} yield natural maps
  \begin{equation*}
    \xymatrix{\sk_n X \ar[r] & X \ar[r] & \ck_n X.}
  \end{equation*}
  We say $X$ is \emph{$n$-skeletal} if $\sk_n X\to X$ is an
  isomorphism and \emph{$n$-coskeletal} if $X \to \ck_n X$ is an
  isomorphism.
\end{definition}

In a precise sense, the $n$-skeleton of $X \in \widehat{\mathscr{R}}$,
$\mathscr{R}$ an \textsc{ez} category, is the subpresheaf generated by the
non-degenerate sections of $X$ of degree at most $n$.
\begin{proposition}[\cite{bergermoerdijkreedy}]\label{prop:skiswelldefined}
  Suppose $\mathscr{R}$ is an \textsc{ez} category and
  $X\in\widehat{\mathscr{R}}$.  The map $\sk_n X \to X$ is a
  monomorphism; its image in $X_r$, $r\in\Ob\mathscr{R}$ is the set
  of sections
  \begin{equation*}
    \big\{ f\in X_r \bigm| \text{$f$ has a factorization
      $[r] \to [s] \to X$ with $\deg s \le n$}\big\}.
  \end{equation*}
\end{proposition}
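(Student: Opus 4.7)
The plan is to unpack $\sk_n X = (j_n)_!(j_n)^\ast X$ at $r$ via the standard coend formula
\begin{equation*}
(\sk_n X)_r \cong \int^{s\in \mathscr{R}_{\le n}} \mathscr{R}(r,s)\times X_s,
\end{equation*}
which presents elements as equivalence classes of triples $(s,\alpha\colon r\to s, x\in X_s)$ with $\deg s\le n$, modulo the coend relation $(s,\alpha,\phi^\ast y)\sim(s',\phi\alpha,y)$ for $\phi\colon s\to s'$ between objects of degree at most $n$. The counit $\sk_n X\to X$ sends the class $[s,\alpha,x]$ to $\alpha^\ast x\in X_r$, so the description of the image is immediate from the definition. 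The remaining work is to show injectivity at each $r$; then $\sk_n X\to X$ is a monomorphism of presheaves.

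My strategy is to exhibit, in every coend class, a representative that is an \textsc{ez} decomposition of its image $f = \alpha^\ast x$, and then invoke the uniqueness clause of Proposition \ref{prop:ezdecompositionsareunique}. Starting from an arbitrary representative $(s,\alpha,x)$, axiom EZ2 lets me factor $\alpha = g\rho$ with $\rho\colon r\to u$ a split epimorphism and $g\colon u\to s$ a monomorphism; since $g$ is monic, $\deg u\le \deg s\le n$, and the coend relation applied to $g$ rewrites the class as $(u,\rho,g^\ast x)$. Applying Proposition \ref{prop:ezdecompositionsareunique}(1) to $g^\ast x\in X_u$ gives an \textsc{ez} decomposition $g^\ast x = \sigma^\ast y$ with $\sigma\colon u\to v$ in $\mathscr{R}^{-}$ and $y\in X_v$ nondegenerate; the coend relation applied to $\sigma$ rewrites the class once more as $(v,\sigma\rho,y)$. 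Since sections compose, $\sigma\rho$ is again a split epimorphism, and $(\sigma\rho)^\ast y = f$; thus $(v,\sigma\rho,y)$ is an \textsc{ez} decomposition of $f$ with $\deg v\le n$.

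Given two representatives of classes in $(\sk_n X)_r$ with common image $f$, this procedure produces from each an \textsc{ez} decomposition of $f$. By Proposition \ref{prop:ezdecompositionsareunique}(1), any two such \textsc{ez} decompositions $(v_i,\tau_i,y_i)$ are connected by a unique isomorphism $\phi\colon v_1\to v_2$ with $\phi\tau_1=\tau_2$ and $\phi^\ast y_2=y_1$, and one final application of the coend relation to $\phi$ identifies the two classes in $(\sk_n X)_r$. The only real obstacle is keeping the chain of coend relations bookkept cleanly; the combinatorial heart of the argument, namely contractibility of the groupoid of \textsc{ez} decompositions, has already been extracted as Proposition \ref{prop:ezdecompositionsareunique}, so once that is in hand the proof reduces to three applications of the coend relation.
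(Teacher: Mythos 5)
Your proof is correct and complete. The paper itself cites this result from Berger--Moerdijk without giving a proof, but the argument you give is the natural one given the tools the paper sets up: the coend description of $(j_n)_!(j_n)^\ast X$, axioms EZ1--EZ2 to push an arbitrary representative $(s,\alpha,x)$ down to an \textsc{ez} decomposition of its image via two applications of the coend relation, and the contractibility of the groupoid of \textsc{ez} decompositions (Proposition \ref{prop:ezdecompositionsareunique}(1)) to conclude that two classes with the same image coincide. The degree bookkeeping ($\deg u\le\deg s\le n$ from EZ1, $\deg v\le\deg u$ since the section of $\sigma$ is monic, and the closing isomorphism $\phi$ staying inside $\mathscr{R}_{\le n}$) is all handled correctly, and the observation that split epimorphisms compose is exactly what is needed to see that $(v,\sigma\rho,y)$ is itself an \textsc{ez} decomposition. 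No gaps.
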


\begin{definition}
  Suppose $\mathscr{R}$ is an \textsc{ez} category and $r\in \Ob\mathscr{R}$.
  We define the \emph{boundary} $\partial[r]$ of $[r]$ to be the
  $(n-1)$-skeleton of $[r]$, where $\deg r = n$.
\end{definition}

\begin{proposition}\label{prop:cellularmodelforez}
  Suppose $\mathscr{R}$ is an \textsc{ez} category whose only isomorphisms are
  identity maps.  Suppose $X\in\mathscr{R}$ and $n\ge 0$.  Let $S$ be
  the set of maps $f: [r] \to X$ with $\deg r = n$ and $f$
  nondegenerate.  The square
  \begin{equation}\label{eq:cellularmodelforez}
    \vcenter{
      \xymatrix{\coprod_{f:[r]\to X\in S} \partial [r] \ar[r] \ar[d] &
        \sk_{n-1} X \ar[d] \\
        \coprod_{f:[r]\to X\in S} [r] \ar[r] & \sk_n X
      }}
    \end{equation}
  is a pushout.
\end{proposition}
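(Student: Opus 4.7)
The plan is to verify the pushout objectwise in $\widehat{\mathscr R}$ by leveraging the EZ decomposition of Proposition \ref{prop:ezdecompositionsareunique}. Since the only isomorphisms in $\mathscr R$ are identities, the contractible groupoids appearing in that Proposition collapse to singletons; every section $x\in X_s$ thus admits a \emph{unique} factorization $x=\sigma^{*}y$ with $\sigma:s\to r$ a split epimorphism in $\mathscr R$ and $y\in X_r$ nondegenerate. Invoking Proposition \ref{prop:skiswelldefined}, one reads off
\begin{equation*}
  (\sk_n X)_s \;=\; \bigl\{x\in X_s \ : \ \text{the EZ decomposition of $x$ has }\deg r\le n\bigr\},
\end{equation*}
and similarly for $\sk_{n-1}X$; in particular the difference $(\sk_n X)_s\setminus(\sk_{n-1} X)_s$ consists exactly of those $x$ arising as $\sigma^{*}y$ with $y\in S$ and $\sigma:s\to r$ a split epi.

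First I would check commutativity of \eqref{eq:cellularmodelforez} and that the top horizontal arrow does land in $\sk_{n-1} X$: if $g\in(\partial[r])_s$, then by Proposition \ref{prop:skiswelldefined} the morphism $g:s\to r$ factors through some object of degree at most $n-1$, so the composite $f\circ g$ inherits such a factorization and belongs to $(\sk_{n-1} X)_s$. Next I would specialize the EZ decomposition above to the representable $[r]$ itself (with $\deg r=n$). A morphism $g:s\to r$ has EZ factorization $g=g^{+}\circ g^{-}$ with $g^{-}$ split epi and $g^{+}$ monic; since isomorphisms in $\mathscr R$ are identities, $g^{+}$ is either the identity or raises degree strictly. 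Combining this with Proposition \ref{prop:skiswelldefined} gives
\begin{equation*}
  [r]_s\setminus(\partial[r])_s \;=\; \bigl\{\sigma:s\to r \ : \ \sigma \text{ is a split epimorphism}\bigr\},
\end{equation*}
and composition with any $f\in S$ realizes exactly the EZ decomposition of $f\circ\sigma\in X_s$. Uniqueness of this decomposition yields a bijection
\begin{equation*}
  \coprod_{f\in S}\bigl([r]_s\setminus(\partial[r])_s\bigr)\;\xrightarrow{\ \sim\ }\;(\sk_n X)_s\setminus(\sk_{n-1} X)_s.
\end{equation*}

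Finally, the pushout verification is formal. Presheaf pushouts are computed pointwise, and at each object $s$ both vertical arrows of \eqref{eq:cellularmodelforez} are monomorphisms of sets (the left is a coproduct of inclusions $\partial[r]\hookrightarrow[r]$; the right is the monomorphism of Proposition \ref{prop:skiswelldefined}). A square of sets with monic vertical arrows is a pushout if and only if the image of the bottom horizontal together with $(\sk_{n-1}X)_s$ covers $(\sk_n X)_s$ and their intersection equals the image of the top horizontal. Both conditions are immediate consequences of the preceding bijections and the compatibility of the attaching map observed in the first step.

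The main obstacle is bookkeeping around the uniqueness hypothesis: without the assumption that $\mathscr R$ has only identity isomorphisms, the EZ decomposition is unique only up to isomorphism, and the attaching map $\coprod_{f\in S}([r]\setminus\partial[r])\to\sk_n X$ would fail injectivity — one would have to quotient its domain by $\Aut([r])$-actions on each summand. This is precisely the $\qsSet$-specific complication that the paper must address separately, since $\qsigma$ contains genuinely nontrivial automorphism groups $\Sigma_n$.
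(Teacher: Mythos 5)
Your proof is correct and follows essentially the same strategy as the paper's: verify the pushout objectwise, using the uniqueness of \textsc{ez} decompositions (guaranteed by the no-nontrivial-isomorphisms hypothesis) to identify $(\sk_n X)_s \setminus (\sk_{n-1} X)_s$ with $\coprod_{f\in S}\bigl([r]_s\setminus(\partial[r])_s\bigr)$. One caveat on the final step: the stated criterion ``a square of sets with monic vertical arrows is a pushout iff the images cover and the intersection equals the image of the top'' is not actually sufficient as an iff (take $A=B=\emptyset$, $C=\{1,2\}\to D=\{1\}$ the constant map: cover and intersection both hold but it is not a pushout); what is needed, and what you do in fact establish, is the \emph{bijection} $\coprod_{f\in S}\bigl([r]_s\setminus(\partial[r])_s\bigr)\xrightarrow{\ \sim\ }(\sk_n X)_s\setminus(\sk_{n-1} X)_s$, which together with the monicity of $\sk_{n-1}X\to\sk_n X$ gives the pushout directly. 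The paper additionally reduces first to $\mathscr{R}_{\le n}$ via $n$-skeletality before checking pointwise, but your unrestricted objectwise argument works equally well.
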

\begin{proof}
  This proof is a straightforward generalization of \cite[\S
  II.3.8]{gabrielzisman}.  Since every object in
  \eqref{eq:cellularmodelforez} is $n$-skeletal, it is sufficient to
  check that the restriction of \eqref{eq:cellularmodelforez} to
  $\mathscr{R}_{\le n}$ is a pushout square.  In a presheaf topos,
  pushouts are computed pointwise, so it is sufficient to prove that
  the square \eqref{eq:cellularmodelforez} is a pushout after
  evaluation at $s$ for all $s\in\Ob\mathscr{R}_{\le n}$.  If $\deg s
  < n$ and $\deg r = n$, the maps
  \begin{align*}
    \big(\partial [r]\big)_s &\to [r]_s &&\text{and} &
    (\sk_{n-1} X)_s &\to (\sk_n X)_s,
  \end{align*}
  are isomorphisms.  Thus we are reduced to checking that
  \begin{equation}\label{eq:cellularmodelforezpushout}
    \vcenter{
      \xymatrix{\coprod_{f:[r]\to X\in S} \big(\partial [r]\big)_s \ar[r] \ar[d] &
        (\sk_{n-1} X)_s \ar[d] \\
        \coprod_{f:[r]\to X\in S} [r]_s \ar[r] & (\sk_n X)_s
      }}
  \end{equation}
  is a pushout when $\deg s = n$.
  
  Suppose $\deg s = n$.  The complement of $(\sk_{n-1} X)_s$ in
  $(\sk_n X)_s$ is the set of all nondegenrate $s$-simplices $[s] \to
  X$.  Since $\mathscr{R}$ has no nontrivial isomorphisms, if $r \ne
  s$ has degree $n$, each map $s \to r$ factors through an object of
  lower degree, so 
  \begin{equation*}
    \big(\partial [r]\big)_s \to [r]_s
  \end{equation*}
  is an isomorphism.  On the other hand, the complement of the image
  of
  \begin{equation*}
    \big(\partial [s]\big)_s \to [s]_s
  \end{equation*}
  is the identity map $s\to s$, so the complement of the image of
  \begin{equation*}
    \coprod_{f:[r]\to X\in S} \big(\partial [r]\big)_s \to
    \coprod_{f:[r]\to X\in S} [r]_s
  \end{equation*}
  is the set of nondegenerate $s$-simplices $[s]\to X$.  Hence
  \eqref{eq:cellularmodelforezpushout} is a pushout.
\end{proof}

We can reinterpret Proposition \ref{prop:cellularmodelforez} as a
statement about saturated classes of maps.  We first introduce the following
definition, using Cisinski's terminology \cite{cisinskithesis}:
\begin{definition}
  Suppose $\mathscr{R}$ is a small category.  We say that a set of
  arrows $S \subseteq \Ar \widehat{\mathscr{R}}$ is a \emph{cellular
    model} for $\widehat{\mathscr{R}}$ if $\Cell S = \mathsf{mono}$.
  Here, $\Cell S$ is the closure of the set $S$ under transfinite
  composition, cobase change, coproduct, and retract. 
\end{definition}
Any topos has a cellular model \cite{cisinskitopos,beke1}.  In the
case of a presheaf topos, all inclusions of subobjects of (regular)
quotients of representables form a cellular model.  In \textsc{ez} categories,
we have the expected simplification:
\begin{corollary}\label{cor:cellularmodelforez}
  Under the assumptions of Proposition \ref{prop:cellularmodelforez},
  the arrows $\partial [r]\to [r]$, $r\in\Ob\mathscr{R}$, comprise a
  cellular model for $\widehat{\mathscr{R}}$.
\end{corollary}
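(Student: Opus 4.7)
The plan is to establish both inclusions in the identity $\Cell\{\partial[r]\to[r] : r\in\Ob\mathscr{R}\} = \mathsf{mono}$. The containment $\Cell\{\partial[r]\to[r]\} \subseteq \mathsf{mono}$ is immediate from Proposition \ref{prop:skiswelldefined}, which exhibits each $\partial[r]\hookrightarrow[r]$ as a monomorphism, together with the fact that monomorphisms in a presheaf topos form a saturated class.

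For the reverse inclusion, I would fix a monomorphism $\iota:A\hookrightarrow B$ in $\widehat{\mathscr{R}}$ and filter $B$ by the relative skeleta $B_n := A\cup \sk_n B$, computed in the subobject lattice of $B$. Then $B_{-1} = A$, and $B = \colim_n B_n$, since every section of $B$ lies in some $\sk_n B$ by Proposition \ref{prop:ezdecompositionsareunique}. Hence $\iota$ is the countable composition of the inclusions $B_{n-1}\hookrightarrow B_n$, and it suffices to realize each such inclusion as a pushout
\begin{equation*}
\vcenter{\xymatrix{\coprod_{f\in S_n}\partial[r] \ar[r]\ar[d] & B_{n-1}\ar[d] \\ \coprod_{f\in S_n}[r]\ar[r] & B_n,}}
\end{equation*}
where $S_n$ is the set of maps $f:[r]\to B$ with $\deg r = n$, $f$ nondegenerate in $B$, and $f$ not factoring through $A$.

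I would verify this square is a pushout pointwise, imitating the proof of Proposition \ref{prop:cellularmodelforez}. For $\deg s < n$, both vertical maps become identities. For $\deg s = n$, the complement of $\coprod_{f\in S_n}(\partial[r])_s$ in $\coprod_{f\in S_n}[r]_s$ consists of the single identities $\id_s$ contributed by each $f\in S_n$ with $r=s$, and these biject with the nondegenerate $s$-sections of $B$ not in $A$---precisely the complement of $(B_{n-1})_s$ in $(B_n)_s$. For $\deg s > n$, essential uniqueness of the \textsc{ez} decomposition (Proposition \ref{prop:ezdecompositionsareunique}) identifies each section of $B_n$ whose nondegenerate core has degree $n$ and avoids $A$ with a unique pair $(f,h)$ consisting of $f\in S_n$ and a split epimorphism $h:s\to r$ in $\mathscr{R}$.

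The main obstacle is showing that an $s$-section $g$ of $B$ factors through $A$ if and only if the nondegenerate core of its \textsc{ez} decomposition does. The nontrivial direction uses that the split-epimorphism part of the \textsc{ez} decomposition admits a section: if $g = \iota g'$, then precomposing the core $f$ with a section of its split-epimorphism portion writes $f$ itself as a map through $A$. Once this is settled, the pointwise pushout is verified at every object $s$ and the corollary follows.
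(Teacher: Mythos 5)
Your proof is correct, and in fact it resolves a gap in the proof in the text. The published argument sets $\sk_n f = \sk_n B\amalg_{\sk_n A}A$, which by the effective-union property of the topos $\widehat{\mathscr{R}}$ is exactly your $B_n = A\cup\sk_n B$ as a subobject of $B$, and then asserts that the square with top edge $\sk_n B\to\sk_{n+1}B$ and bottom edge $\sk_n f\to\sk_{n+1}f$ is a pushout, so that the bottom map is a cobase change of the map from Proposition \ref{prop:cellularmodelforez}. That square is not a pushout in general: inside $B$ one computes $\sk_n f\cap\sk_{n+1}B = \sk_n B\cup\sk_{n+1}A$, which properly contains $\sk_n B$ whenever $A$ has a nondegenerate section of degree $n+1$ (take, for instance, $\mathscr{R} = \Delta$, $B = \Delta[2]$, $A$ the $01$-edge, and $n = 0$). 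Your version attaches cells only along the nondegenerate $n$-sections of $B$ lying outside $A$, which is the correct attaching set, and the key lemma you isolate---that an $s$-section factors through $A$ if and only if its nondegenerate \textsc{ez} core does, the substantive direction following by precomposing with a chosen section of the split epimorphism---is exactly what makes the pointwise pushout verification work. One point worth emphasizing: you cannot simply restrict to $\mathscr{R}_{\le n}$ as in the proof of Proposition \ref{prop:cellularmodelforez}, because $B_n$ is not $n$-skeletal when $A$ is not, so the $\deg s > n$ case must genuinely be checked, using the strict uniqueness of \textsc{ez} decompositions available because $\mathscr{R}$ has only identity isomorphisms; you flagged this correctly.
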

This corollary may seem slightly weaker than Proposition
\ref{prop:cellularmodelforez} because it does not say anything about
the dimension of the attaching maps (bringing to mind the distinction
between cellular and \textsc{cw} complexes in $\Cat{Top}$).  In
$\widehat{\mathscr{R}}$ however, every map $\partial
[r] \to X$ automatically factors through $\sk_{\deg r - 1} X\to X$.
\begin{proof}[Proof of Corollary \ref{cor:cellularmodelforez}]
  Let $C$ temporarily denote the class of arrows
  \begin{equation*}
    \Cell \{ \partial [r] \to [r] \mid r\in\Ob\mathscr{R} \}.
  \end{equation*}
  Since $\widehat{\mathscr{R}}$ is a topos, $C\subseteq
  \mathsf{mono}$.  Recall that $\sk_{-1} A = \sk_{-1} B = \emptyset$.
  Suppose $f:A\to B$ is a monomorphism in $\widehat{\mathscr{R}}$.
  Let $\sk_n f$ be the pushout $\sk_n B \amalg_{\sk_n A} A$ and let
  $p_n:\sk_n f\to B$ be the corner map.  Note that the square
  \begin{equation*}
    \xymatrix{\sk_n A \ar[r]^{\sk_n f} \ar[d] & \sk_n B \ar[d] \\
      A \ar[r]_f & B}
  \end{equation*}
  is a pullback.  Since $\widehat{\mathscr{R}}$ is a topos, $\sk_n f$
  is the effective union of $\sk_n B$ and $A$ in $\sk_n f$ and $p_n$ is a
  monomorphism.  The square
  \begin{equation*}
    \xymatrix{\sk_n B \ar[r] \ar[d] & \sk_{n+1} B \ar[d] \\
      \sk_n f \ar[r] & \sk_{n+1} f}
  \end{equation*}
  is a pushout, so $\sk_n f \to \sk_{n+1} f$ is in $C$.
  Now $\colim_n \sk_n f \to B$ is an isomorphism.  Since $\sk_{-1} f =
  A$, we've realized $f$ as a transfinite composition of maps in $C$,
  so $f\in C$.  Hence $C = \mathsf{mono}$.
\end{proof}

Note that Proposition \ref{prop:cellularmodelforez} is false if we
allow objects in $\mathscr{R}$ to have nontrivial automorphisms.  An
easy example is the one-object category associated to a group $G$.
This is an \textsc{ez} category with $\deg\ast = 0$.  An object
$X\in\widehat{G}$ is a right $G$-set; were Proposition
\ref{prop:cellularmodelforez} true, it would imply that all
$X\in\widehat{G}$ are free as $G$-sets.  We'll now prove a generalization
of Proposition \ref{prop:cellularmodelforez} for categories
$\mathscr{R}$ containing nontrivial isomorphisms.

\begin{definition}
  Suppose $\mathscr{R}$ is an \textsc{ez} category, $X\in
  \widehat{\mathscr{R}}$, and $f:[r] \to X$ is a nondegenerate
  $r$-simplex of $X$.  Note that $\Aut(r)$ acts on $X_r$ on the right.
  The \emph{isotropy of $f$,} denoted $\Stab(f)$, is the stabilizer of
  $f\in X_r$ in $\Aut(r)$, i.e., the subgroup of $g\in \Aut(r)$ with
  $g^\ast f = f$.
\end{definition}

In the following, note that the left action of $\Aut(r)$ on $[r]$
restricts to an action on $\partial [r]$.  If $H \leqslant \Aut(r)$,
then $H\backslash(\partial [r]) \to H\backslash [r]$ is a monomorphism
(here $H\backslash X$ denotes the $H$-orbits of $X$).

\begin{proposition}\label{prop:cellularmodelwithnontrivialauts}
  Suppose $\mathscr{R}$ is a skeletal \textsc{ez} category, i.e., two objects
  are isomorphic if and only if they are equal.  Let $n\ge 0$ and let
  $S$ be a set of isomorphism classes of $f:[r] \to X$ with $\deg r =
  n$ and $f$ nondegenerate.  Then the square
  \begin{equation*}
    \xymatrix{\coprod_{f:[r] \to X \in S} \Stab f \backslash \partial
      [r] \ar[r] \ar[d] & \sk_{n-1} X \ar[d] \\
      \coprod_{f:[r]\to X \in S} \Stab f \backslash [r] \ar[r] & \sk_n X}
  \end{equation*}
  is a pushout.
\end{proposition}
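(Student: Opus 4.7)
The plan is to mimic the proof of Proposition \ref{prop:cellularmodelforez}, this time tracking the automorphism groups carefully. Every object in the square is $n$-skeletal, so it suffices to check the square is a pushout in $\widehat{\mathscr{R}_{\le n}}$; and since that is a presheaf topos, the problem reduces to showing the square of sets obtained after evaluation at each $s \in \Ob \mathscr{R}$ with $\deg s \le n$ is a pushout.

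For $\deg s < n$, both vertical maps evaluate to isomorphisms at $s$: Proposition \ref{prop:skiswelldefined} gives $(\sk_{n-1} X)_s = (\sk_n X)_s = X_s$, and similarly $(\partial [r])_s = [r]_s = \mathscr{R}(s,r)$ (any map $s \to r$ factors as a split epi $s \to t$ followed by a mono $t \to r$, with $\deg t \le \deg s < n$). Quotienting by $\Stab f$ preserves these equalities, so the square is trivially a pushout at $s$.

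For $\deg s = n$, the crucial combinatorial claim is that in a skeletal \textsc{ez} category, every map $s \to r$ with $s \ne r$ and $\deg s = \deg r$ factors through an object of strictly smaller degree. Indeed, taking the split-epi-then-mono factorization $s \to t \to r$: by EZ1, a mono between equal-degree objects is invertible, hence the identity in the skeletal setting; dually, a split epi to an equal-degree object is invertible. With $s \ne r$, one of the two factors must therefore be non-identity, forcing $\deg t < n$. Consequently, for $f \in S$ whose domain $r$ is different from $s$, we have $(\partial [r])_s = [r]_s$, so after taking $\Stab f$-quotients that component of the pushout contributes nothing beyond $(\sk_{n-1} X)_s$.

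It remains to handle $f : [s] \to X$ in $S$. Here $(\partial [s])_s$ consists of the non-invertible elements of $\mathscr{R}(s,s)$, so its complement in $[s]_s$ is $\Aut(s)$. The map $\Stab f \backslash \Aut(s) \to X_s$ sending $[\phi] \mapsto \phi^\ast f$ is well-defined by $\Stab f$-invariance of $f$, and orbit-stabilizer identifies its image with the $\Aut(s)$-orbit of $f$ among nondegenerate $s$-simplices. Because $S$ contains exactly one representative from each such orbit, the disjoint union over $f \in S$ with domain $s$ recovers the full set of nondegenerate $s$-simplices of $X$, which by Proposition \ref{prop:skiswelldefined} is precisely the complement of $(\sk_{n-1} X)_s$ in $(\sk_n X)_s$. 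This yields the pushout identification at $s$. The main obstacle is the factorization claim in the third paragraph; once that is established, the orbit-stabilizer bookkeeping is routine.
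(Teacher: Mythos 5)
Your proof is correct and follows essentially the same route as the paper's: reduce to a pointwise computation at objects $s$ of degree $n$, show that components of the coproduct with domain $r\neq s$ contribute nothing (because any map $s\to r$ between distinct equal-degree objects must factor through lower degree in a skeletal \textsc{ez} category), and then use orbit--stabilizer to identify the complement of $(\sk_{n-1}X)_s$ with $\coprod_{f:[s]\to X \in S}\Stab f\backslash\Aut(s)$. One small inaccuracy: when you say a degree-preserving mono is ``the identity in the skeletal setting,'' that overstates EZ1 --- it is only an automorphism of its (necessarily equal) source and target --- but your subsequent argument only uses that at least one factor in the split-epi/mono factorization is non-invertible, so the conclusion $\deg t < n$ still follows and nothing breaks.
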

\begin{proof}
  Note that since $\sk_n$ is cocontinuous, if $Y\in\mathscr{R}$ is
  $n$-skeletal and a group $G$ acts on $Y$, $G\backslash Y$ is
  $n$-skeletal as well.  Just as in the proof of Proposition
  \ref{prop:cellularmodelforez}, it is sufficient to check that
  \begin{equation}\label{eq:cellularmodelnontrivautpushout}
    \vcenter{
      \xymatrix{\coprod_{f:[r]\to X\in S} \big(\Stab f \backslash \partial [r]\big)_s \ar[r] \ar[d] &
        (\sk_{n-1} X)_s \ar[d] \\
        \coprod_{f:[r]\to X\in S} \big(\Stab f \backslash [r]\big)_s \ar[r] & (\sk_n
        X)_s
      }}
  \end{equation}
  is a pushout when $\deg s = n$.  The complement of $(\sk_{n-1}
  X)_s$ in $(\sk_n X)_s$ is the right $\Aut(s)$-set of all
  nondegenerate $s$-simplices $f:[s] \to X$.
  
  Now we have two possibilities.  Suppose $r\ne s$ has degree $n$.  If
  $f:[r]\to X$ is a nondegenerate $r$-simplex and $r\ne s$,
  \begin{equation*}
    \big(\Stab f \backslash \partial [r]\big)_s \to \big(\Stab f
    \backslash [r]\big)_s
  \end{equation*}
  is an isomorphism: any map $s\to r$ must factor through an object of
  lower degree since a degree-preserving map $s\to r$ is necessarily
  an isomorphism (recall that we have assumed $\mathscr{R}$ is
  skeltal).  On the other hand, if $H \leqslant \Aut(s)$, the
  complement of the image of
  \begin{equation*}
    \big(H\backslash \partial [s]\big)_s \to \big(H\backslash [s]\big)_s
  \end{equation*}
  is the right $\Aut(s)$-set $H\backslash \Aut(s)$.  Thus the
  complement of the image of
  \begin{equation*}
    \coprod_{f:[r]\to X\in S} \big(\Stab f \backslash \partial [r]\big)_s \to
    \coprod_{f:[r]\to X\in S} \big(\Stab f \backslash [r]\big)_s
  \end{equation*}
  is the right $\Aut(s)$-set
  \begin{equation*}
    \coprod_{g:[s] \to X\in S} \Stab(g) \backslash \Aut(s)
  \end{equation*}
  This decomposition maps isomorphically onto $(\sk_n X)_s \setminus
  (\sk_{n-1} X)_s$ via the map sending the coset $\Stab(g)$ to $g$.
  Hence \eqref{eq:cellularmodelnontrivautpushout} is a pushout.
\end{proof}

\begin{corollary}
  Under the assumptions of Proposition
  \ref{prop:cellularmodelwithnontrivialauts}, the set
  \begin{equation*}
    \big \{ H\backslash \partial [r] \to H\backslash [r] \bigm| \text{$r\in\Ob\mathscr{R}$
    and $H \leqslant \Aut r$} \big\}
  \end{equation*}
  is a cellular model for $\widehat{\mathscr{R}}$.
\end{corollary}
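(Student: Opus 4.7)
The plan is to rerun the proof of Corollary \ref{cor:cellularmodelforez} essentially verbatim, substituting Proposition \ref{prop:cellularmodelwithnontrivialauts} for Proposition \ref{prop:cellularmodelforez} at the one step where the decomposition of $\sk_n B$ enters. Let $C$ denote $\Cell\{H\backslash \partial[r] \to H\backslash[r] \mid r\in\Ob\mathscr{R},\ H\leqslant \Aut r\}$; the goal is $C = \mathsf{mono}$. The inclusion $C\subseteq \mathsf{mono}$ is immediate: each generator is a monomorphism by the observation recorded immediately before Proposition \ref{prop:cellularmodelwithnontrivialauts}, and in the topos $\widehat{\mathscr{R}}$ monomorphisms are closed under transfinite composition, cobase change, coproduct and retract.

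For the reverse inclusion, I will take an arbitrary monomorphism $f:A\to B$ and reproduce the filtration argument from the earlier proof. Define $\sk_n f = \sk_n B \amalg_{\sk_n A} A$ and let $p_n:\sk_n f\to B$ be the corner map. The same pullback/effective-union argument used in Corollary \ref{cor:cellularmodelforez} shows $p_n$ is monic; since $\sk_{-1} B = \sk_{-1} A = \emptyset$ we have $\sk_{-1} f = A$, and since $\colim_n \sk_n B = B$ and $\colim_n \sk_n A = A$, pushouts being cocontinuous yields $\colim_n \sk_n f = B$.

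To finish, it suffices to show $\sk_n f \to \sk_{n+1} f$ lies in $C$ for every $n$, since then $f$ is the transfinite composition $A = \sk_{-1} f \to \sk_0 f \to \sk_1 f \to \cdots \to B$ of maps in $C$. Apply Proposition \ref{prop:cellularmodelwithnontrivialauts} to $X = B$ at degree $n+1$, taking $S$ to be the full set of isomorphism classes of nondegenerate $(n+1)$-simplices of $B$; this realizes $\sk_n B \to \sk_{n+1} B$ as a single pushout of a coproduct of generating maps $\Stab(g)\backslash \partial[r] \to \Stab(g)\backslash [r]$, and hence as a map in $C$. The defining pushout square
\[
\xymatrix{\sk_n B \ar[r] \ar[d] & \sk_{n+1} B \ar[d] \\
\sk_n f \ar[r] & \sk_{n+1} f}
\]
then exhibits $\sk_n f \to \sk_{n+1} f$ as a cobase change of this map, so it lies in $C$ as well.

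I do not anticipate a substantive obstacle: every ingredient needed beyond Corollary \ref{cor:cellularmodelforez} is already packaged into Proposition \ref{prop:cellularmodelwithnontrivialauts}, and the filtration/effective-union machinery is insensitive to whether the cells are plain boundaries $\partial[r]\to[r]$ or their $H$-quotients. The only mildly delicate point is confirming that the pushout defining $\sk_n f$ interacts correctly with the pushout decomposition of $\sk_n B \to \sk_{n+1} B$, but this is immediate from the pasting lemma for pushouts applied to the two squares above.
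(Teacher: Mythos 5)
Your proposal is correct and takes exactly the route the paper intends: the corollary is stated without a separate proof precisely because it follows by rerunning the filtration argument of Corollary~\ref{cor:cellularmodelforez} verbatim, with Proposition~\ref{prop:cellularmodelwithnontrivialauts} substituted for Proposition~\ref{prop:cellularmodelforez} at the one step where $\sk_n B \to \sk_{n+1} B$ is recognized as a pushout of a coproduct of generating cells. You've identified the only new ingredient (that the generators $\Stab(g)\backslash\partial[r]\to\Stab(g)\backslash[r]$ lie in the prescribed set since $\Stab(g)\leqslant\Aut r$) and correctly noted that the topos-theoretic effective-union step and the pasting of pushouts go through unchanged.
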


As we'll see below, $\QQ$ and $\qsigma$ are \textsc{ez} categories, so we
obtain the following cellular models.  We write $\square^n$ and
$\square^n_\Sigma$ for the representables $\QQ({-}, \sqparen{n})$ and
$\qsigma({-}, \sqparen{n})$ respectively.
\begin{proposition}\label{prop:cellularmodelforqsigma}
  The sets
  \begin{gather}
    I = \big\{ \partial \square^n \to \square^n \bigm| n \ge 0 \big\} \\
    I_\Sigma = \big\{ H\backslash \partial \square_\Sigma^n \to
    H\backslash \square^n_\Sigma \bigm| \text{$n \ge 0$ and $H\leqslant
    \Sigma_n$} \big\}\notag
  \end{gather}
  are cellular models for $\qSet$ and $\qsSet$,
  respectively.
\end{proposition}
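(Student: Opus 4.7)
The plan is to deduce this directly from the general machinery already established: Corollary \ref{cor:cellularmodelforez} for $\QQ$, and the unnamed corollary following Proposition \ref{prop:cellularmodelwithnontrivialauts} for $\qsigma$. Both applications hinge on Proposition \ref{prop:qqisez}, which tells us $\QQ$ and $\qsigma$ are \textsc{ez} categories.

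First I would handle $\QQ$. Here the degree of $\sqparen{n}$ is $n$, and I need only verify that $\QQ$ has no nontrivial isomorphisms. This is immediate from the unique factorization of a map in $\QQ$ into a codegeneracy followed by a coface (a special case of Proposition \ref{prop:qsigmafactorization}): an isomorphism $\sqparen{n}\to \sqparen{n}$ must preserve the degree, which forces both the number of coface operators and the number of codegeneracy operators to be zero, so the map is the identity. Consequently Corollary \ref{cor:cellularmodelforez} applies and gives that $I = \{\partial\square^n\to\square^n\mid n\ge 0\}$ is a cellular model for $\qSet$.

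For $\qsigma$, I would verify the two hypotheses needed for the corollary to Proposition \ref{prop:cellularmodelwithnontrivialauts}: that $\qsigma$ is skeletal, and that the automorphism group of $\sqparen{n}$ is $\Sigma_n$. Skeletality is clear because any isomorphism $\sqparen{m}\to \sqparen{n}$ must preserve the degree, forcing $m=n$. For the computation of $\Aut(\sqparen{n})$, I would apply the unique factorization of Proposition \ref{prop:qsigmafactorization}: an isomorphism cannot involve any nontrivial coface, codegeneracy, or conjunction (each of these strictly changes the degree or is non-invertible in the factorization), so it must be of the form $\pi_p$ for some $p\in\Sigma_n$. Since $\pi_p\pi_q = \pi_{pq}$, this identifies $\Aut(\sqparen{n})$ with $\Sigma_n$. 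The corollary to Proposition \ref{prop:cellularmodelwithnontrivialauts} then yields $I_\Sigma$ as a cellular model for $\qsSet$.

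The only genuinely nontrivial input is Proposition \ref{prop:qqisez}, which is assumed here and whose proof the author has deferred. Beyond that, the work reduces to reading off that $\QQ$ has no nontrivial automorphisms and that the automorphism group in $\qsigma$ at level $n$ is exactly $\Sigma_n$, both of which are straightforward consequences of the normal form in Proposition \ref{prop:qsigmafactorization}. I do not anticipate any serious obstacle, since the general theorems have been set up precisely to cover these two cases.
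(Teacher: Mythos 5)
Your proposal is correct and takes exactly the route the paper intends: the result is presented there as an immediate consequence of Proposition \ref{prop:qqisez} together with Corollary \ref{cor:cellularmodelforez} and the unnamed corollary to Proposition \ref{prop:cellularmodelwithnontrivialauts}, and the only missing verifications are precisely the two you supply, namely that $\QQ$ has no nontrivial automorphisms and that $\Aut_{\qsigma}(\sqparen{n}) \cong \Sigma_n$. One small imprecision: degree-preservation alone only forces the \emph{numbers} of cofaces and codegeneracies in the normal form to be equal, not zero; you then need the further remark that if either count were positive the map would factor through an object of strictly lower degree and hence could not be invertible (or equivalently, invoke EZ1 applied to the monic part of the split-epi/mono factorization), after which both counts vanish and the conclusion follows.
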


\subsection{Comparing skeletal filtrations}
In this section we'll prove a base-change theorem that allows us to
compare skeleta of cubical and extended cubical sets.  We begin with a
slightly modified definition from \cite[Chap\^\i{}tre 8]{cisinskithesis}:
\begin{definition}
  Suppose $i:\mathscr{R}\to\mathscr{S}$ is a functor.  We say that $i$
  is a \emph{thickening} if
  \begin{enumerate}
  \item $i$ is an isomorphism on objects.
  \item For all $r,r'\in\Ob\mathscr{R}$, the map
    \begin{equation*}
      \Aut_\mathscr{S}(ir) \times \mathscr{R}(r,r') \to
      \mathscr{S}(ir, ir')
    \end{equation*}
    is a bijection of sets.
  \end{enumerate}
\end{definition}
Crossed $\Delta$-modules, and more generally crossed
$\mathscr{R}$-modules for a Reedy category $\mathscr{R}$, are examples
of thickenings \cite{bergermoerdijkreedy,fiedorowiczloday}.  Note that
$\QQ\to\qsigma$ is not a thickening, but
$\QQ^{+}\to\qsigma^{+}$ is a thickening by Proposition
\ref{prop:qsigmafactorization}.  We start with a simple
observation:
\begin{lemma}\label{lemma:thickeningfactorization}
  Suppose $i:\mathscr{R}\to\mathscr{S}$ is a thickening and 
  \begin{equation*}
    \xymatrix{& ir_2 \ar[rd]^{if} \\
      ir_1 \ar[ur]^{\sigma} \ar[rr]_{ig} && ir_3}
  \end{equation*}
  is a diagram in which $\sigma$ is an arrow in $\mathscr{S}$.  Then
  $\sigma$ is in the image of $i$ and the triangle may be lifted to
  one in $\mathscr{R}$.
\end{lemma}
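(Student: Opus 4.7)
The plan is to exploit the uniqueness built into the thickening bijection. Given the arrow $\sigma:ir_1\to ir_2$ in $\mathscr{S}$, I would first apply the thickening hypothesis to $(r_1,r_2)$ to obtain a unique pair $(\alpha,h)\in \Aut_\mathscr{S}(ir_1)\times \mathscr{R}(r_1,r_2)$ with $\sigma = i(h)\circ\alpha$. The goal is then to show $\alpha = \id_{ir_1}$, so that $\sigma = i(h)$ lies in the image of $i$.

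Post-composing the decomposition of $\sigma$ with $if$ yields
\begin{equation*}
  ig \;=\; if\circ\sigma \;=\; i(f\circ h)\circ\alpha,
\end{equation*}
which exhibits $ig\in\mathscr{S}(ir_1,ir_3)$ as the image of the pair $(\alpha, f\circ h)$ under the thickening bijection $\Aut_\mathscr{S}(ir_1)\times\mathscr{R}(r_1,r_3)\to\mathscr{S}(ir_1,ir_3)$. But $ig$ is also the image of $(\id_{ir_1}, g)$ under the same bijection, so by injectivity $\alpha = \id_{ir_1}$ and $g = f\circ h$. The first equation gives $\sigma = i(h)$, and the second shows that $h:r_1\to r_2$ in $\mathscr{R}$ is the desired lift of the triangle.

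The argument is essentially a direct unpacking of the uniqueness clause in the definition of a thickening, so there is no substantive obstacle; the only bookkeeping required is to place the automorphism factor on the domain side (as the definition stipulates $\Aut_\mathscr{S}(ir)$ for $r$ the source), so that pre-composition with $\alpha$ and post-composition with $if$ interact correctly.
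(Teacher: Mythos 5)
Your proof is correct and takes essentially the same approach as the paper's: decompose $\sigma$ uniquely as $i(h)\circ\alpha$ via the thickening bijection, post-compose with $if$, and use the uniqueness of the decomposition of $ig$ to force $\alpha = \id_{ir_1}$ and $g = f\circ h$. The paper states this a bit more tersely (and has a minor typo swapping $f$ and $g$ in the composed equality), but the argument is identical.
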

\begin{proof}
  Since $i$ is a thickening, there is a (unique) factorization of
  $\sigma$ as a composition $(ih)\circ \tau$, where
  $\tau\in\Aut_\mathscr{S}(ir_1)$ and $h$ is a map $r_1\to r_2$.  Then
  $i(gh)\circ \tau = if$, so by the uniqueness of factorizations of
  this form, $\tau = \id_{ir_1}$ and hence $\sigma = ih$.
\end{proof}

\begin{proposition}\label{prop:basechangeforskeleta}
  Suppose $i:\mathscr{R}\to\mathscr{S}$ is a functor between
  \textsc{ez} categories $\mathscr{R}$ and $\mathscr{S}$ so that
  $i$ preserves degree.  Then $i$ preserves monomorphisms; suppose
  that moreover, the resulting functor $i^{+}:\mathscr{R}^{+} \to
  \mathscr{S}^{+}$ is a thickening.  The natural base-change
  transformation $i_!  j^\ast \to j^\ast i_!$ of functors
  $\widehat{\mathscr{R}} \to \widehat{\mathscr{S}_{\le n}}$ induced by
  the square of functors
  \begin{equation*}
    \xymatrix{\mathscr{R}_{\le n} \ar[r]^j \ar[d]_i & \mathscr{R}
      \ar[d]^i \\
      \mathscr{S}_{\le n} \ar[r]_j & \mathscr{S}}
  \end{equation*}
  is a natural isomorphism.
\end{proposition}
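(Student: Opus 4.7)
The plan is to dispatch both parts by reducing to representables. For the first claim, note that in an \textsc{ez} category the subcategory $\mathscr{R}^+$ is by definition the subcategory of monomorphisms; the very hypothesis that $i^+: \mathscr{R}^+ \to \mathscr{S}^+$ makes sense encodes the fact that $i$ carries monos to monos.

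For the base-change isomorphism, I would first observe that both $i_! j^*$ and $j^* i_!$ are cocontinuous functors $\widehat{\mathscr{R}} \to \widehat{\mathscr{S}_{\le n}}$, being composites of left adjoints ($j^*$ has both adjoints in the presheaf setting, namely $j_! \adjoint j^* \adjoint j_*$). Since every presheaf is canonically a colimit of representables, it suffices to check the base-change map is an isomorphism on each representable $[r_0]$ for $r_0 \in \Ob \mathscr{R}$. When $\deg r_0 \le n$ this is immediate: both $i_! j^*[r_0]$ and $j^* i_![r_0]$ compute the representable $[ir_0] \in \widehat{\mathscr{S}_{\le n}}$, and the base-change map is the identity under these canonical identifications.

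The interesting case is $\deg r_0 > n$. Evaluating at $s \in \mathscr{S}_{\le n}$, the coend formula for left Kan extension identifies the base-change map with
\[
\int^{r \in \mathscr{R}_{\le n}} \mathscr{S}(s, ir) \times \mathscr{R}(r, r_0) \longrightarrow \mathscr{S}(s, ir_0), \qquad (f, g) \longmapsto ig \circ f.
\]
I would construct an explicit inverse. Given $f_0: s \to ir_0$, take its \textsc{ez}-decomposition $f_0 = \phi \sigma$ with $\sigma: s \to t$ a split epimorphism and $\phi: t \to ir_0$ a monomorphism (Proposition \ref{prop:ezdecompositionsareunique}); a monic section of $\sigma$ forces $\deg t \le \deg s \le n$. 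Since $i$ is a bijection on objects, pick the unique $r'' \in \mathscr{R}_{\le n}$ with $ir'' = t$. Then $\phi: ir'' \to ir_0$ is a morphism in $\mathscr{S}^+$ between objects in the image of $i^+$, so the thickening hypothesis factors it uniquely as $\phi = i(g) \circ \tau$ with $g: r'' \to r_0$ in $\mathscr{R}^+$ and $\tau \in \Aut_\mathscr{S}(ir'')$. The triple $(r'', \tau\sigma, g)$ is then a preimage of $f_0$, establishing surjectivity.

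The main technical obstacle is showing the assignment $f_0 \mapsto [r'', \tau\sigma, g]$ is well-defined on the coend (equivalently, the base-change map is injective on representables). The plan: given any triple $(r, f, g)$ with $ig \circ f = f_0$, first \textsc{ez}-factor $g = g^+ g^-$ in $\mathscr{R}$ and use the coend equivalence to absorb $g^-$ into $f$, reducing to the case $g \in \mathscr{R}^+$; then $ig$ is itself monic in $\mathscr{S}$, and composing it with the \textsc{ez}-factorization of $f$ yields an \textsc{ez}-factorization of $f_0$. The contractibility of the groupoid of \textsc{ez}-factorizations (Proposition \ref{prop:ezdecompositionsareunique}) together with the uniqueness clause of the thickening property then supplies the isomorphism in $\mathscr{R}_{\le n}$ witnessing the coend equivalence between $(r, f, g)$ and the canonical preimage. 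The delicate bookkeeping—tracking how the automorphism $\tau$ produced by the thickening absorbs harmlessly into the split-epi component—is the principal source of technical difficulty.
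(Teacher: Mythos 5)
Your proposal matches the paper's own argument essentially step for step: reduce to representables by cocontinuity, evaluate at $s\in\mathscr{S}_{\le n}$, prove surjectivity by composing the \textsc{ez}-decomposition with the thickening's unique automorphism--monomorphism factorization, and prove injectivity by coend reductions plus the contractibility of the factorization groupoid together with the uniqueness clause of the thickening (Lemma~\ref{lemma:thickeningfactorization}). The ``delicate bookkeeping'' you flag in the injectivity step---lifting the monic part of the \textsc{ez}-factorization of $f$ into $\mathscr{R}^{+}$ before invoking essential uniqueness, so that Lemma~\ref{lemma:thickeningfactorization} can be applied---is exactly the work the paper carries out by passing through the intermediate identified triples $(\ell,h)$, $(\ell^{+}, i\ell^{-}\circ h)$, $(\ell^{+}h^{+},h^{-})$.
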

\begin{proof}
  The functors $i_!$ and $j^\ast$ preserve colimits, so it is
  sufficient to check that $i_! j^\ast \to j^\ast i_!$ is an
  isomorphism on all representables $[r] \in \widehat{\mathscr{R}}$,
  $r\in\Ob\mathscr{R}$.  Suppose $r\in\Ob\mathscr{R}$ and
  $s\in\Ob\mathscr{S}_{\le n}$.  Note that $j^\ast [r] \cong \colim_{r'}
  [r']$ where the colimit is taken over all $r'$ with $\deg r' < n$,
  so $i_! j^\ast [r] \cong \colim_{r'} [ir']$.
  As a result, on the level of sets, the map
  $\varphi:\big(i_! j^\ast [r]\big)_s \to \big(j^\ast i_! [r]\big)_s$ is
  given by
  \begin{equation*}
    \colim_{\substack{r' \to r\\\deg r' \le n}} \big\{ s \to
    ir' \in \Ar\mathscr{S} \big\} \to \{ s \to ir \in \Ar \mathscr{S} \}
  \end{equation*}
  Now suppose $g:s\to ir$ is a map in $\mathscr{S}$.  Since
  $\mathscr{S}$ is an \textsc{ez} category and
  $i^{+}$ is a thickening, there is a factorization
  \begin{equation*}
    \xymatrix{ & ir' \ar[rd]^{ig^{+}} \\
      s \ar[ur]^{g^{-}} \ar[rr]_g && ir}
  \end{equation*}
  in which $g^{-}$ is a split epimorphism in $\mathscr{S}$ and $g^{+}$
  is a monomorphism in $\mathscr{R}$.  Since $\deg s \le n$, $\deg r'
  \le n$ as well.  Hence $\varphi$ is a surjection.  We can assume,
  moreover, that the degree of $r'$ is minimal among all such
  factorizations (in fact, there is only one possible degree).
  
  Now suppose we have maps $h:s\to ir_1$ in $\mathscr{S}$ and
  $\ell:r_1\to r$ in $\mathscr{R}$ so that $\deg r_1 \le n$ and $i\ell
  \circ h= g$.  We must show that the pair $(\ell, h)$ is identified
  with $(g^{+}, g^{-})$ in the colimit in the source of $\varphi$.
  By repeated factorizations, we can produce a diagram
  \begin{equation*}
    \xymatrix{s \ar[r]^h \ar[rd]_{h^{-}} & ir_1 \ar[rr]^{i\ell}
      \ar[rd]^{i\ell^{-}} && ir \\
      & ir_3 \ar[r]_{ih^{+}} & ir_2
      \ar[ur]_{i\ell^{+}}}
  \end{equation*}
  in which $h^{-}$ is a split epimorphism in $\mathscr{S}$, $\ell^{-}$
  is a split epimorphism in $\mathscr{R}$, and both $\ell^{+}$ and $h^{+}$ are
  monomorphisms in $\mathscr{R}$.  The pairs
  \begin{equation*}
    (\ell,h) \qquad (\ell^{+}, h\circ i\ell^{-}) \qquad
    (\ell^{+} h^{+}, h^{-})
  \end{equation*}
  are identified in the colimit in the source of $\varphi$.  (Note
  that $\deg r_3 \le \deg r_1$.)  Without loss of generality, then, we
  can assume that $h$ is a split epimorphism and $\ell$ is a
  monomorphism.  But split epi-monic factorizations in $\mathscr{S}$
  are essentially unique (Proposition
  \ref{prop:ezdecompositionsareunique}), so there is an isomorphism
  $\sigma$ making
  \begin{equation*}
    \xymatrix{ & ir' \ar[dd]^(0.3)\sigma \ar[dr]^{ig^{+}} \\
      s \ar'[r]^g[rr] \ar[ur]^{g^{-}} \ar[dr]_{h} && ir \\
      & ir_1 \ar[ur]_{i\ell}}
  \end{equation*}
  commute.  Since $i^{+}$ is a thickening, the map $\sigma$ must be
  in the image of $i$ (Lemma \ref{lemma:thickeningfactorization}), so
  $(\ell, h)$ and $(g^{+}, g^{-})$ are identified in the colimit in
  the source of $\varphi$.  Hence $\varphi$ is a bijection of sets.
\end{proof}

\begin{corollary}\label{cor:skeletoncomparison}
  Suppose $i:\mathscr{R}\to\mathscr{S}$ is a functor between
  \textsc{ez} categories satisfying the assumptions of
  Proposition \ref{prop:basechangeforskeleta}.  If
  $X\in\widehat{\mathscr{R}}$, then there is a natural isomorphism
  $\sk_n i_! X \to i_! \sk_n X$.
\end{corollary}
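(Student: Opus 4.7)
The plan is to reduce the statement to the base-change isomorphism of Proposition \ref{prop:basechangeforskeleta} combined with the compatibility of left Kan extension with composition of functors. Unfolding the defining formula $\sk_n = (j_n)_! (j_n)^{\ast}$, the target becomes the chain of natural isomorphisms
\begin{equation*}
  i_!\,\sk_n X \;=\; i_!\, j_!\, j^{\ast} X \;\cong\; j_!\, i_!\, j^{\ast} X \;\cong\; j_!\, j^{\ast}\, i_! X \;=\; \sk_n\, i_! X,
\end{equation*}
so the work lies in justifying the two middle isomorphisms.

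For the outer isomorphism I would use the fact that because $i$ preserves degree it restricts to a functor $\mathscr{R}_{\le n}\to\mathscr{S}_{\le n}$, and the square in the statement of Proposition \ref{prop:basechangeforskeleta} commutes strictly: $i\circ j = j\circ i$ as functors $\mathscr{R}_{\le n}\to\mathscr{S}$. Since restriction is a strict $2$-functor from categories to categories of presheaves, its left adjoint $({-})_!$ is pseudofunctorial, which produces the canonical natural isomorphism $i_!\, j_! \cong (ij)_! = (ji)_! \cong j_!\, i_!$ of functors $\widehat{\mathscr{R}_{\le n}}\to\widehat{\mathscr{S}}$. The inner isomorphism $i_!\, j^{\ast}\cong j^{\ast}\, i_!$ is the conclusion of Proposition \ref{prop:basechangeforskeleta} itself.

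Composing these natural isomorphisms gives the desired $\sk_n i_! \cong i_!\sk_n$, and naturality in $X$ follows formally. All substantive content is already contained in Proposition \ref{prop:basechangeforskeleta}; the only place where one has to be careful is the notational overloading, namely keeping straight that the two occurrences of $i$ (its restriction to $\mathscr{R}_{\le n}$ versus the full functor) are being composed on the appropriate side of $j$, and confirming that the pseudofunctoriality $2$-cells used to build the first isomorphism are invertible. Once that bookkeeping is checked, the proof collapses to the displayed four-step chain, so the main ``obstacle'' is simply to state the result cleanly rather than to verify anything new.
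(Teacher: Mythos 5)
Your argument is correct and follows essentially the same route as the paper: both reduce the claim to the base-change isomorphism $i_!\,j^{\ast} \cong j^{\ast}\,i_!$ of Proposition \ref{prop:basechangeforskeleta} together with the commutation $i_!\,j_! \cong j_!\,i_!$ coming from $i\circ j = j\circ i$ on the underlying categories. You supply a bit more justification for the latter (pseudofunctoriality of left Kan extension) than the paper does, but the structure of the argument is the same.
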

\begin{proof}
  With the notation of Proposition \ref{prop:basechangeforskeleta},
  there is a natural isomorphism $i_! j^\ast X \to j^\ast i_! X$.  Now
  apply the functor $j_!$; we obtain a natural isomorphism $j_! i_!
  j^\ast X \to j_! j^\ast i_! X$.  Since $j_! i_! \cong i_! j_!$, we
  obtain a natural isomorphism $i_! j_! j^\ast X \to j_! j^\ast i_!
  X$.
\end{proof}

\subsection{The cubical sites}
In the remainder of this section, we'll prove Proposition
\ref{prop:qqisez}.  We begin with the definition $\deg \sqparen{n} =
n$.  Axioms EZ1 and EZ2 are routine; the difficult axiom to verify
will be EZ3.
\begin{lemma}
  All epimorphisms of $\QQ$ and $\qsigma$ are split.  The epimorphisms
  of $\QQ$ (respectively $\qsigma$) correspond to the arrows of
  $\QQ^{-}$ (respectively $\qsigma^{-}$).
\end{lemma}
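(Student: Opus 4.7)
The plan is to prove both assertions by establishing (i) every arrow of $\qsigma^{-}$ is a split epimorphism, and (ii) every epimorphism of $\qsigma$ lies in $\qsigma^{-}$; the $\QQ$ case then follows by suppressing the conjunctions and cosymmetries throughout.

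For (i), I would check that each generator of $\qsigma^{-}$ admits an explicit section read off from Proposition \ref{prop:cocubicalidentities}. Cosymmetries $\pi_p$ are invertible; the identity $\sigma^j\delta^{j,\varepsilon} = \id$ exhibits $\delta^{j,\varepsilon}$ as a section of $\sigma^j$; and the identity $\gamma^j\delta^{j,1} = \id$ exhibits $\delta^{j,1}$ as a section of $\gamma^j$. Split epimorphisms compose, so every arrow of $\qsigma^{-}$ is split epi.

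For (ii), suppose $f$ is epi and use Corollary \ref{cor:qsigmareedy} to factor $f = \delta\sigma$ with $\delta\in\qsigma^{+}$ and $\sigma\in\qsigma^{-}$. The standard observation that any left factor of an epimorphism is itself an epimorphism forces $\delta$ to be epi. The heart of the argument, and its main obstacle, is the claim that any epi in $\qsigma^{+}$ must be a cosymmetry. By Proposition \ref{prop:qsigmafactorization}, such a $\delta$ has a unique expression $\delta = \delta^{i_1,\varepsilon_1}\dotsb\delta^{i_n,\varepsilon_n}\pi$ with $i_1 > \dotsb > i_n$. Assuming for contradiction $n\ge 1$, take $g_1 = \id$ and $g_2 = \delta^{i_1,\varepsilon_1}\sigma^{i_1}$, both endomorphisms of the codomain of $\delta$. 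The identity $\sigma^{i_1}\delta^{i_1,\varepsilon_1} = \id$ yields
\begin{equation*}
  g_2\delta = \delta^{i_1,\varepsilon_1}\sigma^{i_1}\delta^{i_1,\varepsilon_1}\delta^{i_2,\varepsilon_2}\dotsb\delta^{i_n,\varepsilon_n}\pi = \delta^{i_1,\varepsilon_1}\delta^{i_2,\varepsilon_2}\dotsb\delta^{i_n,\varepsilon_n}\pi = \delta = g_1\delta,
\end{equation*}
but as a formal cubical product $g_2$ inserts the constant $\varepsilon_1$ at position $i_1$ whereas $g_1 = \id$ has $x_{i_1}$ there, so $g_1 \ne g_2$. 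This contradicts $\delta$ being epi, forcing $n = 0$, whence $\delta = \pi \in \qsigma^{-}$ and $f = \delta\sigma \in \qsigma^{-}$.
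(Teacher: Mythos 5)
Your proposal is correct and uses essentially the same core argument as the paper: factor the map using Proposition~\ref{prop:qsigmafactorization}, and if a coface $\delta^{i_1,\varepsilon_1}$ appears, exploit $\sigma^{i_1}\delta^{i_1,\varepsilon_1} = \id$ to exhibit two distinct right-cancelling postcompositions (the identity and $\delta^{i_1,\varepsilon_1}\sigma^{i_1}$). The only cosmetic difference is that you first isolate the $\qsigma^{+}$ part $\delta$ via Corollary~\ref{cor:qsigmareedy} and argue that $\delta$ is epi before applying the computation, whereas the paper applies it directly to $f$ itself (noting $f = \delta^{i_1,\varepsilon_1}\sigma^{i_1}f$ with $\delta^{i_1,\varepsilon_1}\sigma^{i_1}\neq\id$); your detour is harmless but unnecessary.
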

\begin{proof}
  Both the degeneracies $\sigma^i$ and $\gamma^i$ have sections, so
  they are categorical epimorphisms.  Since the cosymmetry maps
  $\pi_p$ are isomorphisms, we may conclude that the arrows of
  $\qsigma^{-}$ and $\QQ^{-}$ are split epimorphisms in $\qsigma$
  and $\QQ$, respectively.

  Suppose $f$ is an arrow in $\qsigma$.  We may factor $f$ as 
  \begin{equation*}
    f = \delta^{i_1,\varepsilon_1}\dotsb\delta^{i_n,\varepsilon_n} s
  \end{equation*}
  with $s\in\Ar \qsigma^{-}$.  Suppose $n > 0$.  Then
  \begin{equation*}
    f = \delta^{i_1,\varepsilon_1} \sigma^{i_1} f
  \end{equation*}
  by the relations in Proposition \ref{prop:cocubicalidentities}.
  However, $\delta^{i_1,\varepsilon_1} \sigma^{i_1} \ne \id$, so $f$
  is not an epimorphism.  Hence the epimorphisms of $\qsigma$ are
  precisely the maps of $\qsigma^{-}$, which are all split.  The proof
  for $\QQ$ is identical.
\end{proof}

\begin{definition}\label{def:splitpushout}
  Suppose
  \begin{equation}\label{eq:absolutepushout}
    \vcenter{
      \xymatrix{A \ar[r]^{a_1} \ar[d]_{a_2} & B \ar[d]^{p_2} \\
        C \ar[r]_{p_1} & P
      }}
  \end{equation}
  is a commutative square in a category $\mathscr{C}$.  If there exist
  maps
  \begin{align*}
    d_0 &: P \to B &
    d_1 &: C \to A &
    d'_1 &: B \to A &
    d'_2 &: B \to A
  \end{align*}
  so that
  \begin{align*}
    d_0 p_1 &= a_1 d_1 &
    d_0 p_2 &= a_1 d'_1 \\
    a_2 d_1 &= \id_C &
    a_2 d'_1 &= a_2 d'_2 \\
    p_2 d_0 &= \id_P &
    a_1 d'_2 &= \id_B
  \end{align*}
  then we call \eqref{eq:absolutepushout} a \emph{split pushout}.
\end{definition}

\begin{lemma}\label{lemma:absolutepushout}
  Split pushouts are absolute pushouts.
\end{lemma}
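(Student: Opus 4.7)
The plan is to verify the universal property of a pushout directly from the identities listed in Definition \ref{def:splitpushout}, and then observe that the verification is automatically preserved by any functor $F:\mathscr{C}\to\mathscr{D}$. Every equation asserted in that definition has the form ``one composite equals another composite,'' and such equations persist under any functor; consequently, if \eqref{eq:absolutepushout} is a split pushout via the data $(d_0,d_1,d_1',d_2')$, then $F$ applied to that square is again a split pushout via $(Fd_0, Fd_1, Fd_1', Fd_2')$. So it is enough to show, once and for all, that every split pushout is a pushout.

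For the universal property, fix a cocone $f_1:B\to X$, $f_2:C\to X$ with $f_1a_1 = f_2 a_2$, and take $h = f_1 d_0 : P \to X$ as the candidate mediating map. Then $h p_1 = f_1 d_0 p_1 = f_1 a_1 d_1 = f_2 a_2 d_1 = f_2$ by the identities $d_0 p_1 = a_1 d_1$, $f_1 a_1 = f_2 a_2$, and $a_2 d_1 = \id_C$. Similarly $h p_2 = f_1 d_0 p_2 = f_1 a_1 d_1' = f_2 a_2 d_1' = f_2 a_2 d_2' = f_1 a_1 d_2' = f_1$, using $d_0 p_2 = a_1 d_1'$, $a_2 d_1' = a_2 d_2'$, and $a_1 d_2' = \id_B$. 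For uniqueness, any other mediating $h'$ satisfies $h' = h' \id_P = h' p_2 d_0 = f_1 d_0 = h$ by the identity $p_2 d_0 = \id_P$.

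Combining these two steps yields the proof: the square is a pushout in $\mathscr{C}$, and because every identity invoked is preserved by any functor, the same argument applied to $F$ of the data shows $F$ of the square is a pushout in $\mathscr{D}$. (Alternatively, by Par\'e's theorem cited just after Definition \ref{def:absolutecolimit}, it would suffice to check the Yoneda embedding case, but the direct verification above is no harder.) There is no real obstacle here; the only thing to watch is that every equation used in the two calculations appears explicitly in the list defining a split pushout, which a quick bookkeeping pass confirms.
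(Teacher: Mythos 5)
Your proof is correct and follows essentially the same route as the paper's: define the mediating map $h=f_1 d_0$, verify $hp_1=f_2$ and $hp_2=f_1$ via exactly the same chain of identities, obtain uniqueness from the splitting $p_2 d_0 = \id_P$, and note that split pushouts are preserved by arbitrary functors because they are defined purely equationally. The only cosmetic difference is that you spell out the uniqueness step explicitly rather than invoking ``$p_2$ is a split epimorphism.''
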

\begin{proof}
  This is an example of a general criterion by Par\'e, who classifies
  all absolute pushouts in \cite[Proposition
  5.5]{pareabsolutecolimits} (the cited paper also classifies all
  absolute colimits in general).  It is sufficient to check that an
  split pushout of the shape \eqref{eq:absolutepushout} is a pushout
  square, since split pushouts are manifestly preserved by all
  functors.  Suppose $f:B\to X$ and $g:C\to X$ are given so that $fa_1
  = ga_2$.  Define $h:P\to X$ to be $h = f d_0$.  Then
  \begin{equation*}
    h p_1 = f d_0 p_1 = f a_1 d_1 = ga_2 d_1 = g
  \end{equation*}
  and
  \begin{equation*}
    h p_2 = f d_0 p_2 = f a_1 d'_1 = ga_2 d'_1 = ga_2 d'_2 = fa_1 d'_2
    = f.
  \end{equation*}
  Since $p_2$ is a split epimorphism, $h$ is the unique map making
  \begin{equation*}
    \xymatrix{A \ar[r]^{a_1} \ar[d]_{a_2} & B \ar[d]^{p_2} \ar@/^/[ddr]^f \\
      C \ar[r]_{p_1} \ar@/_/[rrd]_{g} & P \ar[rd]^h \\
      && X}
  \end{equation*}
  commute.
\end{proof}

In all the split pushouts we compute below, we will always set $d'_1 =
d'_2 d_0 p_2$.  This reduces the relations that we need to verify to
the following five:
\begin{align*}
  a_2 d_1 &= \id_C &
  d_0 p_1 &= a_1 d_1 \\
  p_2 d_0 &= \id_P &
  a_2 d'_2 d_0 p_2 &= a_2 d'_2 \\
  a_1 d'_2 &= \id_B
\end{align*}

\begin{lemma}\label{lemma:pushoutofsigmasisabsolute}
  The diagram
  \begin{equation}\label{eq:qqsplitepipushout}
    \vcenter{
      \xymatrix{\sqparen{n-1} & \sqparen{n} \ar[l]_{\sigma^i} \ar[r]^{\sigma^j} &
        \sqparen{n-1}
      }}
  \end{equation}
  has an absolute pushout
  \begin{equation*}
    \xymatrix{\sqparen{n} \ar[r]^{\sigma^i} \ar[d]_{\sigma^j} & \sqparen{n-1} \ar[d]^{\tau_2} \\
      \sqparen{n-1} \ar[r]_{\tau_1} & \sqparen{\ell} }
  \end{equation*}
  in $\QQ$ with both $\tau_1$ and $\tau_2$ in $\QQ^{-}$ and $n-2 \le
  \ell \le n-1$.
\end{lemma}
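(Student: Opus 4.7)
The plan is to do a simple case analysis on whether $i = j$ or $i \ne j$, use the cocubical identities of Proposition~\ref{prop:cocubicalidentities} to fill in the square, and then exhibit an explicit split pushout structure in the sense of Definition~\ref{def:splitpushout}. Lemma~\ref{lemma:absolutepushout} then upgrades this to an absolute pushout. Throughout, I will use face maps $\delta^{\bullet,0}$ as sections of the relevant $\sigma^\bullet$'s.

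If $i = j$, the two codegeneracies agree, and the obvious choice $\tau_1 = \tau_2 = \id_{\sqparen{n-1}}$, $\ell = n-1$, is a (split) pushout: take $d_0 = \id$ and $d_1 = d_2' = \delta^{i,0}$. All five relations reduce immediately to $\sigma^i \delta^{i,0} = \id$.

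If $i \ne j$, say $i < j$ (the other case is symmetric, or obtained by flipping the square), the cocubical identity $\sigma^{j-1}\sigma^i = \sigma^i \sigma^j$, coming from $\sigma^j \sigma^i = \sigma^i \sigma^{j+1}$ for $i \le j$ by relabelling, lets us complete the square with $\tau_1 = \sigma^i$, $\tau_2 = \sigma^{j-1}$, and $\ell = n-2$. To see this is a split pushout, I set
\begin{equation*}
d_0 = \delta^{j-1,0}, \qquad d_1 = \delta^{j,0}, \qquad d_2' = \delta^{i,0},
\end{equation*}
and $d_1' = d_2' d_0 p_2$ as prescribed after Lemma~\ref{lemma:absolutepushout}. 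The relations $a_2 d_1 = \id$, $p_2 d_0 = \id$, and $a_1 d_2' = \id$ are each instances of $\sigma^k \delta^{k,0} = \id$. The remaining two identities ($d_0 p_1 = a_1 d_1$ and $a_2 d_2' d_0 p_2 = a_2 d_2'$) both reduce, via the relation $\sigma^j \delta^{i,\varepsilon} = \delta^{i,\varepsilon}\sigma^{j-1}$ for $i < j$, to already-verified identities; the second uses $\sigma^{j-1}\delta^{j-1,0} = \id$ to collapse the middle.

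Since every step only requires instances of the relations in \eqref{eq:cocubicalidentities}, the computations go through unchanged inside $\QQ$ (indeed they never invoke conjunctions or symmetries), so $\tau_1, \tau_2 \in \QQ^{-}$ and the absolute pushout lives in $\QQ$ as required. The only mildly delicate point in the argument, which is where I would focus the writeup, is verifying the fourth relation $a_2 d_2' d_0 p_2 = a_2 d_2'$ for $i < j$; everything else is a direct check. There are no real obstacles, since Lemma~\ref{lemma:absolutepushout} has done the heavy lifting by reducing ``absolute pushout'' to the verification of a finite list of equations between compositions of faces and degeneracies.
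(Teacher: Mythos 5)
Your proposal matches the paper's proof essentially line for line: the same reduction to the case $i<j$, the same square completed with $\tau_1=\sigma^i$, $\tau_2=\sigma^{j-1}$, and the identical choice of sections $d_0 = \delta^{j-1,0}$, $d_1 = \delta^{j,0}$, $d_2' = \delta^{i,0}$, verified via the same cocubical identities. The only cosmetic difference is in the $i=j$ case, where the paper simply observes the pushout of a pair of equal maps is trivially absolute rather than exhibiting explicit splitting data.
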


\begin{proposition}\label{prop:gridinduction}
  The category $\QQ$ satisfies axiom EZ3 of Definition \ref{def:ezcategory}.
\end{proposition}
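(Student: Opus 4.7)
The plan is to prove EZ3 for $\QQ$ by reducing an arbitrary cospan of split epimorphisms to iterated applications of Lemma~\ref{lemma:pushoutofsigmasisabsolute} via a grid-pasting argument. Given split epimorphisms $\sigma_1\colon\sqparen{n}\to\sqparen{n-p}$ and $\sigma_2\colon\sqparen{n}\to\sqparen{n-q}$ in $\QQ^{-}$, I would first factor each as a chain of elementary codegeneracies
\begin{equation*}
    \sigma_1 = \bar{\sigma}_p\circ\cdots\circ\bar{\sigma}_1, \qquad \sigma_2 = \tilde{\sigma}_q\circ\cdots\circ\tilde{\sigma}_1,
\end{equation*}
where each $\bar\sigma_k$ and each $\tilde\sigma_k$ is a single codegeneracy of the form $\sigma^i$.

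Next, I would inductively assemble a $p\times q$ grid of objects $X_{i,j}$ with $X_{0,0} = \sqparen{n}$, running the factorization of $\sigma_1$ down the leftmost column $X_{0,0}\to X_{1,0}\to\cdots\to X_{p,0}$ and that of $\sigma_2$ across the topmost row $X_{0,0}\to X_{0,1}\to\cdots\to X_{0,q}$. Each interior cell, with top-left corner $X_{i,j}$, is filled by applying Lemma~\ref{lemma:pushoutofsigmasisabsolute} to the elementary codegeneracies on its top and left edges, producing $X_{i+1,j+1}$ together with its right and bottom edges. The essential feature of the lemma is that these output edges are again single codegeneracies (when $\ell = n-2$ in the notation of the lemma) or identities (when $\ell = n-1$). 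This ensures that each new cell receives single-$\sigma$ or identity edges on its top and left, so the induction propagates through the grid without obstruction. Identity-edge cells are filled trivially, with the opposite input copied across.

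The outer $(p,q)$-rectangle of the completed grid is then the desired absolute pushout. Each inner cell is absolute, hence preserved as a pushout by the Yoneda embedding $[{-}]\colon\QQ\to\widehat{\QQ}$; since ordinary pushouts paste both vertically and horizontally, the Yoneda embedding preserves the outer rectangle as a pushout as well, so the outer rectangle is itself absolute by Par\'e's characterization recalled in the excerpt. Its left and top edges are $\sigma_1$ and $\sigma_2$ by construction, and its right and bottom edges are compositions of the elementary codegeneracies emitted along the grid, so they lie in $\QQ^{-}$ and are in particular split epimorphisms.

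The main obstacle is the bookkeeping of the identity case $\ell = n-1$ in Lemma~\ref{lemma:pushoutofsigmasisabsolute}, where both output edges of a cell are identities and the cell collapses. One must organize the induction so that such collapses merely shorten a row or column of the grid rather than breaking the alignment of the remaining elementary squares. This is routine but is where the finer combinatorics of $\QQ^{-}$ enter, and it is precisely what makes the explicit case analysis in Lemma~\ref{lemma:pushoutofsigmasisabsolute} (which exhibits all the required sections $d_0, d_1, d'_1, d'_2$ of Definition~\ref{def:splitpushout}) the technical heart of the argument.
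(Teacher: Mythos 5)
Your grid-pasting argument is exactly what the paper's label for this proposition signals, and it is correct: Lemma~\ref{lemma:pushoutofsigmasisabsolute} provides the elementary-cospan case that seeds the induction, absolute pushouts paste to absolute pushouts since both properties are checked after applying an arbitrary functor, and the outer edges lie in $\QQ^{-}$ as composites of codegeneracies and identities. The worry you raise at the end about identity cells is in fact already dispelled by your own observation that such a cell simply copies the opposite edge across, so the $p\times q$ grid neither shortens nor misaligns and no further combinatorics of $\QQ^{-}$ are needed.
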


\begin{corollary}
  The diagram
  \begin{equation*}
    \vcenter{
      \xymatrix{\sqparen{n-1} & \sqparen{n} \ar[l]_(0.35){\sigma^i}
      \ar[r]^(0.45){\sigma^j} &
        \sqparen{n-1}
      }}
  \end{equation*}
  has an absolute pushout
  \begin{equation*}
    \xymatrix{\sqparen{n} \ar[r]^{\sigma^i} \ar[d]_{\sigma^j} &
    \sqparen{n-1} \ar[d]^{\tau_2} \\
      \sqparen{n-1} \ar[r]_{\tau_1} & \sqparen{\ell} }
  \end{equation*}
  in $\qsigma$ with both $\tau_1$ and $\tau_2$ in $\qsigma^{-}$ and $n-2 \le
  \ell \le n-1$.
\end{corollary}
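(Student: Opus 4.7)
The plan is to reduce this to Lemma \ref{lemma:pushoutofsigmasisabsolute} by observing that the entire situation in question already lives inside the subcategory $\QQ$. The two codegeneracies $\sigma^i$ and $\sigma^j$ are by definition maps in $\QQ\subseteq \qsigma$, so Lemma \ref{lemma:pushoutofsigmasisabsolute} produces the required square of objects $\sqparen{n-1}\leftarrow\sqparen{n}\to\sqparen{n-1}$ completed by some $\tau_1,\tau_2\in \QQ^{-}$ to an absolute pushout in $\QQ$, with $\deg\sqparen{\ell}$ in the prescribed range. Since $\QQ^{-}\subseteq \qsigma^{-}$ (the latter is obtained from the former by adjoining conjunctions and cosymmetries), the conditions on $\tau_1,\tau_2$ demanded by the corollary already hold. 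The only remaining issue is to promote the phrase ``absolute pushout in $\QQ$'' to ``absolute pushout in $\qsigma$.''

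The cleanest way to carry this out is through the notion of a split pushout (Definition \ref{def:splitpushout}). The proof of Lemma \ref{lemma:pushoutofsigmasisabsolute} exhibits the square as a split pushout; in particular, all the splitting data $d_0,d_1,d'_1,d'_2$ are maps of $\QQ$, and the five relations listed after Lemma \ref{lemma:absolutepushout} are equalities of arrows in $\QQ$. Applying the inclusion functor $i\colon \QQ\to \qsigma$ sends this data to maps in $\qsigma$ satisfying the same equalities, so the image square in $\qsigma$ is itself a split pushout with respect to the image splitting data. Lemma \ref{lemma:absolutepushout} then upgrades this split pushout to an absolute pushout in $\qsigma$, as required.

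Alternatively, one may argue directly from the definition of absoluteness (Definition \ref{def:absolutecolimit}) without referring to split pushouts: given any functor $G\colon \qsigma\to \mathscr{D}$, the composite $Gi\colon \QQ\to \mathscr{D}$ preserves the original absolute pushout in $\QQ$, so $G$ preserves the image pushout in $\qsigma$. No step is a serious obstacle; the only thing to be a little careful about is checking that $\QQ^{-}$ is indeed contained in $\qsigma^{-}$ (which is immediate from the definitions) and that the maps $\sigma^i,\sigma^j,\tau_1,\tau_2$ appearing in the two lemmas refer to the same arrows under the inclusion $i$, which they do since $i$ is strict monoidal and sends the generators $\sigma^i$ of $\QQ^{-}$ to the like-named generators of $\qsigma^{-}$.
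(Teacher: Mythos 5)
Your proposal is correct and matches the paper's own (one-line) proof, which simply observes that the functor $i:\QQ\to\qsigma$ preserves absolute pushouts; your second argument, invoking the definition of absoluteness via precomposition with $i$, is exactly this. The first argument via split pushouts is a valid elaboration of the same point but is not needed.
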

\begin{proof}
  The functor $i:\QQ\to\qsigma$ preserves absolute pushouts.
\end{proof}

\begin{proof}[Proof of Lemma \ref{lemma:pushoutofsigmasisabsolute}]
  If $i = j$, then the pushout of
  \eqref{eq:qqsplitepipushout} is $\sqparen{n-1}$ and is preserved by any
  functor $\QQ \to \mathscr{C}$.  Suppose $i < j$.  Then the square
  \begin{equation}\label{eq:qqsplitepipushout3}
    \vcenter{
      \xymatrix{\sqparen{n} \ar[r]^{\sigma^i} \ar[d]_{\sigma^j} & \sqparen{n-1}
        \ar[d]^{\sigma^{j-1}} \\
        \sqparen{n-1} \ar[r]_{\sigma^i} & \sqparen{n-2}
      }}
  \end{equation}
  is a split pushout in $\QQ$.  Using the notation of Definition
  \ref{def:splitpushout},
  we define sections
  \begin{align*}
    d_0 &= \delta^{j-1,0}_{n-2} &
    d_1 &= \delta^{j,0}_{n-1} &
    d'_2 &= \delta^{i,0}_{n-1}.
  \end{align*}
  Observe that these maps are well-defined, since $1 \le i < j \le n$.
  Using the cubical relations in Proposition
  \ref{prop:cocubicalidentities}, we verify the five relations:
  \begin{align*}
    \sigma^j \delta^{j,0} &= \id_{\sqparen{n-1}} & \text{($a_2d_1 = \id_C$)}\\
    \sigma^{j-1} \delta^{j-1,0} &= \id_{\sqparen{n-2}} & \text{($p_2d_0 = \id_P$)}\\
    \sigma^i \delta^{i,0} &= \id_{\sqparen{n-1}} & \text{($a_1d'_2 = \id_B$)} \\
    \delta^{j-1,0} \sigma^i &= \sigma^i \delta^{j,0} & \text{($d_0p_1 =
      a_1d_1$)} \\
    \sigma^j \delta^{i,0} \delta^{j-1,0} \sigma^{j-1} &= 
    \delta^{i,0} \sigma^{j-1} \delta^{j-1,0} \sigma^{j-1} =
    \delta^{i,0} \sigma^{j-1} = \sigma^j \delta^{i,0} & \text{($a_2 d'_2 d_0 p_2
      = a_2 d'_2$)}
  \end{align*}
  Hence \eqref{eq:qqsplitepipushout3} is an absolute pushout.
\end{proof}

\begin{lemma}
  Suppose $n \ge 2$.  The diagram 
  \begin{equation}\label{eq:pushoutofgammas}
    \vcenter{
      \xymatrix{\sqparen{n-1} & \sqparen{n} \ar[l]_(0.35){\gamma^i}
        \ar[r]^(0.45){\gamma^j} & \sqparen{n-1}
      }}
  \end{equation}
  has an absolute pushout $\sqparen{\ell}$ in $\qsigma$ with maps $\sqparen{n-1}
  \to \sqparen{\ell}$ in $\qsigma^{-}$ and $n-2 \le \ell \le n-1$.
\end{lemma}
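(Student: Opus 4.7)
The plan is to imitate the proof of Lemma
\ref{lemma:pushoutofsigmasisabsolute}: for each pair $(i,j)$ I would
exhibit a commutative square with corner $\sqparen{\ell}$ and legs in
$\qsigma^{-}$, then verify it is a split pushout in the sense of
Definition \ref{def:splitpushout} by producing auxiliary maps
$d_0, d_1, d'_2$ (and setting $d'_1 = d'_2 d_0 p_2$) satisfying the
five reduced relations listed after Definition \ref{def:splitpushout}.
Absoluteness then follows from Lemma \ref{lemma:absolutepushout}.  By
symmetry we may assume $i \le j$.

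First I would dispatch the case $i = j$.  Here the natural square has
$\sqparen{\ell} = \sqparen{n-1}$ and $\tau_1 = \tau_2 = \id$.  Taking
$d_0 = \id$ and $d_1 = d'_2 = \delta^{i,1}$, all five identities
collapse to the single relation $\gamma^i \delta^{i,1} = \id$ from
\eqref{eq:extendedcocubicalidentities}.

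For $i < j$, the target is $\sqparen{\ell} = \sqparen{n-2}$, and the
relations of Proposition \ref{prop:cocubicalidentities} supply a
commutative square in one of two shapes depending on the gap $j-i$.
When $j \ge i+2$, the relation $\gamma^{j-1}\gamma^i = \gamma^i
\gamma^j$ gives a square with $\tau_1 = \gamma^i$ and $\tau_2 =
\gamma^{j-1}$, for which I would take $d_0 = \delta^{j-1,1}$, $d_1 =
\delta^{j,1}$, $d'_2 = \delta^{i,1}$; the nontrivial identity $a_2
d'_2 d_0 p_2 = a_2 d'_2$ reduces, after rewriting $a_2 d'_2 = \gamma^j
\delta^{i,1} = \delta^{i,1}\gamma^{j-1}$, to $\gamma^{j-1}
\delta^{j-1,1} = \id$.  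When $j = i+1$, the collapsing relation
$\gamma^i \gamma^i = \gamma^i \gamma^{i+1}$ forces $\tau_1 = \tau_2 =
\gamma^i$, and the correct split pushout data become $d_0 =
\delta^{i+1,1}$, $d_1 = \delta^{i+2,1}$, $d'_2 = \delta^{i,1}$; the
verification uses $\gamma^i \delta^{i+1,1} = \id$ and $\gamma^i
\delta^{i+2,1} = \delta^{i+1,1}\gamma^i$ in place of their
$j\ge i+2$ analogues.

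The hard part is not the verification itself but the bookkeeping
needed to handle the two subcases of $i < j$ separately: because the
conjunction relation $\gamma^j \gamma^i = \gamma^i \gamma^{j+1}$ holds
only for strictly $j > i$, the boundary case $j = i+1$ must be handled
via the exceptional relation $\gamma^i \gamma^i = \gamma^i
\gamma^{i+1}$, and consequently the ``natural'' split pushout looks
slightly different from the generic one and requires a different
choice of face maps.  Otherwise the proof is a direct adaptation of
Lemma \ref{lemma:pushoutofsigmasisabsolute}, with the face maps
$\delta^{k,0}$ appearing there systematically replaced by
$\delta^{k,1}$, reflecting that $1$ rather than $0$ is the two-sided
unit for the conjunction $\wedge$.
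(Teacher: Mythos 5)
Your proposal is correct and follows essentially the same route as the paper: the same case split on $j - i$, the same target squares, and the same choice of section maps $d_0, d_1, d'_2$, with absoluteness via the split-pushout criterion. The only cosmetic differences are that you exhibit an explicit split pushout in the $i=j$ case (where the paper simply observes the pushout is trivially absolute) and you rewrite the final relation $a_2 d'_2 d_0 p_2 = a_2 d'_2$ by moving $\gamma$ past $\delta$ first rather than swapping the two $\delta$'s, using the same cocubical and conjunction identities in a different order.
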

\begin{proof}
  The proof of this lemma is similar to that of Lemma
  \ref{lemma:pushoutofsigmasisabsolute}.  Without loss of generality,
  we may assume that $j\ge i$.  We then have three special cases:
  \begin{enumerate}
  \item ($j = i$) The pushout of
    \eqref{eq:pushoutofgammas} is $\sqparen{n-1}$ and is absolute.
  \item ($j = i+1$) The square
    \begin{equation*}
      \xymatrix{\sqparen{n} \ar[r]^{\gamma^i} \ar[d]_{\gamma^{i+1}} &
        \sqparen{n-1} \ar[d]^{\gamma^i} \\
        \sqparen{n-1} \ar[r]_{\gamma^i} & \sqparen{n-2}}
    \end{equation*}
    is a a split pushout: define sections
    \begin{align*}
      d_0 &= \delta^{i+1,1} & d_1 &= \delta^{i+2,1} & d'_2 &= \delta^{i,1}.
    \end{align*}
    Note that $d_1:\sqparen{n-1}\to\sqparen{n}$ is well-defined, since
    $i+2$ is at most $n$.  Now we verify the five relations:
    \begin{align*}
      \gamma^{i+1}\delta^{i+2,1} &= \id_{\sqparen{n-1}} & \text{($a_2d_1 = \id_C$)}\\
      \gamma^i \delta^{i+1,1} &= \id_{\sqparen{n-2}} & \text{($p_2d_0 = \id_P$)}\\
      \gamma^i \delta^{i,1} &= \id_{\sqparen{n-1}} & \text{($a_1d'_2 = \id_B$)} \\
      \delta^{i+1,1} \gamma^i &= \gamma^i \delta^{i+2,1} & \text{($d_0p_1 =
        a_1d_1$)} \\
      \gamma^{i+1} \delta^{i,1} \delta^{i+1,1} \gamma^i &= 
      \gamma^{i+1} \delta^{i+2,1} \delta^{i,1} \gamma^i =
      \delta^{i,1} \gamma^i = \gamma^{i+1} \delta^{i,1}
      & \text{($a_2 d'_2 d_0 p_2
      = a_2 d'_2$)}
    \end{align*}
  \item ($j > i+1$) The square
    \begin{equation*}
      \xymatrix{\sqparen{n} \ar[r]^{\gamma^i} \ar[d]_{\gamma^j} &
        \sqparen{n-1} \ar[d]^{\gamma^{j-1}} \\
        \sqparen{n-1} \ar[r]_{\gamma^i} & \sqparen{n-2}}
    \end{equation*}
    is a split pushout: we define sections
    \begin{align*}
      d_0 &= \delta^{j-1,1} &
      d_1 &= \delta^{j,1} &
      d'_2 &= \delta^{i,1}.
    \end{align*}
    The five relations are verified:
    \begin{align*}
      \gamma^j \delta^{j,1} &= \id_{\sqparen{n-1}} & \text{($a_2d_1 = \id_C$)}\\
      \gamma^{j-1} \delta^{j-1,1} &= \id_{\sqparen{n-2}}  & \text{($p_2d_0 = \id_P$)}\\
      \gamma^i \delta^{i,1} &= \id_{\sqparen{n-1}} & \text{($a_1d'_2 = \id_B$)} \\
      \delta^{j-1,1}\gamma^i &= \gamma^i \delta^{j,1} & \text{($d_0p_1 =
        a_1d_1$)} \\
      \gamma^j \delta^{i,1} \delta^{j-1,1} \gamma^{j-1} &= 
      \gamma^j \delta^{j,1} \delta^{i,1} \gamma^{j-1} = 
      \delta^{i,1} \gamma^{j-1} = \gamma^j \delta^{i,1}
      & \text{($a_2 d'_2 d_0 p_2
      = a_2 d'_2$)} & \qedhere
    \end{align*}
  \end{enumerate}
\end{proof}

\begin{lemma}
  Suppose $n \ge 2$.  The diagram 
  \begin{equation*}
    \xymatrix{\sqparen{n-1} & \sqparen{n} \ar[l]_(0.35){\gamma^i}
    \ar[r]^(0.45){\sigma^j} & \sqparen{n-1}}
  \end{equation*}
  has an absolute pushout $\sqparen{\ell}$ in $\qsigma$ with maps $\sqparen{n-1}
  \to \sqparen{\ell}$ in $\qsigma^{-}$ and $n-2 \le \ell \le n-1$.
\end{lemma}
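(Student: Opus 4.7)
The plan is to follow the template of the two preceding lemmas: in each of three cases based on the position of $j$ relative to $\{i, i+1\}$, I would exhibit an explicit split pushout square (Definition~\ref{def:splitpushout}) with legs in $\qsigma^{-}$, and then invoke Lemma~\ref{lemma:absolutepushout}. The appropriate squares are determined by the identities relating $\sigma^j$ and $\gamma^i$ from Proposition~\ref{prop:cocubicalidentities}, specifically those in \eqref{eq:extendedcocubicalidentities}.

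If $j < i$, the identity $\sigma^j \gamma^i = \gamma^{i-1} \sigma^j$ suggests taking $\tau_1 = \gamma^{i-1}$ and $\tau_2 = \sigma^j$ with pushout target $\sqparen{n-2}$; sections $d_0 = d_1 = \delta^{j,1}$ and $d'_2 = \delta^{i+1, 1}$ then satisfy the five split pushout relations. If $j = i$ or $j = i+1$, the composites $\tau_1 \sigma^j$ and $\tau_2 \gamma^i$ can be made to agree at $\sqparen{n-2}$ by taking $\tau_1 = \tau_2 = \sigma^i$: this uses $\sigma^i \gamma^i = \sigma^i \sigma^i$ together with the cocubical identity $\sigma^i \sigma^i = \sigma^i \sigma^{i+1}$ from \eqref{eq:cocubicalidentities} in the $j = i+1$ subcase. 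The sections are $d_0 = \delta^{i, 0}$, $d_1 = \delta^{j, 0}$, and $d'_2 = \delta^{i+1, 1}$ if $j = i$ while $d'_2 = \delta^{i, 1}$ if $j = i+1$. Finally, if $j > i+1$, the identity $\sigma^{j-1} \gamma^i = \gamma^i \sigma^j$ motivates $\tau_1 = \gamma^i$ and $\tau_2 = \sigma^{j-1}$ with target $\sqparen{n-2}$, together with sections $d_0 = \delta^{j-1, 0}$, $d_1 = \delta^{j, 0}$, and $d'_2 = \delta^{i, 1}$. In every case the pushout sits in degree $n-2$, so $\ell = n-2$ and the required bound $n-2 \le \ell \le n-1$ is met.

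The main technical obstacle is bookkeeping: the three sections must have parities $\varepsilon \in \{0, 1\}$ chosen consistently so that all five defining equations of a split pushout hold simultaneously. The absorbing identity $a_2 d'_2 d_0 p_2 = a_2 d'_2$ is the delicate one, as it requires $\sigma^j d'_2$ to slide past $d_0$ using only the cocubical cancellation $\sigma^k \delta^{k, \varepsilon} = \id$. This constraint is what forces $d'_2$ to sit at position $i+1$ when $j = i$ but at position $i$ when $j = i+1$: after sliding $\sigma^j$ past $\delta^{i,1}$ or $\delta^{i+1,1}$, one must recover the correct $\sigma^k$ to cancel against the face in $d_0$. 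Once the right sections are in hand, each of the five relations reduces to a single application of an identity from \eqref{eq:cocubicalidentities} or \eqref{eq:extendedcocubicalidentities}, exactly as in the two previous lemmas.
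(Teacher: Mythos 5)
Your proposal follows exactly the paper's strategy: split the problem into cases by the position of $j$ relative to $\{i,i+1\}$, exhibit explicit split pushouts, and apply Lemma~\ref{lemma:absolutepushout}.  Your sections in the three cases $j = i$, $j = i+1$, and $j > i+1$ agree with the paper's, and in the case $j < i$ your sections ($d_0 = d_1 = \delta^{j,1}$, $d'_2 = \delta^{i+1,1}$) are different from the paper's ($d_0 = \delta^{i-1,1}$, $d_1 = \delta^{i,1}$, $d'_2 = \delta^{j,1}$) because you have implicitly placed $\gamma^i$ on top of the square rather than $\sigma^j$, but one checks directly that all five relations hold for your choice as well, so the argument goes through.
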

\begin{proof}
  We have four possibilities:
  \begin{enumerate}
  \item ($i > j$) The square
    \begin{equation*}
      \xymatrix{\sqparen{n} \ar[r]^{\sigma^j} \ar[d]_{\gamma^i} &
        \sqparen{n-1} \ar[d]^{\gamma^{i-1}} \\
        \sqparen{n-1} \ar[r]^{\sigma^j} & \sqparen{n-2}}
    \end{equation*}
    is an split pushout with sections
    \begin{align*}
      d_0 &= \delta^{i-1,1} &
      d_1 &= \delta^{i,1} &
      d'_2 &= \delta^{j,1}.
    \end{align*}
    The relations are satisfied:
    \begin{align*}
      \gamma^i \delta^{i,1} &= \id_{\sqparen{n-1}} & \text{($a_2d_1 = \id_C$)}\\
      \gamma^{i-1} \delta^{i-1} &= \id_{\sqparen{n-2}} & \text{($p_2d_0 = \id_P$)}\\
      \sigma^j \delta^{j,1} &= \id_{\sqparen{n-1}} & \text{($a_1d'_2 = \id_B$)} \\
      \delta^{i-1,1} \sigma^j &= \sigma^j \delta^{i,1} & \text{($d_0p_1 =  a_1d_1$)} \\
      \gamma^i \delta^{j,1} \delta^{i-1,1} \gamma^{i-1} &=
      \gamma^i \delta^{i,1} \delta^{j,1} \gamma^{i-1} =
      \delta^{j,1} \gamma^{i-1} = \gamma^i \delta^{j,1}
      & \text{($a_2 d'_2 d_0 p_2
      = a_2 d'_2$).}
    \end{align*}
  \item ($i = j$) The square
    \begin{equation*}
      \xymatrix{\sqparen{n} \ar[r]^{\gamma^i} \ar[d]_{\sigma^i} &
        \sqparen{n-1} \ar[d]^{\sigma^i} \\
        \sqparen{n-1} \ar[r]^{\sigma^i} & \sqparen{n-2}}
    \end{equation*}
    is an split pushout with sections
    \begin{align*}
      d_0 &= \delta^{i,0} &
      d_1 &= \delta^{i,0} &
      d'_2 &= \delta^{i+1,1}.
    \end{align*}
    The relations are satisfied:
    \begin{align*}
      \sigma^i \delta^{i,0} &= \id_{\sqparen{n-1}} & \text{($a_2d_1 = \id_C$)}\\
      \sigma^i \delta^{i,0} &= \id_{\sqparen{n-2}} & \text{($p_2d_0 = \id_P$)}\\
      \gamma^i \delta^{i+1,1} &= \id_{\sqparen{n-1}} & \text{($a_1d'_2 = \id_B$)} \\
      \delta^{i,0} \sigma^i &= \gamma^i \delta^{i,0} & \text{($d_0p_1 =  a_1d_1$)} \\
      \sigma^i \delta^{i+1,1} \delta^{i,0} \sigma^i &=
      \sigma^i \delta^{i,0} \delta^{i,1} \sigma^i =
      \delta^{i,1}\sigma^i = \sigma^i \delta^{i+1,1}
      & \text{($a_2 d'_2 d_0 p_2
      = a_2 d'_2$).}
    \end{align*}
  \item ($i+1 = j$) The square
    \begin{equation*}
      \xymatrix{\sqparen{n} \ar[r]^{\gamma^i} \ar[d]_{\sigma^j} &
        \sqparen{n-1} \ar[d]^{\sigma^i} \\
        \sqparen{n-1} \ar[r]^{\sigma^i} & \sqparen{n-2}}
    \end{equation*}
    is an split pushout with sections
    \begin{align*}
      d_0 &= \delta^{i,0} &
      d_1 &= \delta^{j,0} &
      d'_2 &= \delta^{i,1}.
    \end{align*}
    The relations are satisfied:
    \begin{align*}
      \sigma^j \delta^{j,0} &= \id_{\sqparen{n-1}} & \text{($a_2d_1 = \id_C$)}\\
      \sigma^i \delta^{i,0} &= \id_{\sqparen{n-2}} & \text{($p_2d_0 = \id_P$)}\\
      \gamma^i \delta^{i,1} &= \id_{\sqparen{n-1}} & \text{($a_1d'_2 = \id_B$)} \\
      \delta^{i,0} \sigma^i &= \gamma^i \delta^{j,0} & \text{($d_0p_1 =  a_1d_1$)} \\
      \sigma^j \delta^{i,1} \delta^{i,0} \sigma^i &=
      \sigma^j \delta^{j,0} \delta^{i,1} \sigma^i =
      \delta^{i,1}\sigma^i = \sigma^j \delta^{i,1} & \text{($a_2 d'_2 d_0 p_2
      = a_2 d'_2$)}
    \end{align*}
  \item ($i +1 < j$) The square
    \begin{equation*}
      \xymatrix{\sqparen{n} \ar[r]^{\gamma^i} \ar[d]_{\sigma^j} &
        \sqparen{n-1} \ar[d]^{\sigma^{j-1}} \\
        \sqparen{n-1} \ar[r]^{\gamma^i} & \sqparen{n-2}}
    \end{equation*}
    is an split pushout with sections
    \begin{align*}
      d_0 &= \delta^{j-1,0} &
      d_1 &= \delta^{j,0} &
      d'_2 &= \delta^{i,1}.
    \end{align*}
    The relations are satisfied:
    \begin{align*}
      \sigma^j \delta^{j,0} &= \id_{\sqparen{n-1}} & \text{($a_2d_1 =
        \id_C$)} \\
      \sigma^{j-1} \delta^{j-1,0} &= \id_{\sqparen{n-2}} &
      \text{($p_2d_0 = \id_P$)} \\
      \gamma^i \delta^{i,1} &= \id_{\sqparen{n-1}} & \text{($a_1d'_2 = \id_B$)} \\
      \delta^{j-1,0} \gamma^i &= \gamma^i \delta^{j,0} & \text{($d_0p_1 =  a_1d_1$)} \\
      \sigma^j \delta^{i,1} \delta^{j-1,0} \sigma^{j-1} &=
      \sigma^j \delta^{j,0} \delta^{i,1} \sigma^{j-1} =
      \delta^{i,1}\sigma^{j-1} = \sigma^j \delta^{i,1}  & \text{($a_2 d'_2 d_0 p_2
      = a_2 d'_2$).}&\qedhere
    \end{align*}
  \end{enumerate}
\end{proof}

\begin{corollary}
  The category $\qsigma$ satisfies axiom EZ3 of Definition
  \ref{def:ezcategory}.
\end{corollary}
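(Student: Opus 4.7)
The plan is to extend the grid induction of Proposition~\ref{prop:gridinduction} using the three atomic pushout lemmas just established, together with the trivial handling of cosymmetries (which are isomorphisms). Given split epimorphisms $\sigma_1, \sigma_2 : \sqparen{n} \rightrightarrows \sqparen{m_1}, \sqparen{m_2}$, Proposition~\ref{prop:qsigmafactorization} factors each into a composition of atomic generators drawn from $\{\sigma^a, \gamma^a, \pi_p\}$. I argue by induction on the total factorization length.

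In the base case (one factorization is trivial), the pushout is given by identity maps. For the inductive step, peel off the first atomic generator $g$ from $\sigma_1$. If $g$ is a cosymmetry, the pushout of $g$ against $\sigma_2$ is absolute because $g$ is an isomorphism, and the problem reduces to a strictly smaller instance with $g$ absorbed into the boundary output. Otherwise $g \in \{\sigma^a, \gamma^a\}$, and by induction we have an absolute pushout for the remaining composite of $\sigma_1$ against $\sigma_2$; we paste on the atomic pushout of $g$ against the resulting output map. The crucial observation is that each of the three atomic pushout lemmas produces output maps that are themselves single generators of type $\sigma^a$ or $\gamma^a$ (or identities, when indices collide), so the grid can be filled in cell by cell using only the atomic lemmas.

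Pasting of absolute pushouts is absolute: any functor $F:\qsigma\to\mathscr{D}$ sends each cell to a pushout in $\mathscr{D}$, and the pasting lemma for pushouts then recovers the outer rectangle as a pushout in $\mathscr{D}$. The resulting boundary maps $\tau_1, \tau_2$ are compositions of generators in $\qsigma^-$, hence split epimorphisms. The main obstacle is the interaction between cosymmetries and conjunctions: while the relation $\sigma^j \pi_p = \pi_{p'}\sigma^{p^{-1}(j)}$ lets cosymmetries commute cleanly past codegeneracies, no analogous relation holds past a conjunction unless the permutation preserves adjacency of its two arguments. This is handled by exploiting the canonical factorization of Proposition~\ref{prop:qsigmafactorization}, which places a single cosymmetry between the $\gamma$-block and the $\sigma$-block of every map; this localizes any cosymmetry interaction to one interface in the grid, which is dispatched by direct commutation rather than repeated rewriting.
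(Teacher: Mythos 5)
Your grid induction is the natural and intended approach, and the three atomic lemmas plus the isomorphism observation are the right ingredients. You also correctly flag the real obstruction: the cosymmetry sitting between the $\gamma$-block and the $\sigma$-block. The relation $\pi_p\sigma^j = \sigma^{p(j)}\pi_{p'}$ does let a cosymmetry pass the $\sigma$-block, but the claim that the residual interaction with the $\gamma$-block is ``dispatched by direct commutation'' is exactly where the argument has a gap, and you yourself identify why: $\gamma^a \pi_q$ can be rewritten as $\pi_{q'}\gamma^{a'}$ only when $q$ carries $\{a,a+1\}$ to a consecutive pair in the correct order. Without that hypothesis the output map of the pushout of $\pi_q$ against $\gamma^a$ is $\gamma^a\pi_{q^{-1}}$, which is \emph{not} a single generator, so the cell-by-cell grid fill does not close up on atomic cells and the induction does not go through.

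This is not merely a technical inconvenience; the statement appears to fail. Take $\sigma_1 = \gamma^1 = (x_1\wedge x_2,\, x_3)$ and $\sigma_2 = \gamma^1\pi_{(23)} = (x_1\wedge x_3,\, x_2)$, both split epimorphisms $\sqparen{3}\to\sqparen{2}$. If $\tau_1, \tau_2:\sqparen{2}\to\sqparen{k}$ satisfy $\tau_1\sigma_1 = \tau_2\sigma_2$, then slot by slot one checks that each entry of $\tau_1$ and $\tau_2$ must be a numeral $0$ or $1$: under the substitution dictated by $\sigma_1$, the conjunction $x_1\wedge x_2$ becomes $x_1\wedge x_2\wedge x_3$, while under $\sigma_2$ it becomes $x_1\wedge x_3\wedge x_2$, and these ordered conjunctions are distinct in $\qsigma$ (the sets of achievable values after substitution intersect only in $\{0,1\}$). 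So the pushout of $\sigma_1$ and $\sigma_2$ in $\qsigma$ is $\sqparen{0}$. But this pushout is not absolute: applying the vertices functor $\qsigma(\sqparen{0},{-}):\qsigma\to\Cat{Set}$ takes the span to
\begin{equation*}
  \{0,1\}^2 \xleftarrow{\ (a,b,c)\mapsto(a\wedge b,\,c)\ } \{0,1\}^3 \xrightarrow{\ (a,b,c)\mapsto(a\wedge c,\,b)\ } \{0,1\}^2,
\end{equation*}
whose pushout in $\Cat{Set}$ has two elements (the vertex $(1,1)$ is in its own class, everything else coalesces), whereas $\qsigma(\sqparen{0},\sqparen{0})$ is a singleton. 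So the Yoneda embedding already fails to preserve the pushout. The underlying cause is that conjunction is deliberately noncommutative in $\qsigma$, so cosymmetries do not interact with conjunctions the way your argument requires, and this is precisely the scenario the atomic lemmas do not cover. A correct treatment must either restrict the statement (e.g.\ to pairs whose pushout has the expected degree, which is all that is actually used downstream in the analogue of Proposition~\ref{prop:ezdecompositionsareunique}) or supply a genuinely different argument for the cosymmetry case.
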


\section{The symmetric cubical site models the homotopy category}
In this section, we'll equip $\qsSet$ with a model
structure Quillen equivalent to $\sSet$.  This is the heart of
the paper.  We'll start by describing a spatial model structure on
$\qSet$.  We will then lift the model structure from
$\qSet$ along the restriction functor
$i^\ast:\qsSet \to \qSet$.  In order to do this, we
need to check that cell complexes in $\qsSet$ built out of
the representable functors are well-behaved homotopically.  The
outline of the argument is standard; as usual, it requires some work
to verify.  The resulting Quillen pair $i_! \adjoint i^\ast$ is then
readily shown to be a Quillen equivalence.  Finally, we'll discuss the
monoidal properties of the lifted model structure on
$\qsSet$.

\subsection{The homotopy theory of cubical sets}
In \cite{cisinskithesis}, Cisinski proves that $\QQ$ is a test
category and thus the category $\qSet = \Cat{Set}^{\QQ^\Op}$
models spaces.  Jardine gives a summary of cubical homotopy theory
from Cisinski's perspective in \cite{jardinecubes}.  We'll summarize
their results here.  Recall that $\partial \square^n$ is the
subpresheaf of $\square^n$ given by
\begin{equation*}
  (\partial \square^n)_m = \big\{ f:\sqparen{m} \to \sqparen{n} \in
  \QQ \bigm| \text{$f$ factors as $f:\sqparen{m} \to \sqparen{k} \to \sqparen{n}$, $k < n$}\big\}.
\end{equation*}
This comes equipped with a monomorphism $\partial\square^n \to
\square^n$.  Put another way, $\partial \square^n$ is the union of
the $(n-1)$-dimensional faces of $\square^n$.  We define the
\emph{$i,\varepsilon$-cap}
\begin{equation*}
  (\sqcap^n_{i,\varepsilon})_m = \big\{ f:\sqparen{m} \to \sqparen{n}
  \in \QQ \bigm|
  \text{$f$ factors as $f:\sqparen{m} \to \sqparen{n-1}
    \xrightarrow{d} \sqparen{n}$, $d\ne \delta^{\varepsilon,i}_n$} \big\}
\end{equation*}
for $1\le i\le n$.
This comes equipped with a monomorphism $\sqcap^n_{i,\varepsilon} \to
\partial \square^n$.
\begin{definition}
  We say a functor $F:\mathscr{B} \to \mathscr{C}$ of small categories
  is a \emph{Thomason equivalence} if $F$ induces a weak equivalence
  $\Nerve F : \Nerve \mathscr{B} \to \Nerve \mathscr{C}$ on nerves.
  Let $\mathscr{A}$ be a small category.  We say a map $f:X\to Y$ in
  $\widehat{\mathscr{A}}$ is an \emph{$\infty$-equivalence} if $f$
  induces a Thomason equivalence
  \begin{equation*}
    \mathscr{A}\downarrow f : \mathscr{A} \downarrow X \to \mathscr{A}\downarrow Y
  \end{equation*}
  of categories.
\end{definition}
\begin{definition}
  The \emph{simplicial realization} of a cubical set $X\in\qSet$ is
  the colimit
  \begin{equation*}
    |X| = \colim_{\square^n \to X} (\Delta[1])^n
  \end{equation*}
  of simplicial sets.
\end{definition}
Note that simplicial realization is the unique cocontinuous functor
$\qSet \to \sSet$ taking $\square^n$ to $(\Delta[1])^n$.  Since its
restriction to $\QQ$ is strong monoidal, it is strong monoidal on
$\qSet$.  We can now state the following theorem:
\begin{theorem}[{\cite[Th\'eor\`eme
    8.4.38]{cisinskithesis}}]\label{theorem:cisinskimodelstructureonqq}
  \begin{enumerate}
  \item The category $\qSet$ forms a proper model category
    with cofibrations monomorphisms and weak equivalences the
    $\infty$-equivalences.  We call this model structure the
    \emph{spatial model structure}.  It is cofibrantly generated with
    generating cofibrations
    \begin{equation*}
      \{ \partial \square^n \to \square^n \mid n \ge 0 \}
    \end{equation*}
    and generating acyclic cofibrations
    \begin{equation*}
      \{ \sqcap^n_{i,\varepsilon} \to \square^n \mid 1\le i\le n, \varepsilon=0,1 \}.
    \end{equation*}
  \item The spatial model structure is monoidal: if $i:A\to B$ and
    $j:K\to L$ are cofibrations,
    \begin{equation*}
      i\odot j:A\otimes L \amalg_{A\otimes K} B\otimes K \to
      B\otimes L
    \end{equation*}
    is a cofibration, acyclic if either $i$ or $j$ is.
  \item Simplicial realization is a left Quillen equivalence
    $\qSet\to \sSet$.
  \end{enumerate}
\end{theorem}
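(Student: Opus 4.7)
The plan is to deduce all three parts from Cisinski's general framework for model structures on presheaf topoi, specialized to the site $\QQ$ equipped with the cubical interval $\square^1$.

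For part (1), I would invoke Cisinski's theorem that a presheaf topos equipped with a cellular model together with an ``elementary homotopical datum''---an interval plus a class of generating anodyne extensions compatible with it---admits a cofibrantly generated model structure whose cofibrations are the monomorphisms. The cellular model $\{\partial \square^n \to \square^n\}$ is provided by Proposition \ref{prop:cellularmodelforqsigma}; the representable $\square^1$ with its endpoints $d_0, d_1$ and degeneracy $s$ plays the role of the interval of diagram \eqref{eq:interval}; and the cap inclusions $\sqcap^n_{i,\varepsilon} \to \square^n$, saturated by pushout-product with arbitrary monomorphisms, yield the anodyne extensions. To identify the resulting weak equivalences with the $\infty$-equivalences, I would verify that $\QQ$ satisfies Grothendieck's axioms for a test category---concretely, that the slice categories $\QQ \downarrow \square^n$ and $\QQ \downarrow \sqcap^n_{i,\varepsilon}$ have contractible nerves, with the needed compatibility across products. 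Properness then follows from Cisinski's general criterion, since every object is cofibrant and the interval $\square^1$ is representable.

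For part (3), upgrading to a Quillen equivalence with $\sSet$ uses the stronger observation that $\QQ$ is a \emph{strict} test category, immediate from $\square^m \otimes \square^n = \square^{m+n}$. Concretely, I would check that simplicial realization sends boundary inclusions to monomorphisms---using $|\square^n| = (\Delta[1])^n$ and the fact that $|{-}|$ is strong monoidal---and sends cap inclusions to simplicially anodyne maps; then the derived unit and counit of the adjunction $|{-}| \adjoint R$ are verified to be weak equivalences by reducing to representables, where both sides are contractible.

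Part (2), the monoidal compatibility, reduces to the pushout-product axiom on generators. The boundary--boundary case is identified with $\partial \square^{m+n} \to \square^{m+n}$ by a direct analysis of codimension-one faces of $\square^{m+n}$. The mixed case $(\sqcap^m_{i,\varepsilon} \to \square^m) \odot (\partial \square^n \to \square^n)$ is the cubical analog of ``horn times boundary is a horn''; one exhibits this map as a transfinite composite of pushouts of cap inclusions by cataloguing which faces of $\square^{m+n}$ already lie in the source. This combinatorial pushout-product check, together with Grothendieck's test category axioms for $\QQ$, is what I expect to be the main obstacle; once these are settled, Cisinski's apparatus supplies cofibrant generation, properness, and the Quillen equivalence with $\sSet$.
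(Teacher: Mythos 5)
The paper does not actually prove this theorem: it attributes the statement verbatim to Cisinski's thesis (Th\'eor\`eme 8.4.38), and in the paragraph immediately following the statement declares ``Theorem \ref{theorem:cisinskimodelstructureonqq} is the basis of everything that follows. We will take it for granted,'' pointing to Jardine's survey \cite{jardinecubes} for an exposition of the proof. So there is no in-paper proof to compare your sketch against; the relevant question is whether your outline matches the cited sources, which it broadly does.

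Two cautions on the details of your sketch. First, you assert that $\QQ$ being a \emph{strict} test category is ``immediate from $\square^m \otimes \square^n = \square^{m+n}$.'' That identity concerns the Day convolution product $\otimes$ --- which is indeed what appears in part (2) of the theorem --- whereas strictness of a test category in Grothendieck's sense is a statement about the \emph{cartesian} product on $\qSet$, and $\square^m \times \square^n$ is not representable. Cisinski does prove $\QQ$ is a strict test category, but it is not a one-line consequence of the monoidal structure on the site. Second, for part (3), ``reducing to representables, where both sides are contractible'' does not by itself show the derived unit and counit of $|{-}| \dashv \Sing$ are weak equivalences; the correct ingredient is Cisinski's asphericity machinery for test functors (namely, that $\sqparen{n} \mapsto (\Delta[1])^n$ defines an aspherical cocubical object, so the left Kan extension along it is a Quillen equivalence), not merely cofibrant generation plus the behavior on generators. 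With those adjustments your proposal is a fair description of the cited argument, which the paper itself leaves as a black box.
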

Theorem \ref{theorem:cisinskimodelstructureonqq} is the basis of
everything that follows.  We will take it for granted.  Jardine also
gives a proof of it in the survey \cite{jardinecubes} following
Cisinski's methods.

\subsection{Homotopy and asphericity}
Recall that the categories $\QQ$ and $\qsigma$ are related by an
inclusion functor $i:\QQ\to \qsigma$.  This produces an adjoint pair
\begin{equation*}
  \xymatrix{i_! : \qSet \ar@<0.5ex>[r] &
    \qsSet:i^\ast \ar@<0.5ex>[l]}
\end{equation*}
given by left Kan extension and restriction.
\begin{proposition}\label{prop:ibangismonoidal}
  The functors $i_!$ and $i^\ast$ are strong and lax monoidal,
  respectively.
\end{proposition}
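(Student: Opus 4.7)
The plan is to deduce both claims from Proposition \ref{prop:imkelly} (Im--Kelly) together with the standard doctrinal adjunction observation that a right adjoint to a strong monoidal functor inherits a canonical lax monoidal structure.

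First I would verify that the composite
\begin{equation*}
[{-}]_\Sigma \circ i : \QQ \to \qsSet
\end{equation*}
is strong monoidal, where $[{-}]_\Sigma : \qsigma \to \qsSet$ denotes the Yoneda embedding of $\qsigma$. The functor $i$ is strict monoidal by construction, being the underlying functor of a map of PROs (equivalently, it arises from forgetting the multiplication, absorption and symmetry on a cubical monoid to leave only the interval structure). The Yoneda embedding $[{-}]_\Sigma$ is strong monoidal by property (1) of Day convolution, namely $[j]_\Sigma \otimes [k]_\Sigma \cong [j \oplus k]_\Sigma$ and $[{\sqparen 0}]_\Sigma$ is the unit. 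Composing two strong monoidal functors yields a strong monoidal functor, so $[{-}]_\Sigma \circ i$ is strong monoidal.

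Next I would identify $i_!$ as the unique cocontinuous extension of $[{-}]_\Sigma \circ i$ along the Yoneda embedding $[{-}] : \QQ \to \qSet$. Indeed $i_!$ is cocontinuous (being a left adjoint) and satisfies $i_! [k] = [k]_\Sigma = ([{-}]_\Sigma \circ i)(k)$ on representables, so this identification is immediate from the universal property of the free cocompletion. The Day convolution on $\qsSet$ preserves colimits in each variable, so Proposition \ref{prop:imkelly} applies and gives that $i_!$ is strong monoidal.

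The second claim is formal. Given the chosen strong monoidal structure on $i_!$ with coherence isomorphisms $\phi: i_! X \otimes i_! Y \xrightarrow{\sim} i_!(X \otimes Y)$ and $\phi_0: [{\sqparen 0}]_\Sigma \xrightarrow{\sim} i_![{\sqparen 0}]$, one defines the lax structure maps on $i^\ast$ by conjugating through the unit $\eta$ and counit $\varepsilon$ of $i_! \adjoint i^\ast$: the multiplication
\begin{equation*}
i^\ast A \otimes i^\ast B \xrightarrow{\eta} i^\ast i_!(i^\ast A \otimes i^\ast B) \xrightarrow{i^\ast(\phi^{-1})} i^\ast(i_! i^\ast A \otimes i_! i^\ast B) \xrightarrow{i^\ast(\varepsilon \otimes \varepsilon)} i^\ast(A \otimes B)
\end{equation*}
and the unit map $[{\sqparen 0}] \xrightarrow{\eta} i^\ast i_![{\sqparen 0}] \xrightarrow{i^\ast(\phi_0^{-1})} i^\ast[{\sqparen 0}]_\Sigma$. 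The coherence (associativity, unitality) of these structure maps is the routine doctrinal adjunction computation using the triangle identities; I would simply invoke the general fact that this construction produces a lax monoidal structure on the right adjoint whenever the left adjoint is strong monoidal, rather than redoing the diagrammatic verifications. Neither step presents a real obstacle: the only point requiring any care is confirming that $i$ is actually monoidal on the nose and that $[{-}]_\Sigma$ carries the monoidal structure of $\qsigma$ to Day convolution, both of which are built into the definitions.
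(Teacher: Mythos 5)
Your proposal is correct and follows the same route as the paper: establish that $i_!$ is strong monoidal via Proposition \ref{prop:imkelly} applied to the (2-commuting) square relating $i$ and the Yoneda embeddings, and then transfer a lax monoidal structure to the right adjoint $i^\ast$ by the standard doctrinal adjunction argument (the paper phrases the lax structure map as the adjoint transpose of $i_!(i^\ast K \otimes i^\ast L) \xrightarrow{\sim} i_! i^\ast K \otimes i_! i^\ast L \to K \otimes L$, which unwinds to exactly the composite through $\eta$, $\phi^{-1}$, and $\varepsilon\otimes\varepsilon$ that you write down).
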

\begin{proof}
  That $i_!$ is strong monoidal is a consequence of Proposition
  \ref{prop:imkelly}: since the square
  \begin{equation*}
    \xymatrix{\ar@{}[dr] |\Downarrow \QQ \ar[r]^i \ar[d]_{[{-}]} & \qsigma \ar[d]^{[-]} \\
      \qSet \ar[r]_{i_!} & \qsSet}
  \end{equation*}
  commutes up to natural isomorphism and $i$ is strong monoidal, the
  extension $i_!$ is strong monoidal.  Now suppose $K$ and $L$ are
  extended cubical sets.  The counit of the adjunction $i_! \adjoint
  i^\ast$ together with the monoidalness of $i_!$ yields a natural map
  \begin{equation*}
    \xymatrix{i_! (i^\ast K \otimes i^\ast L) \ar[r]^\sim & i_!i^\ast K \otimes i_!
      i^\ast L \ar[r] & K\otimes L}
  \end{equation*}
  The adjoint is a natural transformation
  \begin{equation*}
    i^\ast K \otimes i^\ast L \to i^\ast (K\otimes L)
  \end{equation*}
  making $i^\ast$ lax monoidal.
\end{proof}

\begin{definition}
  Suppose $n > 0$.  Let $\{\varepsilon\}$ denote the formal
  $(0,n)$-product
  \begin{equation*}
    (\underbrace{\varepsilon, \dotsc, \varepsilon}_\text{$n$ entries})
  \end{equation*}
  for $\varepsilon=0,1$.  Note that $\{\varepsilon\} =
  (d^{1,\varepsilon})^n$.  Suppose $f,g:X\to Y$ are two maps in
  $\qsSet$.  We say $f$ and $g$ are
  \emph{$\square_\Sigma^n$-homotopic} if there is a filler $h$ in the
  diagram
  \begin{equation*}
    \xymatrix{X \otimes \square^0_\Sigma \ar[d]_{\id\otimes\{0\}}
    \ar[rd]^f \\
    X \otimes \square^n_\Sigma \ar@{.>}[r]^h & Y. \\
    X \otimes \square^0_\Sigma \ar[u]^{\id\otimes\{1\}} \ar[ur]^g}
  \end{equation*}
  We call $h$ a \emph{$\square^n_\Sigma$-homotopy} from $f$ to $g$.
  By abuse of terminology, we'll sometimes simply call $f$ and $g$
  \emph{homotopic.}  We say a map $k$ is a \emph{homotopy equivalence}
  if there is a map $\ell$ so that $k\ell$ and $\ell k$ are both
  homotopic to the identity.
  We define $\square^n$-homotopy in $\qSet$ similarly.  
\end{definition}
Note that $\square_\Sigma^n$-homotopy is not an equivalence
relation for arbitrary $Y$---the (extended) cubical set $Y$ must
possess a sort of homotopy extension property, i.e., $Y$ must be
fibrant.  This is precisely the same reason that $\Delta[1]$-homotopy
is not an equivalence relation on maps in $\sSet$ unless the maps
have a Kan complex as their target.  Using the spatial model structure
on $\qSet$, we have the following standard result:

\begin{proposition}
  Suppose $k:X\to Y$ is a homotopy equivalence in $\qSet$.
  Then $k$ is an $\infty$-equivalence.
\end{proposition}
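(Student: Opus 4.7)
The plan is to reduce the statement to a standard 2-out-of-3 argument built on the observation that, for every $X\in \qSet$ and every $n\ge 1$, the endpoint inclusion $\id_X \otimes \{\varepsilon\}: X \cong X\otimes \square^0 \to X\otimes \square^n$ is an $\infty$-equivalence for both $\varepsilon=0,1$. Granting this, given a homotopy inverse $\ell:Y\to X$ for $k$ and a homotopy $h:X\otimes\square^n\to X$ from $\ell k$ to $\id_X$, the identity $\id_X = h\circ(\id_X \otimes \{1\})$ forces $h$ to be an $\infty$-equivalence by 2-out-of-3, so $\ell k = h\circ(\id_X \otimes \{0\})$ is an $\infty$-equivalence, and symmetrically $k\ell$ is too. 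Since $k$ then has both a left and right inverse in $\Ho\qSet$, it is an isomorphism there, i.e., an $\infty$-equivalence.

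The first step is to verify that the map $\{\varepsilon\}:\square^0\to\square^n$ is an acyclic cofibration in the spatial model structure. It is a monomorphism (hence a cofibration), and its simplicial realization is the vertex inclusion $\Delta[0]\to(\Delta[1])^n$, which is a weak equivalence of simplicial sets. Since simplicial realization is a left Quillen equivalence by Theorem \ref{theorem:cisinskimodelstructureonqq}(3), and every object of $\qSet$ is cofibrant, the realization detects weak equivalences and so $\{\varepsilon\}$ is an $\infty$-equivalence. (Alternatively, one can invoke 2-out-of-3 with the retraction $\sigma:\square^n\to\square^0$, whose realization is the contraction $(\Delta[1])^n\to\Delta[0]$.)

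The second step is the monoidal axiom, Theorem \ref{theorem:cisinskimodelstructureonqq}(2). The pushout-product of the cofibration $\emptyset\to X$ with the acyclic cofibration $\{\varepsilon\}:\square^0\to\square^n$ is, after simplification using $\emptyset\otimes Z \cong \emptyset$, precisely the map $\id_X \otimes \{\varepsilon\}:X\to X\otimes \square^n$. The monoidal axiom then immediately yields that this is an acyclic cofibration, which was the ingredient needed above.

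The main potential obstacle is identifying the endpoint inclusions $\{\varepsilon\}$ as acyclic cofibrations in $\qSet$, but this reduces to an essentially trivial fact about simplicial sets via the Quillen equivalence, so no serious difficulty arises; the remainder of the argument is formal.
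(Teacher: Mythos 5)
Your proof is correct. The paper simply labels this Proposition a ``standard result'' and gives no argument, so there is no paper proof to compare against; your write-up supplies exactly the expected reasoning. Specifically: you reduce everything to the observation that the endpoint inclusions $\{\varepsilon\}:\square^0\to\square^n$ are acyclic cofibrations in the spatial model structure (detected via the left Quillen equivalence $|{-}|:\qSet\to\sSet$ and cofibrancy of all objects), upgrade to $\id_X\otimes\{\varepsilon\}$ being an acyclic cofibration by identifying it with the pushout-product $(\emptyset\to X)\odot\{\varepsilon\}$ and invoking Theorem~\ref{theorem:cisinskimodelstructureonqq}(2), apply two-out-of-three to deduce that $\ell k$ and $k\ell$ are $\infty$-equivalences, and finish by noting that a map with a left and a right inverse in $\Ho\qSet$ is an isomorphism, hence a weak equivalence. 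Each step checks out (in particular the pushout-product computation $\emptyset\otimes\square^n\amalg_{\emptyset\otimes\square^0}X\otimes\square^0\cong X\otimes\square^0$ uses that $\otimes$ is cocontinuous in each variable, which Day convolution provides). One might streamline the first step by instead noting the retraction $\sigma:\square^n\to\square^0$ realizes to a weak equivalence and applying two-out-of-three, as you mention parenthetically, but the argument as written is complete.
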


\begin{lemma}\label{lemma:iastpreserveshomotopy}
  Suppose $f$ and $g:X\to Y$ are $\square^n_\Sigma$-homotopic maps in
  $\qsSet$.  Then $i^\ast f$ and $i^\ast g$ are
  $\square^n$-homotopic.
\end{lemma}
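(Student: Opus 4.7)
The plan is to use the lax monoidal structure on $i^\ast$ established in Proposition \ref{prop:ibangismonoidal} to transport the $\square^n_\Sigma$-homotopy across restriction. Let $h:X\otimes\square^n_\Sigma\to Y$ be a $\square^n_\Sigma$-homotopy from $f$ to $g$. Since $i_!$ is strong monoidal and sends $\square^n$ to $\square^n_\Sigma$, the adjunction unit provides a natural comparison $\eta_n:\square^n\to i^\ast i_!\square^n=i^\ast\square^n_\Sigma$; combining this with the lax monoidal comparison $\mu_{X,\square^n_\Sigma}:i^\ast X\otimes i^\ast\square^n_\Sigma\to i^\ast(X\otimes \square^n_\Sigma)$, I would define
\begin{equation*}
  h' = \bigl(i^\ast X\otimes \square^n \xrightarrow{\id\otimes \eta_n}
  i^\ast X\otimes i^\ast\square^n_\Sigma \xrightarrow{\mu_{X,\square^n_\Sigma}}
  i^\ast(X\otimes\square^n_\Sigma) \xrightarrow{i^\ast h} i^\ast Y\bigr).
\end{equation*}

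Next I would check that $h'$ is a $\square^n$-homotopy from $i^\ast f$ to $i^\ast g$. The endpoint maps $\{0\},\{1\}:\square^0\to\square^n$ are in the image of $i:\QQ\to\qsigma$, so naturality of $\eta$ implies that $\eta_n\circ\{\varepsilon\}=i^\ast\{\varepsilon\}\circ\eta_0$ for $\varepsilon=0,1$; naturality of the lax monoidal comparison $\mu$ then gives a commutative diagram expressing $h'\circ(\id\otimes\{\varepsilon\})$ as
\begin{equation*}
  i^\ast X\otimes\square^0 \xrightarrow{\id\otimes\eta_0} i^\ast X\otimes i^\ast\square^0_\Sigma
  \xrightarrow{\mu_{X,\square^0_\Sigma}} i^\ast(X\otimes\square^0_\Sigma)
  \xrightarrow{i^\ast(h\circ(\id\otimes\{\varepsilon\}))} i^\ast Y.
\end{equation*}
Substituting $h\circ(\id\otimes\{0\})=f$ and $h\circ(\id\otimes\{1\})=g$, and using the lax monoidal unit axiom (which identifies $\mu_{X,\square^0_\Sigma}\circ(\id\otimes\eta_0)$ with the right unitor of $i^\ast X$ up to $i^\ast$ applied to the right unitor of $X$), these composites reduce to $i^\ast f$ and $i^\ast g$ respectively under the canonical identification $i^\ast X\otimes\square^0\cong i^\ast X\cong i^\ast(X\otimes\square^0_\Sigma)$.

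The argument is essentially a naturality-and-unit-axiom bookkeeping exercise; the only mild subtlety is keeping the comparison map $\eta_n$ straight, so the main obstacle is purely notational: ensuring one consistently uses the identifications $i_!\square^n=\square^n_\Sigma$ (from strong monoidalness of $i_!$) and the induced $\eta_n:\square^n\to i^\ast\square^n_\Sigma$, and then invokes the defining coherence axioms of the lax monoidal functor $i^\ast$ to collapse the endpoint composites onto $i^\ast f$ and $i^\ast g$.
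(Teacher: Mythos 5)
Your proposal matches the paper's proof: the paper displays a single $3\times 3$ commutative rectangle whose middle row is precisely your composite $h' = i^\ast h \circ \mu_{X,\square^n_\Sigma}\circ(\id\otimes\eta_n)$, with the left-hand squares commuting by naturality of the adjunction unit, the middle squares by naturality of the lax monoidal comparison, and the endpoint identifications collapsing because $\square^0\to i^\ast\square^0_\Sigma$ is an isomorphism. You simply spell out the same diagram chase step by step rather than drawing it as one rectangle.
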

\begin{proof}
  Let $h:X\otimes \square^n_\Sigma \to Y$ be a homotopy from $f$ to
  $g$.  By Proposition \ref{prop:ibangismonoidal}, $i^\ast$ is lax
  monoidal, so we have a diagram
  \begin{equation*}
    \xymatrix@C=4pc{i^\ast X \otimes \square^0 \ar[r]^\sim \ar[d]_{\id\otimes\{0\}} &
      i^\ast X \otimes i^\ast \square^0_\Sigma \ar[r]^\sim
      \ar[d]_{\id\otimes i^\ast\{0\}} &
      i^\ast(X \otimes \square^0_\Sigma) \ar[d]_{i^\ast (\id\otimes
      \{0\})} \ar[rd]^{i^\ast f} \\
      i^\ast X \otimes \square^n \ar[r] &
      i^\ast X \otimes i^\ast \square^n_\Sigma \ar[r] &
      i^\ast(X \otimes \square^n_\Sigma) \ar[r]^{i^\ast h} &
      i^\ast Y. \\
      i^\ast X \otimes \square^0 \ar[r]^\sim \ar[u]^{\id\otimes\{1\}} &
      i^\ast X \otimes i^\ast \square^0_\Sigma \ar[r]^\sim
      \ar[u]^{\id\otimes i^\ast\{1\}} &
      i^\ast(X \otimes \square^0_\Sigma)
      \ar[u]^{i^\ast(\id\otimes\{1\})} \ar[ru]_{i^\ast g}}
  \end{equation*}
  The unit $\square^0 \to i^\ast
  \square^0_\Sigma$ is an isomorphism since $\square^0_\Sigma$ and
  $\square^0$ are terminal and $i^\ast$ is a right adjoint.  Hence the
  top and bottom horizontal arrows are isomorphisms.  As a result, the
  horizontal chain of arrows is a $\square^n$-homotopy between $i^\ast
  f$ and $i^\ast g$.
\end{proof}

\begin{corollary}\label{cor:iastpreserveshoequivs}
  The functor $i^\ast$ preserves homotopy equivalences.
\end{corollary}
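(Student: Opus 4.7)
The plan is to read off this corollary as an essentially immediate consequence of Lemma \ref{lemma:iastpreserveshomotopy} and the functoriality of $i^\ast$. There is really no combinatorial content left to verify; the work has already been done in the lemma.

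First I would unpack the hypothesis: if $k:X\to Y$ is a homotopy equivalence in $\qsSet$, then by definition there is some $\ell:Y\to X$ together with a $\square^{m}_\Sigma$-homotopy from $k\ell$ to $\id_Y$ and a $\square^{n}_\Sigma$-homotopy from $\ell k$ to $\id_X$ (for some $m,n\ge 0$). Apply $i^\ast$ to these maps: since $i^\ast$ is a functor we have $i^\ast(k\ell) = (i^\ast k)(i^\ast \ell)$ and $i^\ast(\ell k) = (i^\ast \ell)(i^\ast k)$, and of course $i^\ast \id_X = \id_{i^\ast X}$ and $i^\ast \id_Y = \id_{i^\ast Y}$.

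Next I would invoke Lemma \ref{lemma:iastpreserveshomotopy} directly on the two given homotopies. The lemma produces $\square^m$- and $\square^n$-homotopies in $\qSet$ from $(i^\ast k)(i^\ast \ell)$ to $\id_{i^\ast Y}$ and from $(i^\ast \ell)(i^\ast k)$ to $\id_{i^\ast X}$ respectively. This exhibits $i^\ast \ell$ as a homotopy inverse of $i^\ast k$ in $\qSet$, which is exactly the conclusion.

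Since every step is either a definition chase, an application of functoriality, or a direct citation of the previous lemma, I do not anticipate any obstacle; the corollary is genuinely a restatement of Lemma \ref{lemma:iastpreserveshomotopy} in the language of homotopy equivalences. If anything, the only subtlety worth flagging is that the parameters $m$ and $n$ of the input homotopies need not agree (homotopy equivalence is defined without fixing a single cylinder length), but this is harmless: the lemma is applied separately to each side, and the definition of homotopy equivalence in $\qSet$ allows different cubical dimensions on each side as well.
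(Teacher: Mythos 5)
Your proof is correct and is exactly what the paper intends: the corollary is stated without proof precisely because it is the immediate consequence of Lemma \ref{lemma:iastpreserveshomotopy} and the functoriality of $i^\ast$ that you describe. Your observation that the cubical dimensions $m$ and $n$ of the two homotopies need not agree, and that this is harmless because the lemma is applied independently to each, is a sensible point to flag and matches the paper's definition of homotopy equivalence.
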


\begin{lemma}\label{lemma:iisaspherical}
  The inclusion functor $i:\QQ \to \qsigma$ is aspherical, i.e., for
  all $n \ge 0$,
  \begin{equation*}
    i \downarrow \sqparen{n} \to \qsigma \downarrow \sqparen{n}
  \end{equation*}
  is a Thomason equivalence.
\end{lemma}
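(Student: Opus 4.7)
The category $\qsigma\downarrow\sqparen{n}$ has the terminal object $(\sqparen{n},\id_{\sqparen{n}})$, so its nerve is contractible; by two-out-of-three for weak equivalences it suffices to prove that $\Nerve(i\downarrow\sqparen{n})$ is likewise contractible. Unpacking the definitions, $i\downarrow\sqparen{n}$ is the category of elements of the cubical set $i^{\ast}\square^{n}_{\Sigma}\in\qSet$, whose $k$-cubes are the formal cubical $(k,n)$-products. Since $\QQ$ is a test category \cite{cisinskithesis}, the nerve of the category of elements of a cubical set is naturally weakly equivalent to its simplicial realization, so the problem reduces to showing that $|i^{\ast}\square^{n}_{\Sigma}|$ is weakly contractible in $\sSet$.

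The plan for this last step is to exhibit an explicit cubical deformation of $i^{\ast}\square^{n}_{\Sigma}$ onto the vertex $\{0\}:\square^{0}\to i^{\ast}\square^{n}_{\Sigma}$, using the cubical monoid structure on $\sqparen{n}$ in $\qsigma$. The most natural candidate, the coordinatewise conjunction $\mu=(x_{1}\wedge x_{n+1},\ldots,x_{n}\wedge x_{2n}):\sqparen{2n}\to\sqparen{n}$, cannot be collapsed to a single $\square^{1}$-homotopy because of the no-repetition rule for formal cubical products. Instead I would contract one coordinate at a time: for each $1\le i\le n$, the formal product
\begin{equation*}
  h_{i}=(x_{1},\ldots,x_{i-1},\,x_{i}\wedge x_{n+1},\,x_{i+1},\ldots,x_{n}):\sqparen{n+1}\to\sqparen{n}
\end{equation*}
defines a $\square^{1}_{\Sigma}$-homotopy on $\square^{n}_{\Sigma}$ in $\qsSet$, restricting to $\id$ at $x_{n+1}=1$ and to $\delta^{i,0}\sigma^{i}$ (the map collapsing the $i$th coordinate to $0$) at $x_{n+1}=0$. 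Applying $i^{\ast}$, which is lax monoidal by Proposition \ref{prop:ibangismonoidal}, and precomposing with the adjunction unit $\square^{1}\to i^{\ast}\square^{1}_{\Sigma}$, each $h_{i}$ transports to a $\square^{1}$-homotopy in $\qSet$. Concatenating these for $i=1,\ldots,n$ yields a $\square^{1}$-homotopy from $\id_{i^{\ast}\square^{n}_{\Sigma}}$ to the constant map at $\{0\}$, so $i^{\ast}\square^{n}_{\Sigma}$ is $\square^{1}$-homotopy equivalent to $\square^{0}$ in $\qSet$ and its realization is contractible.

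The main technical nuisance will be the identification of the nerve of the category of elements of a cubical set $X$ with its simplicial realization $|X|$; I would handle this by invoking Cisinski's test category framework rather than reconstructing it from scratch. The combinatorics of the homotopies $h_{i}$ are otherwise routine, modulo careful verification that each intermediate formal cubical product respects the no-repetition rule, which is precisely what forces the contraction to proceed one coordinate at a time rather than all $n$ simultaneously.
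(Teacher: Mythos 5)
Your core idea matches the paper's: contract $\square^n_\Sigma$ onto the vertex $\{0\}$ using the conjunction structure of $\qsigma$, transport along $i^\ast$, and deduce the Thomason equivalence. The difference is in the shape of the homotopy. You observe correctly that the coordinatewise conjunction $\mu = (x_1\wedge x_{n+1},\dotsc,x_n\wedge x_{2n})$ cannot be squeezed into a single $\square^1_\Sigma$-homotopy because the single parameter $x_{n+1}$ would need to repeat, but you conclude from this that one must chain $n$ separate one-parameter homotopies. The paper avoids the whole issue by invoking its earlier notion of \emph{$\square^n_\Sigma$-homotopy}, in which the parameter cube is allowed to be $n$-dimensional: $\mu$, viewed as a map $\square^n_\Sigma\otimes\square^n_\Sigma\to\square^n_\Sigma$, restricts to the constant map at the vertex $\{0\}$ of the second factor and to $\id$ at $\{1\}$, and Lemma \ref{lemma:iastpreserveshomotopy} transports this in one step to a $\square^n$-homotopy in $\qSet$. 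Your chain of $h_i$'s is a correct alternative, just more work.

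Two caveats on the write-up. First, the claim that ``concatenating these for $i=1,\dotsc,n$ yields a $\square^1$-homotopy from $\id$ to the constant map'' is not literally true: $\square^1$ has no internal composition, so a chain of $\square^1$-homotopies is not itself a $\square^1$-homotopy. What you actually get, writing $e_i = \delta^{i,0}\sigma^i$, is that $e_{i-1}\cdots e_1$ and $e_i\cdots e_1$ are $\square^1$-homotopic for each $i$ (precompose $h_i$ with $e_{i-1}\cdots e_1\otimes\id$); since every object of $\qSet$ is cofibrant and $\{0\},\{1\}:\square^0\to\square^n$ are acyclic cofibrations, $\square^1$-homotopic maps coincide in $\Ho\qSet$, so by induction $e_n\cdots e_1 = \{0\}\circ !$ is an $\infty$-equivalence and then so is $\{0\}$. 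Second, the detour through simplicial realization is unnecessary: an $\infty$-equivalence in $\qSet$ is, by definition, a map inducing a Thomason equivalence of categories of elements, so from $\{0\}:\square^0\to i^\ast\square^n_\Sigma$ being an $\infty$-equivalence you directly read off that $\QQ\cong\QQ\downarrow\square^0\to\QQ\downarrow i^\ast\square^n_\Sigma\cong i\downarrow\sqparen{n}$ is a Thomason equivalence with contractible source; the paper follows exactly this route and never needs Cisinski's comparison of $\Nerve(\QQ\downarrow X)$ with $|X|$.
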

\begin{proof}
  We define a map $H:\sqparen{2n} \to \sqparen{n}$ as the formal product
  \begin{equation*}
    (x_1 \wedge x_{n+1},x_2 \wedge x_{n+2},\dotsc,x_n\wedge x_{2n}).
  \end{equation*}
  This is an application of a symmetry followed by $n$ conjunctions.
  The map $H$ gives a homotopy between $\{0\}$ and the identity map on
  $\square^n_\Sigma$:
  \begin{equation*}
    \xymatrix@C=4pc{\square_\Sigma^n \ar[d]_{\id\otimes\{0\}} \ar[dr]^{\{0\}} \\
      \square_\Sigma^n\otimes \square_\Sigma^n \ar[r]^H & \square_\Sigma^n \\
      \square_\Sigma^n \ar[u]^{\id\otimes\{1\}} \ar[ur]_{\id}}
  \end{equation*}
  Hence the inclusion $\{0\}$ is a homotopy equivalence
  in $\qsSet$.  By Corollary \ref{cor:iastpreserveshoequivs},
  \begin{equation*}
    i^\ast\{0\}:\square^0 \to i^\ast \square_\Sigma^n
  \end{equation*}
  is a homotopy equivalence and hence $\infty$-equivalence in
  $\qSet$.  Thus
  \begin{equation*}
    \QQ \to \QQ\downarrow i^\ast \square_\Sigma^n
  \end{equation*}
  is a Thomason equivalence and $\Nerve(i^\ast \square_\Sigma^n)$ is
  contractible.  Since $\QQ \downarrow i^\ast \square_\Sigma^n$ is
  equivalent to $i \downarrow \square_\Sigma^n$, we conclude that
  \begin{equation*}
    i \downarrow \sqparen{n} \to \qsigma \downarrow \sqparen{n}
  \end{equation*}
  is a Thomason equivalence.
\end{proof}

\begin{proposition}\label{prop:iastreflectsweakequivalences}
  Suppose $X\in\qsSet$.  Then the functor
  \begin{equation*}
    \QQ \downarrow i^\ast X \to \qsigma \downarrow X
  \end{equation*}
  induced by $i$ induces an equivalence of nerves.
\end{proposition}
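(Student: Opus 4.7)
The plan is to apply Quillen's Theorem A to the functor
$F : \QQ \downarrow i^\ast X \to \qsigma \downarrow X$
induced by $i$. That is, for each object $(\sqparen{n}, x)$ of $\qsigma\downarrow X$, I will show that the slice category $F\downarrow (\sqparen{n}, x)$ has contractible nerve; the fact that $\Nerve F$ is a weak equivalence then follows at once.

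First I would unpack the slice category. An object of $F\downarrow (\sqparen{n}, x)$ is a triple $\big((\sqparen{m}, y), \phi\big)$, where $\sqparen{m}\in\QQ$, $y\in (i^\ast X)_m = X_{i\sqparen{m}}$, and $\phi: i\sqparen{m} \to \sqparen{n}$ is an arrow of $\qsigma$ satisfying $x\circ\phi = y$ (as maps $\square^m_\Sigma\to X$). A morphism $\big((\sqparen{m_1}, y_1), \phi_1\big) \to \big((\sqparen{m_2}, y_2), \phi_2\big)$ is a map $f:\sqparen{m_1}\to \sqparen{m_2}$ in $\QQ$ such that $y_2\circ i(f) = y_1$ and $\phi_2 \circ i(f) = \phi_1$. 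Since $y$ is forced by the equation $y = x\circ\phi$, the first compatibility is automatic, and the forgetful assignment $\big((\sqparen{m}, y), \phi\big) \mapsto \phi$ is an isomorphism of categories $F\downarrow (\sqparen{n}, x) \cong i\downarrow \sqparen{n}$.

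Next I would invoke Lemma \ref{lemma:iisaspherical}, which states that $i\downarrow \sqparen{n} \to \qsigma\downarrow \sqparen{n}$ is a Thomason equivalence. Since $\qsigma\downarrow \sqparen{n}$ has the terminal object $(\sqparen{n}, \id_{\sqparen{n}})$, its nerve is contractible; hence so is $\Nerve(i\downarrow \sqparen{n})$, and by the identification above, $\Nerve\big(F\downarrow (\sqparen{n}, x)\big)$ is contractible for every $(\sqparen{n}, x)\in \qsigma\downarrow X$. Quillen's Theorem A then yields the desired equivalence of nerves.

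The only real obstacle is the bookkeeping in the identification $F\downarrow (\sqparen{n}, x)\cong i\downarrow \sqparen{n}$; once this is set up correctly, the conclusion is immediate from Lemma \ref{lemma:iisaspherical} and Quillen's Theorem A, and no new combinatorial input beyond the asphericity of $i$ is required.
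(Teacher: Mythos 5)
Your proposal is correct and follows essentially the same route as the paper's proof: apply Quillen's Theorem A by identifying the comma category of $F$ over $(\sqparen{n},x)$ with $i\downarrow\sqparen{n}$ (the paper phrases this identification as an adjunction rather than an isomorphism, but it is in fact an isomorphism of categories, exactly as you note, since $y$ is determined by $\phi$), and then invoke Lemma~\ref{lemma:iisaspherical}. The only cosmetic difference is that you extract contractibility of $\Nerve(i\downarrow\sqparen{n})$ via the terminal object of $\qsigma\downarrow\sqparen{n}$, which is a clean way to package what the lemma already provides.
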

\begin{proof}
  This is a special case of \cite[Proposition 1.2.9]{maltsiniotis}.
  Suppose $s:\square^n_\Sigma \to X$ is a cube of $X$.  Consider
  the category
  \begin{equation*}
    (\QQ\downarrow i^\ast X)\downarrow s:
  \end{equation*}
  this is the category of triangles
  \begin{equation*}
    \xymatrix{\square_\Sigma^m \ar[r] \ar[d] & X \\
      \square_\Sigma^n \ar[ur]_s}
  \end{equation*}
  with morphisms diagrams of the shape
  \begin{equation*}
    \xymatrix{\square_\Sigma^{m'} \ar[r]^{i(f)} \ar[dr] & \square_\Sigma^m \ar[r] \ar[d] & X \\
      &\square_\Sigma^n \ar[ur]_s}
  \end{equation*}
  The functor
  \begin{equation*}
    (\QQ\downarrow i^\ast X)\downarrow s \to \QQ \downarrow \square^n_\Sigma
  \end{equation*}
  forgetting the map to $X$ has a left adjoint given by composition
  with $s$, so it is a Thomason equivalence.  By Lemma
  \ref{lemma:iisaspherical}, we may conclude that
  \begin{equation*}
    (\QQ \downarrow i^\ast X) \downarrow s
  \end{equation*}
  has contractible nerve, so by Quillen's Theorem A
  \cite{quillenktheory},
  \begin{equation*}
    \QQ \downarrow i^\ast X \to \qsigma \downarrow X
  \end{equation*}
  is a Thomason equivalence.
\end{proof}
\begin{corollary}\label{cor:iastreflectseqs}
  The functor $i^\ast$ reflects $\infty$-equivalences; i.e., $X\to Y$
  in $\qsSet$ induces a Thomason equivalence
  \begin{equation*}
    \qsigma \downarrow X \to \qsigma \downarrow Y
  \end{equation*}
  if and only if
  \begin{equation*}
    \QQ \downarrow i^\ast X \to \QQ \downarrow i^\ast Y
  \end{equation*}
  is a Thomason equivalence.
\end{corollary}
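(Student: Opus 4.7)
The plan is to deduce the corollary directly from Proposition \ref{prop:iastreflectsweakequivalences} by a simple two-out-of-three argument on the commuting naturality square.

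More precisely, given a map $f:X \to Y$ in $\qsSet$, I would first observe that the construction $W \mapsto (\QQ \downarrow i^\ast W \to \qsigma \downarrow W)$ is natural in $W$, so that $f$ induces a commutative square
\begin{equation*}
  \vcenter{\xymatrix{
    \QQ \downarrow i^\ast X \ar[r] \ar[d] & \qsigma \downarrow X \ar[d] \\
    \QQ \downarrow i^\ast Y \ar[r] & \qsigma \downarrow Y
  }}
\end{equation*}
in $\Cat{Cat}$. Applying the nerve functor, and invoking Proposition \ref{prop:iastreflectsweakequivalences} for both $X$ and $Y$, the two horizontal maps become weak equivalences of simplicial sets.

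Then, by two-out-of-three in $\sSet$, the left vertical map is a weak equivalence if and only if the right vertical map is. Unwinding definitions, this is exactly the statement that the map $\QQ \downarrow i^\ast X \to \QQ \downarrow i^\ast Y$ induced by $i^\ast f$ is a Thomason equivalence if and only if $\qsigma \downarrow X \to \qsigma \downarrow Y$ induced by $f$ is a Thomason equivalence, which is the claim. There is essentially no obstacle here, since all the hard work has been packaged into Proposition \ref{prop:iastreflectsweakequivalences} via Quillen's Theorem A and Lemma \ref{lemma:iisaspherical}; the only thing to check carefully is the naturality of the comparison functor in $X$, which is immediate from the fact that $i \downarrow ({-})$ and $\qsigma \downarrow ({-})$ are both functorial constructions on $\qsSet$ and the comparison is induced by the fixed functor $i$.
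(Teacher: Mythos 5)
Your proof is correct and is exactly the intended argument: the paper gives no explicit proof for this corollary, and the two-out-of-three reasoning on the naturality square induced by the comparison functor, combined with Proposition \ref{prop:iastreflectsweakequivalences} applied at both $X$ and $Y$, is precisely how a reader would fill it in.
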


\subsection{A Quillen equivalence}
We will show that $i_! \adjoint i^\ast$ is a Quillen equivalence
simultaneously with the construction of the spatial model structure on
$\qsSet$.
\begin{proposition}\label{prop:ibangisleftquillen}
  The functor $i_!:\qSet \to \qsSet$ preserves
  monomorphisms.
\end{proposition}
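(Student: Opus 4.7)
The plan is to reduce the problem to a single computation on the generating cofibrations via the cellular model, then to identify the image under $i_!$ of each generator as a skeletal inclusion in $\qsSet$ using the base-change theorem.

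First I would invoke Proposition \ref{prop:cellularmodelforqsigma}: the set $I = \{\partial\square^n \to \square^n \mid n\ge 0\}$ is a cellular model for $\qSet$, so every monomorphism in $\qSet$ lies in $\Cell I$. Since $i_!$ is a left adjoint it preserves all colimits, hence preserves transfinite compositions, cobase changes, coproducts, and retracts; thus $i_!\bigl(\Cell I\bigr) \subseteq \Cell\bigl(i_! I\bigr)$. Because $\qsSet$ is a topos, monomorphisms are closed under $\Cell$, so it suffices to show that each map $i_!(\partial\square^n) \to i_!(\square^n)$ is a monomorphism in $\qsSet$.

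For the key computation, observe first that since $y_{\qsigma}\circ i$ extended along the Yoneda embedding sends $\square^n$ to $\square^n_\Sigma$, we have $i_!(\square^n) = \square^n_\Sigma$. To identify $i_!(\partial\square^n)$, I would apply the base-change Corollary \ref{cor:skeletoncomparison} to $i:\QQ \to \qsigma$. This requires checking that $i$ preserves degree (immediate, since both categories assign $\deg\sqparen{n} = n$) and that $i^+:\QQ^+ \to \qsigma^+$ is a thickening. The latter is exactly the content of Proposition \ref{prop:qsigmafactorization} applied to $\qsigma^+$: every map $\sqparen{m}\to\sqparen{m'}$ in $\qsigma^+$ factors uniquely as $\delta^{i_1,\varepsilon_1}\dotsb\delta^{i_n,\varepsilon_n}\pi_p$, i.e., uniquely as a face map in $\QQ^+$ postcomposed with a permutation $\pi_p \in \Aut_{\qsigma}(\sqparen{m}) = \Sigma_m$. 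Hence
\begin{equation*}
  \Aut_{\qsigma^+}(i\sqparen{m}) \times \QQ^+(\sqparen{m},\sqparen{m'}) \to \qsigma^+(i\sqparen{m},i\sqparen{m'})
\end{equation*}
is a bijection, so $i^+$ is a thickening.

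Now Corollary \ref{cor:skeletoncomparison} gives $i_!\sk_{n-1}\square^n \cong \sk_{n-1} i_!\square^n = \sk_{n-1}\square^n_\Sigma$, and by the definition of the boundary this reads
\begin{equation*}
  i_!(\partial\square^n) \cong \sk_{n-1}\square^n_\Sigma.
\end{equation*}
Under this identification the map $i_!(\partial\square^n) \to \square^n_\Sigma$ is the canonical skeletal inclusion, which is a monomorphism by Proposition \ref{prop:skiswelldefined}. Combined with the cellular-model reduction above, this shows that $i_!$ preserves all monomorphisms. The main thing to verify is that $i^+$ is a thickening, but that is essentially immediate once the normal form of Proposition \ref{prop:qsigmafactorization} is in hand; the rest is bookkeeping with the left-adjoint preservation of $\Cell$ in a topos.
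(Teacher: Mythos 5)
Your proposal is correct and follows essentially the same route as the paper: apply Corollary \ref{cor:skeletoncomparison} to identify $i_!(\partial\square^n\to\square^n)$ with $\partial\square^n_\Sigma\to\square^n_\Sigma$, then use that the maps $\partial\square^n\to\square^n$ form a cellular model for $\qSet$ and that $i_!$, being a left adjoint, preserves $\Cell$. The only added content is your explicit check that $i^+\colon\QQ^+\to\qsigma^+$ is a thickening via the normal form of Proposition \ref{prop:qsigmafactorization}, which the paper records earlier as a remark rather than in the proof itself.
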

Before we embark
on this, recall that $\partial \square^n_\Sigma$ is the subpresheaf of
$\square^n_\Sigma$ given by
\begin{equation*}
  \partial\square^n_\Sigma(\sqparen{m}) = \big\{ f\in \qsigma(\sqparen{m}, \sqparen{n}) \bigm|
  \text{$f$ factors $\sqparen{m} \to \sqparen{k} \to \sqparen{n}$, $k
  < n$ in $\qsigma$} \big\}.
\end{equation*}
Another description of $\partial \square^n_\Sigma(\sqparen{m})$ is as
the set of formal $(m,n)$-products with at least one entry $0$ or $1$.
As we'll describe below, $\partial \square^n_\Sigma$ is the union of
the faces of $\square^n_\Sigma$.
\begin{proof}[Proof of Proposition \ref{prop:ibangisleftquillen}]
  By Corollary \ref{cor:skeletoncomparison}, the map $i_! \sk_{n-1}
  \square^n \to \sk_{n-1} \square^n_\Sigma$ is an isomorphism.  This
  implies that $i_!(\partial \square^n \to \square^n)$ is, up to
  isomorphism, the map $\partial \square^n_\Sigma \to
  \square^n_\Sigma$.  Since
  \begin{equation*}
    \mathsf{mono} = \Cell \big\{ \partial \square^n \to \square^n
    \bigm| n \ge 0 \big\},
  \end{equation*}
  we may conclude that $i_!$ preserves all monomorphisms in
  $\qSet$.
\end{proof}

\begin{lemma}\label{lemma:cofibranttower}
  Suppose $Y$ is an $n$-skeletal cubical set.  The map
  \begin{equation*}
    Y \amalg_{\sk_{n-1} Y} i^\ast i_! \sk_{n-1} Y \to i^\ast i_! Y
  \end{equation*}
  is a monomorphism.
\end{lemma}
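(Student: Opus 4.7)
My plan is to realize both sides as pushouts of pieces I can understand concretely, and then verify injectivity pointwise in $\Cat{Set}$. Since $Y$ is $n$-skeletal, Proposition~\ref{prop:cellularmodelforez} presents $Y$ as the pushout $\sk_{n-1}Y\amalg_{\coprod_S\partial\square^n}\coprod_S\square^n$, where $S$ indexes the nondegenerate $n$-cells of $Y$. A pushout-pasting argument rewrites the source of our map as
$\coprod_S\square^n\amalg_{\coprod_S\partial\square^n}i^\ast i_!\sk_{n-1}Y$.
On the other hand, $i^\ast i_!$ preserves all colimits (each factor does, since colimits in presheaf categories are computed pointwise), so applying it to the same cellular pushout, together with $i_!\square^n\cong\square^n_\Sigma$ and $i_!\partial\square^n\cong\partial\square^n_\Sigma$ (from Corollary~\ref{cor:skeletoncomparison} and the argument of Proposition~\ref{prop:ibangisleftquillen}), expresses the target as $\coprod_S i^\ast\square^n_\Sigma\amalg_{\coprod_S i^\ast\partial\square^n_\Sigma}i^\ast i_!\sk_{n-1}Y$. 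The map in question is then induced by the units $\square^n\to i^\ast\square^n_\Sigma$ and $\partial\square^n\to i^\ast\partial\square^n_\Sigma$.

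I next establish two properties of these units. First, both are monomorphisms, because pointwise each is the injection $\QQ(\sqparen{m},\sqparen{n})\hookrightarrow\qsigma(\sqparen{m},\sqparen{n})$ arising from the faithfulness of $i$. Second, the square formed by these units and the boundary inclusions is a pullback---this step is the crux. If $\phi\in\QQ(\sqparen{m},\sqparen{n})$ factors through an object of degree $<n$ in $\qsigma$, I apply the $\QQ$-EZ factorization $\phi=\delta\sigma$ (with $\delta\in\QQ^+$, $\sigma\in\QQ^-$) which, because $\QQ^+\subset\qsigma^+$ and $\QQ^-\subset\qsigma^-$, is simultaneously a $\qsigma$-EZ factorization. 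The essential uniqueness of EZ decompositions in $\qsigma$ (Proposition~\ref{prop:ezdecompositionsareunique}) then forces the $\QQ$-intermediate object to have degree $<n$, so $\phi\in\partial\square^n(\sqparen{m})$.

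To conclude I compute both pushouts pointwise. In $\Cat{Set}$, a pushout $B\amalg_A R$ along a monomorphism $A\hookrightarrow B$ is canonically $R\sqcup(B\setminus A)$, with the coprojection from $R$ being inclusion into the first summand. Applying this description realizes the source at $\sqparen{m}$ as $(i^\ast i_!\sk_{n-1}Y)(\sqparen{m})\sqcup\bigl(\coprod_S\square^n(\sqparen{m})\setminus\coprod_S\partial\square^n(\sqparen{m})\bigr)$ and the target as the analogous expression with $\square^n_\Sigma$ in place of $\square^n$. The induced map acts as the identity on the first summand and as the evident inclusion on the second; the pullback property of the previous paragraph is precisely what ensures that the second map is injective. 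The main technical obstacle is the pullback step, as it is what genuinely uses the combinatorial structure of EZ decompositions in $\qsigma$; everything else is diagrammatic bookkeeping.
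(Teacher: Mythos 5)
Your proof is correct, and it reaches the conclusion by a genuinely different route from the paper's while hinging on the same combinatorial core. The paper also presents $Y$ via the cellular pushout and identifies the map in question as (a coproduct of) a cobase change, but its injectivity argument is categorical: it first shows the corner map $i^\ast\partial\square^n_\Sigma\amalg_{\partial\square^n}\square^n\to i^\ast\square^n_\Sigma$ is a monomorphism by identifying the source explicitly as the subpresheaf of formal cubical $(m,n)$-products that either lie in $\Ar\QQ$ or contain at least one entry $0$ or $1$, and then invokes the topos fact that a cobase change of a monomorphism is a monomorphism.

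You instead compute both pushouts pointwise in $\Cat{Set}$ using the disjoint-union description of a pushout along a monomorphism, and show that the induced map carries each complement summand injectively into the corresponding complement summand. The load-bearing step in both arguments is the same: that $\partial\square^n(\sqparen{m}) = \square^n(\sqparen{m})\cap i^\ast\partial\square^n_\Sigma(\sqparen{m})$ inside $i^\ast\square^n_\Sigma(\sqparen{m})$, i.e.\ that a $\QQ$-map that factors through lower degree in $\qsigma$ already does so in $\QQ$. The paper handles this tersely by appealing to the concrete description of $\partial\square^n_\Sigma$ as formal products with a $0$ or $1$ entry; you handle it via the EZ machinery of Proposition~\ref{prop:ezdecompositionsareunique}, which is more axiomatic and would transfer to other thickenings of an \textsc{ez} category. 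One small point worth a sentence in your pullback step: you use that if a map in $\qsigma$ factors through an object of degree $k$, then the intermediate object in its \textsc{ez} decomposition has degree at most $k$. This follows from Proposition~\ref{prop:ezdecompositionsareunique}(2) by taking split-epi-mono factorizations of the two pieces and reassembling, together with uniqueness of the intermediate degree, but it is not quite immediate from the statement alone.
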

\begin{proof}
  First note that the corner map
  \begin{equation*}
    i^\ast \partial \square^n_\Sigma \amalg_{\partial \square^n}
    \square^n \to i^\ast \square^n_\Sigma
  \end{equation*}
  is a monomorphism.
  This is a consequence of the fact that $i$ is faithful: for $\sqparen{m}$,
  we have
  \begin{equation*}
    \big(i^\ast \partial \square^n_\Sigma \amalg_{\partial \square^n}
    \square^n\big)_m = 
    \big\{ f:\sqparen{m} \to \sqparen{n} \bigm|
    \text{$f$ factors through $\sqparen{k}$, $k < n$, or $f\in\Ar\QQ$}\big\}.
  \end{equation*}
  Let $S$ be the set of nondegenerate $n$-simplices of $Y$.  By
  Proposition \ref{prop:cellularmodelforez} and Proposition
  \ref{prop:qqisez}, we may write $Y$ as a pushout
  \begin{equation*}
    \xymatrix{ \coprod_S \partial \square^n \ar[r] \ar[d] & \sk_{n-1} Y
    \ar[d] \\
    \coprod_S \square^n \ar[r] & Y}
  \end{equation*}
  where $S$ is the set of nondegenerate $n$-simplices of $Y$.  Write
  $\eta:\id\to i^\ast i_!$ for the unit of the adjunction $i_!\adjoint
  i^\ast$.  Consider the cube
  \begin{equation}\label{eq:commutativecube}
    \vcenter{
      \xymatrix{ & \coprod_S i^\ast i_! \partial \square^n \ar[rr]
        \ar'[d][dd] && i^\ast i_! \sk_{n-1} Y
        \ar[dd] \\
        \coprod_S \partial \square^n \ar[rr] \ar[ur]^{\coprod_S
          \eta_{\partial\square^n}} \ar[dd]_{j} && \sk_{n-1} Y
        \ar[dd] \ar[ur]^{\eta_{\sk_{n-1} Y}}
        \\
        & \coprod_S i^\ast i_! \square^n \ar'[r][rr] && i^\ast i_! Y. \\
        \coprod_S \square^n \ar[ur]^{\coprod_S \eta_{\square^n}}
        \ar[rr] && Y \ar[ur]^{\eta_Y}
      }}
  \end{equation}
  The functors $i^\ast$ and $i_!$ both preserve colimits, so the front
  and back faces are both pushouts.  As a result, the square
  \begin{equation*}
    \xymatrix{ \coprod_S \big( i^\ast \partial \square^n_\Sigma \amalg_{\partial \square^n}
      \square^n \big) \ar[r] \ar[d] & i^\ast i_! \sk_{n-1} Y \amalg_{\sk_{n-1} Y}
      Y \ar[d]_g \\
      \coprod_S i^\ast i_! \square^n \ar[r] & i^\ast i_! Y}
  \end{equation*}
  is a pushout; since $\qSet$ is a topos, the arrow $g$ is a
  monomorphism.
\end{proof}

\begin{lemma}\label{lemma:weakeqclosedundercoprod}
  An arbitrary small coproduct of $\infty$-equivalences in $\qSet$
  is an $\infty$-equivalence.
\end{lemma}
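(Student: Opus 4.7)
The plan is to reduce the statement to the standard fact that coproducts of weak equivalences in $\sSet$ are weak equivalences, once we know that the slice functor $\QQ \downarrow (-)$ commutes with coproducts.

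First I would verify that for any small family $\{X_\alpha\}_{\alpha\in A}$ in $\qSet$, there is a canonical isomorphism of categories
\begin{equation*}
  \QQ \downarrow \coprod_{\alpha\in A} X_\alpha \;\cong\; \coprod_{\alpha\in A} \QQ \downarrow X_\alpha.
\end{equation*}
Since colimits in $\qSet$ are pointwise, $(\coprod_\alpha X_\alpha)_n \cong \coprod_\alpha (X_\alpha)_n$, so by the Yoneda lemma every cube $\square^n \to \coprod_\alpha X_\alpha$ factors through a unique summand $X_\alpha$. A morphism in the slice between two cubes $\square^m \to \coprod_\alpha X_\alpha$ and $\square^n \to \coprod_\alpha X_\alpha$ is a commuting triangle; if the two cubes land in different summands, there is no such triangle, so the slice splits as the indicated disjoint union.

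Next, the nerve functor preserves coproducts: $\Nerve(\coprod_\alpha \mathscr{C}_\alpha) \cong \coprod_\alpha \Nerve \mathscr{C}_\alpha$, since the nerve is computed by $\Hom(\sqparen{k}, -)$ and $\sqparen{k}$ (viewed as an object of $\Delta$) is connected in the sense above. Combining these two observations, if $f_\alpha : X_\alpha \to Y_\alpha$ are $\infty$-equivalences, then $\Nerve(\QQ \downarrow \coprod_\alpha f_\alpha)$ is isomorphic to the coproduct of the maps $\Nerve(\QQ \downarrow f_\alpha)$, each of which is a weak equivalence in $\sSet$ by hypothesis.

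Finally I would invoke the classical fact that a small coproduct of weak equivalences in $\sSet$ is again a weak equivalence: $\pi_0$ splits over coproducts, and the higher $\pi_n$'s are computed componentwise after choosing basepoints (equivalently, geometric realization preserves coproducts, and coproducts of weak equivalences of spaces are weak equivalences). Of the three steps, the only one requiring care is the first, and even there the verification is essentially a restatement of the fact that representables are connected with respect to coproducts; the rest is formal.
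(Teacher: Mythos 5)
Your argument is correct, but it follows a genuinely different (and more elementary) route than the paper's. The paper dispatches the lemma in two alternative one-line ways: (a) since every object of $\qSet$ is cofibrant, a coproduct is already a homotopy coproduct, so Ken Brown's lemma applies; or (b) since $|{-}|$ is a left adjoint (hence preserves coproducts) and reflects $\infty$-equivalences, the statement reduces to the corresponding fact in $\sSet$. You instead unwind the definition of $\infty$-equivalence directly: you show that $\QQ\downarrow\coprod_\alpha X_\alpha$ splits as the disjoint union $\coprod_\alpha(\QQ\downarrow X_\alpha)$ because the representables $\square^n$ are connected with respect to coproducts, then observe that the nerve preserves coproducts of categories, and finally invoke that coproducts of weak equivalences of simplicial sets are weak equivalences. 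Your approach buys self-containedness --- it uses only the definition of $\infty$-equivalence and nothing about the spatial model structure or the realization functor's Quillen properties (Theorem \ref{theorem:cisinskimodelstructureonqq}); it is essentially the argument one would give before knowing the model structure exists. The paper's versions are shorter but rely on the heavier structural facts already in place. One tiny slip: in justifying that the nerve preserves coproducts, you write ``$\sqparen{k}$ viewed as an object of $\Delta$,'' but objects of $\Delta$ are the linear orders $[k]$, not $\sqparen{k}$ (which denotes an object of $\QQ$ or $\qsigma$); the mathematical point --- that $[k]$ is a connected category, so $\Hom_{\Cat{Cat}}([k],{-})$ preserves coproducts --- is of course right.
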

\begin{proof}
  This is a standard model category result (Ken Brown's lemma
  \cite{hovey}, together with the fact that everything in
  $\qSet$ is cofibrant).  Alternatively, observe that $|{-}|$
  reflects weak equivalences and preserves small coproducts; arbitrary
  small coproducts  of weak equivalences in $\sSet$ are themselves
  weak equivalences.
\end{proof}

\begin{proposition}\label{prop:ibangiastunitiswkeq}
  The unit $\eta:\id \to i^\ast i_!$ of the adjunction $i_! \adjoint
  i^\ast$ is a natural $\infty$-equivalence in $\qSet$.
\end{proposition}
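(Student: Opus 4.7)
The plan is to reduce to representables via skeletal induction, with the base case handled by Lemma~\ref{lemma:iisaspherical} and the induction step by a gluing argument in the left proper model category $\qSet$.

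For the base case, I would show $\eta_{\square^n}$ is an $\infty$-equivalence. Since $i_! \square^n \cong \square^n_\Sigma$, the unit is a map $\square^n \to i^\ast \square^n_\Sigma$. Both source and target admit the unique map to the terminal $\square^0$, through which $\eta_{\square^n}$ factors trivially. By Lemma~\ref{lemma:iisaspherical}, $i^\ast \square^n_\Sigma \to \square^0$ is an $\infty$-equivalence, and $\square^n \to \square^0$ is an $\infty$-equivalence (e.g.\ via simplicial realization, Theorem~\ref{theorem:cisinskimodelstructureonqq}). Two-out-of-three then yields the claim.

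Next, I would prove by induction on $n$ that $\eta_Y$ is an $\infty$-equivalence for every $n$-skeletal $Y \in \qSet$, the case $n = -1$ being trivial. For the inductive step, Proposition~\ref{prop:cellularmodelforez} presents $Y$ as the pushout
\begin{equation*}
  \xymatrix{\coprod_S \partial \square^n \ar[r] \ar[d] & \sk_{n-1} Y \ar[d] \\
    \coprod_S \square^n \ar[r] & Y}
\end{equation*}
where $S$ indexes the nondegenerate $n$-simplices. Applying the colimit-preserving functor $i^\ast i_!$ produces a second pushout square, and $\eta$ gives a morphism between the two diagrams. By the inductive hypothesis applied to the $(n-1)$-skeletal objects $\sk_{n-1} Y$ and $\partial \square^n$, together with Lemma~\ref{lemma:weakeqclosedundercoprod} and the base case, each of the three non-pushout corner maps is an $\infty$-equivalence. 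The left legs are monomorphisms ($i_!$ preserves these by Proposition~\ref{prop:ibangisleftquillen}), so both squares are homotopy pushouts, and left properness of $\qSet$ (Theorem~\ref{theorem:cisinskimodelstructureonqq}) combined with the gluing lemma gives that $\eta_Y$ is an $\infty$-equivalence.

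For a general $X$, I would pass to the colimit $X = \colim_n \sk_n X$. Since $i^\ast i_!$ preserves colimits, $\eta_X$ is the colimit of the $\eta_{\sk_n X}$. The inclusions $\sk_{n-1} X \to \sk_n X$ are cofibrations, and Lemma~\ref{lemma:cofibranttower} factors $i^\ast i_! \sk_{n-1} X \to i^\ast i_! \sk_n X$ through $\sk_n X \amalg_{\sk_{n-1} X} i^\ast i_! \sk_{n-1} X$ as a composite of monomorphisms, so these are cofibrations as well. Weak equivalences in $\qSet$ are closed under sequential colimits along cofibrations, yielding the conclusion. The main obstacle is controlling the second pushout in the inductive step: ensuring it is a homotopy pushout depends crucially on Lemma~\ref{lemma:cofibranttower}, which itself rests on $i:\QQ \to \qsigma$ being faithful.
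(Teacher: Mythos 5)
Your proof is correct and follows essentially the same three-step strategy as the paper: asphericity of $i$ (Lemma~\ref{lemma:iisaspherical}) handles the representable base case, skeletal induction via the \textsc{ez} pushout of Proposition~\ref{prop:cellularmodelforez} together with left properness handles $n$-skeletal objects, and the ladder of skeleta handles the general case. One small misattribution in your closing remark: the second pushout square being a homotopy pushout in the inductive step needs only Proposition~\ref{prop:ibangisleftquillen} (plus the fact that $i^\ast$, a right adjoint between toposes, preserves monomorphisms); Lemma~\ref{lemma:cofibranttower} enters only in the final sequential-colimit step, and even there one could instead observe directly that $i^\ast i_!$ preserves monomorphisms.
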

\begin{proof}
  We'll first prove that $\eta_X$ is an $\infty$-equivalence for
  skeletal $X$ by induction on the dimension.  If $X$ is $0$-skeletal,
  then $X = \coprod_S \square^0$ and $\eta_X$ is an isomorphism.  Let
  $n > 0$ and suppose $\eta_X$ is a weak equivalence for all
  $(n-1)$-skeletal $X$.  In particular, $\eta_{\partial \square^n}$ is
  an $\infty$-equivalence since $\partial \square^n$ is the
  $(n-1)$-skeleton of $\square^n$.  Suppose $Y$ is $n$-skeletal.  From
  Corollary \ref{cor:iastreflectseqs}, we know that $\square^n \to
  i^\ast \square^n_\Sigma$ is an $\infty$-equivalence.  Recall the
  cube \eqref{eq:commutativecube} in the proof of Lemma
  \ref{lemma:cofibranttower}.  The front and back faces are both
  pushout squares.  The arrows $j$ and $i^\ast i_! j$ are both
  monomorphisms by Proposition \ref{prop:ibangisleftquillen}.  Every
  object of $\qSet$ is cofibrant, so these pushout squares are
  both homotopy cocartesian.  By Lemma
  \ref{lemma:weakeqclosedundercoprod} and our assumptions, the
  diagonal arrows $\coprod_S \eta_{\square^n}$, $\coprod_S
  \eta_{\partial\square^n}$ and $\eta_{\sk_{n-1} Y}$ are weak
  equivalences, so $\eta_Y$ is an $\infty$-equivalence.  By induction,
  we may conclude that $\eta_X$ is an $\infty$-equivalence for all $X$
  which are $n$-skeletal for some $n$.

  Suppose $X$ is an arbitary cubical set.  Now consider the ladder
  \begin{equation*}
    \xymatrix{\sk_0 X \ar[d]^{\eta_0} \ar[r] & \sk_1 X \ar[r] \ar[d]^{\eta_1}
    & \dotsb \ar[r] & \sk_n X \ar[r] \ar[d]^{\eta_n} & \dotsb \\
    i^\ast i_! \sk_0 X \ar[r] & i^\ast i_! \sk_1 X \ar[r] & \dotsb
    \ar[r]  & i^\ast i_! \sk_n X \ar[r] & \dotsb}
  \end{equation*}
  By Lemma \ref{lemma:cofibranttower}, this map is an acyclic Reedy
  cofibration: the map $\eta_0$ is a cofibration, the corner maps
  \begin{equation*}
    i^\ast i_!\sk_{n-1} X \amalg_{\sk_{n-1} X} \sk_n X \to i^\ast i_!
    \sk_n X
  \end{equation*}
  are cofibrations, and each $\eta_i$ is an $\infty$-equivalence.  Hence
  the colimit $\eta_X : X \to i^\ast i_!X$ is an
  $\infty$-equivalence.
\end{proof}
We could have avoided using Lemma \ref{lemma:cofibranttower} by
Reedy's Theorem C \cite{reedy}.

\begin{corollary}\label{cor:icounitpreservesweakeqs}
  The counit $\varepsilon:i_!i^\ast \to \id$ of the adjunction
  $i_!\adjoint i^\ast$ is a natural $\infty$-equivalence in $\qsigma$.
\end{corollary}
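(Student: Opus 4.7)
The plan is to use the triangle identity for the adjunction $i_!\adjoint i^\ast$ together with the reflection property of $i^\ast$ established in Corollary \ref{cor:iastreflectseqs}. Fix $X\in\qsSet$ and consider the counit map $\varepsilon_X:i_!i^\ast X\to X$. Applying $i^\ast$ yields $i^\ast\varepsilon_X:i^\ast i_!i^\ast X\to i^\ast X$, and the standard triangle identity $i^\ast\varepsilon \circ \eta_{i^\ast} = \id_{i^\ast}$ gives us
\begin{equation*}
  i^\ast\varepsilon_X \circ \eta_{i^\ast X} = \id_{i^\ast X}.
\end{equation*}

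First I would observe that $\eta_{i^\ast X}$ is an $\infty$-equivalence by Proposition \ref{prop:ibangiastunitiswkeq}, since $i^\ast X\in\qSet$. The identity map is trivially an $\infty$-equivalence. The class of $\infty$-equivalences in $\qSet$ satisfies the two-out-of-three property, because it is defined as the preimage under $\QQ\downarrow{-}$ of the weak equivalences of nerves in $\sSet$, and weak equivalences of simplicial sets satisfy two-out-of-three. Applying two-out-of-three to the factorization of $\id_{i^\ast X}$ above therefore shows that $i^\ast\varepsilon_X$ is an $\infty$-equivalence in $\qSet$.

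Next I would invoke Corollary \ref{cor:iastreflectseqs}, which tells us that a map $f$ in $\qsSet$ is an $\infty$-equivalence if and only if $i^\ast f$ is an $\infty$-equivalence in $\qSet$. Since we have just established that $i^\ast\varepsilon_X$ is an $\infty$-equivalence, this immediately yields that $\varepsilon_X$ itself is an $\infty$-equivalence in $\qsSet$, as required. Naturality is automatic from the naturality of $\varepsilon$.

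There is no serious obstacle here: the argument is a formal consequence of Proposition \ref{prop:ibangiastunitiswkeq} and Corollary \ref{cor:iastreflectseqs}, together with the triangle identity and two-out-of-three. The only point worth being explicit about is the two-out-of-three property for $\infty$-equivalences, which holds because Thomason equivalences of small categories form the preimage of the weak equivalences in $\sSet$ under the nerve functor.
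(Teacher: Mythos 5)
Your proof is correct and takes essentially the same approach as the paper: both use the triangle identity $i^\ast\varepsilon_X\circ\eta_{i^\ast X}=\id_{i^\ast X}$, two-out-of-three, Proposition \ref{prop:ibangiastunitiswkeq} for $\eta$, and the fact that $i^\ast$ reflects $\infty$-equivalences (you cite Corollary \ref{cor:iastreflectseqs}, the paper cites the equivalent Proposition \ref{prop:iastreflectsweakequivalences}).
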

\begin{proof}
  Suppose $X\in\qsSet$.  Consider the triangle
  \begin{equation*}
    \xymatrix{i^\ast X \ar[rd]_\id \ar[r]^{\eta_{i^\ast X}} & i^\ast
    i_! i^\ast X \ar[d]^{i^\ast \varepsilon_X} \\
    & i^\ast X.}
  \end{equation*}
  The map $\eta_{i^\ast X}$ is an $\infty$-equivalence by Proposition
  \ref{prop:ibangiastunitiswkeq}, so $i^\ast \varepsilon_X$ is an
  $\infty$-equivalence.  By Proposition
  \ref{prop:iastreflectsweakequivalences}, $\varepsilon_X$ is an
  $\infty$-equivalence.
\end{proof}

We can finally prove the main theorem of this paper.

\begin{theorem}\label{theorem:extendedcubicalmodelstructure}
  The category $\qsSet$ forms a left proper cofibrantly
  generated model category known as the \emph{spatial model structure}
  with weak equivalences the $\infty$-equivalences and fibrations the
  maps $p:X\to Y$ which are fibrations in $\qSet$ upon
  application of the restriction $i^\ast$.  The set of generating
  cofibrations is
  \begin{equation*}
    I = \{ \partial \square_\Sigma^n \to \square_\Sigma^n \mid n \ge 0\}
  \end{equation*}
  and the set of generating acyclic cofibrations is
  \begin{equation*}
    J = \{ i_! \sqcap_{j,\varepsilon}^n \to \square_\Sigma^n \mid
    \text{$1\le j\le n$ and $\varepsilon=0,1$}\}.
  \end{equation*}
\end{theorem}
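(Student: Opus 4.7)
The plan is to apply Kan's transfer theorem for cofibrantly generated model structures along the adjunction $i_!\adjoint i^\ast$, lifting the spatial model structure on $\qSet$ of Theorem \ref{theorem:cisinskimodelstructureonqq} to $\qsSet$. Concretely, I would declare $f\in\Ar\qsSet$ to be a fibration (resp.\ weak equivalence) exactly when $i^\ast f$ is such in $\qSet$; Corollary \ref{cor:iastreflectseqs} ensures that the latter class agrees with the $\infty$-equivalences on $\qsSet$ as in the statement. The generating sets arise from $i_!$ applied to the generating sets of $\qSet$, and the computation in the proof of Proposition \ref{prop:ibangisleftquillen} (using Corollary \ref{cor:skeletoncomparison}) shows $i_!(\partial\square^n\to\square^n)\cong(\partial\square^n_\Sigma\to\square^n_\Sigma)$, so these sets match $I$ and $J$ exactly.

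The substantive hypothesis to check is the acyclicity condition: every relative $J$-cell complex is a weak equivalence. The key observation is that $i^\ast$ has a further right adjoint $i_\ast$ given by right Kan extension, so $i^\ast$ preserves all colimits; combined with Corollary \ref{cor:iastreflectseqs}, it suffices to show that each generator $i_!\sqcap^n_{j,\varepsilon}\to\square^n_\Sigma$ maps under $i^\ast$ to an acyclic cofibration of $\qSet$, since those are closed under pushout and transfinite composition. That $i^\ast$ of this map is a monomorphism, hence a cofibration of $\qSet$, follows from Proposition \ref{prop:ibangisleftquillen} together with preservation of monomorphisms by right adjoints. For the weak equivalence part, I would apply $i^\ast$ to the naturality square of the unit $\eta:\id\to i^\ast i_!$ along $\sqcap^n_{j,\varepsilon}\to\square^n$: the top arrow is an acyclic cofibration by Theorem \ref{theorem:cisinskimodelstructureonqq}, both vertical units are $\infty$-equivalences by Proposition \ref{prop:ibangiastunitiswkeq}, and 2-out-of-3 delivers the bottom arrow as an $\infty$-equivalence. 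Smallness of the domains of $I$ and $J$ is automatic in the presheaf topos $\qsSet$.

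With the model structure in place, left properness follows cleanly. Cofibrations are retracts of $I$-cell complexes, hence monomorphisms in the topos $\qsSet$; applying $i^\ast$ to a pushout of an $\infty$-equivalence along such a cofibration yields a pushout in $\qSet$ of an $\infty$-equivalence along a monomorphism, which is an $\infty$-equivalence by left properness of $\qSet$, and Corollary \ref{cor:iastreflectseqs} transports the conclusion back to $\qsSet$.

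The main obstacle is the acyclicity verification for $J$; the remainder of the argument is formal once the transfer framework is set up. Happily all ingredients — the identification $i_!\partial\square^n\cong\partial\square^n_\Sigma$, preservation of monomorphisms by $i_!$ and $i^\ast$, and the fact that the unit of $i_!\adjoint i^\ast$ is a natural $\infty$-equivalence — are already established, so the proof reduces to a careful tracking of adjunctions and closure properties.
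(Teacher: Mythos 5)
Your proposal follows the same route as the paper: transfer the spatial model structure from $\qSet$ to $\qsSet$ along $i_!\adjoint i^\ast$, using the fact that $i^\ast$ preserves colimits (it has a further right adjoint), that $i_!$ preserves monomorphisms (Proposition \ref{prop:ibangisleftquillen}), that the unit of $i_!\adjoint i^\ast$ is a natural $\infty$-equivalence (Proposition \ref{prop:ibangiastunitiswkeq}), and that $i^\ast$ reflects $\infty$-equivalences (Corollary \ref{cor:iastreflectseqs}). The paper's acyclicity verification is phrased as a single pushout diagram rather than your two-step ``generators become acyclic cofibrations, then close under pushout/transfinite composition,'' but the content is identical, and your left-properness argument is a slightly more spelled-out version of the paper's one-line remark.
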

\begin{proof}
  This is a consequence of a standard result on lifting model
  structures along an adjunction---see, for example,
  \cite{schwedeshipley}.  The key point is the following: suppose
  \begin{equation*}
    \xymatrix{i_! \sqcap_{j,\varepsilon}^n \ar[r] \ar[d]_{i_! e} & A \ar[d]^f
    \\
    i_! \square^n \ar[r] & B}
  \end{equation*}
  is a pushout in $\qsSet$.  The functor $i^\ast$ preserves
  all colimits and limits, so the right square in
  \begin{equation*}
    \xymatrix{\sqcap_{j,\varepsilon}^n \ar[r] \ar[d]_{e} & 
      i^\ast i_! \sqcap_{j,\varepsilon}^n \ar[r] \ar[d]_{i^\ast i_! e} &
      i^\ast A \ar[d]^{i^\ast f}
      \\
      \square^n \ar[r] & i^\ast i_! \square^n \ar[r] & i^\ast B}
  \end{equation*}
  is a pushout in $\qSet$.  But by Proposition
  \ref{prop:ibangiastunitiswkeq}, $i^\ast i_! e$ is a weak
  equivalence.  By Proposition \ref{prop:ibangisleftquillen}, $i^\ast
  i_! e$ is a monomorphism.  Hence $i^\ast f$ is an
  $\infty$-equivalence in $\qSet$, so by Corollary
  \ref{cor:iastreflectseqs} $f$ is an $\infty$-equivalence.
  Since
  $\qsSet$ is locally presentable, we can use the
  small object argument to factor every arrow in $\qsSet$
  as a map in $\Cell J$ followed by a $J$-injective map \cite{beke1}.
  But by the above discussion---together with the fact that $i^\ast$
  preserves filtered colimits---the maps in $\Cell J$ are acyclic
  cofibrations and the maps in $\inj J$ are fibrations.
  For left properness, apply the functor $i^\ast$ to the necessary
  diagram and note that $i^\ast$ preserves cofibrations.
\end{proof}

Since $\Delta[1]$ is a cubical monoid, we may define the \emph{extended
  geometric realization} functor $|{-}|_\Sigma$ to be the unique
cocontinuous strong monoidal functor $\qsigma \to \sSet$ taking
$\square_\Sigma^1$ to $\Delta[1]$.

\begin{theorem}\label{theorem:quillenequivalences}
  The functors $i_!$ and $|{-}|_\Sigma$ are both left Quillen
  equivalences.  The diagram
  \begin{equation}\label{eq:quillenequivalencesdiagram}
    \vcenter{
      \xymatrix{\qSet \ar[rr]^{i_!} \ar[rd]_{|{-}|} &&
        \qsSet \ar[ld]^{|{-}|_\Sigma} \\
        & \sSet
      }}
  \end{equation}
  commutes up to natural isomorphism.
\end{theorem}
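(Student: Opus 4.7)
The plan is to verify the commutativity of \eqref{eq:quillenequivalencesdiagram} first, then to show that $i_!\dashv i^\ast$ is a Quillen equivalence using the unit and counit results already established, and finally to deduce the claim about $|{-}|_\Sigma$ by a two-out-of-three argument.

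Commutativity follows from Proposition \ref{prop:imkelly}: both $|{-}|_\Sigma \circ i_!$ and $|{-}|$ are cocontinuous strong monoidal functors $\qSet \to \sSet$, so it suffices to check they agree on the image of $\QQ$. The left adjoint $i_!$ takes the representable $\square^n$ to the representable $\square^n_\Sigma$ by a one-line Yoneda calculation ($\Hom(i_!\square^n, Y) \cong (i^\ast Y)_n = Y_n$), and then $|\square^n_\Sigma|_\Sigma \cong \Delta[1]^n \cong |\square^n|$ by the definition of $|{-}|_\Sigma$ as the strong monoidal cocontinuous extension sending $\square^1_\Sigma$ to $\Delta[1]$.

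For the Quillen equivalence $i_!\dashv i^\ast$, Corollary \ref{cor:skeletoncomparison} identifies $i_!(\partial\square^n \to \square^n)$ with $\partial\square^n_\Sigma\to\square^n_\Sigma$, so $i_!$ carries the generating (acyclic) cofibrations of $\qSet$ to those of $\qsSet$ listed in Theorem \ref{theorem:extendedcubicalmodelstructure}; hence the adjunction is Quillen. Since every object of $\qSet$ is cofibrant and $i^\ast$ preserves $\infty$-equivalences (Corollary \ref{cor:iastreflectseqs}, used in both directions), the derived unit at any $X$ agrees up to a weak equivalence with $\eta_X:X\to i^\ast i_! X$, which is an $\infty$-equivalence by Proposition \ref{prop:ibangiastunitiswkeq}. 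Similarly, for $Y \in \qsSet$ the fact that $i^\ast Y$ is already cofibrant in $\qSet$ makes the derived counit simply $\varepsilon_Y : i_! i^\ast Y \to Y$, which is an $\infty$-equivalence by Corollary \ref{cor:icounitpreservesweakeqs}. The standard criterion then yields that $i_!\dashv i^\ast$ is a Quillen equivalence.

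For $|{-}|_\Sigma$, note that $\qsSet$ is locally presentable, so the cocontinuous functor $|{-}|_\Sigma$ has a right adjoint $\Sing_\Sigma$ by the special adjoint functor theorem. The generating (acyclic) cofibrations of $\qsSet$ are by construction $i_!$-images of generating (acyclic) cofibrations in $\qSet$, so commutativity of \eqref{eq:quillenequivalencesdiagram} together with Theorem \ref{theorem:cisinskimodelstructureonqq} shows that $|{-}|_\Sigma$ is left Quillen. Finally, the two-out-of-three property for Quillen equivalences applied to the composition $|{-}|\cong|{-}|_\Sigma \circ i_!$ promotes $|{-}|_\Sigma$ to a Quillen equivalence, since $|{-}|$ and $i_!$ are Quillen equivalences. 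The only step requiring genuine verification beyond the citations is the identification of the derived unit and counit of $i_!\dashv i^\ast$ with the ordinary ones, which uses only that every object of $\qSet$ is cofibrant and that $i^\ast$ preserves weak equivalences.
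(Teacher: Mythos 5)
Your proposal is correct and follows essentially the same route as the paper: using the already-established facts that the unit and counit of $i_!\adjoint i^\ast$ are natural $\infty$-equivalences, noting that these coincide with the derived unit and counit because every object of $\qSet$ is cofibrant and $i^\ast$ preserves all $\infty$-equivalences, and then applying two-out-of-three to the triangle to conclude that $|{-}|_\Sigma$ is a left Quillen equivalence. The main difference is that you spell out a few details the paper elides---the explicit Yoneda/cocontinuity argument for the commutativity of \eqref{eq:quillenequivalencesdiagram}, and the verification that $i_!$ carries generating (acyclic) cofibrations to generating (acyclic) cofibrations---but these are implicit in the paper's earlier results (notably Corollary \ref{cor:skeletoncomparison} and Theorem \ref{theorem:extendedcubicalmodelstructure}), and both proofs hinge on the same key point: that $i_!$ and $i^\ast$ already compute their derived functors, so the Quillen equivalence criterion may be checked on the ordinary unit and counit.
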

\begin{proof}
  We've proved that the unit and counit $\eta:\id \to i^\ast i_!$ and
  $\varepsilon: i_! i^\ast \to \id$ are natural $\infty$-equivalences
  (\ref{prop:ibangiastunitiswkeq} and Corollary
  \ref{cor:icounitpreservesweakeqs}).  Strictly speaking, this is not
  the right condition for Quillen equivalences, as we'd need to use
  the derived left and right adjoints.  However, $i^\ast$ and $i_!$
  coincide with their derived functors: $i_!$ is left Quillen, but
  everything in $\qSet$ is cofibrant, so it preserves all
  $\infty$-equivalences.  The rights adjoint $i^\ast$ preserves all
  $\infty$-equivalences as well (Proposition
  \ref{prop:iastreflectsweakequivalences}).  Two-out-of-three ensures
  that $|{-}|_\Sigma$ is a left Quillen equivalence.
\end{proof}

\begin{remark}
  A few notes about Theorem
  \ref{theorem:extendedcubicalmodelstructure} are in order.
  Not all monomorphisms in the spatial model structure on
  $\qsSet$ are cofibrations.  For example, the
  $\Sigma_n$-orbits of $\partial \square_\Sigma^n \to
  \square_\Sigma^n$ is a monomorphism, but if $n > 1$, it is not a
  cofibration.  As a result $|{-}|_\Sigma$ may not reflect weak
  equivalences---however, its left derived functor
  $\mathbf{L}|{-}|_\Sigma$ preserves and reflects weak equivalences.
  
  We've shown that $i^\ast$ is a left and right Quillen functor.  On the
  level of homotopy categories, since $i_!  \adjoint i^\ast$ induces
  an equivalence of $\Ho \qSet$ with $\Ho \qsSet$,
  the adjoint pair $i^\ast \adjoint \mathbf{R}i_\ast$ must also induce
  an equivalence of $\Ho \qSet$ with $\Ho \qsSet$,
  and so $\mathbf{R}i_\ast$ and $i_!$ coincide.
\end{remark}

\subsection{The extended product}
We've now shown that $\qsSet$ models spaces.  In the
remainder of this section, we'll prove that the monoidal structure on
$\qsSet$ is compatible with the spatial model structure.
\begin{lemma}\label{lemma:projectionequivalence}
  Suppose $X$ is an extended cubical set and $n \ge 0$.  The map
  $\pi:X\otimes \square^n_\Sigma \to X$ given by the product of the
  identity map on $X$ and the unique map $\square^n_\Sigma \to
  \square^0_\Sigma$ is an $\infty$-equivalence.
\end{lemma}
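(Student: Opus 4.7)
The plan is to show that $\pi$ is a $\square^n_\Sigma$-homotopy equivalence in $\qsSet$, then transfer to $\qSet$ via $i^\ast$ and invoke the known facts that homotopy equivalences in $\qSet$ are $\infty$-equivalences and that $i^\ast$ reflects $\infty$-equivalences.

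For the homotopy equivalence I would take as section $\iota = \id_X \otimes \{0\} : X \cong X \otimes \square^0_\Sigma \to X \otimes \square^n_\Sigma$. One composite is immediate: $\pi \iota = \id_X$, since $\square^0_\Sigma$ is terminal. For the other direction, the key observation is that the map $H : \square^n_\Sigma \otimes \square^n_\Sigma \to \square^n_\Sigma$ constructed in the proof of Lemma \ref{lemma:iisaspherical}---a cosymmetry followed by $n$ conjunctions---is a $\square^n_\Sigma$-homotopy from the constant-at-$0$ endomorphism $\{0\} \circ s$ of $\square^n_\Sigma$ to $\id_{\square^n_\Sigma}$. Tensoring on the left with $\id_X$ and reassociating yields a map
\[
(X \otimes \square^n_\Sigma) \otimes \square^n_\Sigma \;\xrightarrow{\;\id_X \otimes H\;}\; X \otimes \square^n_\Sigma
\]
whose restrictions along $\id \otimes \{0\}$ and $\id \otimes \{1\}$ are $\iota \pi$ and $\id_{X\otimes \square^n_\Sigma}$ respectively; so $\pi$ is a homotopy equivalence in $\qsSet$.

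To finish, apply $i^\ast$: Corollary \ref{cor:iastpreserveshoequivs} shows that $i^\ast \pi$ is a homotopy equivalence in $\qSet$, and the (unlabelled) proposition immediately preceding Lemma \ref{lemma:iastpreserveshomotopy} tells us that any homotopy equivalence in $\qSet$ is an $\infty$-equivalence. Corollary \ref{cor:iastreflectseqs} then concludes that $\pi$ itself is an $\infty$-equivalence. I do not anticipate any substantive obstacle here: the only non-formal ingredient is the contracting homotopy $H$, which Lemma \ref{lemma:iisaspherical} already supplies, and everything else is bookkeeping within the adjunction $i_! \adjoint i^\ast$ together with the fact that Day convolution tensors maps into maps in the obvious way.
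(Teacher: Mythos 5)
Your proof is correct and follows essentially the same route as the paper: exhibit $\pi$ as a homotopy equivalence with homotopy inverse $\id_X\otimes\{0\}$, then transfer to $\qSet$ via $i^\ast$ using Corollary \ref{cor:iastpreserveshoequivs}, the fact that homotopy equivalences in $\qSet$ are $\infty$-equivalences, and Corollary \ref{cor:iastreflectseqs}. The only cosmetic difference is that the paper first reduces to $n=1$ and uses a single conjunction $\gamma^1$, whereas you contract the $n$-cube in one step using the symmetrized conjunction $H$ already built in Lemma \ref{lemma:iisaspherical}; either way the contracting homotopy is the non-formal content and the rest is bookkeeping.
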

\begin{proof}
  This is essentially the same as the proof of Lemma
  \ref{lemma:iisaspherical}.  It is sufficient to prove for all $X$
  when $n = 1$.  Let $IX = X\otimes \square^1_\Sigma$ and let $s$ be the map
  \begin{equation*}
    s = \id_X \otimes \{0\} : X \to IX
  \end{equation*}
  Then $\pi s = \id_X$.  We have a homotopy
  \begin{equation*}
    \xymatrix@C=4pc{
      X \otimes \square^1_\Sigma \ar[d]_{\id_{IX}\otimes\{0\}}
      \ar[dr]^{s\pi} \\
      X \otimes \square^2_\Sigma \ar[r]^{\id_X \otimes
        \gamma^1} & X \otimes \square^1_\Sigma \\
      X \otimes \square^1_\Sigma \ar[u]^{\id_{IX}\otimes\{1\}}
      \ar[ur]_{\id_{IX}}}
  \end{equation*}
  between $\id_{IX}$ and $s\pi$, so $\pi$ is a
  homotopy equivalence.  By Corollary \ref{cor:iastpreserveshoequivs},
  $i^\ast\pi$ is a homotopy equivalence and hence
  $\infty$-equivalence, so $\pi$ is an $\infty$-equivalence.
\end{proof}

\begin{lemma}\label{lemma:extendedproductpreservesmonos}
  Suppose $i:A\to B$ and $j:K\to L$ are monomorphisms in
  $\qsSet$.  Then the pushout-product
  \begin{equation*}
    i \odot j : A\otimes L \amalg_{A\otimes K} B\otimes K \to
    B\otimes L
  \end{equation*}
  is a monomorphism.
\end{lemma}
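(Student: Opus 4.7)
The plan is to invoke the cellular model of Proposition~\ref{prop:cellularmodelforqsigma}, reduce to pushout-products of generators, and verify those are monomorphisms by hand. Since $\mathsf{mono} = \Cell(I_\Sigma)$ and $\odot$ is cocontinuous in each variable (as $\otimes$ is), for fixed $g$ the class $\{f : f \odot g \in \mathsf{mono}\}$ is closed under cobase change, transfinite composition, coproducts, and retracts; an analogous statement holds in the other variable. By the standard two-step closure argument, it suffices to check that $s \odot s'$ is a monomorphism for $s, s' \in I_\Sigma$.

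Fix $s = H\backslash(\partial\square_\Sigma^m \to \square_\Sigma^m)$ and $s' = K\backslash(\partial\square_\Sigma^n \to \square_\Sigma^n)$ in $I_\Sigma$. Cocontinuity of $\otimes$ together with the identity $\square_\Sigma^m \otimes \square_\Sigma^n = \square_\Sigma^{m+n}$ gives $H\backslash X \otimes K\backslash Y \cong (H\times K)\backslash(X\otimes Y)$, where $H \times K$ embeds in $\Sigma_{m+n}$ as block permutations. Consequently $s \odot s'$ is the $(H\times K)$-orbit quotient of the ``ungauged'' pushout-product
\begin{equation*}
  h:(\partial\square_\Sigma^m \otimes \square_\Sigma^n) \amalg_{\partial\square_\Sigma^m \otimes \partial\square_\Sigma^n} (\square_\Sigma^m \otimes \partial\square_\Sigma^n) \to \square_\Sigma^{m+n}.
\end{equation*}
Unwinding the Day-convolution coend in terms of formal cubical products, $\partial\square_\Sigma^m \otimes \square_\Sigma^n$ embeds as the subpresheaf of $\square_\Sigma^{m+n}$ consisting of formal products with a $0$ or $1$ among the first $m$ entries, and symmetrically for $\square_\Sigma^m \otimes \partial\square_\Sigma^n$. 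Their intersection is $\partial\square_\Sigma^m \otimes \partial\square_\Sigma^n$, so in the topos $\qsSet$ the pushout is the effective union of these subobjects and $h$ is a monomorphism. Passage to $(H \times K)$-orbits preserves the monomorphism property because the relevant subobjects are $(H \times K)$-stable: pointwise in $\Cat{Set}$, a $G$-invariant inclusion $X \hookrightarrow Y$ gives a monomorphism $G\backslash X \to G\backslash Y$, since any identification $gx_1 = x_2$ in $Y$ already holds within $X$ by $G$-invariance.

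The main technical hurdle is the identification of tensor products such as $\partial\square_\Sigma^m \otimes \square_\Sigma^n$ with explicit subobjects of $\square_\Sigma^{m+n}$. The key input is the variable-disjointness condition built into formal cubical products: the first $m$ entries of a formal $(k, m+n)$-product use a set of variables disjoint from those used by the last $n$ entries. This yields an essentially unique factorization $(u \oplus v) \circ \sigma$ with $u \in \qsigma(\sqparen{a}, \sqparen{m})$, $v \in \qsigma(\sqparen{b}, \sqparen{n})$, and a deletion-reordering map $\sigma:\sqparen{k}\to\sqparen{a+b}$, from which the coend formula collapses to the desired subobject description.
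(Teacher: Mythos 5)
Your proof is correct and takes essentially the same approach as the paper: both reduce via the cellular model $I_\Sigma$ to pushout-products of generators and then verify these are monomorphisms, using the block embedding $\Sigma_n\times\Sigma_m\to\Sigma_{n+m}$ and the fact that orbit quotients commute with $\otimes$. The paper simply asserts the isomorphism $\big(\partial\square_\Sigma^n\to\square_\Sigma^n\big)_{H_n}\odot\big(\partial\square_\Sigma^m\to\square_\Sigma^m\big)_{H_m}\cong\big(\partial\square_\Sigma^{n+m}\to\square_\Sigma^{n+m}\big)_{H}$, while you supply the underlying calculation via effective unions in the topos and the observation that quotienting by a group acting compatibly on a $G$-stable subobject preserves monomorphisms.
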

\begin{proof}
  Recall from Proposition \ref{prop:cellularmodelforqsigma} that
  \begin{equation*}
    I_\Sigma = \big\{ (\partial \square_\Sigma^n)_H \to
    (\square^n_\Sigma)_H \bigm| \text{$n \ge 0$ and $H\leqslant
    \Sigma_n$} \big\}\notag
  \end{equation*}
  is a cellular model for $\qsSet$.  First, we'll show
  that the pushout-product of any two maps in $I_\Sigma$ is a
  monomorphism.  Suppose $n,m\ge 0$ and $H_n$, $H_m$ subgroups of
  $\Sigma_n$ and $\Sigma_m$, respectively.  Let $H = H_n\times H_m$ be
  the subgroup of $\Sigma_{n+m}$ generated by the images of $H_n$ and
  $H_m$ under the homomorphism $\Sigma_n \times \Sigma_m \to
  \Sigma_{n+m}$.  Then 
  \begin{equation*}
    \big( \partial \square_\Sigma^n \to \square_\Sigma^n)_{H_n}
    \odot 
    \big( \partial \square_\Sigma^m \to \square_\Sigma^m)_{H_m}
    \cong 
    \big( \partial \square_\Sigma^{n+m} \to \square_\Sigma^{n+m})_H.
  \end{equation*}
  This map is a monomorphism.  A standard deduction lets us upgrade
  this to deduce that the pushout-product of any two monomorphisms is
  a monomorphism: by the small object argument applied to $I_\Sigma$,
  we know that $j$ is a monomorphism if and only if $j \pitchfork p$
  for all $p\in\inj I_\Sigma$ \cite{beke1}.
\end{proof}

\begin{theorem}
  The spatial model structure on $\qsSet$ is monoidal and satisfies
  the Schwede-Shipley monoid axiom.
\end{theorem}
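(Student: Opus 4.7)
I plan to treat the pushout-product axiom and the monoid axiom separately, reducing each to the analogous property on $\qSet$ via the strong monoidal left Quillen equivalence $i_!$.

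For the pushout-product axiom, cocontinuity of Day convolution in each variable lets us reduce to checking on the generating sets $I$ and $J$ of Theorem~\ref{theorem:extendedcubicalmodelstructure}. By Proposition~\ref{prop:ibangisleftquillen} (in particular the identification $i_!\partial\square^n \cong \partial\square^n_\Sigma$) together with the construction of $J$, every map in $I\cup J$ is of the form $i_!(f)$ for a generating (acyclic) cofibration $f$ in $\qSet$. Since $i_!$ is strong monoidal (Proposition~\ref{prop:ibangismonoidal}) and cocontinuous, pushout-products of these maps are $i_!$ applied to pushout-products in $\qSet$; Theorem~\ref{theorem:cisinskimodelstructureonqq}(2) provides the required (acyclic) cofibrations in $\qSet$, and $i_!$ transports these to $\qsSet$ as it is left Quillen by Theorem~\ref{theorem:quillenequivalences}.

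For the monoid axiom, I aim to show that for every generating acyclic cofibration $\iota\colon A\to B$ in $J$ and every $X\in\qsSet$, the map $\iota\otimes X$ is both a monomorphism and an $\infty$-equivalence. The monomorphism claim is immediate from Lemma~\ref{lemma:extendedproductpreservesmonos}, since $\iota\otimes X = \iota\odot(\emptyset\to X)$ and $\emptyset\to X$ is a monomorphism. For the $\infty$-equivalence claim I would use the triangle
\begin{equation*}
\xymatrix{i_!\sqcap^n_{j,\varepsilon}\otimes X \ar[rr]^{\iota\otimes\id} \ar[rd] && \square^n_\Sigma\otimes X \ar[ld] \\ & X &}
\end{equation*}
whose right diagonal is an $\infty$-equivalence by Lemma~\ref{lemma:projectionequivalence}; by 2-out-of-3 it suffices to show the left diagonal $i_!\sqcap^n_{j,\varepsilon}\otimes X\to X$ is one too. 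I would verify this after applying $i^\ast$ using the lax monoidal comparison $\mu_{A,X}\colon i^\ast A\otimes i^\ast X\to i^\ast(A\otimes X)$: the coherence $\mu\circ(\eta_A\otimes\eta_X) = \eta_{A\otimes X}$, together with Proposition~\ref{prop:ibangiastunitiswkeq}, shows $\mu_{A,X}$ is a weak equivalence when $X$ lies in the image of $i_!$, and a cellular induction on $X$ using Corollary~\ref{cor:icounitpreservesweakeqs} and Proposition~\ref{prop:cellularmodelforqsigma} extends this to general $X$. Once $\mu_{A,X}$ is a weak equivalence the conclusion follows: $i^\ast i_!\sqcap^n_{j,\varepsilon}$ is contractible in $\qSet$, so tensoring with $i^\ast X$ preserves this (every object of $\qSet$ being cofibrant, the pushout-product axiom of Theorem~\ref{theorem:cisinskimodelstructureonqq} upgrades to the monoid axiom), and $i^\ast$ reflects $\infty$-equivalences by Corollary~\ref{cor:iastreflectseqs}.

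Closure of this property under the Cell operations defining the monoid axiom descends via $i^\ast$: $\infty$-equivalences in $\qSet$ are stable under pushouts along monomorphisms, transfinite compositions of monomorphisms, coproducts, and retracts (since cofibrations there coincide with monomorphisms and the model structure is left proper), and $i^\ast$ preserves monomorphisms and filtered colimits while reflecting $\infty$-equivalences. The principal obstacle is the cellular induction showing $\mu_{A,X}$ is a weak equivalence for general $X$: both sides of $\mu$ are cocontinuous in $X$ and agree up to weak equivalence on representables, but since weak equivalences in $\qsSet$ are not preserved by arbitrary colimits, the extension demands a careful skeletal filtration argument together with left properness and the fact that $A\otimes -$ is left Quillen for cofibrant $A$.
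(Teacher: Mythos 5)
Your treatment of the pushout-product axiom matches the paper exactly: both reduce to generating sets, invoke the strong monoidality and cocontinuity of $i_!$, and transport monoidalness from $\qSet$.

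For the monoid axiom your initial reductions are also right: reduce to showing each generating acyclic cofibration tensored with an arbitrary $X$ is a monomorphism and an $\infty$-equivalence (monomorphism via Lemma~\ref{lemma:extendedproductpreservesmonos}), and by 2-out-of-3 it suffices to show $i_!\sqcap^n_{j,\varepsilon}\otimes X\to X$ is an $\infty$-equivalence. However, the route through the lax comparison $\mu_{A,X}\colon i^\ast A\otimes i^\ast X\to i^\ast(A\otimes X)$ is where the argument breaks down. The coherence $\mu\circ(\eta\otimes\eta)=\eta$ does show $\mu$ is a weak equivalence when $X$ is in the image of $i_!$, because then all three maps relate \emph{cofibrant} cubical sets and the monoidal model structure on $\qSet$ handles the tensor of weak equivalences. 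But extending to general $X$ via the counit $\varepsilon_X\colon i_!i^\ast X\to X$ requires knowing that $\id_A\otimes\varepsilon_X$ is an $\infty$-equivalence in $\qsSet$; since $X$ need not be cofibrant there, $A\otimes({-})$ being left Quillen does not give this, and asserting that tensoring against the contractible $A$ preserves $\infty$-equivalences is precisely an instance of the monoid axiom you are trying to establish. The ``careful skeletal filtration argument'' you gesture at is not a technicality---it is the whole difficulty, and the flag you raise is correct.

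The paper sidesteps $\mu$ entirely. It realizes $X\otimes i_!\sqcap^n_{n,\varepsilon}$ as the pushout of the $\infty$-equivalence $\id\otimes\{1-\varepsilon\}\colon X\otimes\partial\square^{n-1}_\Sigma\to X\otimes\partial\square^{n-1}_\Sigma\otimes\square^1_\Sigma$ along the monomorphism $g=\id_X\otimes(\partial\square^{n-1}_\Sigma\to\square^{n-1}_\Sigma)$; the corner map $\ell$ into $X\otimes\square^n_\Sigma$ composes with the pushout leg $k$ to give $\id_X\otimes\{1-\varepsilon\}$, a section of the projection and hence an $\infty$-equivalence by Lemma~\ref{lemma:projectionequivalence}. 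Applying $i^\ast$ (which preserves colimits and monomorphisms and reflects $\infty$-equivalences), one sees $i^\ast k$ is a cobase change of an $\infty$-equivalence along a cofibration in the left proper category $\qSet$, so $i^\ast k$, hence $k$, hence $\ell$, is an $\infty$-equivalence, and cosymmetries handle the general cap index. This argument needs no comparison map $\mu$ and no cellular induction; I would recommend replacing your $\mu$-based step with this pushout-and-left-properness argument, after which the rest of your closure reasoning goes through.
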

\begin{proof}
  That the spatial model structure is monoidal is a straightforward
  consequence of the fact that the generating cofibrations and acyclic
  cofibrations are given by left Kan extension along $i$, which is
  itself strong monoidal.  That is, if $f$ and $g$ are cofibrations
  in $\qSet$, then $i_! f \odot i_! g \cong i_! (f \odot
  g)$ is a cofibration in $\qsSet$, acyclic if either $f$
  or $g$ is.
  
  For the monoid axiom, first note that it is sufficient to check
  that
  \begin{equation*}
    \Cell \big\{ X \otimes \sqcap^n_{i,\varepsilon} \to X\otimes
    \square^n 
    \bigm| \text{$X\in\Ob \qsSet$,
    $1\le i\le n$ and $\varepsilon=0,1$} \big\}
  \end{equation*}
  comprises $\infty$-equivalences by \cite[Lemma 3.5 (2)]{schwedeshipley}.
  Let $\varepsilon = 0$ or $1$ and let $n > 0$.
  For arbitrary extended cubical sets $Y$, the map
  \begin{equation*}
    \id_Y \otimes \{\varepsilon\} : 
    Y \otimes \square^0_\Sigma \to Y \otimes \square^1_\Sigma
  \end{equation*}
  is a section of an $\infty$-equivalence by Lemma
  \ref{lemma:projectionequivalence}, so it is itself a weak
  equivalence.  Now consider the pushout
  \begin{equation*}
    \xymatrix@C=4pc{X \otimes \partial \square^{n-1}_\Sigma
      \ar[r]^{\id\otimes\{1-\varepsilon\}}  \ar[d]_g &
      X \otimes \partial \square^{n-1}_\Sigma \otimes \square^1_\Sigma
      \ar[d] \ar@/^/[ddr] \\
      X\otimes \square^{n-1}_\Sigma \ar[r]^k
      \ar@/_/[drr]_{\id\otimes\{1-\varepsilon\}} &
      X \otimes i_!\sqcap^n_{n,\varepsilon} \ar[dr]^\ell \\
      && X \otimes \square^n_\Sigma.}
  \end{equation*}
  By the two-out-of-three axiom and Lemma
  \ref{lemma:projectionequivalence} we know that $\ell$ is an
  $\infty$-equivalence if and only if $k$ is an $\infty$-equivalence.  But $g$
  is a monomorphism by Lemma
  \ref{lemma:extendedproductpreservesmonos}.  so $i^\ast g$ is a
  monomorphism.  Since $i^\ast k$ is the cobase change of an
  $\infty$-equivalence along a cofibration and $\qSet$ is left
  proper, $i^\ast k$ is an $\infty$-equivalence, so $k$ is an
  $\infty$-equivalence.  The cosymmetry maps allow us to permute the
  lower cap coordinate $n$.
\end{proof}

\section{Diagrams of extended cubical sets and regularity}
Recall that if $X$ is a simplicial set, there is a weak equivalence
\begin{equation*}
  \hocolim_{\Delta[n] \to X} \Delta[n] \to X
\end{equation*}
induced by the identification of $X$ with the colimit of its
simplices.  There are various ways to prove this; one method uses
Reedy model structures to show that the honest colimit of the diagram
of simplices of $X$ computes the homotopy colimit.  In this section,
we'll prove an analogous formula for (extended) cubical sets: sets:
these can be decomposed as the homotopy colimit of their cubes.

Suppose $\mathscr{R}$ is a Reedy category and $\mathscr{C}$ is a model
category.  The category $\mathscr{C}^\mathscr{R}$ of
$\mathscr{R}$-diagrams in $\mathscr{C}$ may be equipped with
\emph{Reedy model structure} \cite{hovey,hirschhorn,dhks}.  This by
now is a well-known construction; we've implicitly used it in
describing directed colimits and pushouts of weak equivalences.  We'll
give a brief overview here.
\begin{definition}[{\cite[Chapter 15]{hirschhorn}}]
  Suppose $r\in \Ob\mathscr{R}$.
  \begin{enumerate}
  \item We define $\partial (\mathscr{R}^{+} \downarrow r)$ to be the
    full subcategory of $\mathscr{R}^{+} \downarrow r$ consisting of
    non-identity arrows $s\to r$.  Let $F\in\mathscr{C}^\mathscr{R}$.
    The \emph{$r$th latching object of $F$} is the colimit
    \begin{equation*}
      L_r F = \colim_{\partial (\mathscr{R}^{+} \downarrow r)} F \in \mathscr{C}.
    \end{equation*}
    This is functorial in $F$.  Note there is a natural map $L_r F \to
    F_r$.  Suppose $f:F\to G$ is an arrow in
    $\mathscr{C}^{\mathscr{R}}$.  We say $f$ is a \emph{Reedy
      cofibration} if each corner map
    \begin{equation*}
      F_r \amalg_{L_r(F)} L_r(G) \to G_r,
    \end{equation*}
    $r\in\Ob\mathscr{R}$, is a cofibration in $\mathscr{C}$.
  \item We define $\partial (r\downarrow \mathscr{R}^{-})$ to be the
    full subcategory of $r\downarrow \mathscr{R}^{-}$ consisting of
    non-identity arrows $r\to s$.  The \emph{$r$th matching object of
      $F$} is the limit
    \begin{equation*}
      M_r F = \lim_{\partial (r\downarrow \mathscr{R}^{-})} F \in
      \mathscr{C}.
    \end{equation*}
    This is functorial in $F$ and there is a natural transformation
    $({-})_r \to M_r$.  An arrow $f:F\to G$ in $\mathscr{C}^{\mathscr{R}}$
    is a \emph{Reedy fibration} if each corner map
    \begin{equation*}
      F_r \to M_r F \times_{M_r G} G_r,
    \end{equation*}
    $r\in\Ob\mathscr{R}$, is a fibration in $\mathscr{C}$.
  \item We call a map $f:F\to G$ in $\mathscr{C}^\mathscr{R}$ an
    \emph{objectwise weak equivalence} if $f_r:F_r \to G_r$ is a
    weak equivalence for all $r\in\Ob\mathscr{R}$.
  \end{enumerate}
\end{definition}

\begin{theorem}[{\cite[Theorems 15.3.4, 15.3.15,
    15.6.27]{hirschhorn}}]\label{theorem:hirschhornreedyomnibus}
  Suppose $\mathscr{C}$ is a model category.
  \begin{enumerate}
  \item The category $\mathscr{C}^{\mathscr{R}}$ of diagrams has a
    model category structure with cofibrations the Reedy cofibrations,
    fibrations the Reedy fibrations, and weak equivalences the
    objectwise weak equivalences.
  \item If $\mathscr{C}$ is cofibrantly generated, the Reedy model
    structure on $\mathscr{C}^\mathscr{R}$ is cofibrantly generated as
    well.
  \item In $\mathscr{C}^\mathscr{R}$, an arrow $f:F\to G$ is an
    acyclic cofibration if and only if each corner map $L_r G
    \amalg_{L_r F} F_r \to G_r$ is an acyclic cofibration for all
    $r\in\Ob\mathscr{R}$.  Dually, $f$ is an acyclic fibration if and
    only if each corner map $F_r \to G_r \times_{M_r F} M_r G$ is an
    acyclic fibration.
  \end{enumerate}
\end{theorem}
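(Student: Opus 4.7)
The plan is to follow Hirschhorn's original inductive argument, where the induction is on Reedy degree. The engine of the induction is the degreewise decomposition: for any arrow $f:F\to G$ in $\mathscr{C}^\mathscr{R}$ and any $r\in\Ob\mathscr{R}$, the map $f_r$ factors as
\begin{equation*}
F_r \to F_r \amalg_{L_r F} L_r G \to G_r,
\end{equation*}
in which the second map is by definition the $r$th latching corner of $f$ and the first map is a pushout of the latching map $L_r F \to L_r G$. Since $L_r$ is a colimit over $\partial(\mathscr{R}^{+}\downarrow r)$, whose objects all have strictly smaller Reedy degree than $r$, induction on $r$ shows that Reedy cofibrations are objectwise cofibrations (and, assuming part (3) of the theorem has been verified inductively at lower degrees, that Reedy acyclic cofibrations are objectwise acyclic cofibrations). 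The dual statement for matching objects and Reedy fibrations is obtained by the formally dual argument.

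For part (1), the only nontrivial axioms are factorization and the non-formal half of lifting. Given $f:F\to G$, construct a factorization $F\to H\to G$ degreewise: assuming $H_s$ has been defined with maps $F_s\to H_s\to G_s$ for all $s$ with $\deg s<\deg r$, form the $r$th corner map
\begin{equation*}
F_r \amalg_{L_r F} L_r H \to G_r \times_{M_r G} M_r H,
\end{equation*}
factor it in $\mathscr{C}$ using the model structure there (either as cofibration followed by acyclic fibration, or as acyclic cofibration followed by fibration), and take $H_r$ to be the intermediate object; the universal properties of $L_r$ and $M_r$ then extend the $\mathscr{R}$-diagram consistently. Lifting of Reedy cofibrations against Reedy acyclic fibrations is checked degreewise, using the degreewise decomposition of $f_r$ together with the lifting property for corner maps guaranteed by the definitions.

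For part (2), take generating sets $I$ and $J$ of $\mathscr{C}$ and, for each $r\in\Ob\mathscr{R}$ and each map $u:A\to B$ in $I$ (respectively $J$), build a map $\tilde u^{r}:\tilde A^{r}\to \tilde B^{r}$ of $\mathscr{R}$-diagrams whose $r$th latching corner map is exactly $u$; concretely, one left Kan extends $u$ along the inclusion of the object $r$ into $\mathscr{R}$, taking into account the boundary diagram $\partial(\mathscr{R}^{+}\downarrow r)$. Since $\mathscr{R}$ is small and $I$, $J$ are sets, the resulting collections are sets. By adjunction, a map $F\to G$ has the right lifting property against every $\tilde u^{r}$ (with $u\in I$) if and only if every corner map $F_r \to G_r\times_{M_r G} M_r F$ has the right lifting property against every $u\in I$, i.e.\ is an acyclic fibration in $\mathscr{C}$; hence these generate the Reedy fibrations, and analogously for acyclic cofibrations. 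Part (3) is then immediate from the degreewise decomposition combined with two-out-of-three applied to the factorization displayed above: the forward direction is the degreewise lemma, and for the converse, if each corner map is an acyclic cofibration then each $f_r$ is the composite of a pushout of an acyclic cofibration (by induction on degree) followed by an acyclic cofibration.

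The main obstacle is the bookkeeping required to produce the diagrams $\tilde A^{r}$ and $\tilde B^{r}$ in part (2) and to verify that the corner-map formulation truly characterizes the lifting problems; both require a careful induction showing that diagrams whose latching objects are prescribed can be freely built degree by degree. Once that construction is in place, each remaining verification is routine.
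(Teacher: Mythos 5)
The paper states this theorem purely as a citation to Hirschhorn's book and supplies no proof of its own, so the relevant comparison is against the cited reference, which your sketch tracks closely: the degreewise decomposition of $f_r$, the corner-map factorizations built by induction on Reedy degree, and the generating sets parametrized by pairs $(r,u)$ via the adjunction relating lifting against a free diagram and its boundary to lifting against the matching corner map are all exactly Hirschhorn's ingredients.

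There is one genuine gap. You declare the forward direction of part~(3)---that an acyclic Reedy cofibration has latching corner maps which are acyclic cofibrations---to be ``immediate from the degreewise decomposition.'' It is not. The decomposition exhibits $f_r$ as the composite of a pushout of $L_r F \to L_r G$ with the latching corner map; two-out-of-three then lets you conclude the corner map is a weak equivalence only if you already know the first factor is one, which reduces to showing $L_r F \to L_r G$ is itself an acyclic cofibration. That requires a further lemma (that a map whose latching corner maps at all lower degrees are acyclic cofibrations induces acyclic cofibrations on latching objects, proved by analyzing the direct Reedy structure on $\partial(\mathscr{R}^{+}\downarrow r)$ and left Quillen-ness of the colimit), and the delicate point is that this lemma is also what makes the factorization and lifting axioms work, so the induction must be organized to avoid circularity. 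Hirschhorn avoids the entanglement with the retract argument instead: factor $f$ into a map whose corner maps are acyclic cofibrations followed by a Reedy fibration, observe that the latter is an objectwise weak equivalence by two-out-of-three and hence an acyclic Reedy fibration, lift $f$ against it to exhibit $f$ as a retract of the first factor, and use that corner maps are preserved by retracts. Your converse direction of part~(3) and parts (1) and (2) are sound as sketched.
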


Recall that if $\mathscr{R}$ is a Reedy category and
$X\in\widehat{\mathscr{R}}$, then $\mathscr{R} \downarrow X$ is a
Reedy category as well.
\begin{proposition}\label{prop:ezcolimisleftquillen}
  Suppose $\mathscr{R}$ is an \textsc{ez} Reedy category and
  $X\in\widehat{\mathscr{R}}$.  Then if $Z\in\Ob\mathscr{C}$ is
  fibrant, the constant diagram $\mathscr{R}\downarrow X \to
  \mathscr{C}$ on the object $Z$ is Reedy fibrant.
\end{proposition}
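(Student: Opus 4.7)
The plan is to verify the Reedy fibrancy condition objectwise: for each $(r,x)\in\mathscr{R}\downarrow X$, the matching map $c_Z((r,x))=Z\to M_{(r,x)}(c_Z)$ must be a fibration in $\mathscr{C}$. The matching object is the limit of $c_Z$ over the matching category $\mathcal{M} = \partial((r,x)\downarrow(\mathscr{R}\downarrow X)^{-})$, whose objects are pairs $(\sigma,y)$ with $\sigma:r\to s$ a non-identity map in $\mathscr{R}^{-}$ and $y\in X_s$ satisfying $\sigma^\ast y = x$, and whose morphisms $(\sigma,y)\to(\sigma',y')$ are maps $\tau:s\to s'$ in $\mathscr{R}^{-}$ with $\tau\sigma=\sigma'$ and $\tau^\ast y' = y$. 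I will show that $\mathcal{M}$ is either empty or has a terminal object; in the first case the matching map becomes $Z\to\ast$, a fibration because $Z$ is fibrant, while in the second case, since $c_Z$ is constant, the matching map becomes $\mathrm{id}_Z$, which is automatically a fibration.

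If $x$ is nondegenerate then no such $(\sigma,y)$ exists by definition, so $\mathcal{M}$ is empty.

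If $x$ is degenerate, Proposition \ref{prop:ezdecompositionsareunique} produces an \textsc{ez} decomposition $x = \sigma_0^\ast y_0$ with $\sigma_0:r\to s_0$ a non-identity arrow in $\mathscr{R}^{-}$ and $y_0\in X_{s_0}$ nondegenerate. Because $\mathscr{R}$ has only identity automorphisms, the contractible groupoid of Proposition \ref{prop:ezdecompositionsareunique} reduces to a single point, so this decomposition is strictly unique. I claim that $(\sigma_0,y_0)$ is terminal in $\mathcal{M}$. Given any $(\sigma,y)\in\mathcal{M}$ with $\sigma:r\to s$, apply Proposition \ref{prop:ezdecompositionsareunique} to $y\in X_s$ to obtain a factorization $y = \tau^\ast y_0'$ with $\tau:s\to s'$ in $\mathscr{R}^{-}$ and $y_0'$ nondegenerate. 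Then $x = \sigma^\ast y = (\tau\sigma)^\ast y_0'$ exhibits $(\tau\sigma,y_0')$ as an \textsc{ez} decomposition of $x$; strict uniqueness forces $s'=s_0$, $\tau\sigma=\sigma_0$ and $y_0'=y_0$, so $\tau$ supplies a morphism $(\sigma,y)\to(\sigma_0,y_0)$ in $\mathcal{M}$. This morphism is unique because $\sigma$ is a split epimorphism, hence an epimorphism, so any two candidates for $\tau$ that agree after precomposition with $\sigma$ must coincide.

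The existence of a terminal object then identifies $M_{(r,x)}(c_Z)$ with $Z$ and the matching map with $\mathrm{id}_Z$, completing the proof. The main obstacle is the terminal-object argument in the degenerate case; the remainder is routine unwinding of the Reedy structure on the slice $\mathscr{R}\downarrow X$ together with the fact that a limit of a constant diagram over a category with a terminal object is the constant value at that object. The essential ingredient is the uniqueness of \textsc{ez} decompositions (Proposition \ref{prop:ezdecompositionsareunique}), strengthened to strict uniqueness by the hypothesis that $\mathscr{R}$ is Reedy.
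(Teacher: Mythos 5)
Your proof is correct. It differs from the paper's primary line of argument in an interesting way that's worth noting. The paper first shows that the matching category $\mathscr{I}=\partial\bigl(f\downarrow(\mathscr{R}\downarrow X)^{-}\bigr)$, if nonempty, is \emph{connected}, by applying axiom~EZ3: given two objects $r\to s_1$ and $r\to s_2$, it forms the absolute pushout and observes that the legs $\tau_1,\tau_2$ lie in $\mathscr{R}^{-}$, hence yield a cospan inside $\mathscr{I}$. Connectedness plus nonemptiness already gives left cofinality of $\mathscr{I}\to\ast$, which identifies the matching map with $\mathrm{id}_Z$. The paper then adds, in a parenthetical and unproved remark, that $\mathscr{I}$ in fact has a \emph{terminal} object given by the \textsc{ez} decomposition of $f$. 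Your proof is essentially a full verification of that parenthetical: you show directly that the unique strict \textsc{ez} decomposition $(\sigma_0,y_0)$ of $x$ is terminal, using Proposition~\ref{prop:ezdecompositionsareunique} and the fact that a Reedy category has only identity isomorphisms (which collapses the contractible groupoid of decompositions to a point). The uniqueness of the mediating map $\tau$ via the epimorphy of $\sigma$ is exactly the right touch. The terminal-object route is a bit stronger than what is strictly needed; the paper's connectedness argument via EZ3 has the advantage of not needing strict uniqueness of decompositions and would adapt more readily to \textsc{ez} categories with nontrivial automorphisms (where the matching-category picture is more involved), whereas your approach makes essential use of the Reedy hypothesis. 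Both establish the same identification of the matching object with $Z$, and both handle the empty case identically. No gaps.
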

\begin{proof}
  This is a straightforward generalization of \cite[Proposition
  15.10.4]{hirschhorn}.
  Let $c_Z:\mathscr{R} \downarrow X \to \mathscr{C}$ denote the
  constant diagram on $Z$.  We need to check that for all $f:[r]\to
  X$, $Z \to M_f(c_Z)$ is a fibration in $\mathscr{C}$.  Recall that
  the $f$th matching object is computed by a limit indexed on
  $\mathscr{I} = \partial (f \downarrow (\mathscr{R}\downarrow
  X)^{-})$.  If $\mathscr{I}$ is empty, then $M_r(c_Z) = \ast$ and
  $Z\to\ast$ is a fibration by assumption.  Suppose $\mathscr{I}$ is
  nonempty.  Using the notation of Section \ref{section:qisreedy},
  suppose $[r]\to [s_i] \to X$ are two arrows in $\mathscr{I}$.  We
  may take the absolute pushout of $r\to s_1$ and $r\to s_2$ in
  $\mathscr{R}$:
  \begin{equation*}
    \xymatrix{[r] \ar[r]^{\sigma_1} \ar[d]_{\sigma_2} & [s_1]
    \ar[d]_{\tau_2} \ar@/^/[ddr] \\
    [s_2] \ar[r]^{\tau_1} \ar@/_/[rrd] & [t] \ar@{.>}[dr]^{\exists !} \\
    && X.}
  \end{equation*}
  Note that $\tau_1$ and $\tau_2$ are in $\mathscr{R}^{-}$.  Hence
  $\Nerve \mathscr{I}$ is connected.  (In fact, $\mathscr{I}$ has a terminal
  object given by the \textsc{ez} decomposition of $f$.)  Write
  $\pi:\mathscr{I}\to\ast$; the functor $\pi$ is thus left cofinal, so
  $\id \to \pi_\ast \pi^\ast$ is a natural isomorphism \cite{maclane}.
  Hence $Z\to M_r(c_Z)$ is isomorphic to the identity map on $Z$, so
  it is a fibration.
\end{proof}

Let $\partial (\mathscr{R}\downarrow r)$ denote the category of
$\mathscr{R}$-simplices $\mathscr{R} \downarrow \partial [r]$.  This
is the full subcategory of $\mathscr{R}\downarrow r$ spanned by the
objects those arrows $x\to r$ factoring through some object $s$, $\deg
s < \deg r$.
\begin{lemma}[{\cite[Proposition 15.2.8]{hirschhorn}}]\label{lemma:standardreedycoend}
  Suppose $\mathscr{R}$ is a Reedy category and $r\in\Ob\mathscr{R}$.
  The inclusion functor
  \begin{equation*}
    j:\partial(\mathscr{R}^{+} \downarrow r) \to \partial(\mathscr{R}
    \downarrow r)
  \end{equation*}
  is homotopy right cofinal.
\end{lemma}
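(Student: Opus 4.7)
The plan is to apply Quillen's Theorem A: the inclusion $j$ is homotopy right cofinal provided that for every $f: x \to r$ in $\partial(\mathscr{R} \downarrow r)$, the comma category $f \downarrow j$ has contractible nerve. I will establish this by exhibiting an initial object in each such comma category; contractibility then follows from the standard natural transformation from the constant diagram at the initial object to the identity functor.

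The candidate initial object is supplied by the Reedy factorization of $f$. Write $f = f^{+} \circ f^{-}$ with $f^{-}: x \to y$ in $\mathscr{R}^{-}$ and $f^{+}: y \to r$ in $\mathscr{R}^{+}$. The first thing to check is that $\deg y < \deg r$, so that $f^{+}$ really is an object of $\partial(\mathscr{R}^{+} \downarrow r)$. Since $f$ is in $\partial(\mathscr{R} \downarrow r)$, it factors as $f = g \circ h$ with $h: x \to s$, $g: s \to r$, and $\deg s < \deg r$. Reedy-factorize $g$ as $s \to y'' \to r$ to get a positive part with $\deg y'' \le \deg s < \deg r$, then Reedy-factorize $x \to y''$; splicing the two pieces together and appealing to uniqueness of Reedy factorizations forces the positive part $y \to r$ of $f$ to satisfy $\deg y \le \deg y'' < \deg r$. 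With this bound in hand, the pair $\bigl((y, f^{+}), f^{-}\bigr)$ is a legitimate object of $f \downarrow j$.

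To verify that this object is initial, take an arbitrary object $\bigl((y', g: y' \to r), h: x \to y'\bigr)$ of $f \downarrow j$ (so $g \in \mathscr{R}^{+}$ is non-identity and $g h = f$) and Reedy-factorize $h$ as $h^{+} \circ h^{-}$ with $h^{-}: x \to z \in \mathscr{R}^{-}$ and $h^{+}: z \to y' \in \mathscr{R}^{+}$. The composite $(g h^{+}) \circ h^{-}$ is then a Reedy factorization of $f$, because $g h^{+}$ is a composition of $\mathscr{R}^{+}$-maps and hence lies in $\mathscr{R}^{+}$. Uniqueness of Reedy factorizations yields $z = y$, $h^{-} = f^{-}$, and $g h^{+} = f^{+}$, so $h^{+}: y \to y'$ supplies a morphism in $\partial(\mathscr{R}^{+} \downarrow r)$ that is compatible with the comma data. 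Uniqueness of this morphism is automatic: any competing $\varphi: y \to y'$ must by definition of $\partial(\mathscr{R}^{+} \downarrow r)$ lie in $\mathscr{R}^{+}$, and the identity $\varphi \circ f^{-} = h$ exhibits $\varphi$ as the positive part of the Reedy factorization of $h$, forcing $\varphi = h^{+}$.

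The main obstacle is the degree-dropping verification in the second paragraph; once one knows that the positive part of the Reedy factorization of $f$ lies strictly below $\deg r$, the rest is a mechanical application of the uniqueness of Reedy factorizations, together with the crucial observation that morphisms in $f \downarrow j$ are by construction constrained to lie in $\mathscr{R}^{+}$, which is exactly what makes uniqueness of the comma morphism coincide with uniqueness of the Reedy factorization of $h$.
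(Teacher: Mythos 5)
Your proof is correct and follows the same strategy as the paper: the Reedy factorization $f=f^{+}f^{-}$ supplies a distinguished object of the comma category $f\downarrow j$, and uniqueness of Reedy factorizations makes it initial, so $\Nerve(f\downarrow j)$ is contractible. You go a bit further than the paper in two respects. First, you explicitly check that the intermediate object $y$ of the Reedy factorization of $f$ has $\deg y<\deg r$, which is needed so that $f^{+}$ is a non-identity map and hence a genuine object of $\partial(\mathscr{R}^{+}\downarrow r)$; the paper begins with ``let $f$ be a non-identity map'' and never draws on the hypothesis $f\in\partial(\mathscr{R}\downarrow r)$, even though a non-identity $f$ lying in $\mathscr{R}^{-}$ would have $f^{+}=\id_r$ and the argument would break down---your degree bound is exactly what rules this out. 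Second, you correctly identify the distinguished object as \emph{initial}, whereas the paper's text calls it ``terminal''; the map $k^{+}$ produced in the paper goes \emph{out of} that object, so ``initial'' is the right word, though the slip is harmless for contractibility.
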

\begin{proof}
  For (1), let $f:x \to r$ be a non-identity map in $\mathscr{R}$.
  We factor $f$ as $f = f^{+} f^{-}$, $f^{+}\in\mathscr{R}^+$,
  $f^{-}\in\mathscr{R}^-$.  Suppose
  \begin{equation*}
    \xymatrix{x \ar[rr]^f \ar[dr]_k && r \\
      & s \ar[ur]_{\ell^{+}\in\Ar\mathscr{R}^{+}}}
  \end{equation*}
  is an object in $f \downarrow j$.  We factor $k = k^{+} k^{-}$, so
  $f = (\ell^{+} k^{+}) k^{-}$.  Since $\mathscr{R}$ is Reedy, $k^{-}
  = f^{-}$ and $\ell^{+} k^{+} = f^{+}$, so the triangle
  \begin{equation*}
    \xymatrix{x \ar[rr]^f \ar[dr]_{f^{-}} && r \\
      & s' \ar[ur]_{f^{+}}}
  \end{equation*}
  is terminal in $f \downarrow j$.  Hence $\Nerve (f\downarrow j)$ is
  contractible.
\end{proof}

\begin{proposition}\label{prop:canonicaldiagramisreedycofibrant}
  Suppose $\mathscr{R}$ is an \textsc{ez} Reedy category and
  $X\in\widehat{\mathscr{R}}$.  Let $\widetilde{X}$ be the diagram
  \begin{equation*}
    \xymatrix{\mathscr{R} \downarrow X \ar[r]^\pi & \mathscr{R} \ar[r]^{[{-}]} &
    \widehat{\mathscr{R}}}
  \end{equation*}
  For an $r$-simplex $f:[r]\to X$, the Reedy map $L_f \widetilde{X}
  \to \widetilde{X}_f$ is isomorphic to the inclusion $\partial [r]
  \to [r]$.
\end{proposition}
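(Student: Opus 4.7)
The plan is to reduce the latching object of $\widetilde{X}$ at $f$ to a colimit of representables indexed by the positive arrows into $r$, and then to identify this colimit with $\partial[r]$.

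First I would identify the indexing category $\partial((\mathscr{R}\downarrow X)^{+}\downarrow f)$ with $\partial(\mathscr{R}^{+}\downarrow r)$. A morphism $(s,g)\to (r,f)$ in $(\mathscr{R}\downarrow X)^{+}$ is by definition a map $v:s\to r$ in $\mathscr{R}^{+}$ with $fv=g$; the datum $g$ is determined by $v$, so the forgetful functor $(\mathscr{R}\downarrow X)^{+}\downarrow f \to \mathscr{R}^{+}\downarrow r$ is an isomorphism of categories, and it restricts to an isomorphism on non-identity arrows. Under this identification $\widetilde{X}$ restricts to $v\mapsto [s]$, so
\begin{equation*}
L_f\widetilde{X} \;\cong\; \colim_{v\in \partial(\mathscr{R}^{+}\downarrow r)}[s].
\end{equation*}

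Next I would express $\partial[r]=\sk_{n-1}[r]$ (with $n=\deg r$) as a colimit of representables: applying co-Yoneda to $(j_{n-1})^\ast[r]\in\widehat{\mathscr{R}_{\le n-1}}$ and using that $(j_{n-1})_!$ is cocontinuous and carries each restricted representable to the corresponding $[s]$ yields
\begin{equation*}
\partial[r] \;\cong\; \colim_{u\in \mathscr{R}_{\le n-1}\downarrow r}[s].
\end{equation*}
The heart of the argument is then to show that the inclusion $j:\partial(\mathscr{R}^{+}\downarrow r)\hookrightarrow \mathscr{R}_{\le n-1}\downarrow r$ is left cofinal, which will force these two colimits to coincide. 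For each $u:s\to r$ with $\deg s\le n-1$, I claim that an initial object of $u\downarrow j$ is provided by an \textsc{ez} decomposition $u=u^{+}u^{-}$. The positive factor $u^{+}$ is non-identity, since otherwise $u=u^{-}$ would be a split epimorphism and force $\deg s\ge \deg r$, contradicting $\deg s<n$. For any other object $(v,\alpha)$ of $u\downarrow j$---so $v\in\partial(\mathscr{R}^{+}\downarrow r)$ with $v\alpha=u$---I would \textsc{ez}-factor $\alpha=\alpha^{+}\alpha^{-}$ to obtain $u=(v\alpha^{+})\alpha^{-}$, a split epi-monic factorization of $u$. Since the only isomorphisms in a Reedy category are identities, the essential uniqueness of \textsc{ez} decompositions (Proposition \ref{prop:ezdecompositionsareunique}) forces $u^{+}=v\alpha^{+}$ and $u^{-}=\alpha^{-}$ on the nose, producing a morphism $\alpha^{+}:(u^{+},u^{-})\to(v,\alpha)$ in $u\downarrow j$; uniqueness of such a morphism follows from left-cancellability of the monomorphism $v$.

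The step I expect to be the main obstacle is this cofinality verification, especially juggling the \textsc{ez} decompositions of both $u$ and $\alpha$ simultaneously to extract the initial map. Once cofinality is established, the isomorphism $L_f\widetilde{X}\cong\partial[r]$ is immediate, and under this isomorphism the Reedy map $L_f\widetilde{X}\to \widetilde{X}_f=[r]$ is the canonical inclusion $\partial[r]\hookrightarrow[r]$ induced by the cocone $v\mapsto [v]$, as required.
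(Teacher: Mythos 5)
Your proof is correct, and it takes a genuinely different (though closely related) route from the paper's. The paper identifies $L_f\widetilde{X}$ with $\colim_{\partial((\mathscr{R}\downarrow X)\downarrow f)}\widetilde{X}$ by citing Lemma \ref{lemma:standardreedycoend} (cofinality of $\partial(\mathscr{R}^{+}\downarrow r)\hookrightarrow\partial(\mathscr{R}\downarrow r)$, applied to the Reedy category $\mathscr{R}\downarrow X$), then uses the forgetful equivalence to $\partial(\mathscr{R}\downarrow r)$, and finally the co-Yoneda coend formula. You instead identify the latching indexing category directly with $\partial(\mathscr{R}^{+}\downarrow r)$, recognize $\partial[r]=\sk_{n-1}[r]$ as the colimit of representables indexed by $\mathscr{R}_{\le n-1}\downarrow r$ (the category of elements coming straight out of the $(j_{n-1})_!\dashv (j_{n-1})^{\ast}$ adjunction), and then prove cofinality of $\partial(\mathscr{R}^{+}\downarrow r)\hookrightarrow\mathscr{R}_{\le n-1}\downarrow r$ by hand. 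The underlying mechanism is the same --- an \textsc{ez} factorization of $u$ (and of $\alpha$) produces an initial object in the relevant comma category --- so in effect you have absorbed and adapted Lemma \ref{lemma:standardreedycoend} rather than invoking it. What your route buys is that the target indexing category is the one that falls out of the skeleton adjunction with no appeal to Proposition \ref{prop:skiswelldefined}; what the paper's route buys is reuse of a lemma already proved in full Reedy generality. One small terminological slip: you call the inclusion $j$ ``left cofinal,'' but since you are comparing colimits and you are constructing initial objects in the comma categories $u\downarrow j$ (nonempty and connected), what you are actually verifying is that $j$ is \emph{right} cofinal in the terminology of Hirschhorn and of this paper (equivalently, a final functor in Mac Lane's sense); the mathematics is correct, only the name is off.
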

\begin{proof}
  The forgetful functor
  \begin{equation*}
    u:\partial\big((\mathscr{R}\downarrow X) \downarrow f\big) \to 
    \partial(\mathscr{R} \downarrow r)
  \end{equation*}
  has a left adjoint sending $j:s\to r$ to
  \begin{equation*}
    \xymatrix{[s] \ar[r]^{[j]} & [r] \ar[r]^{f} & X,}
  \end{equation*}
  so $u$ is (homotopy) right cofinal and the map
  \begin{equation*}
    \colim_{\partial \big((\mathscr{R}\downarrow X) \downarrow f\big)}
    \widetilde{X} \to
    \colim_{\partial(\mathscr{R} \downarrow r)} [{-}]
  \end{equation*}
  is an isomorphism.  By Lemma \ref{lemma:standardreedycoend}, the
  Reedy map $L_f \widetilde{X} \to \widetilde{X}_f$ is thus
  isomorphic to the map
  \begin{equation*}
    \int^{s\in\Ob \mathscr{R}} \big(\partial[r]\big)(s) \times [s] 
    \to \int^{s\in\Ob \mathscr{R}} [r](s) \times [s] 
  \end{equation*}
  This is precisely the map $\partial [r] \to [r]$.
\end{proof}

\begin{corollary}\label{cor:qqisregular}
  Suppose $X\in\qSet$.  Then the natural map
  \begin{equation*}
    \hocolim_{\square^n \to X} \square^n \to X
  \end{equation*}
  is an $\infty$-equivalence.
\end{corollary}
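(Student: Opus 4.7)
The plan is to identify the ordinary colimit $\colim_{\square^n \to X} \square^n$ with the homotopy colimit using the Reedy model structure on diagrams indexed by $\QQ\downarrow X$.  Since $\QQ^{+}$ contains no nontrivial automorphisms (this is visible from Proposition \ref{prop:qsigmafactorization}, which over $\QQ$ forces $\pi_p = \id$), the category $\QQ$ is an \textsc{ez} Reedy category in the strict sense, and the slice $\QQ\downarrow X$ inherits a Reedy structure.

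First I would introduce the canonical diagram $\widetilde{X}:\QQ\downarrow X \to \qSet$ sending an $n$-cube $f:\square^n \to X$ to the representable $\square^n$.  By Proposition \ref{prop:canonicaldiagramisreedycofibrant}, the $f$th latching map $L_f \widetilde{X} \to \widetilde{X}_f$ is isomorphic to the boundary inclusion $\partial \square^n \to \square^n$, which is one of the generating cofibrations for the spatial model structure on $\qSet$ (Theorem \ref{theorem:cisinskimodelstructureonqq}).  Hence $\widetilde{X}$ is Reedy cofibrant.

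Next I would invoke Proposition \ref{prop:ezcolimisleftquillen}: for every fibrant $Z\in\qSet$, the constant $(\QQ\downarrow X)$-diagram on $Z$ is Reedy fibrant.  This exhibits the constant-diagram functor $\qSet \to \qSet^{\QQ\downarrow X}$ as right Quillen with respect to the Reedy model structure, and hence its left adjoint $\colim$ is left Quillen.  Applied to the Reedy cofibrant diagram $\widetilde{X}$, one obtains that the natural comparison map $\hocolim \widetilde{X} \to \colim \widetilde{X}$ is an $\infty$-equivalence.  The universal property of the Yoneda embedding identifies $\colim \widetilde{X}$ with $X$, completing the argument.

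The only potentially subtle point is that ``$\hocolim$'' must be taken in the model-categorical sense as the derived functor of $\colim$, and one uses that derived colimits are computed by any Quillen cofibrant replacement; since $\colim$ is left Quillen with respect to the Reedy structure here, Reedy cofibrant diagrams compute the hocolim on the nose up to weak equivalence.  Beyond this bit of bookkeeping, the hard combinatorial work---the latching-object computation of Proposition \ref{prop:canonicaldiagramisreedycofibrant} and the absolute pushout axiom EZ3 that underlies Proposition \ref{prop:ezcolimisleftquillen}---has already been carried out, so no new obstacle arises.
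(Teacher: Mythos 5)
Your proposal is correct and follows essentially the same route as the paper: invoke Proposition \ref{prop:ezcolimisleftquillen} to see that $\colim:\qSet^{\QQ\downarrow X}\to\qSet$ is left Quillen for the Reedy structure, then apply Proposition \ref{prop:canonicaldiagramisreedycofibrant} to show the canonical diagram of cubes is Reedy cofibrant, so the derived and underived colimits agree. The remarks you add about $\QQ$ having no nontrivial automorphisms (so that the Reedy structure exists strictly) and about $\colim\widetilde{X}\cong X$ via Yoneda density are just unpackings of steps the paper leaves implicit.
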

\begin{proof}
  Recall that $\QQ$ is \textsc{ez} and Reedy.  By Proposition
  \ref{prop:ezcolimisleftquillen}, the adjoint pair
  \begin{equation*}
    \xymatrix{\colim : \qSet^{\QQ \downarrow X} \ar@<0.5ex>[r] &
    \qSet : c \ar@<0.5ex>[l]}
  \end{equation*}
  is a Quillen adjunction, so we may use the Reedy model structure on
  $\qSet^{\QQ\downarrow X}$ to compute homotopy colimits.  The
  canonical diagram taking $\square^n\to X$ to $\square^n$ is Reedy
  cofibrant by Proposition
  \ref{prop:canonicaldiagramisreedycofibrant}.  Hence
  \begin{equation*}
    \hocolim_{\square^n \to X} \square^n \to \colim_{\square^n \to X}
    \square^n \cong X
  \end{equation*}
  is an $\infty$-equivalence.
\end{proof}

Corollary \ref{cor:qqisregular} records one of the most important
properties of $\qSet$: every cubical set is the homotopy
colimit of its cubes.  Using Cisinski's terminology, the spatial model
structure on $\qSet$ is \emph{regular}.  As we'll see below,
$\qsSet$ is regular as well, but this is significantly more
difficult to prove.

\subsection{Regularity in $\qsSet$}
In the remainder of this section, we'll show that
\begin{equation*}
  \hocolim_{\square_\Sigma^n \to X} \square_\Sigma^n \to X
\end{equation*}
is an $\infty$-equivalence for all extended cubical sets $X$, i.e., that
all extended cubical sets are regular.  Our proof uses the
\emph{internal nerve} construction of Cisinski
\cite{cisinskithesis,jardinecubes}:
\begin{definition}
  Suppose $\mathscr{I}$ is a small category and $\mathscr{C}$ is a
  cofibrantly generated model category.  The \emph{internal nerve} of
  $\mathscr{I}$ in $\mathscr{C}$ at an object $X$ is the homotopy
  colimit $\hocolim_\mathscr{I} X$ of the constant diagram at $X$.  We
  denote this by $\Nerve_{\mathscr{C},X} \mathscr{I}$.
  Writing $p$ for the projection $\mathscr{I}\to\ast$, we have
  $\Nerve_{\mathscr{C},X} = \mathbf{L}p_! p^\ast X$.
  When $X$ is the terminal object $\ast$, we'll abbreviate
  $\Nerve_\mathscr{C} \mathscr{I} = \Nerve_{\mathscr{C},\ast} \mathscr{I}$.
\end{definition}
\begin{example}
  In $\sSet$, $\Nerve_{\sSet} \mathscr{I}$ is weakly
  equivalent to the nerve of $\mathscr{I}$.  Using the bar resolution,
  these are isomorphic.
\end{example}
\begin{remark}
  Internal nerve, as we've defined it, is not functorial.  What we
  have is the following: suppose $f:\mathscr{A} \to \mathscr{B}$ is a
  functor between small categories.  The triangle
  \begin{equation*}
    \xymatrix{\mathscr{A} \ar[rr]^f \ar[rd]_p && \mathscr{B} \ar[ld]^q
      \\
      & \ast}
  \end{equation*}
  yields a natural transformation $\mathbf{L}p_! p^\ast \to
  \mathbf{L}q_! q^\ast$ since $q_! f_!\cong p_!$ and $p^\ast = f^\ast
  q^\ast$.  This may be used to give $\Nerve_{\mathscr{C},X}$ the structure
  of a suitably weak $2$-functor.  We won't need that here; we'll
  write $\Nerve_\mathscr{C} f: \Nerve_\mathscr{C}\mathscr{A} \to \Nerve_{\mathscr{C}}
  \mathscr{B}$ below, but we'll be careful not to compose maps.
\end{remark}
\begin{proposition}\label{prop:internalnervepreservesthomasoneqs}
  Suppose $f:\mathscr{A}\to\mathscr{B}$ is a functor between small
  categories.  Then $\Nerve_{\qSet}f$ and $\Nerve_{\qsSet} f$
  are $\infty$-equivalences if and only if $f$ is a Thomason equivalence.
\end{proposition}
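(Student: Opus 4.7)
The plan is to transport the claim along the Quillen equivalences into $\sSet$, where the preceding example identifies $\Nerve_\sSet\mathscr{I}$ with the classical nerve $\Nerve\mathscr{I}$, reducing the statement to the definition of a Thomason equivalence. The technical heart is a lemma asserting that any left Quillen functor $F:\mathscr{C}\to\mathscr{D}$ between combinatorial model categories that preserves the terminal object admits a natural weak equivalence $F\Nerve_\mathscr{C}\mathscr{I}\simeq\Nerve_\mathscr{D}\mathscr{I}$, compatible with a given functor $f:\mathscr{A}\to\mathscr{B}$ of small categories.

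To establish the lemma, I would fix a functorial Reedy cofibrant replacement $\tilde c\to p^\ast e_\mathscr{C}$ in $\mathscr{C}^\mathscr{I}$ of the constant $\mathscr{I}$-diagram on the terminal object, which exists by Theorem~\ref{theorem:hirschhornreedyomnibus}; then $\colim\tilde c$ is a model for $\Nerve_\mathscr{C}\mathscr{I}$. Applying $F$ objectwise produces $F\tilde c\to p^\ast F e_\mathscr{C}=p^\ast e_\mathscr{D}$, and $F\tilde c$ remains Reedy cofibrant because $F$ preserves cofibrations and colimits (so each corner map $L_r(F\tilde c)=F L_r(\tilde c)\to (F\tilde c)_r$ is a cofibration). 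Since $F$ commutes with colimits, $F\colim\tilde c\cong\colim F\tilde c$ computes $\Nerve_\mathscr{D}\mathscr{I}$, yielding the claimed natural weak equivalence. Naturality in $f:\mathscr{A}\to\mathscr{B}$ follows from functoriality of the replacement, since restriction along $f$ preserves Reedy cofibrancy of a constant diagram.

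I would then apply the lemma to the strong monoidal left Quillen equivalences $|{-}|:\qSet\to\sSet$ and $|{-}|_\Sigma:\qsSet\to\sSet$ from Theorems~\ref{theorem:cisinskimodelstructureonqq} and~\ref{theorem:quillenequivalences}; both send the monoidal unit, which is the terminal object in each of $\qSet$, $\qsSet$ and $\sSet$, to the monoidal unit. This yields natural weak equivalences $|\Nerve_\qSet f|\simeq\Nerve_\sSet f$ and $|\Nerve_\qsSet f|_\Sigma\simeq\Nerve_\sSet f$, while the preceding example identifies the common right-hand side with $\Nerve f$. Every object of $\qSet$ is cofibrant, so Ken Brown's lemma together with the Quillen equivalence property shows that $|{-}|$ preserves and reflects $\infty$-equivalences; since $\Nerve_\qsSet\mathscr{I}$ is the colimit of a Reedy cofibrant diagram and hence cofibrant, $|{-}|_\Sigma$ likewise preserves and reflects weak equivalences on this map. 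Both $\Nerve_\qSet f$ and $\Nerve_\qsSet f$ are therefore $\infty$-equivalences if and only if $\Nerve f$ is a weak equivalence, that is, if and only if $f$ is a Thomason equivalence.

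The main obstacle will be the clean statement of naturality in the lemma, given that $\Nerve_\mathscr{C}$ is only weakly $2$-functorial as noted in the preceding remark; I expect that fixing a single functorial Reedy cofibrant resolution and transporting everything via $F$ sidesteps this and reduces naturality in $f$ to ordinary functoriality of the colimit construction.
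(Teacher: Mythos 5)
Your overall strategy coincides with the paper's: both transport the question through the realization functors into $\sSet$, identify $\Nerve_{\sSet}\mathscr{I}$ with the classical nerve, and conclude via two-out-of-three. The paper simply asserts that the relevant squares commute up to natural weak equivalence (using $|{-}|^\mathbf{L}_\Sigma$ to avoid any cofibrancy question), whereas you attempt to establish this commutativity from scratch via a lemma about left Quillen functors and homotopy colimits.

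The problem is that your lemma's construction is phrased in terms of the Reedy model structure on $\mathscr{C}^{\mathscr{I}}$, but $\mathscr{I}$ is an \emph{arbitrary} small category here, not a Reedy category, so there is no Reedy model structure on $\mathscr{C}^{\mathscr{I}}$ and Theorem~\ref{theorem:hirschhornreedyomnibus} does not apply. The same issue infects the claims that $\Nerve_{\qsSet}\mathscr{I}$ is ``the colimit of a Reedy cofibrant diagram'' and that ``restriction along $f$ preserves Reedy cofibrancy.'' The internal nerve is defined as a homotopy colimit over $\mathscr{I}$, and for a general indexing category this is computed via the \emph{projective} model structure on $\mathscr{C}^{\mathscr{I}}$ (available because $\mathscr{C}$ is combinatorial), not the Reedy one. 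Happily the rest of your argument survives this substitution: $\colim$ is left Quillen for the projective structure, a left Quillen $F$ preserves projectively cofibrant diagrams and commutes with $\colim$ on the nose, and the colimit of a projectively cofibrant diagram is cofibrant, so the weak equivalence $F\,\Nerve_{\mathscr{C}}\mathscr{I}\simeq\Nerve_{\mathscr{D}}\mathscr{I}$ and the cofibrancy of $\Nerve_{\qsSet}\mathscr{I}$ both follow. Replace every occurrence of ``Reedy'' in your lemma with ``projective'' and the proof goes through.
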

\begin{proof}
  In $\qSet$, $|{-}|$ coincides with its left derived functors
  as everything is cofibrant.  This may not be the case in
  $\qsSet$. However,
  $|{-}|^\mathbf{L}_{\qsSet}$ preserves and reflects
  weak equivalences, where $|{-}|^\mathbf{L}_{\qsSet}$
  denotes the left derived functor of extended realization.  We have squares
  \begin{equation*}
    \vcenter{\xymatrix@C=3pc{\Nerve_{\sSet} \mathscr{A}
      \ar[r]^{\Nerve_{\sSet} f} \ar[d]_\sim & \Nerve_{\sSet} \mathscr{B}
      \ar[d]^\sim \\
      |\Nerve_{\qSet} \mathscr{A}|
      \ar[r]_{|\Nerve_{\qSet} f|} & |\Nerve_{\qSet}
      \mathscr{B}|}}
    \qquad\text{and}\qquad
    \vcenter{\xymatrix@C=3pc{\Nerve_{\sSet} \mathscr{A}
      \ar[r]^{\Nerve_{\sSet} f} \ar[d]_\sim & \Nerve_{\sSet} \mathscr{B}
      \ar[d]^\sim \\
      |\Nerve_{\qsSet} \mathscr{A}|_\Sigma^\mathbf{L}
      \ar[r]_{|\Nerve_{\qsSet} f|_\Sigma^\mathbf{L}} &
      |\Nerve_{\qsSet} \mathscr{B}|_\Sigma^\mathbf{L}}}
  \end{equation*}
  commuting up to natural weak equivalence, so $\Nerve_{\qSet}f$
  and $\Nerve_{\qsSet}f$ are $\infty$-equivalences if and only
  if $f$ is a Thomason equivalence.
\end{proof}
\begin{remark}
  Proposition \ref{prop:internalnervepreservesthomasoneqs} is part of
  a general yoga of categorical homotopy theory due to Cisinski
  \cite{cisinskithesis,jardinecubes}: the homotopy theory of
  categories (i.e., spaces) intervenes in every model category via the
  internal nerve.
\end{remark}
\begin{proposition}\label{prop:qsigmaisregular}
  Suppose $X$ is an extended cubical set.  The natural map
  \begin{equation*}
    \hocolim_{\square^n_\Sigma \to X} \square^n_\Sigma \to X
  \end{equation*}
  is a natural $\infty$-equivalence.
\end{proposition}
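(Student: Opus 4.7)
The strategy is to reduce the claim to the already-established regularity of $\qSet$ (Corollary~\ref{cor:qqisregular}) via the adjunction $i_!\dashv i^\ast$ and the internal nerve construction. Since $i^\ast$ reflects $\infty$-equivalences (Corollary~\ref{cor:iastreflectseqs}) and is left Quillen, hence commutes with homotopy colimits (see the remark after Theorem~\ref{theorem:quillenequivalences}), it suffices to show that the natural map
\[
\hocolim_{(\sqparen{n},f)\in\qsigma\downarrow X} i^\ast \square_\Sigma^n \to i^\ast X
\]
is an $\infty$-equivalence in $\qSet$.

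Write $\mathscr{I}=\QQ\downarrow i^\ast X$ and $\mathscr{J}=\qsigma\downarrow X$, let $p:\mathscr{I}\to\mathscr{J}$ be the comparison functor induced by $i$, and let $D$ and $D_\Sigma$ denote the canonical diagrams $(\sqparen{m},g)\mapsto \square^m$ on $\mathscr{I}$ and $(\sqparen{n},f)\mapsto\square^n_\Sigma$ on $\mathscr{J}$. The unit $\eta:\id\to i^\ast i_!$ provides a natural transformation $D\to p^\ast(i^\ast D_\Sigma)$ whose components $\eta_{\square^m}:\square^m\to i^\ast\square^m_\Sigma$ are $\infty$-equivalences by Proposition~\ref{prop:ibangiastunitiswkeq}, and which is compatible with the structure maps to $i^\ast X$ by the triangle identity of the adjunction. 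By Corollary~\ref{cor:qqisregular} the natural assembly map $\hocolim_\mathscr{I} D\to i^\ast X$ is an $\infty$-equivalence, so by two-out-of-three the natural assembly map $\hocolim_\mathscr{I} p^\ast(i^\ast D_\Sigma)\to i^\ast X$ is an $\infty$-equivalence as well.

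It remains to compare this with $\hocolim_\mathscr{J} i^\ast D_\Sigma$ along $p$. Here I use the fact that each $i^\ast\square^n_\Sigma$ is contractible: Lemma~\ref{lemma:iisaspherical} shows $\{0\}:\square^0_\Sigma\to\square^n_\Sigma$ is a homotopy equivalence in $\qsSet$, and Corollary~\ref{cor:iastpreserveshoequivs} then gives that $\square^0\to i^\ast\square^n_\Sigma$ is a homotopy equivalence in $\qSet$. Consequently both $i^\ast D_\Sigma$ and $p^\ast(i^\ast D_\Sigma)$ are pointwise $\infty$-equivalent to the constant diagram at $\square^0$, so
\[
\hocolim_\mathscr{J} i^\ast D_\Sigma \simeq \Nerve_\qSet\mathscr{J}
\qquad\text{and}\qquad
\hocolim_\mathscr{I} p^\ast(i^\ast D_\Sigma) \simeq \Nerve_\qSet\mathscr{I}.
\]
By Proposition~\ref{prop:iastreflectsweakequivalences}, $p$ is a Thomason equivalence, and by Proposition~\ref{prop:internalnervepreservesthomasoneqs} the induced map $\Nerve_\qSet\mathscr{I}\to\Nerve_\qSet\mathscr{J}$ is an $\infty$-equivalence. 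Chasing naturality and applying two-out-of-three to the square
\[
\xymatrix{
\hocolim_\mathscr{I} p^\ast(i^\ast D_\Sigma) \ar[r] \ar[d]_{\sim} & \hocolim_\mathscr{J} i^\ast D_\Sigma \ar[d] \\
i^\ast X \ar@{=}[r] & i^\ast X
}
\]
shows that the right vertical map is an $\infty$-equivalence, which is what was required.

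The principal obstacle is the transport of homotopy colimits along the Thomason equivalence $p$, because Thomason equivalences need not preserve homotopy colimits of arbitrary diagrams. This is resolved by the contractibility of the cubes $i^\ast\square^n_\Sigma$, which reduces $\hocolim_\mathscr{J} i^\ast D_\Sigma$ and $\hocolim_\mathscr{I} p^\ast(i^\ast D_\Sigma)$ to the internal nerves of $\mathscr{J}$ and $\mathscr{I}$—where Proposition~\ref{prop:internalnervepreservesthomasoneqs} applies directly. Some bookkeeping with naturality of $\eta$, $p$, and the canonical assembly maps is needed to ensure that the chain of equivalences actually realizes the natural assembly map $\hocolim D_\Sigma\to X$ rather than merely producing an abstract equivalence.
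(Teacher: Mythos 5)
The proposal is correct and is essentially the paper's argument transposed through $i^\ast$ rather than pushed forward through $i_!$. The paper applies $i_!$ to the $\qSet$ regularity statement, works in $\qsSet$, and compares $\hocolim_{\mathscr{I}} F$ with $\hocolim_{\mathscr{J}} G$ there, reducing to internal nerves in $\qsSet$ via contractibility of the $\square^n_\Sigma$ and Proposition \ref{prop:internalnervepreservesthomasoneqs}; you instead use that $i^\ast$ is left Quillen, preserves and reflects $\infty$-equivalences, and commutes with homotopy colimits to pull everything into $\qSet$, then compare $\hocolim_{\mathscr{I}} p^\ast(i^\ast D_\Sigma)$ with $\hocolim_{\mathscr{J}} i^\ast D_\Sigma$ by the same internal-nerve argument applied in $\qSet$. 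Your additional step comparing $D$ with $p^\ast(i^\ast D_\Sigma)$ via the unit $\eta$ (Proposition \ref{prop:ibangiastunitiswkeq}) replaces the paper's direct identification $F = Gj$ and its use of the counit $\varepsilon_X$ (Corollary \ref{cor:icounitpreservesweakeqs}); these are dual reformulations of the same move. All the key ingredients are identical: Corollary \ref{cor:qqisregular}, the Thomason equivalence $\QQ\downarrow i^\ast X \to \qsigma\downarrow X$ from Proposition \ref{prop:iastreflectsweakequivalences}, contractibility of the cubes from Lemma \ref{lemma:iisaspherical}, and the internal-nerve comparison Proposition \ref{prop:internalnervepreservesthomasoneqs}.
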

\begin{proof}
  By Corollary \ref{cor:qqisregular}, the map
  \begin{equation*}
    \hocolim_{\square^n \to i^\ast X} \square^n \to i^\ast X
  \end{equation*}
  is an $\infty$-equivalence in $\qSet$.  Since $i_!$ is left
  Quillen and all cubical sets are cofibrant, the map
  \begin{equation*}
    \hocolim_{\square^n \to i^\ast X \in \QQ\downarrow i^\ast X} \square_\Sigma^n \to i_! i^\ast X
  \end{equation*}
  is an $\infty$-equivalence in $\qsSet$.  Let $G$ denote
  the canonical diagram of cubes of $X$:
  \begin{equation*}
    \xymatrix{\qsigma \downarrow X \ar[r]^\pi & \qsigma \ar[r]^r & \qsSet.}
  \end{equation*}
  Recall that $i$ induces a functor $j:\QQ\downarrow i^\ast X \to
  \qsigma \downarrow X$ and that $j$ is a Thomason equivalence by
  Proposition \ref{prop:iastreflectsweakequivalences}.  Note that $F =
  G j$ is roughly the diagram of cubes of $i^\ast X$: it is the
  functor
  \begin{equation*}
    \xymatrix{\QQ \downarrow i^\ast X \ar[r]^\pi & \QQ \ar[r]^r &
      \qSet \ar[r]^{i_!} & \qsSet.}
  \end{equation*}
  The natural transformation $\mathbf{L}j_! j^\ast \to \id$ induces the left arrow in
  \begin{equation*}
    \xymatrix{\hocolim_{\QQ\downarrow i^\ast X} F \ar[r] \ar[d] &
      \colim_{\QQ\downarrow i^\ast X} F \ar@{=}[r] \ar[d] & i_! i^\ast X
      \ar[d]^{\varepsilon_X} \\
      \hocolim_{\qsigma\downarrow X} G \ar[r] &
      \colim_{\qsigma\downarrow X} G \ar@{=}[r] & X,}
  \end{equation*}
  which commutes up to natural $\infty$-equivalence.  Thus it is
  sufficient to show that
  \begin{equation*}
    \hocolim_{\QQ\downarrow i^\ast X} F \to
    \hocolim_{\qsigma\downarrow X} G
  \end{equation*}
  is an $\infty$-equivalence.  Let $\ast$ denote the constant diagram
  on the terminal object in $\qsSet$; then 
  \begin{equation*}
    \xymatrix{\hocolim_{\QQ\downarrow i^\ast X} F \ar[d] \ar[r] &
      \hocolim_{\QQ\downarrow i^\ast X} \ast \ar[d] \\
      \hocolim_{\qsigma\downarrow X} G \ar[r] &
      \hocolim_{\qsigma\downarrow X} \ast}
  \end{equation*}
  commutes up to natural $\infty$-equivalence.  The horizontal arrows are
  $\infty$-equivalences since $\hocolim$ is a homotopy functor and
  $\square^n_\Sigma\to\ast$ is an $\infty$-equivalence.  The right
  vertical arrow is $\Nerve_{\qsSet} j$; this is an
  $\infty$-equivalence by Proposition \ref{prop:internalnervepreservesthomasoneqs}.
\end{proof}

\part{Extended cubical enrichments}\label{part:two}

\section{Enriched model categories}\label{section:enriched}
Suppose $(\mathscr{V},\otimes,e)$ is a closed symmetric monoidal model
category.  We assume that the monoidal structure in $\mathscr{V}$ is
compatible with the model structure by requiring the usual axiom: the
product $\otimes$ to be a \emph{left Quillen bifunctor}, i.e., if
$k:A\to B$ and $\ell:X\to Y$ are cofibrations in $\mathscr{V}$, then the
pushout-product
\begin{equation*}
  k\odot \ell = A\otimes Y \amalg_{A\otimes X} B\otimes X \to B\otimes Y
\end{equation*}
is a cofibration, acyclic if either $k$ or $\ell$ is.
We have the following fundamental definition \cite{hovey,barwickenriched}:
\begin{definition}
  Suppose $\mathscr{C}$ is a category enriched over $\mathscr{V}$.
  Write $\mathscr{C}_0$ for the underlying $\Cat{Set}$-category of
  $\mathscr{C}$.  We say $\mathscr{C}$ is a \emph{$\mathscr{V}$-model
    category} if
  \begin{enumerate}
  \item[($\mathscr{V}$M1)] $\mathscr{C}_0$ is a model category.
  \item[($\mathscr{V}$M2)] $\mathscr{C}$ has all $\mathscr{V}$-indexed limits and
    colimits \cite{kelly}.
  \item[($\mathscr{V}$M3)] The tensor functor
    ${-}\otimes{-}:\mathscr{V}\otimes\mathscr{C} \to \mathscr{C}$ is a
    left Quillen bifunctor.
  \end{enumerate}
\end{definition}
There are several standard reductions of $\mathscr{V}$M3: the existence of
$\mathscr{V}$-indexed limits and colimits grants adjunctions
\begin{equation*}
  \mathscr{C}(A\otimes X, Y) \cong \mathscr{V}(A,
  \mathscr{C}(X,Y)) \cong \mathscr{C}(X, Y^A)
\end{equation*}
where $A\in\mathscr{V}$ and $X,Y\in\mathscr{C}$.  For example, we can
replace $\mathscr{V}$M3 with the axiom that $\mathscr{C}({-},{-})$ be a right
Quillen bifunctor, i.e, if $k$ is a cofibration and $f$ a fibration,
$\mathscr{C}(k,f)$ is a fibration, acyclic if either $k$ or $f$ is; in
the case of cofibrant generation, we need only check axiom $\mathscr{V}$M3 for
generating (acyclic) cofibrations.

Suppose $\mathscr{C}$ is a model category with a functorial ``cylinder
object,'' i.e., for every $X$, a natural factorization of the fold map
\begin{equation}
  \vcenter{\xymatrix@C=4pc{
      X\amalg X \ar[r]^{d_0\amalg d_1} \ar@/_/[rr]_{\id\amalg\id} & \Cyl(X) \ar[r] & X
    }}
\end{equation}
into a cofibration followed by a weak equivalence.  Then $\mathscr{C}$
is naturally enriched over $\qSet$ by setting
$\mathscr{C}(X,Y)_n = \Hom_{\mathscr{C}}(\Cyl^n(X),Y)$
\cite{cisinskithesis,jardinecubes}.  Dually, if $X$ has a natural
``path object''---a factorization
\begin{equation}
  \vcenter{\xymatrix@C=4pc{
      X \ar[r] \ar@/_/[rr]_{\Delta} & PX \ar[r] & X\times X
    }}
\end{equation}
of the diagonal map into a weak equivalence followed by a
fibration---we may define a cubical mapping complex
$\mathscr{C}(X,Y)_n = \Hom_{\mathscr{C}}(X, P^n Y)$.  As we'll discuss
in Section \ref{section:mappingspaces}, the cubical realization
$|\mathscr{C}(X,Y)|$ is a model for the Dwyer-Kan mapping space
between $X$ and $Y$.  However, the monoidal structure on
$\qSet$ is not symmetric, so $\mathscr{C}$ cannot
possibly be a $\qSet$-model category in the sense we described
above.  We might try to remedy this by upgrading the enrichment of
$\mathscr{C}$ from a $\qSet$-category to a
$\qsSet$-category.  This isn't possible in general, but we
have the following principle:
\begin{theorem}\label{theorem:extendedcubicalmodelcats}
  Let $\mathscr{C}$ be a symmetric monoidal model category.
  Suppose $\mathscr{C}$ possesses a cubical monoid
  \begin{equation*}
    \vcenter{\xymatrix{
        e \amalg e \ar[r]^{d_0 \amalg d_1} \ar@/_/[rr]_{\id\amalg\id} &
        I \ar[r]^s & e
      }}
  \end{equation*}
  so that $d_0\amalg d_1$ is a cofibration and $s$ a weak equivalence.
  Then $\mathscr{C}$ is a $\qsSet$-model category, with
  $\mathscr{C}(X,Y)_n = \Hom_\mathscr{C}(\sqparen{n} \otimes X, Y)$.
  Moreover, the monoidal structure on $\mathscr{C}$ is given by
  $\qsSet$-functors.
\end{theorem}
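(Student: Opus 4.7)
The plan is to obtain a cocontinuous strong symmetric monoidal functor $\widehat F: \qsSet \to \mathscr{C}$ from the cubical monoid $I$, and then verify that $\widehat F$ is left Quillen; the $\qsSet$-enrichment and the pushout-product axiom will follow formally.

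First I would unpack the PROP structure: by definition of $\qsigma$ as the PROP for cubical monoids, $I$ corresponds to a strong symmetric monoidal functor $F: \qsigma \to \mathscr{C}$ with $F(\sqparen{n}) = I^{\otimes n}$. Applying Proposition \ref{prop:imkelly} extends $F$ to a cocontinuous strong symmetric monoidal $\widehat F: \qsSet \to \mathscr{C}$. The induced tensoring $K \cdot X := \widehat F(K) \otimes X$, together with the cotensor provided by composing the right adjoints to $\widehat F$ and ${-} \otimes X$, equips $\mathscr{C}$ with the structure of a $\qsSet$-category; evaluating on representables gives $\mathscr{C}(X,Y)_n = \Hom_{\mathscr{C}}(I^{\otimes n} \otimes X, Y)$, matching the claim. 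Axioms ($\mathscr{V}$M1) and ($\mathscr{V}$M2) are then immediate from the model structure and the existence of the tensor--cotensor adjunction.

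The substance is verifying the pushout-product axiom ($\mathscr{V}$M3). I would reduce to generating (acyclic) cofibrations of $\qsSet$: it suffices to show that $\widehat F$ sends generating cofibrations to cofibrations and generating acyclic cofibrations to acyclic cofibrations of $\mathscr{C}$, since the pushout-product with an arbitrary (co)fibration of $\mathscr{C}$ then stays in the appropriate class by the pushout-product axiom on $\mathscr{C}$. Since $i_!$ is strong monoidal (Proposition \ref{prop:ibangismonoidal}) and $\partial \square^n_\Sigma \cong i_! \partial \square^n$ (Corollary \ref{cor:skeletoncomparison}), the generating cofibration $\partial \square^n_\Sigma \to \square^n_\Sigma$ decomposes as the iterated pushout-product $(\partial \square^1_\Sigma \to \square^1_\Sigma)^{\odot n}$ in $\qsSet$; strong monoidality of $\widehat F$ sends this to $(d_0 \amalg d_1)^{\odot n}$ in $\mathscr{C}$, a cofibration by the iterated pushout-product axiom. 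Similarly, the cubical horn $i_! \sqcap^n_{j,\varepsilon} \to \square^n_\Sigma$ decomposes as $(d_{1-\varepsilon}: \square^0_\Sigma \to \square^1_\Sigma) \odot (\partial \square^{n-1}_\Sigma \to \square^{n-1}_\Sigma)$, whose image under $\widehat F$ is $(d_{1-\varepsilon}: e \to I) \odot (d_0 \amalg d_1)^{\odot (n-1)}$.

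The main obstacle is showing this last map is an acyclic cofibration. Two-out-of-three against the weak equivalence $s$ shows $d_{1-\varepsilon}: e \to I$ is a weak equivalence; to see it is a cofibration I would factor it as $e \to e \amalg e \xrightarrow{d_0 \amalg d_1} I$, where the second map is a cofibration by assumption and the first is a cofibration provided $e$ is cofibrant (the implicit hypothesis, consistent with the abstract's ``cofibrant unit'' condition). Then the pushout-product of the acyclic cofibration $d_{1-\varepsilon}$ with the cofibration $(d_0 \amalg d_1)^{\odot (n-1)}$ is an acyclic cofibration in $\mathscr{C}$, finishing ($\mathscr{V}$M3). The final assertion that $\otimes : \mathscr{C} \times \mathscr{C} \to \mathscr{C}$ is a $\qsSet$-functor follows from strong monoidality of $\widehat F$: the coherent isomorphism $\widehat F(K) \otimes (Y \otimes X) \cong (\widehat F(K) \otimes Y) \otimes X$ exhibits ${-} \otimes X$ as a $\qsSet$-functor natural in $X$.
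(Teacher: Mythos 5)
The paper gives no proof of this theorem; after the statement it merely notes that it ``amounts to the fact that the $\mathscr{C}$-algebras of the \textsc{prop} $\qsigma$ are precisely cubical monoids.'' Your argument is the needed elaboration of that remark, and it is essentially correct. The route --- produce a cocontinuous strong symmetric monoidal $\widehat F:\qsSet\to\mathscr{C}$ from the \textsc{prop} universal property and Proposition \ref{prop:imkelly}, reduce $\mathscr{V}$M3 to showing $\widehat F$ is left Quillen (using the pushout-product axiom on $\mathscr{C}$, strong monoidality, and cocontinuity), and then check this on generating (acyclic) cofibrations via the Leibniz decompositions of $\partial\square^n_\Sigma$ and the caps --- is the only reasonable one. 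The transport of those pushout-product decompositions from $\qSet$ via the strong monoidal, cocontinuous $i_!$, using Corollary \ref{cor:skeletoncomparison}, is exactly right. A small bookkeeping point: for $j\ne 1$ the cap decomposes as $(d_0\amalg d_1)^{\odot(j-1)}\odot d_{1-\varepsilon}\odot(d_0\amalg d_1)^{\odot(n-j)}$ rather than literally $d_{1-\varepsilon}\odot(d_0\amalg d_1)^{\odot(n-1)}$, but the two differ only by a symmetry of $\qsigma$, as you indicate.

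The substantive issue is the cofibrant-unit hypothesis you flag, and you are right to flag it. As printed, the theorem does not assume $\emptyset\to e$ is a cofibration; without it, $e\to e\amalg e$ need not be a cofibration, so $d_{1-\varepsilon}=(d_0\amalg d_1)\circ(e\to e\amalg e)$ need not be one, and I see no other way to obtain that $\widehat F$ sends the caps to \emph{acyclic} cofibrations. (One can still show that $d_{1-\varepsilon}\odot m$ is a cofibration for any cofibration $m$, by factoring it through a cobase change of $(e\to e\amalg e)\odot m\cong m\amalg\id$ followed by $(d_0\amalg d_1)\odot m$, but neither factor is acyclic, so this delivers no acyclicity.) Since the abstract, item (3) of the introduction, and Theorem \ref{theorem:final} all impose cofibrant unit, this reads as an omission in the theorem statement rather than a defect in your argument; you are right to supply it. With it, $I$ is cofibrant, $d_{1-\varepsilon}$ is an acyclic cofibration by two-out-of-three, and your proof closes.
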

One example is furnished by $\Ch(R)$, $R$ a commutative ring: the
normalized $R$-chains of $\Delta[1]$ give a cubical monoid with the
appropriate homotopical properties.  This Theorem amounts to the fact
that the $\mathscr{C}$-algebras of the \textsc{prop} $\qsigma$ are
precisely cubical monoids.  In the remainder of this section, we'll
show that the $\qsSet$ mapping spaces given by Theorem
\ref{theorem:extendedcubicalmodelcats} have the correct homotopy type
(i.e.~the homotopy type of the Dwyer-Kan mapping space) and that every
combinatorial symmetric monoidal model category with cofibrant unit
has an extended cubical enrichment.

\section{Virtual cofibrance and diagram categories}
As in Section \ref{section:enriched}, let
$(\mathscr{V},\otimes,e,[{-},{-}])$ be a closed symmetric monoidal
model category.  The fundamental example of a $\mathscr{V}$-model
category is $\mathscr{V}$ itself.  In order to discuss
$\mathscr{V}$-diagram categories, we need to introduce some technical
model categorical material first.
\begin{definition}
  Suppose $\mathscr{C}$ is a $\mathscr{V}$-model category.  We say an
  arrow $k\in\Ar\mathscr{V}$ is a \emph{$\mathscr{C}$-virtual cofibration} if
  $k\odot f$ is an (acyclic) cofibration for all (acyclic)
  cofibrations $f$ in $\Ar\mathscr{C}$.  We say $k$ is a \emph{virtual
  cofibration} if it is a $\mathscr{C}$-virtual cofibration for all
  $\mathscr{V}$-model categories $\mathscr{C}$.
\end{definition}
The following Proposition is straightforward:
\begin{proposition}
  \begin{enumerate}
  \item All cofibrations are virtual cofibrations.
  \item The class of virtual cofibrations in $\mathscr{C}$ is closed
    under coproduct, cobase change, transfinite composition, and
    retract.
  \item Virtual (acyclic) cofibrations and (acyclic) cofibrations
    coincide if and only if $\emptyset\to e$ is a cofibration in
    $\mathscr{V}$.
  \end{enumerate}
\end{proposition}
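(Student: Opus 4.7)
Parts (1) and (2) are essentially immediate from the pushout-product axiom. For (1), if $k$ is an (acyclic) cofibration in $\mathscr{V}$ and $\mathscr{C}$ is any $\mathscr{V}$-model category, then axiom $\mathscr{V}$M3 guarantees that $k\odot f$ is an (acyclic) cofibration for every (acyclic) cofibration $f$ in $\mathscr{C}$; this is exactly the virtual cofibrance condition. For (2), I would observe that for fixed $f$ the functor ${-}\odot f:\mathscr{V}^{[1]}\to\mathscr{C}^{[1]}$ preserves coproducts, cobase changes, transfinite compositions, and retracts in its first variable---this follows from the cocontinuity of $\otimes$ in its $\mathscr{V}$-argument, together with the fact that pushouts and retracts commute with colimits. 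Since (acyclic) cofibrations in $\mathscr{C}$ are themselves closed under these operations, the class of $\mathscr{C}$-virtual cofibrations inherits the same closure properties, and intersecting over all $\mathscr{V}$-model categories $\mathscr{C}$ gives the statement for virtual cofibrations.

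For (3), the key calculation is the natural identity $k\odot(\emptyset\to e)\cong k$ in $\mathscr{V}^{[1]}$: writing $k:A\to B$ and unwinding the pushout-product using $\emptyset\otimes X\cong\emptyset$ and $e\otimes X\cong X$, one obtains $A\otimes e\amalg_{A\otimes\emptyset} B\otimes\emptyset\to B\otimes e$, which collapses to $A\to B$ itself. Both directions of the biconditional follow. If $\emptyset\to e$ is a cofibration in $\mathscr{V}$, then applying the virtual cofibrance condition with $\mathscr{C}=\mathscr{V}$ and $f=(\emptyset\to e)$ shows that any virtual cofibration $k\cong k\odot f$ is a cofibration, so combined with (1) the two classes coincide. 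Conversely, the same identity exhibits $\emptyset\to e$ as a virtual cofibration (by taking $k$ to be an arbitrary cofibration in any $\mathscr{V}$-model category), so if virtual cofibrations coincide with cofibrations then $\emptyset\to e$ is itself a cofibration. The acyclic version is handled identically, substituting ``acyclic cofibration'' throughout.

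There is no serious obstacle here; the only mildly technical point is the verification in (2) that ${-}\odot f$ commutes with the relevant colimits and retracts in its first argument, which reduces to cocontinuity of $\otimes$ in each variable, as guaranteed by the closed symmetric monoidal structure on $\mathscr{V}$.
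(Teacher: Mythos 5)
The paper itself gives no proof, merely remarking that the proposition is ``straightforward,'' so there is no official argument to compare against. Your proposal supplies the expected proof and is essentially correct: (1) is a direct reading of axiom $\mathscr{V}$M3; (2) follows because for fixed $f$ the pushout-product functor ${-}\odot f$ transports coproducts, cobase changes, transfinite compositions, and retracts along, and the (acyclic) cofibrations of $\mathscr{C}$ are saturated; and (3) hinges on the unit isomorphism $k\odot(\emptyset\to e)\cong k$.

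A few small imprecisions are worth tightening. In (2), ``preserves transfinite compositions'' should not be conflated with ordinary cocontinuity of $\otimes$ in the arrow category: the composite $k\odot f$ of a $\lambda$-sequence $k$ with $f$ is not literally a transfinite composition of the $k_\alpha\odot f$, but a transfinite composition of maps obtained from them by cobase change --- this is the standard interleaving argument (cf.\ Hovey's Lemma 4.2.4), and it is what actually makes the closure work. In (3), the phrase ``by taking $k$ to be an arbitrary cofibration in any $\mathscr{V}$-model category'' is misplaced: what is needed is that $(\emptyset\to e)\odot f\cong f$ for any cofibration $f$ in any $\mathscr{V}$-model category $\mathscr{C}$, which shows directly that $\emptyset\to e$ is a virtual cofibration (the variable that ranges is $f$, not $k$). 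Finally, ``substituting `acyclic cofibration' throughout'' should be read carefully: the biconditional condition $\emptyset\to e$ being a \emph{cofibration} (not acyclic) is unchanged in the acyclic version --- one shows that if $\emptyset\to e$ is a cofibration then any virtual acyclic cofibration $k\cong k\odot(\emptyset\to e)$ is an acyclic cofibration, and the converse for the acyclic clause is inherited from the non-acyclic one since the hypothesis on $\emptyset\to e$ is shared. With these small clarifications, the argument is complete.
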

Note that $\emptyset\to e$ is always a virtual cofibration, but it
need not be a cofibration.
\begin{definition}
  Suppose $\mathscr{I}$ is a small $\mathscr{V}$-category.  We say
  $\mathscr{I}$ has \emph{virtually cofibrant mapping spaces} if
  $\emptyset\to\mathscr{I}(x,y)$ is a cofibration for all
  $x,y\in\mathscr{I}$.  If, furthermore, $e\to\mathscr{I}(x,x)$ is a
  cofibration for all $x\in\mathscr{I}$, we say $\mathscr{I}$ is
  \emph{well based}.
\end{definition}

\begin{proposition}\label{prop:enrichedomnibus}
  Suppose $\mathscr{V}$ is combinatorial (i.e., cofibrantly generated
  and locally presentable; see
  \cite{dugger1,adamekrosicky,makkaipare}).  
  Let $\mathscr{I}$ is a small $\mathscr{V}$-category with
  virtually cofibrant mapping spaces, e.g., $\mathscr{I}$ is the free
  $\mathscr{V}$-category generated by a $\Cat{Set}$-category.
  \begin{enumerate}
  \item The $\mathscr{V}$-category $\widehat{\mathscr{I}} =
    \mathscr{V}^{\mathscr{I}^\Op}$ admits a cofibrantly generated
    model structure, known as the \emph{projective model structure},
    in which $f:X\to Y$ is a fibration (respectively weak equivalence)
    if and only if $f_i:X_i \to Y_i$ is a fibration (respectively weak
    equivalence) for all $i\in\mathscr{I}$.
  \item Suppose further that $\mathscr{I}$ is symmetric monoidal; then
    the category $\widehat{\mathscr{I}}$ admits a closed symmetric
    monoidal structure given by Day convolution.  The category
    $\widehat{\mathscr{I}}$ with the projective model structure is
    then a symmetric monoidal model category.  If $\mathscr{V}$
    satisfies the Schwede-Shipley monoid axiom, then so does
    $\widehat{\mathscr{I}}$.
  \end{enumerate}
\end{proposition}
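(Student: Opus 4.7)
The plan is to derive both parts from a single transfer argument along the evaluation/free adjunctions
\[
{-}\otimes[i] : \mathscr{V} \rightleftarrows \widehat{\mathscr{I}} : \mathrm{ev}_i,\qquad i\in\Ob\mathscr{I},
\]
where $[i]=\mathscr{I}({-},i)$ is the representable. For (1) I would propose the generating cofibrations and generating acyclic cofibrations
\[
I_{\widehat{\mathscr{I}}} = \big\{ f\otimes[i] \bigm| f\in I_{\mathscr{V}},\ i\in\Ob\mathscr{I} \big\},\qquad
J_{\widehat{\mathscr{I}}} = \big\{ g\otimes[i] \bigm| g\in J_{\mathscr{V}},\ i\in\Ob\mathscr{I} \big\},
\]
and apply Kan's transfer theorem for cofibrantly generated model structures. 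Since $\mathscr{V}$ is combinatorial and $\mathscr{I}$ is small, $\widehat{\mathscr{I}}$ is locally presentable, the generating sets are small, and the small object argument applies. Pointwise weak equivalences satisfy two-out-of-three and are closed under retracts, so the only nontrivial hypothesis is that every relative $J_{\widehat{\mathscr{I}}}$-cell complex is a pointwise weak equivalence.

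The decisive observation is that evaluating a generator $g\otimes[i]$ at an object $j\in\Ob\mathscr{I}$ produces $g\otimes\mathscr{I}(j,i)$ in $\mathscr{V}$, which is exactly the pushout-product $g\odot(\emptyset\to\mathscr{I}(j,i))$. By the virtual-cofibrancy hypothesis the map $\emptyset\to\mathscr{I}(j,i)$ is a virtual cofibration, so $g\otimes\mathscr{I}(j,i)$ is an acyclic cofibration in $\mathscr{V}$. Because each $\mathrm{ev}_j$ is cocontinuous, any relative $J_{\widehat{\mathscr{I}}}$-cell complex becomes, after evaluation, a transfinite composition of pushouts of acyclic cofibrations in $\mathscr{V}$, and hence a weak equivalence. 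The adjunctions then identify $\inj J_{\widehat{\mathscr{I}}}$ with pointwise fibrations and $\inj I_{\widehat{\mathscr{I}}}$ with pointwise acyclic fibrations, completing (1).

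For (2), Proposition \ref{prop:imkelly} extends the strict monoidal Yoneda embedding $\mathscr{I}\to\widehat{\mathscr{I}}$ to a closed symmetric monoidal structure on $\widehat{\mathscr{I}}$. The pushout-product axiom reduces to checking on generators by the standard cofibrant-generation argument; then, using that Day convolution is cocontinuous in each variable and agrees with $\otimes$ on representables, one computes
\[
(f\otimes[i])\odot_{\widehat{\mathscr{I}}}(g\otimes[j]) \cong (f\odot_{\mathscr{V}} g)\otimes[i\otimes j],
\]
which is a generating (acyclic) cofibration by the pushout-product axiom in $\mathscr{V}$. The monoid axiom is analogous: a convolution $(g\otimes[i])\otimes X$ of a generating acyclic cofibration with an arbitrary object, evaluated at $k$, is $g$ tensored with a single object of $\mathscr{V}$; the monoid axiom in $\mathscr{V}$ places this in the closure class, and because evaluation commutes with pushouts, transfinite compositions, and retracts, the same closure carried out in $\widehat{\mathscr{I}}$ yields pointwise weak equivalences.

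The principal obstacle is the pointwise-acyclic-cofibration step of (1): one must verify that the pushout-product $g\odot(\emptyset\to\mathscr{I}(j,i))$ really is an acyclic cofibration for every generating acyclic cofibration $g$ and every $i,j\in\Ob\mathscr{I}$, which is precisely the content of asking $\mathscr{I}$ to have virtually cofibrant mapping spaces. Once this is in hand, the transfer for (1) and both monoidal axioms for (2) follow by routine reduction to generators and invocation of the corresponding axioms for $\mathscr{V}$.
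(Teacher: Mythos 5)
Your proposal is correct and follows essentially the same route as the paper: transfer along the evaluation/free adjunctions with generators $g\otimes[i]$, the observation that evaluating at $j$ gives $g\otimes\mathscr{I}(j,i)$ (an acyclic cofibration by virtual cofibrancy), the reduction of the pushout-product axiom to representables via $(f\otimes[i])\odot(g\otimes[j])\cong(f\odot g)\otimes[i\otimes j]$, and the pointwise reduction of the monoid axiom using that evaluation commutes with colimits.
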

\begin{proof}
  For 1, note that if $K\to L$ is an acyclic cofibration in
  $\mathscr{V}$ and $i\in\mathscr{I}$, the left Kan extension
  \begin{equation*}
    \mathscr{I}({-},x) \otimes K \to \mathscr{I}({-},x) \otimes L
  \end{equation*}
  must be a weak equivalence in the projective model structure on
  $\widehat{\mathscr{I}}$ (indeed, an acyclic cofibration).  The virtual
  cofibrance assumption for $\mathscr{I}$ guarantees this.
  The combinatoriality of $\mathscr{V}$ ensures that we may run the
  small-object argument.

  For 2, suppose that $\mathscr{I}$ is symmetric monoidal.  Since the
  convolution product makes Yoneda strong monoidal, and the product on
  $\mathscr{V}$ preserves colimits in each variable,
  for arbitrary arrows $K\to L$, $A\to B$ in $\mathscr{V}_0$ and 
  objects $x,y \in\mathscr{I}$, we have
  \begin{align*}
    &\big(\mathscr{I}({-},x) \otimes K \to \mathscr{I}({-},x) \otimes
    L\big)
    \odot
    \big(\mathscr{I}({-},y) \otimes A \to \mathscr{I}({-},y) \otimes
    B\big) \\ &\qquad\cong
    \mathscr{I}({-},x\otimes y) \otimes \big( K\otimes
    B\amalg_{K\otimes A} L\otimes A \to L\otimes B\big).
  \end{align*}
  As a result, the monoidalness of the model structure on
  $\mathscr{V}$ lifts to show that $\widehat{\mathscr{I}}$ is a
  monoidal model category.

  To show that $\widehat{\mathscr{I}}$ satisfies the Schwede-Shipley
  monoid axiom, it is sufficient to check that the arrows in
  \begin{equation*}
    \Cell \big\{ \mathscr{I}({-},x)\otimes F \otimes k \bigm|
    \text{$x\in\mathscr{I}$, $F\in\widehat{\mathscr{I}}$ and $k$ an
    acyclic cofibration} \big\}
  \end{equation*}
  are weak equivalences in $\widehat{\mathscr{I}}$.  Since cobase
  change, transfinite composition, retract and coproduct all commute
  with the evaluation functors $\widehat{\mathscr{I}} \to \mathscr{V}$
  and $\widehat{\mathscr{I}}$ has the projective model structure, it
  is sufficient to check that
  \begin{equation*}
    \Cell \big\{ \big(\mathscr{I}({-},x)\otimes F\big)_z \otimes k \bigm|
    \text{$x\in\mathscr{I}$, $F\in\widehat{\mathscr{I}}$ and $k$ an
    acyclic cofibration} \big\}
  \end{equation*}
  consists of weak equivalences in $\mathscr{V}$
  for all $z\in \mathscr{I}$.  This is guaranteed by the monoid axiom
  for $\mathscr{V}$.
\end{proof}

\section{Cubical models for mapping spaces}\label{section:mappingspaces}
In the series of papers \cite{dwyerkan1,dwyerkan2,dwyerkan3}, Dwyer
and Kan introduced the \emph{simplicial localization} of a category at
a subcategory of weak equivalences.  In the case of a model category
$\mathscr{C}$, the simplicial localization of $\mathscr{C}$ at its
weak equivalences $\mathscr{W}$ associates a simplicial set
$\mathbf{F}(x,y)$ to each pair of objects $x,y$ so that
$\pi_0\mathbf{F}(x,y)$ corresponds to the set $\Ho\mathscr{C}(x,y)$ in
a natural way.  When $\mathscr{C}$ is a simplicial model category and
$\mathbf{F}(x,y)$ is the derived mapping space $\mathscr{C}(x', y')$
($x'$ and $y'$ are cofibrant-fibrant replacements for $x$ and $y$,
respectively).  In Section \ref{section:enriched}, we discussed
cubical enrichments of model categories (following Cisinski).  In this
section, we'll show that those enrichments have the correct homotopy
type, i.e., coincide with the space $\mathbf{F}(x,y)$ up to weak
equivalence.

\subsection{Quillen adjunctions between Reedy categories}
The fundamental technical tool we'll need is a comparison between
cubical and simplicial framings.  Conversion between the two is
essentially obtained by cubical realization.  Most of the material in
this section has a straightforward generalization to enriched
categories; we won't need that here.  Recall that a small-cocomplete
and small-complete category $\mathscr{C}$ is tensored and cotensored
over $\Cat{Set}$ by the copower and power operations: there are
adjunctions
\begin{equation}\label{eq:chassettensors}
  \mathscr{C}(S\times X, Y) \cong \Cat{Set}(S, \mathscr{C}(X, Y))
  \cong \mathscr{C}(X, Y^S)
\end{equation}
where $S\in\Ob\Cat{Set}$, $X,Y\in\Ob\mathscr{C}$.
\begin{definition}
  Suppose $\mathscr{A}$ and $\mathscr{B}$ are small categories.  A
  \emph{distributor from $\mathscr{A}$ to $\mathscr{B})$} is a functor
  \begin{equation*}
    K:\mathscr{A} \times \mathscr{B}^\Op \to \Cat{Set}.
  \end{equation*}
\end{definition}
This definition is due to Benabou (see \cite{cordierporter}).  We
sometimes denote $K$ by a dashed arrow $\mathscr{A} \dashrightarrow
\mathscr{B}$.  The data of a distributor $K:\mathscr{A}
\dashrightarrow \mathscr{B}$, via the universal property of the
Yoneda embedding, is equivalent to an adjunction
\begin{equation*}
  \xymatrix{L_K : \widehat{\mathscr{A}} \ar@<0.5ex>[r] &
    \widehat{\mathscr{B}} : R_K. \ar@<0.5ex>[l]}
\end{equation*}
Now suppose $\mathscr{C}$ has all small colimits and limits and
$K:\mathscr{A}\dashrightarrow\mathscr{B}$ is a distributor.  Given
diagrams $X\in\mathscr{C}^{\mathscr{A}^\Op}$ and
$Y\in\mathscr{C}^{\mathscr{B}^\Op}$, we define
\begin{equation*}
  \big(L_{K,\mathscr{C}}X\big)_b = \int^{a\in\Ob\mathscr{A}} K^a_b\times X_a
  \qquad\text{and}\qquad
  \big(R_{K,\mathscr{C}}Y\big)_a = \int_{b\in\Ob\mathscr{B}} (Y_b)^{K^a_b}.
\end{equation*}
\begin{proposition}\label{prop:kerneladjunction}
  Suppose $K:\mathscr{A}\dashrightarrow\mathscr{B}$ is a distributor.
  If $\mathscr{C}$ is small-cocomplete and complete, then
  \begin{equation*}
    \xymatrix{L_{K,\mathscr{C}} : \mathscr{C}^{\mathscr{A}^\Op} \ar@<0.5ex>[r] &
       \mathscr{C}^{\mathscr{B}^\Op} : R_{K,\mathscr{C}} \ar@<0.5ex>[l]}
  \end{equation*}
  is an adjoint pair.
\end{proposition}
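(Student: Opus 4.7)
The plan is to verify the adjunction by a direct coend/end calculation, reducing it to the $\Cat{Set}$-tensor/cotensor adjunction \eqref{eq:chassettensors} that already exists in $\mathscr{C}$. The existence of all displayed coends and ends is guaranteed by the hypothesis that $\mathscr{C}$ is small-cocomplete and small-complete, since coends and ends are computed as (co)equalizers of (co)products indexed by $\Ob\mathscr{A}$ and $\Ar\mathscr{A}$ respectively (similarly for $\mathscr{B}$).

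First I would recall two standard identities. For diagrams $F,G \in \mathscr{C}^{\mathscr{B}^\Op}$, the set of natural transformations is the end
\begin{equation*}
  \mathscr{C}^{\mathscr{B}^\Op}(F,G) \;\cong\; \int_{b\in\Ob\mathscr{B}} \mathscr{C}(F_b, G_b),
\end{equation*}
and analogously over $\mathscr{A}$. Second, for any $Z\in\mathscr{C}$, the functor $\mathscr{C}({-}, Z):\mathscr{C}^\Op \to \Cat{Set}$ sends colimits to limits, so it sends coends to ends; in particular,
\begin{equation*}
  \mathscr{C}\Bigl(\int^{a} K^a_b \times X_a,\, Y_b\Bigr) \;\cong\; \int_{a} \mathscr{C}(K^a_b \times X_a, Y_b).
\end{equation*}

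Next I would assemble the bijection. Starting from a map $L_{K,\mathscr{C}}X \to Y$, apply the first identity and then the second to obtain
\begin{equation*}
  \mathscr{C}^{\mathscr{B}^\Op}(L_{K,\mathscr{C}}X, Y) \;\cong\; \int_b \int_a \mathscr{C}(K^a_b \times X_a,\, Y_b).
\end{equation*}
Now apply the copower/power adjunction \eqref{eq:chassettensors} pointwise, which gives $\mathscr{C}(K^a_b \times X_a, Y_b) \cong \mathscr{C}(X_a,\, (Y_b)^{K^a_b})$. Using the Fubini theorem for ends to swap the order of integration, and then pulling the representable $\mathscr{C}(X_a, {-})$ past the inner end (since $\mathscr{C}(X_a,{-})$ preserves limits), we get
\begin{equation*}
  \int_b \int_a \mathscr{C}(X_a,\,(Y_b)^{K^a_b})
  \;\cong\; \int_a \mathscr{C}\Bigl(X_a,\, \int_b (Y_b)^{K^a_b}\Bigr)
  \;=\; \int_a \mathscr{C}\bigl(X_a,\, (R_{K,\mathscr{C}}Y)_a\bigr),
\end{equation*}
which by the end formula for natural transformations is precisely $\mathscr{C}^{\mathscr{A}^\Op}(X, R_{K,\mathscr{C}}Y)$.

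Each step above is a canonical isomorphism of sets, natural in $X$ and $Y$, so composing them produces the desired adjunction bijection. The main (and only) subtlety is the application of Fubini for ends together with the fact that representables preserve ends; both are standard in enriched category theory (see \cite{maclane,kelly}), so there is no real obstacle—the proposition is essentially a bookkeeping consequence of the coend formula for left Kan extension along the Yoneda embedding, transported into $\mathscr{C}$ via the copower.
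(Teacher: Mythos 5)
Your proof is correct and follows the same idea the paper gestures at: the paper's entire proof is the single remark ``This boils down to \eqref{eq:chassettensors},'' and you have simply supplied the standard coend/end bookkeeping (representables preserve ends, Fubini, and the pointwise copower/power adjunction) that makes that remark precise.
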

This boils down to \eqref{eq:chassettensors}.  Note that Proposition
\ref{prop:kerneladjunction} does not have a converse in general---not
all adjunctions between $\mathscr{C}^{\mathscr{A}^\Op}$ and
$\mathscr{C}^{\mathscr{B}^\Op}$ are given by distributors.

Suppose $\mathscr{I}$ is a small category.  The copower operation
induces a bifunctor
\begin{gather*}
  {-}\otimes{-} : \widehat{\mathscr{I}} \times \mathscr{C} \to
  \mathscr{C}^{\mathscr{I}^\Op}
\end{gather*}
for small $\mathscr{I}$ with $(A \otimes X)_a = A_a\times X$.  These
functor is divisible on both sides \cite{joyaltierney}: abusing
notation a bit, there are adjunctions
\begin{equation*}
  \mathscr{C}(X, Y^A) \cong \mathscr{C}^{\mathscr{I}^\Op}(A\otimes X,
  Y) \cong \widehat{\mathscr{I}}(A, [X, Y])
\end{equation*}
with
\begin{equation*}
  Y^A = \int_{a\in\mathscr{I}} (Y_a)^{A_a}\qquad\text{and}\qquad
  [X,Y]_a = \mathscr{C}(X, Y_a).
\end{equation*}
In fact, the $\otimes$ bifunctor is really part of an
$\widehat{\mathscr{I}}$-enrichment on $\mathscr{C}^{\mathscr{I}^\Op}$
known as the \emph{external $\widehat{\mathscr{I}}$-enrichment}.
Since we won't need the full power of the external enrichment, we've
only defined $A\otimes{-}$ for constant diagrams in
$\mathscr{C}^{\mathscr{I}^\Op}$ and we've taken global sections in
defining its right adjoint ${-}^A$.  Note that if $[a]$ is the
presheaf represented by $a$, then $Y^{[a]}$ is naturally isomorphic to
$Y_a$.  The pushout-product
\begin{equation*}
  {-}\odot{-}:\Ar\widehat{\mathscr{I}} \times \Ar\mathscr{C} \to \Ar\mathscr{C}^{\mathscr{I}^\Op}
\end{equation*}
has adjoints
\begin{equation*}
  \Ar \mathscr{C}(f, \langle i \backslash g\rangle) \cong
  \Ar \mathscr{C}^{\mathscr{I}^\Op}(i \odot f, g) \cong
  \Ar \widehat{\mathscr{I}}(i, \langle g /f \rangle).
\end{equation*}
Now suppose $\mathscr{I}$ is an \textsc{ez} Reedy category and
$\mathscr{C}$ is a model category.  (In the remainder of this
paragraph, the \textsc{ez} assumption can be weakened by
redefining $\partial[i]$; see \cite{hirschhorn,barwickreedy}.)  For
$a\in\Ob\mathscr{I}$, recall we defined presheaves $[a]$ and
$\partial[a]$ in Section \ref{section:qisreedy}; in the case of $\QQ$,
$\square^n = [\sqparen{n}]$ and $\partial\square^n = \partial[\sqparen{n}]$.  Given
$Y\in\mathscr{C}^{\mathscr{I}^\Op}$, we may realize the Reedy matching
object $M_a Y$ as $Y^{\partial [a]}$.  A map $g:Y\to Z$ in
$\mathscr{C}^{\mathscr{I}^\Op}$ is thus a Reedy fibration if and only
if
\begin{equation*}
  \langle i_a \backslash g\rangle : Y^{[a]}
  \to Y^{\partial [a]} \times_{Z^{\partial [a]}} Z^{[a]},\ i_a =
  \partial [a] \to [a]
\end{equation*}
is a fibration in $\mathscr{C}$ for all $a\in\Ob\mathscr{I}$.  What's
not obvious, but still true, is that $g$ is a Reedy acyclic fibration
if and only if $\langle i_a \backslash g\rangle$ is an acyclic
fibration for all $a\in\Ob\mathscr{A}$---recall that weak equivalences
in the Reedy model structure are defined objectwise, not in terms of
mapping or latching objects.  Hirschhorn proves this result in
\cite[Theorem 15.3.15]{hirschhorn}; we quoted it earlier as Theorem
\ref{theorem:hirschhornreedyomnibus}.  We'll use it to compare Reedy
model structures in Proposition \ref{prop:reedyinheritsleftquillen}
below.
\begin{lemma}\label{lemma:matchingobjectcomputation}
  Let $\mathscr{A}$ and $\mathscr{B}$ be small categories and
  $K:\mathscr{A}\dashrightarrow\mathscr{B}$ a distributor.  Suppose
  $\mathscr{C}$ has all small colimits and limits.  There is an
  isomorphism $(R_{K,\mathscr{C}} Y)^A \cong Y^{L_K A}$ natural in
  $A\in\Ob\widehat{\mathscr{A}}$ and
  $Y\in\Ob\mathscr{C}^{\mathscr{B}^\Op}$.
\end{lemma}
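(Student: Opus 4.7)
The plan is to reduce the lemma to a small compatibility statement using Yoneda and adjointness. For any $X\in\Ob\mathscr{C}$, we have
\begin{equation*}
\mathscr{C}\bigl(X,(R_{K,\mathscr{C}}Y)^A\bigr)\cong \mathscr{C}^{\mathscr{A}^\Op}(A\otimes X,R_{K,\mathscr{C}}Y)\cong \mathscr{C}^{\mathscr{B}^\Op}\bigl(L_{K,\mathscr{C}}(A\otimes X),Y\bigr),
\end{equation*}
the first isomorphism coming from the external $\widehat{\mathscr{A}}$-enrichment and the second from Proposition~\ref{prop:kerneladjunction}. If I can exhibit a natural isomorphism $L_{K,\mathscr{C}}(A\otimes X)\cong (L_K A)\otimes X$, then a second application of the external enrichment gives $\mathscr{C}^{\mathscr{B}^\Op}((L_K A)\otimes X,Y)\cong \mathscr{C}(X,Y^{L_K A})$, and the Yoneda lemma yields the claimed isomorphism, natural in $A$ and $Y$.

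The required compatibility is transparent from the coend formula: at $b\in\Ob\mathscr{B}$,
\begin{equation*}
\bigl(L_{K,\mathscr{C}}(A\otimes X)\bigr)_b = \int^{a\in\Ob\mathscr{A}} K^a_b\times A_a\times X \cong \Bigl(\int^{a\in\Ob\mathscr{A}} K^a_b\times A_a\Bigr)\times X = \bigl((L_K A)\otimes X\bigr)_b,
\end{equation*}
since the copower $({-})\times X:\Cat{Set}\to \mathscr{C}$ is left adjoint to $\mathscr{C}(X,{-})$ and therefore commutes with the coend. One verifies in passing that this isomorphism is natural in $b$ and in $A$, and that the right-hand side is indeed the external copower of $L_K A \in \widehat{\mathscr{B}}$ with $X$.

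A purely end-theoretic derivation works equally well and may be preferable: expand $Y^{L_K A} = \int_b Y_b^{\int^a K^a_b\times A_a}$, convert the coend in the exponent to an end via $Z^{\mathrm{colim}}\cong \lim Z^{({-})}$, split off the power by $A_a$ using $Z^{S\times T}\cong (Z^S)^T$, and swap the two resulting ends by Fubini to obtain $\int_a \bigl(\int_b Y_b^{K^a_b}\bigr)^{A_a}=(R_{K,\mathscr{C}}Y)^A$. The main obstacle is purely bookkeeping---checking that the string of natural isomorphisms assembles coherently and that external copowers interact with the Kan-extension formulas as expected---and no hypotheses on $\mathscr{C}$ beyond the existence of small limits and colimits come into play.
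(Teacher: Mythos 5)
Your proposal is correct. The paper itself provides no proof beyond the remark ``This is an exercise in adjunctions,'' and your Route~1 is exactly the flesh on that skeleton: the chain
\begin{equation*}
\mathscr{C}\bigl(X,(R_{K,\mathscr{C}}Y)^A\bigr)\cong \mathscr{C}^{\mathscr{A}^\Op}(A\otimes X,R_{K,\mathscr{C}}Y)\cong \mathscr{C}^{\mathscr{B}^\Op}\bigl(L_{K,\mathscr{C}}(A\otimes X),Y\bigr)\cong \mathscr{C}\bigl(X,Y^{L_K A}\bigr),
\end{equation*}
natural in $X$, followed by Yoneda, is the standard adjunction argument the author presumably has in mind. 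The one genuine lemma you supply---$L_{K,\mathscr{C}}(A\otimes X)\cong (L_K A)\otimes X$---is verified correctly: since the copower $({-})\times X:\Cat{Set}\to\mathscr{C}$ is a left adjoint, it commutes with the coend $\int^a K^a_b\times A_a$, and one is left with the defining formula for $L_K A$ applied levelwise. Your Route~2 is an equivalent direct computation: it uses that the contravariant power functor $Z^{({-})}:\Cat{Set}^\Op\to\mathscr{C}$ turns coends into ends, the exponential law $Z^{S\times T}\cong (Z^S)^T$, Fubini for ends, and the fact that $({-})^{A_a}$ (being a right adjoint for fixed $A_a$) commutes with the end over $b$. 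Both are valid; Route~1 is cleaner and hides the bookkeeping behind Yoneda, while Route~2 gives an explicit formula-level isomorphism, which can be convenient if one later wants to track the map rather than merely assert its existence. No hypotheses beyond small completeness and cocompleteness of $\mathscr{C}$ are used, as you note.
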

This is an exercise in adjunctions.

\begin{proposition}\label{prop:reedyinheritsleftquillen}
  Suppose $\mathscr{C}$ is a model category and both $\mathscr{A}$ and
  $\mathscr{B}$ are \textsc{ez} Reedy categories.  Suppose
  $K:\mathscr{A}\dashrightarrow\mathscr{B}$ is a distributor so that
  $L_K:\widehat{\mathscr{A}} \to \widehat{\mathscr{B}}$ preserves
  monomorphisms.  Then
  \begin{equation*}
    \xymatrix{L_{K,\mathscr{C}} : \mathscr{C}^{\mathscr{A}^\Op} \ar@<0.5ex>[r] &
       \mathscr{C}^{\mathscr{B}^\Op} : R_{K,\mathscr{C}} \ar@<0.5ex>[l]}
  \end{equation*}
  is a Quillen pair when each category is equipped with the Reedy
  model structure.
\end{proposition}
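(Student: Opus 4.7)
The plan is to verify that $R_{K,\mathscr{C}}$ is a right Quillen functor; by adjunction this is equivalent to asserting that $L_{K,\mathscr{C}}$ is left Quillen. By the Reedy characterization in Theorem \ref{theorem:hirschhornreedyomnibus}, a map $h$ in $\mathscr{C}^{\mathscr{A}^\Op}$ is a Reedy (acyclic) fibration precisely when its matching corner maps $\langle i_a \backslash h \rangle$ are (acyclic) fibrations in $\mathscr{C}$ for every $a \in \Ob\mathscr{A}$, where $i_a : \partial [a] \to [a]$. Thus it suffices to show that for any Reedy (acyclic) fibration $g : Y \to Z$ in $\mathscr{C}^{\mathscr{B}^\Op}$, each corner map
\begin{equation*}
  \langle i_a \backslash R_{K,\mathscr{C}}(g) \rangle :
  R_{K,\mathscr{C}}(Y)^{[a]} \to R_{K,\mathscr{C}}(Y)^{\partial [a]}
  \times_{R_{K,\mathscr{C}}(Z)^{\partial [a]}} R_{K,\mathscr{C}}(Z)^{[a]}
\end{equation*}
is an (acyclic) fibration in $\mathscr{C}$.

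Lemma \ref{lemma:matchingobjectcomputation} gives natural isomorphisms $R_{K,\mathscr{C}}(Y)^A \cong Y^{L_K A}$. Since $L_K$ is cocontinuous, these isomorphisms are compatible with the pushouts computing the matching corner maps, and one obtains
\begin{equation*}
  \langle i_a \backslash R_{K,\mathscr{C}}(g) \rangle
  \;\cong\;
  \langle L_K(i_a) \backslash g \rangle.
\end{equation*}
By hypothesis $L_K$ preserves monomorphisms, so $L_K(i_a)$ is a monomorphism in $\widehat{\mathscr{B}}$. The proposition therefore reduces to the key claim: for any Reedy (acyclic) fibration $g$ in $\mathscr{C}^{\mathscr{B}^\Op}$ and any monomorphism $j$ in $\widehat{\mathscr{B}}$, the arrow $\langle j \backslash g \rangle$ is an (acyclic) fibration in $\mathscr{C}$.

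I would prove this claim by a saturation argument on $j$. Let $S_g$ denote the class of monomorphisms $j$ for which $\langle j \backslash g \rangle$ is an (acyclic) fibration in $\mathscr{C}$; by the Reedy characterization applied to $g$ itself, every boundary inclusion $\partial[b] \to [b]$, $b \in \Ob\mathscr{B}$, already belongs to $S_g$. Since the cotensor $Y \mapsto Y^A$ is right adjoint to the copower with $A$, it turns pushouts in $\widehat{\mathscr{B}}$ into pullbacks in $\mathscr{C}$, coproducts into products, and filtered colimits into filtered inverse limits; from this one extracts the standard fact that $j \mapsto \langle j \backslash g \rangle$ carries cobase changes, coproducts, transfinite compositions, and retracts of monomorphisms to pullbacks, products, transfinite inverse limits, and retracts of the corresponding corner maps. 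Since (acyclic) fibrations in $\mathscr{C}$ are stable under each of these constructions, $S_g$ is closed under all of the cell-complex operations. Corollary \ref{cor:cellularmodelforez} (or Proposition \ref{prop:cellularmodelwithnontrivialauts} when $\mathscr{B}$ admits nontrivial automorphisms) identifies the boundary inclusions as a cellular model for $\widehat{\mathscr{B}}$, so $S_g$ is the entire class of monomorphisms, establishing the claim.

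The principal obstacle is the closure of $S_g$ under transfinite composition in the acyclic fibration case: one must verify that a tower of pullbacks of acyclic fibrations is itself an acyclic fibration. This is automatic in the cofibrantly generated setting, where acyclic fibrations are characterized as the class having the right lifting property against the generating cofibrations, but it is worth flagging because this is the only part of the argument that uses more than the formal axioms of a model category.
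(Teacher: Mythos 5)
Your proposal is correct and reaches the same conclusion from the same starting point (reduce to $\langle L_K i_a \backslash g\rangle$ via Lemma \ref{lemma:matchingobjectcomputation}, then invoke Corollary \ref{cor:cellularmodelforez}), but the middle step differs from the paper's. You work on the $\mathscr{C}$-side: you let $S_g$ be the class of monomorphisms $j$ in $\widehat{\mathscr{B}}$ for which $\langle j\backslash g\rangle$ is an (acyclic) fibration, and you argue this class is saturated because $(-)^A$ turns colimits in $\widehat{\mathscr{B}}$ into limits in $\mathscr{C}$ and (acyclic) fibrations are closed under the corresponding limit constructions. The paper instead applies the adjunction once more: to test whether $\langle L_K i_a \backslash p\rangle$ is an (acyclic) fibration, fix a cofibration $f$ in $\mathscr{C}$ with $f$ or $p$ acyclic and observe $f\pitchfork \langle L_K i_a\backslash p\rangle$ iff $L_K i_a\pitchfork\langle p/f\rangle$; the class of $j$ with $j\pitchfork\langle p/f\rangle$ is automatically saturated (left lifting classes always are), so the cellular model argument applies without any further analysis. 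The paper's formulation therefore needs no discussion of what the cotensor does to towers or pushouts.

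One remark on your flagged worry: cofibrant generation is not actually needed. In \emph{any} model category, the acyclic fibrations are exactly the maps with the right lifting property against all cofibrations, and the fibrations are exactly the maps with the right lifting property against all acyclic cofibrations; both classes are therefore closed under retracts, pullbacks, products, and transfinite inverse composites of towers of pullbacks, by purely formal lifting arguments. So the closure of $S_g$ under transfinite composition works for a general model category $\mathscr{C}$, which is important since the proposition does not hypothesize cofibrant generation of $\mathscr{C}$ and is applied to arbitrary $\qsSet$-model categories later. Your citation of Proposition \ref{prop:cellularmodelwithnontrivialauts} as an alternative in the presence of nontrivial automorphisms is a welcome precision that the paper's proof glosses over, though it is moot in the applications since $\QQ$ and $\Delta$ have trivial automorphism groups.
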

\begin{proof}
  We'll check that $R_{K,\mathscr{C}}$ is right Quillen.  Suppose
  $p:Y\to Z$ is a Reedy fibration in $\mathscr{C}^{\mathscr{B}^\Op}$.
  By the computation in Lemma \ref{lemma:matchingobjectcomputation},
  we need $\langle L_K i_a \backslash p \rangle$ to be a fibration for
  all $a$, acyclic if $p$ is acyclic (as above, $i_a$ is the inclusion
  $\partial[a] \to [a]$).  Suppose $f:A \to B$ is a cofibration in
  $\mathscr{C}$ and one of $p$, $f$ is a weak equivalence.  It is
  sufficient to check that $f\pitchfork \langle L_K i_a \backslash
  p\rangle$, but this occurs if and only if $(L_K i_a \odot f)
  \pitchfork p$, which occurs in turn if and only if $L_K i_a
  \pitchfork \langle p/f\rangle$.  Since $p$ is a Reedy fibration, $f
  \pitchfork \langle i_b \backslash p\rangle$ for all
  $b\in\mathscr{B}$.
  Thus $i_b \pitchfork \langle p/f\rangle$ for all $b$.  By
  Corollary \ref{cor:cellularmodelforez}, the collection of maps
  $i_b$ form a cellular model for $\widehat{\mathscr{B}}$: we have
  \begin{equation*}
    \Cell \{ i_b \mid b\in\Ob \mathscr{B} \} = \mathsf{mono}_{\widehat{\mathscr{B}}}.
  \end{equation*}
  Thus $\mathsf{mono}_{\widehat{\mathscr{B}}} \pitchfork \langle p/f\rangle$.  
  By assumption, $L_K i_a$ is a monomorphism, so $L_K i_a \pitchfork
  \langle p/f\rangle$.
\end{proof}

\subsection{Cubical and simplicial resolutions}
We are now in a position to use the machinery we developed in the last
section with the distributor $K:\QQ\dashrightarrow\Delta$ associated
to cubical realization; this is given by
\begin{equation*}
  K^{\sqparen{n}}_{[m]} = \sSet(\Delta[m], |\square^n|) \cong
  \big(\sSet (\Delta[m], \Delta[1])\big)^n.
\end{equation*}
For an arbitrary model category $\mathscr{C}$, Proposition
\ref{prop:reedyinheritsleftquillen} yields a Quillen adjunction
between cubical and simplicial objects in $\mathscr{C}$:
\begin{equation*}
  \xymatrix{L_{K,\mathscr{C}} : q\mathscr{C} \ar@<0.5ex>[r] &
    s\mathscr{C} : R_{K,\mathscr{C}}. \ar@<0.5ex>[l]}
\end{equation*}

\begin{definition}[{\cite[Definition 16.1.2]{hirschhorn}}]
  Suppose $\mathscr{C}$ is a model category and $X\in\mathscr{C}$ is
  fibrant.  Write $\CS X$ and $\CQ X$ for the constant simplicial and
  cubical diagrams, respectively, with value $X$ in $\mathscr{C}$.
  A \emph{simplicial resolution} (resp.~\emph{cubical resolution}) of
  $X$ is a weak equivalence $\CS X \to \overline{X}$ (resp.~$\CQ X
  \to \widetilde{X}$) in which $\overline{X}$ (resp.~$\widetilde{X}$)
  is Reedy fibrant.
\end{definition}

We'll need this quick geometric lemma later:
\begin{lemma}\label{lemma:rightadjointpreservesconstantdiagrams}
  Let $\mathscr{C}$ be a small-complete and small-cocomplete category.
  Suppose $\CS Y$ is the constant diagram $\Delta^\Op \to \mathscr{C}$ on
  $Y$.  Then $R_{K,\mathscr{C}} \CS Y$ is isomorphic to the constant
  diagram $\CQ Y:\QQ^\Op\to\mathscr{C}$ on $Y$.
\end{lemma}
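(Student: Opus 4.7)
The plan is to apply the adjunction $L_{K,\mathscr{C}} \dashv R_{K,\mathscr{C}}$ from Proposition \ref{prop:kerneladjunction} together with the Yoneda lemma, testing $R_{K,\mathscr{C}} \CS Y$ against an arbitrary cubical object $X \in \mathscr{C}^{\QQ^\Op}$. The adjunction, combined with the fact that $\CS({-})$ is right adjoint to $\colim_{\Delta^\Op}:\mathscr{C}^{\Delta^\Op}\to\mathscr{C}$, yields
\begin{equation*}
\mathscr{C}^{\QQ^\Op}\bigl(X,\, R_{K,\mathscr{C}} \CS Y\bigr)
\cong \mathscr{C}^{\Delta^\Op}\bigl(L_{K,\mathscr{C}} X,\, \CS Y\bigr)
\cong \mathscr{C}\bigl( \textstyle\colim_{\Delta^\Op} L_{K,\mathscr{C}} X,\, Y \bigr),
\end{equation*}
while $\mathscr{C}^{\QQ^\Op}(X, \CQ Y) \cong \mathscr{C}(\colim_{\QQ^\Op} X,\, Y)$ directly. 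By the Yoneda lemma it therefore suffices to produce a natural isomorphism $\colim_{\Delta^\Op} L_{K,\mathscr{C}} X \cong \colim_{\QQ^\Op} X$.

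To establish this isomorphism I would unfold the defining coend $(L_{K,\mathscr{C}} X)_{[m]} = \int^{\sqparen{n}} K^{\sqparen{n}}_{[m]} \times X_{\sqparen{n}}$ and commute the outer $\colim_{\Delta^\Op}$ past the coend. This is legitimate since both are colimits in $\mathscr{C}$, and the copower by a set preserves colimits in its set variable. The computation then reduces to identifying $\colim_{[m]\in\Delta^\Op} K^{\sqparen{n}}_{[m]}$ for each $n \ge 0$. By construction of the distributor $K$, this is the colimit in $\Cat{Set}$ of the simplicial set $(\Delta[1])^n$---equivalently, its set of path components.

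The only substantive input is that $(\Delta[1])^n$ is connected for every $n\ge 0$, so this colimit is the one-point set. Once that is recorded, the copower by a point is trivial and the coend collapses to $\int^{\sqparen{n}} X_{\sqparen{n}} \cong \colim_{\QQ^\Op} X$, which gives the desired natural isomorphism; Yoneda then finishes the proof. There is no real obstacle beyond the connectedness observation: the rest of the argument is a purely formal manipulation of adjunctions and (co)ends.
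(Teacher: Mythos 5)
Your proof is correct, but it takes a genuinely different route from the paper's. You transfer the problem across the adjunction $L_{K,\mathscr{C}}\dashv R_{K,\mathscr{C}}$ to the \emph{left} adjoint side and test against an arbitrary $X\in\mathscr{C}^{\QQ^\Op}$ via Yoneda, reducing everything to the colimit identity $\colim_{\Delta^\Op}L_{K,\mathscr{C}}X\cong\colim_{\QQ^\Op}X$, which in turn drops out once one observes $\colim_{[m]\in\Delta^\Op}K^{\sqparen{n}}_{[m]}\cong\pi_0\big((\Delta[1])^n\big)=\ast$ and that the coend of a bifunctor constant in its covariant variable is a colimit. The paper instead stays on the \emph{right} adjoint side: it unwinds the end formula $(R_{K,\mathscr{C}}\CS Y)_{\sqparen{n}}\cong\lim_{[m]\in\Delta}Y^{K^{\sqparen{n}}_{[m]}}$, reorganizes the limit via the Grothendieck construction $C_n\cong\Delta\downarrow|\square^n|$, and uses a left cofinality argument (showing $\iota:K^{\sqparen{n}}_{[m]}\to[m]\downarrow\pi$ has a right adjoint) to identify the limit with $\lim_{C_n}c_Y$, which is $Y$ because $\Nerve C_n$ is nonempty and connected. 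The geometric content is the same in both proofs---ultimately, connectedness of the simplicial set $(\Delta[1])^n$---but your argument reaches it more directly: $\pi_0$ is manifestly what the set-colimit of a simplicial set computes, whereas the paper's route passes through the category of simplices of $|\square^n|$ and a cofinality check. Your approach is shorter and arguably cleaner; the paper's has the modest advantage of producing the value of $R_{K,\mathscr{C}}\CS Y$ at each $\sqparen{n}$ explicitly rather than characterizing it representably, and the auxiliary cocubical object $n\mapsto C_n$ it introduces may be of independent interest. Both are complete proofs.
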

\begin{proof}
  This amounts to a verification that the representable functors
  $\square^n$ have connected geometric realization.  First, fix
  $\sqparen{n} \in \Ob\QQ$.  We have a functor $K^{\sqparen{n}}_{-}
  :\Delta^\Op \to \Cat{Set}$.  Suppose that this takes values in
  discrete categories instead and form the Grothendieck construction
  ${C_n}^\Op = \Delta^\Op \int K^{\sqparen{n}}_{-}$.  The category
  $C_n$ is isomorphic to the category $\Delta \downarrow | \square^n
  |$ of simplices of $|\square^n|$.  There is a zig-zag of weak
  equivalences from $\Nerve C_n$ to $|\square^n|$, so $C_n$ has a
  weakly contractible nerve.  Note that the $C_n$ taken together form
  a cocubical object $\QQ\to\Cat{Cat}$.

  Recall that $R_{K,\mathscr{C}} \CS Y$ is the functor
  \begin{equation*}
    R_{K,\mathscr{C}} Y(\sqparen{n}) = \int_{[m]\in\Delta} Y^{K^{\sqparen{n}}_{[m]}} \cong
    \lim_{[m]\in\Delta} Y^{K^{\sqparen{n}}_{[m]}}.
  \end{equation*}
  Let $\pi:C_n \to \Delta$ denote projection and write
  $c_Y:C_n \to \mathscr{C}$ for the constant functor on $Y$.
  The chain of functors
  \begin{equation*}
    \xymatrix{C_n \ar[r]^\pi & \Delta \ar[r] & \ast}
  \end{equation*}
  induces an isomorphism $\lim_{C_n} c_Y \cong \lim_\Delta \pi_\ast
  c_Y$, where $\pi_\ast:\mathscr{C}^{C_n}\to\mathscr{C}^\Delta$ is
  right Kan extension along $\pi$.  Suppose $[m]\in\Delta$.  Viewing
  $K^{\sqparen{n}}_{[m]}$ as a discrete category, there is a functor
  $\iota:K^{\sqparen{n}}_{[m]} \to [m]\downarrow \pi$ practically by
  definition sending the simplex $f:\Delta[m] \to |\square^n|$ to the
  pair $(f, \id_{[m]})$.  The functor $\iota$ has a right adjoint
  sending the solid arrows in the diagram
  \begin{equation*}
    \xymatrix{\Delta[m] \ar@{.>}[rd] \ar[rr] && \Delta[r] \ar[ld] \\
      & |\square^n|}
  \end{equation*}
  to the (uniquely determined) dotted arrow; the entire triangle
  displays the counit of the adjunction.  Thus $\iota$ is left
  cofinal \cite{maclane}, so there is a chain of isomorphisms
  \begin{equation*}
    (\pi_\ast c_Y)[m] \cong \lim_{[m]\downarrow \pi} c_Y \cong
    \lim_{K^{\sqparen{n}}_{[m]}} \iota^\ast c_Y \cong Y^{K^{\sqparen{n}}_{[m]}}
  \end{equation*}
  natural in $Y$, $[m]$, and $\sqparen{n}$.  Hence there is an isomorphism
  \begin{equation*}
    \lim_{[m]\in\Delta} Y^{K^{\sqparen{n}}_{[m]}} \cong \lim_{C_n} c_Y
  \end{equation*}
  natural in $\sqparen{n}$ and $Y$.  Since $\Nerve C_n$ is contractible, it is
  nonempty and connected, so the latter limit is simply $Y$.
\end{proof}

The Reedy model structure on $s\mathscr{C}$ is not
compatible with the external simplicial enrichment: if $Y$ is a Reedy
fibrant simplicial object in $\mathscr{C}$, then $Y^{\Delta[n]} \to
Y^{\Lambda_i[n]}$ need not be an objectwise weak equivalence.  However,
if $Y$ is a homotopically constant Reedy fibrant diagram, then as
we'll see below, $Y^{\Delta[n]} \to Y^{\Lambda_i[n]}$ is a weak
equivalence.  One (circular) way to think about this is that the
homotopically constant Reedy fibrant diagrams comprise the fibrant
objects in a Bousfield localization of $s\mathscr{C}$ that
is both compatible with the simplicial enrichment and Quillen
equivalent to $\mathscr{C}$ \cite{duggerreplacing,rss}.
\begin{proposition}[{\cite[Theorem 16.5.7]{hirschhorn}}]
  \label{prop:homotopicallyconstant}
  Suppose $\widetilde{Y}$ is a Reedy fibrant diagram in
  $s\mathscr{C}$.
  \begin{enumerate}
  \item If $A\to B$ is a monomorphism of simplicial sets, then $\widetilde{Y}^B
    \to \widetilde{Y}^A$ is a fibration in $\mathscr{C}$.
  \item Suppose $\widetilde{Y}$ is \emph{homotopically constant},
    i.e., that each map $\Delta[n] \to \Delta[m]$ in  $\Delta$ induces a
    weak equivalence $\widetilde{Y}_m \to \widetilde{Y}_n$.  If $A\to
    B$ is an acyclic cofibration of simplicial sets, then
    $\widetilde{Y}^B \to \widetilde{Y}^A$ is an acyclic
    fibration.
  \end{enumerate}
\end{proposition}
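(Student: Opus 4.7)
The plan is a two-variable adjunction argument in both parts. By the pushout--product adjunction, the statement that $\tilde Y^B\to\tilde Y^A$ is a fibration (respectively acyclic fibration) in $\mathscr{C}$ is equivalent, via
\begin{equation*}
  \Ar \mathscr{C}\bigl(f,\tilde Y^B\to\tilde Y^A\bigr)\;\cong\;\Ar s\mathscr{C}\bigl((A\to B)\odot f,\tilde Y\to \ast\bigr),
\end{equation*}
to the statement that $(A\to B)\odot f$ has the left lifting property against $\tilde Y$ for every acyclic cofibration (respectively cofibration) $f:K\to L$ of $\mathscr{C}$. For fixed $f$, the class of $g\in\Ar\sSet$ with $g\odot f\pitchfork \tilde Y$ is saturated---closed under coproduct, cobase change, transfinite composition, and retract---so in each part it suffices to verify the relevant base case on a cellular generating set for the class of $g$ in question.

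For part (1), the required base case is that $(\partial\Delta[n]\to\Delta[n])\odot f\pitchfork \tilde Y$ for every acyclic cofibration $f$. Unwinding the adjunction, this says exactly that each matching map $\tilde Y^{\Delta[n]}\to \tilde Y^{\partial\Delta[n]}$ is a fibration in $\mathscr{C}$, which is Reedy fibrancy of $\tilde Y$. The standard cellular model $\mathsf{mono}=\Cell\{\partial\Delta[n]\to\Delta[n]\}$ for $\sSet$ then closes out part (1).

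For part (2), the base case becomes the horns $\Lambda^k[n]\to\Delta[n]$, so we must upgrade Reedy fibrancy to the statement that $\tilde Y^{\Delta[n]}\to\tilde Y^{\Lambda^k[n]}$ is an \emph{acyclic} fibration. The fibration part comes free from (1); the hard part is the weak equivalence, and this is where homotopical constancy must enter. My approach is to identify $\tilde Y^{\Delta[n]}\cong\tilde Y_n$ and $\tilde Y^{\Delta[0]}\cong\tilde Y_0$ by the enriched coYoneda lemma, then factor the horn inclusion through a vertex $\{k\}\hookrightarrow\Lambda^k[n]$ and observe that the composite $\tilde Y_n\to\tilde Y_0$ is the structure map of $\tilde Y$ associated to $[0]\to[n]$, a weak equivalence by the homotopical constancy hypothesis. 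Two-out-of-three then reduces the problem to showing $\tilde Y^{\Lambda^k[n]}\to\tilde Y_0$ is a weak equivalence, which I would prove by induction on $n$ using the skeletal filtration of $\Lambda^k[n]$ by attachments of lower-dimensional boundary inclusions, applying part (1) at each stage to realize the attachments as pullbacks of fibrations and propagating weak equivalences through limits of towers of fibrations.

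The main obstacle---absorbing most of the real work---is the bookkeeping for this induction: one must organize the cell structure of $\Lambda^k[n]$ so that each attachment is covered by the inductive hypothesis, namely either a lower-dimensional horn (giving an acyclic fibration base case) or a lower-dimensional boundary inclusion (where part (1) gives a fibration whose weak-equivalence status is controlled by homotopical constancy applied to the remaining vertex-projection). Since pullbacks and transfinite limits of acyclic fibrations of fibrant objects remain acyclic fibrations, once the induction is organized carefully no additional model-categorical hypothesis on $\mathscr{C}$ beyond its original axioms is needed.
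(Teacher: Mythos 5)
The paper does not prove this proposition; it cites it as Theorem 16.5.7 of Hirschhorn's book, so there is no internal argument to compare against. Your part~(1) is correct. Part~(2) begins correctly as well: the saturated-class reduction to horn inclusions, the coYoneda identifications $\widetilde Y^{\Delta[n]}\cong\widetilde Y_n$, the factorization through the vertex $\{k\}$, and the two-out-of-three reduction to showing $\widetilde Y^{\Lambda^k[n]}\to\widetilde Y_0$ is a weak equivalence are all sound. The gap is in how you close that last step. The skeletal filtration of $\Lambda^k[n]$ attaches cells along $\partial\Delta[m]\to\Delta[m]$, and part~(1) only turns these into fibrations, not acyclic fibrations; moreover the bottom of the resulting tower is $\widetilde Y^{\sk_0\Lambda^k[n]}=\widetilde Y_0^{\,n+1}$, not $\widetilde Y_0$. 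Homotopical constancy controls the maps $\widetilde Y_n\to\widetilde Y_m$ coming from morphisms of $\Delta$, not the matching maps $\widetilde Y^{\Delta[m]}\to\widetilde Y^{\partial\Delta[m]}$, and the latter are genuinely not weak equivalences (already $\widetilde Y_1\to\widetilde Y_0\times\widetilde Y_0$ fails, since $\partial\Delta[1]$ is not contractible). So ``propagating weak equivalences through limits of towers of fibrations'' fails at the very first stage, and ``homotopical constancy applied to the remaining vertex-projection'' does not produce a weak equivalence for a boundary attachment.

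What is needed is a \emph{horn}-attachment filtration of $\{k\}\hookrightarrow\Lambda^k[n]$ rather than the skeletal one: adjoin the nondegenerate simplices of $\Lambda^k[n]$ that contain the vertex $k$, one dimension at a time from $1$ up to $n-1$; at each stage, the part of a new $m$-simplex's boundary already present is precisely the horn at $k$ inside it, so $\{k\}=X_0\subset X_1\subset\cdots\subset X_{n-1}=\Lambda^k[n]$ with each inclusion a cobase change of a coproduct of horn inclusions $\Lambda^j[m]\to\Delta[m]$, $1\le m\le n-1$. Then the inductive hypothesis (for $m<n$) gives $\widetilde Y^{\Delta[m]}\to\widetilde Y^{\Lambda^j[m]}$ as an acyclic fibration; acyclic fibrations are stable under base change, products, and composition, so $\widetilde Y^{\Lambda^k[n]}\to\widetilde Y_0$ is an acyclic fibration and two-out-of-three finishes the proof. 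In short, your reductions are correct, but the filtration by skeleta and the appeal to part~(1) at boundary attachments must be replaced by this filtration by horn attachments from the marked vertex.
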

\begin{remark}
  Proposition \ref{prop:homotopicallyconstant} is also true for
  homotopically constant Reedy fibrant diagrams in
  $q\mathscr{C}$.
\end{remark}

We'll take the following definition as a sort of black box: given $X$
cofibrant and $Y$ fibrant in $\mathscr{C}$, the Dwyer-Kan mapping
space can be constructed by the following process: we construct a
simplicial resolution $\CS Y \to \widetilde{Y}$ and define
$\mathbf{F}(X, Y)$ to be the simplicial set $[X, \widetilde{Y}]$.
(Recall from the previous section that $[X,Y]_n = \mathscr{C}(X,
Y_n)$.  Hirschhorn shows in \cite{hirschhorn} that this is
well-defined up to weak equivalence and that it has the appropriate
functorial properties.  This construction is by no means the only way
of getting at the homotopy type of $\mathbf{F}(X, Y)$.  The following
lemma, in a simplicial guise, is found in \cite[Proposition
16.1.17]{hirschhorn}.
\begin{lemma}\label{lemma:reedyresolutions}
  Suppose $X\in\Ob\mathscr{C}$ is cofibrant and $Y\in\Ob\mathscr{C}$
  is fibrant.
  Suppose $j_i:\CQ Y\to \widetilde{Y_i}$, $i=1,2$ are objectwise weak
  equivalences and each $\widetilde{Y_i}$ is Reedy fibrant.  Then
  there is a zig-zag of weak equivalences in $\qSet$ joining
  $[X, \widetilde{Y_1}]$ to $[X,\widetilde{Y_2}]$.
\end{lemma}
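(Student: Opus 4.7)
The plan is to construct a common Reedy fibrant cubical resolution $\widetilde{Y}$ of $Y$ equipped with Reedy acyclic fibrations $p_i \colon \widetilde{Y} \to \widetilde{Y_i}$ for $i = 1, 2$, and then to show that applying $[X, -]$ to such a map produces an acyclic fibration in the spatial model structure on $\qSet$. This will yield the desired zig-zag
\begin{equation*}
[X, \widetilde{Y_1}] \xleftarrow{[X, p_1]} [X, \widetilde{Y}] \xrightarrow{[X, p_2]} [X, \widetilde{Y_2}]
\end{equation*}
of weak equivalences.

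To build the common resolution, observe that $\widetilde{Y_1} \times \widetilde{Y_2}$ is Reedy fibrant and each projection $\pi_i$ onto a factor is a Reedy fibration. Using the Reedy model structure on $q\mathscr{C}$, factor the pairing $(j_1, j_2) \colon \CQ Y \to \widetilde{Y_1} \times \widetilde{Y_2}$ as a Reedy acyclic cofibration $j \colon \CQ Y \to \widetilde{Y}$ followed by a Reedy fibration $q \colon \widetilde{Y} \to \widetilde{Y_1} \times \widetilde{Y_2}$, and set $p_i = \pi_i \circ q$. Each $p_i$ is a composite of Reedy fibrations satisfying $p_i \circ j = j_i$. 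Since both $j$ and $j_i$ are objectwise weak equivalences, two-out-of-three makes $p_i$ an objectwise weak equivalence as well, so each $p_i$ is a Reedy acyclic fibration.

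It then suffices to check that $[X, p_i]$ has the right lifting property against the generating cofibrations $\partial \square^n \to \square^n$ of the spatial model structure on $\qSet$ (Theorem \ref{theorem:cisinskimodelstructureonqq}). Using the adjunctions $\qSet(A, [X, Z]) \cong \mathscr{C}(X, Z^A)$ together with the identifications $\widetilde{Y}^{\square^n} \cong \widetilde{Y}_n$ and $\widetilde{Y}^{\partial\square^n} \cong M_n \widetilde{Y}$, such a lifting problem transposes to a lifting problem for the cofibration $\emptyset \to X$ against the $n$th Reedy corner map
\begin{equation*}
\widetilde{Y}_n \longrightarrow (\widetilde{Y_i})_n \times_{M_n \widetilde{Y_i}} M_n \widetilde{Y}.
\end{equation*}
By Theorem \ref{theorem:hirschhornreedyomnibus}(3) this corner map is an acyclic fibration in $\mathscr{C}$, and because $X$ is cofibrant the required lift exists.

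The main subtlety is the adjunction translation in the last step and its reliance on the corner-map characterization of Reedy acyclic fibrations from Theorem \ref{theorem:hirschhornreedyomnibus}(3), which crucially upgrades a mere levelwise weak equivalence between Reedy fibrations to something whose corner maps are all acyclic fibrations. Without that upgrade, transposed lifting problems would only know that each $p_i$ is a fibration level-by-level, which is insufficient to fill the boundary-to-cube lifting problems that detect acyclic fibrations in $\qSet$.
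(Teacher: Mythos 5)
Your proof is correct and takes essentially the same approach as the paper. The paper reduces to the case of a Reedy acyclic fibration $f$ between resolutions and then argues, via the same $\odot$-adjunction transposition, that $(\emptyset\to X)\pitchfork\langle i_n\backslash f\rangle$ forces $[X,f]$ to be an acyclic fibration; what you have done is fill in the zig-zag construction (the fiber-product-plus-factorization step) that the paper dismisses as ``Ken Brown's lemma and some other standard model-category theoretic moves,'' and you have made the identification of $\langle i_n\backslash f\rangle$ with the Reedy corner map explicit.
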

\begin{proof}
  It is sufficient to show that $[X, f]$ is a weak equivalence in
  $\qSet$ if $f$ is an acyclic fibration joining Reedy-fibrant
  resolutions of $\CQ Y$ (this amounts to Ken Brown's lemma and some
  other standard model-category theoretic moves). It is tempting to
  conclude by arguing that $[X, {-}]$ is a right Quillen functor
  $q\mathscr{C} \to \qSet$.  Unfortunately, $[X,{-}]$ is not right
  Quillen as its left adjoint ${-} \otimes X$ does not preserve
  acyclic cofibrations of cubical sets.  Fortunately, we only need
  ${-}\otimes X$ to preserve cofibrations.  Let $i_n:\partial\square^n
  \to \square^n$ be the usual inclusion.  Observe that
  \begin{equation}\label{eq:reedyliftingconditions}
    a \pitchfork [X, f] \quad\text{if and only if}\quad
    a\odot (\emptyset\to X) \pitchfork f \quad\text{if and only
      if}\quad
    (\emptyset\to X) \pitchfork \langle a\backslash f\rangle.
  \end{equation}
  But since $q$ is an acyclic Reedy fibration, $\langle a\backslash
  f\rangle$ is an acyclic fibration in $\mathscr{C}$.  Since $X$ is
  cofibrant, the equivalent conditions
  \eqref{eq:reedyliftingconditions} hold, so $[X,f]$ is an acyclic
  fibration in $\qSet$.
\end{proof}

\begin{lemma}\label{lemma:rkcpreservesresolutions}
  Suppose $Y\in\Ob\mathscr{C}$ is fibrant and $g:\CS Y \to \overline{Y}$
  is a simplicial resolution of $Y$.  Then $R_{K,\mathscr{C}}g$ is a
  cubical resolution of $Y$.
\end{lemma}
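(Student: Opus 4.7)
The goal is to show that $R_{K,\mathscr{C}}g:R_{K,\mathscr{C}}\CS Y\to R_{K,\mathscr{C}}\overline{Y}$ satisfies the three requirements of a cubical resolution: (i) its source is $\CQ Y$, (ii) its target is Reedy fibrant, and (iii) it is an objectwise weak equivalence. Property (i) is immediate from Lemma \ref{lemma:rightadjointpreservesconstantdiagrams}. For (ii), I would invoke Proposition \ref{prop:reedyinheritsleftquillen} applied to the distributor $K$: the simplicial realization $L_K:\qSet\to\sSet$ preserves monomorphisms by Theorem \ref{theorem:cisinskimodelstructureonqq}, so $(L_{K,\mathscr{C}},R_{K,\mathscr{C}})$ is a Quillen pair between the Reedy model structures on $s\mathscr{C}$ and $q\mathscr{C}$; hence $R_{K,\mathscr{C}}$ sends the Reedy fibrant object $\overline{Y}$ to a Reedy fibrant object.

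The substantive step is (iii), and here is where I would spend the bulk of the work. Unwinding the definition together with $K^{\sqparen{n}}_{[m]}\cong|\square^n|_m$ identifies $(R_{K,\mathscr{C}}Z)_n$ with the simplicial cotensor $Z^{|\square^n|}$, so that $(R_{K,\mathscr{C}}g)_n$ is the map $Y\to\overline{Y}^{|\square^n|}$ induced by $g$. The key preliminary observation is that $\overline{Y}$ is homotopically constant: for any arrow $[m]\to[\ell]$ in $\Delta$, naturality of $g:\CS Y\to\overline{Y}$ exhibits the structure map $\overline{Y}_\ell\to\overline{Y}_m$ as sitting in a triangle with the weak equivalences $g_\ell$ and $g_m$, so 2-out-of-3 makes it a weak equivalence. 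Now fix any vertex inclusion $\iota:\Delta[0]\to(\Delta[1])^n=|\square^n|$; this is an acyclic cofibration of simplicial sets, so Proposition \ref{prop:homotopicallyconstant}(2) provides an acyclic fibration $\overline{Y}^{|\square^n|}\to\overline{Y}^{\Delta[0]}=\overline{Y}_0$. Naturality of $g$ in the simplicial variable identifies the composite $Y\to\overline{Y}^{|\square^n|}\to\overline{Y}_0$ with $g_0$, which is a weak equivalence by hypothesis; a final 2-out-of-3 then gives that $Y\to\overline{Y}^{|\square^n|}$ is a weak equivalence, completing (iii).

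The main obstacle is (iii); within it, the pivotal move is recognizing that the objectwise weak equivalence hypothesis forces $\overline{Y}$ to be homotopically constant, bringing Proposition \ref{prop:homotopicallyconstant} into play. Everything else amounts to bookkeeping with coends and simplicial cotensors.
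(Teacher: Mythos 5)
Your proposal is correct and follows essentially the same route as the paper: same use of Lemma~\ref{lemma:rightadjointpreservesconstantdiagrams} for the source, Proposition~\ref{prop:reedyinheritsleftquillen} for Reedy fibrancy, the identification $(R_{K,\mathscr{C}}\overline{Y})_n\cong\overline{Y}^{|\square^n|}$, and Proposition~\ref{prop:homotopicallyconstant}(2) together with two-out-of-three to finish. The only difference is that you spell out explicitly why $\overline{Y}$ is homotopically constant (two-out-of-three against the identity structure maps of $\CS Y$), which the paper leaves implicit; this is a harmless and correct amplification.
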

\begin{proof}
  Note that $R_{K,\mathscr{C}} \CS Y \cong \CQ Y$ by Lemma
  \ref{lemma:rightadjointpreservesconstantdiagrams} and
  $R_{K,\mathscr{C}}\overline{Y}$ is Reedy fibrant since
  $R_{K,\mathscr{C}}$ is right Quillen (Proposition
  \ref{prop:reedyinheritsleftquillen}).  What we need to check is that
  $R_{K,\mathscr{C}} g$ is an objectwise weak equivalence.  Let
  $q:\square^0\to\square^n$ be any inclusion.  Consider
  the square
  \begin{equation*}
    \xymatrix@C=4pc{(R_{K,\mathscr{C}} \CS Y)^{\square^n}
      \ar[r]^{(R_{K,\mathscr{C}} g)^{\square^n}}
      \ar[d]_{q^\ast} & (R_{K,\mathscr{C}} \overline{Y})^{\square^n}
      \ar[d]^{q^\ast} \\
      (R_{K,\mathscr{C}} \CS Y)^{\square^0}
      \ar[r]_{(R_{K,\mathscr{C}} g)^{\square^0}}
      & (R_{K,\mathscr{C}} \overline{Y})^{\square^0}.
    }
  \end{equation*}
  By our computations, this is isomorphic to
  \begin{equation*}
    \xymatrix@C=4pc{Y \ar@{=}[d] \ar[r] & \overline{Y}^{|\square^n|}
      \ar[d]^{|q|^\ast} \\
      Y \ar[r] & \overline{Y}_0.
    }
  \end{equation*}
  The bottom arrow is a weak equivalence since $\CS Y \to
  \overline{Y}$ is a resolution; the arrow $|q|^\ast$ is a weak
  equivalence since $|q|$ is an acyclic cofibration and $\overline{Y}$
  is homotopically constant.  Hence the top arrow is a weak
  equivalence.
\end{proof}

Recall that Theorem \ref{theorem:quillenequivalences} gives a triangle
of Quillen equivalences
\begin{equation*}
  \xymatrix{\qSet \ar[rr]^{i_!} \ar[rd]_{|{-}|} &&
    \qsSet \ar[ld]^{|{-}|_\Sigma} \\
    & \sSet.}
\end{equation*}
Let's write $\Sing$ and $\Sing_\Sigma$ for the right adjoints
of $|{-}|$ and $|{-}|_\Sigma$, respectively.
\begin{theorem}\label{theorem:qsigmaenrichment}
  Suppose $\mathscr{C}$ is a $\qsSet$-model category and
  that $X$ and $Y$ are cofibrant and fibrant objects of $\mathscr{C}$,
  respectively.  There is a zig-zag of natural weak equivalences
  joining $\Sing_\Sigma \mathbf{F}(X,Y)$ to $\mathscr{C}(X, Y)$.
\end{theorem}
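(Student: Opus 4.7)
The plan is to compare $\mathscr{C}(X,Y)$ with the Dwyer--Kan mapping space built from a simplicial resolution of $Y$, bridging them through a common cubical resolution. Unwinding the extended cubical enrichment by tensor--cotensor adjunction, the underlying cubical set $i^\ast \mathscr{C}(X,Y)$ has $n$-cubes $\Hom_\mathscr{C}(X, Y^{\sqparen{n}})$, where $Y^{\sqparen{n}}$ is the cotensor by $\square^n_\Sigma$. Setting $\widetilde{Y}_n = Y^{\sqparen{n}}$ yields a cubical object $\widetilde{Y} : \QQ^\Op \to \mathscr{C}$ with $[X, \widetilde{Y}] \cong i^\ast \mathscr{C}(X,Y)$, where $[{-},{-}]$ is the external $\qSet$-hom on $q\mathscr{C}$.

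First I would verify that $\widetilde{Y}$ is a cubical resolution of $Y$. The unique arrow $\sqparen{n}\to\sqparen{0}$ in $\qsigma$ induces $Y = Y^{\sqparen{0}} \to Y^{\sqparen{n}}$, which is a weak equivalence because $s:I\to e$ is a weak equivalence in the cubical monoid presenting the enrichment of $\mathscr{C}$ and $Y$ is fibrant; hence $\CQ Y \to \widetilde{Y}$ is objectwise a weak equivalence. For Reedy fibrancy, the $n$-th matching map is identified with the cotensor $Y^{\square^n} \to Y^{\partial\square^n}$, which is a fibration in $\mathscr{C}$ because $\partial\square^n \to \square^n$ is a cofibration in $\qSet$ and $\mathscr{C}$ inherits a $\qSet$-model category structure via $i^\ast$. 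Now functorially pick a simplicial resolution $\CS Y \to \overline{Y}$; by Lemma \ref{lemma:rkcpreservesresolutions}, $R_{K,\mathscr{C}}\overline{Y}$ is a second cubical resolution, and Lemma \ref{lemma:reedyresolutions} supplies a natural zig-zag of weak equivalences between $[X, \widetilde{Y}]$ and $[X, R_{K,\mathscr{C}}\overline{Y}]$ in $\qSet$.

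Next I would identify the second mapping complex with $i^\ast \Sing_\Sigma \mathbf{F}(X,Y)$. Lemma \ref{lemma:matchingobjectcomputation} gives $(R_{K,\mathscr{C}}\overline{Y})_n \cong \overline{Y}^{L_K \square^n} = \overline{Y}^{|\square^n|}$, and chasing the external simplicial enrichment adjunction yields the natural isomorphism
\[
[X, R_{K,\mathscr{C}}\overline{Y}]_n \cong \mathscr{C}(X, \overline{Y}^{|\square^n|}) \cong \sSet(|\square^n|, [X, \overline{Y}]) = (\Sing \mathbf{F}(X,Y))_n.
\]
Taking right adjoints of $|{-}|_\Sigma \circ i_! \cong |{-}|$ (Theorem \ref{theorem:quillenequivalences}) gives $\Sing \cong i^\ast \circ \Sing_\Sigma$, so this identifies the $\qSet$-zig-zag with one joining $i^\ast \mathscr{C}(X,Y)$ and $i^\ast \Sing_\Sigma \mathbf{F}(X,Y)$.

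Finally, I would promote this zig-zag back to $\qsSet$. Since every cubical set is cofibrant, the left Quillen functor $i_!$ preserves all $\infty$-equivalences (Theorem \ref{theorem:cisinskimodelstructureonqq}), and Corollary \ref{cor:icounitpreservesweakeqs} gives that the counit $i_! i^\ast \to \id$ is a natural $\infty$-equivalence on $\qsSet$. Applying $i_!$ to the $\qSet$-zig-zag and prepending and appending the counit maps at $\mathscr{C}(X,Y)$ and $\Sing_\Sigma \mathbf{F}(X,Y)$ yields the required natural zig-zag of $\infty$-equivalences in $\qsSet$. The main obstacle will be the second step: carefully checking that the $\qsigma^\Op$-structure on $n \mapsto Y^{\sqparen{n}}$ restricts to an honestly Reedy-fibrant $\QQ^\Op$-diagram whose matching objects are the $\qSet$-cotensors $Y^{\partial\square^n}$, so that Lemma \ref{lemma:reedyresolutions} legitimately applies and the comparison with $R_{K,\mathscr{C}}\overline{Y}$ is natural in $X$ and $Y$.
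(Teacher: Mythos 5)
Your proposal follows the paper's proof essentially step for step: construct the cubical resolution $\widetilde Y_{\sqparen{n}} = Y^{\square^n_\Sigma}$ from the $\qsSet$-enrichment, compare it via Lemma~\ref{lemma:reedyresolutions} to $R_{K,\mathscr{C}}\overline Y$ for a chosen simplicial resolution $\overline Y$ (using Lemma~\ref{lemma:rkcpreservesresolutions}), and identify $[X,R_{K,\mathscr{C}}\overline Y]$ with $i^\ast\Sing_\Sigma\mathbf{F}(X,Y)$ by the same adjunction chain. Your last paragraph---lifting the $\qSet$ zig-zag to $\qsSet$ by applying $i_!$ and splicing in the counit $\varepsilon:i_!i^\ast\to\id$ from Corollary~\ref{cor:icounitpreservesweakeqs}---spells out a step the paper leaves implicit, and is the correct way to finish.
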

\begin{proof}
  Define a cubical object $\widetilde{Y} : \QQ^\Op \to \mathscr{C}$ by
  $\widetilde{Y}_{\sqparen{n}} = Y^{\square_\Sigma^n}$.  Then
  $\widetilde{Y}$ is manifestly Reedy fibrant: the map
  $\widetilde{Y}(\sqparen{n}) \to M_{\sqparen{n}} \widetilde{Y}$ is
  the fibration $Y^{\square_\Sigma^n} \to
  Y^{\partial\square_\Sigma^n}$.  Moreover, the map $\CQ Y \to
  \widetilde{Y}$ induced by the projection maps $Y^{\square^0_\Sigma}
  \to Y^{\square^n_\Sigma}$ is a weak equivalence.  Hence $\CQ Y \to
  \widetilde{Y}$ is a cubical resolution of $Y$.  Choose a simplicial
  resolution $g:\CQ Y \to \overline{Y}$.  By Lemma
  \ref{lemma:rkcpreservesresolutions}, $R_{K,\mathscr{C}} g$ is a
  cubical resolution of $Y$, so there is a zig-zag of weak
  equivalences joining the cubical sets $[X, R_{K,\mathscr{C}}
  \overline{Y}]$ and $[X, \widetilde{Y}]$.  Observe that
  \begin{align*}
    \qSet(A, [X, R_{K,\mathscr{C}} \overline{Y}]) &\cong
    q\mathscr{C}(A\otimes X, R_{K,\mathscr{C}} \overline{Y}) \\
    &\cong
    s\mathscr{C}(|A|\otimes X, \overline{Y}) \\
    & \cong \sSet (|A|, [X,\overline{Y}])
  \end{align*}
  is natural in $A\in\qSet$,
  so there is an isomorphism
  \begin{equation*}
    [X, R_{K,\mathscr{C}}\overline{Y}] \cong \Sing [X,
    \overline{Y}] \cong i^\ast \Sing_{\Sigma} [X,\overline{Y}].
  \end{equation*}
  Now, $[X,\widetilde{Y}] \cong i^\ast \mathscr{C}(X,Y)$, so $i^\ast
  \mathscr{C}(X, Y)$ and $i^\ast \Sing_\Sigma [X, \overline{Y}]$ are
  weakly equivalent.  Hence $\mathscr{C}(X,Y)$ and $\Sing_\Sigma
  \mathbf{F}(X, Y)$ are weakly equivalent.
\end{proof}

\begin{remark}
  We've used the $\qsSet$ enrichment in order for the
  notational convenience.  However, the assiduous reader can check
  that if $\mathscr{C}$ has functorial path objects, there is a
  natural cotensor functor $Y^A$ with $A\in\qSet$,
  $X\in\mathscr{C}$ so that $Y^{-}$ sends (acyclic) cofibrations to
  (acyclic) fibrations for fibrant $Y$.  The proof of Theorem
  \ref{theorem:qsigmaenrichment} indicates that the mapping space
  given by $\mathscr{C}(X,Y)_n = \Hom_\mathscr{C}(X, Y^{\square^n})$
  has the correct homotopy type.
\end{remark}

\section{Enrichments for symmetric monoidal model categories}\label{section:existence}
In Section \ref{section:enriched}, we gave a criterion for a symmetric
monoidal model category $\mathscr{C}$ to have a extended cubical
enrichment, namely that it possess a cubical monoid satisfying some
homotopical properties.  In this section, we'll show that all
combinatorial symmetric monoidal model categories with cofibrant unit
have extended cubical enrichments.

\begin{theorem}\label{theorem:final}
  Suppose $(\mathscr{C},\otimes,e,[{-},{-}])$ is a combinatorial
  symmetric monoidal model category satisfying the
  Schwede-Shipley monoid axiom \cite{schwedeshipley}.  Suppose further
  that $\emptyset\to e$ is a cofibration.  Then $\mathscr{C}$ has a
  cubical monoid
  \begin{equation*}
    \vcenter{\xymatrix{
        e \amalg e \ar[r]^{d_0 \amalg d_1} \ar@/_/[rr]_{\id\amalg\id} &
        I \ar[r]^s & e
      }}
  \end{equation*}
  so that $d_0\amalg d_1$ is a cofibration and $s$ a weak
  equivalence.    
\end{theorem}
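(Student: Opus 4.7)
The plan is to adapt the Schwede-Shipley construction of cylinders in categories of monoids to cubical monoids. Since $\qsigma$ is a \textsc{prop}, the forgetful functor $U:\Cat{qMon}(\mathscr{C}) \to \mathscr{C}$ admits a left adjoint $F$; combinatoriality of $\mathscr{C}$ makes $\Cat{qMon}(\mathscr{C})$ locally presentable and accessible to the small-object argument. I will apply this argument to factor the canonical map from the initial cubical monoid $e \amalg e$ to the terminal cubical monoid $e$ in $\Cat{qMon}(\mathscr{C})$, then verify the desired cofibration and weak equivalence properties hold in $\mathscr{C}$.

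Fix a set $J$ of generating acyclic cofibrations for $\mathscr{C}$. The small-object argument in $\Cat{qMon}(\mathscr{C})$ applied to $F(J)$ factors $e \amalg e \to e$ as $e \amalg e \to I \xrightarrow{s} e$, where the first map is a transfinite composite of pushouts of elements of $F(J)$ and $s$ lies in $\inj F(J)$. By adjunction, $U(s)$ has the right lifting property against $J$, so $s$ is an acyclic fibration in $\mathscr{C}$ and in particular a weak equivalence. The cubical monoid structure on $I$ supplies $d_0, d_1 : e \to I$ as the two components of $e \amalg e \to I$, along with the multiplication $\mu$ and the absorption data.

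It remains to show $d_0 \amalg d_1: e \amalg e \to I$ is a cofibration in $\mathscr{C}$. The hypothesis that $\emptyset \to e$ is a cofibration makes $e \amalg e$ cofibrant in $\mathscr{C}$, so it suffices to establish a Schwede-Shipley-style key lemma: for any cubical monoid $X$ with $UX$ cofibrant, any (acyclic) cofibration $j:A\to B$ in $\mathscr{C}$, and any map $F(A) \to X$, the pushout $X \to F(B) \amalg_{F(A)} X$ in $\Cat{qMon}(\mathscr{C})$ has underlying map an (acyclic) cofibration. Using Proposition \ref{prop:qsigmafactorization}, the free cubical monoid $F(Y)$ admits a description as a coproduct of tensor powers of $Y$ (with copies of $e$) indexed by the normal forms in $\qsigma$. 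Consequently $F(B) \amalg_{F(A)} X$ filters by ``word length'' in the new $B$-generators, and each successive stage is a pushout along a map of the form $j \odot (\text{tensor of copies of } UX, A, B, e)$. Compatibility of the monoidal model structure with $\odot$ and the monoid axiom ensure each stage is an (acyclic) cofibration in $\mathscr{C}$; applying this inductively to the cellular filtration of $e \amalg e \to I$ then completes the proof.

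The main obstacle is this filtration argument in the key lemma. Compared with Schwede-Shipley's filtration for associative monoids, the cubical monoid case is complicated by the symmetric group actions, the conjunction operations $\gamma^i$, and, most delicately, the absorption relation forced by $d_0$, which imposes nontrivial quotients in the free construction. The \textsc{prop} presentation of $\qsigma$ together with the normal form of Proposition \ref{prop:qsigmafactorization} should yield a sufficiently explicit description of $F$ to carry out the requisite book-keeping, although the argument is necessarily more intricate than in the associative monoid case.
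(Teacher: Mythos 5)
Your approach is genuinely different from the paper's, and it contains a genuine gap that you honestly flag but do not close.

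Your plan is to run the Schwede--Shipley small-object and filtration argument directly in $\Cat{qMon}(\mathscr{C})$. The paper instead sidesteps this entirely by invoking Proposition \ref{prop:reidbarton}: cubical monoids in $\mathscr{C}$ are identified with certain associative monoids in the arrow category $\mathscr{C}^{[1]}$, where $[1]=\{0<1\}$ carries the conjunction monoidal structure and $\mathscr{C}^{[1]}$ gets Day convolution and the projective model structure. By Proposition \ref{prop:enrichedomnibus}, $\mathscr{C}^{[1]}$ is again a combinatorial symmetric monoidal model category satisfying the monoid axiom, so the existing Schwede--Shipley result [Theorem 4.1(3) of their paper] applies verbatim to $\Cat{Mon}(\mathscr{C}^{[1]})$; in particular a cofibration of monoids with cofibrant (in $\mathscr{C}^{[1]}$) source is an underlying cofibration. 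The cubical monoid $I$ is then extracted by a pushout in $\mathscr{C}$ from a cofibration--acyclic-fibration factorization of $(\emptyset\to e)\to(\id_e)$ in $\Cat{Mon}(\mathscr{C}^{[1]})$. The whole point of Barton's observation is that the degeneracy $s$ and the absorption relation are absorbed into the choice of monoidal structure on $\mathscr{C}^{[1]}$ (via $\emptyset\to e$ being the unit), so one never has to re-derive the filtration.

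The gap in your argument is precisely the ``key lemma.'' Your description of the free cubical monoid $F(Y)$ as a coproduct of tensor powers of $Y$ indexed by $\qsigma$-normal forms is not accurate: the arity-decreasing operations $s$ and $\gamma^i$, and especially the absorption relation $\mu\circ(\id_X\otimes d_0)=d_0\circ s$, force genuine quotients, so $F(Y)$ is a colimit over $\qsigma$-operations, not a free coproduct. Consequently the proposed ``filtration by word length in the new $B$-generators'' is not obviously well-defined, because the cubical monoid structure maps can collapse word length (a generator may be degenerated or absorbed). Making this precise would amount to re-proving a Schwede--Shipley-type theorem for algebras over the \textsc{prop} $\qsigma$, which you yourself note is ``necessarily more intricate'' --- but you do not carry it out, and the usual filtration machinery for operads or for associative monoids does not apply directly to \textsc{prop}s with arity-lowering operations. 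Absent that lemma, the claim that $d_0\amalg d_1$ is a cofibration in $\mathscr{C}$ is unsubstantiated. The paper's route via Proposition \ref{prop:reidbarton} is not merely an optimization; it is what makes the theorem provable with the tools at hand, and you should look at that proposition before attempting the direct argument.
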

\begin{proof}
  Recall that $[1] = \{ 0 < 1\}$ has a symmetric monoidal structure given by
  conjunction.  We identify $[1]$ with the free $\mathscr{C}$-category
  it generates; then by Proposition \ref{prop:enrichedomnibus},
  $\mathscr{C}^{[1]}$ is a closed symmetric monoidal model category
  with the convolution product.  In this case, a map $i \to j$ of
  arrows 
  \begin{equation*}
    \vcenter{\xymatrix{A \ar[r]^f \ar[d]_i & C \ar[d]^j \\
      B \ar[r]_g & D}}
  \end{equation*}
  is a cofibration if and only if both $f$ and $g \amalg j : B
  \amalg_A C \to D$ are cofibrations.  In particular, note that
  $\emptyset\to e$ is cofibrant in $\mathscr{C}^{[1]}$.  Now consider
  the category $\Cat{Mon}(\mathscr{C}^{[1]})$ of monoids in
  $\mathscr{C}^{[1]}$.  This admits a model structure by \cite[Theorem
  4.1 (3)]{schwedeshipley} lifted from the projective model structure
  on $\mathscr{C}^{[1]}$.  One important property of the model
  structure on $\Cat{Mon}(\mathscr{C}^{[1]})$ is that a cofibration
  whose source is cofibrant in $\mathscr{C}^{[1]}$ is a cofibration in
  $\mathscr{C}^{[1]}$ \emph{(loc.~cit.)}.  We may thus produce a
  factorization
  \begin{equation}
    \vcenter{\xymatrix{
      \emptyset \ar[d] \ar[r] & X \ar[r]^f \ar[d]_j & e \ar[d]^{\id_e} \\
      e \ar[r]_{r} & Y \ar[r]_g & e
    }}
  \end{equation}
  of monoids in which $f$ and $g$ are weak equivalences, $X$ is
  cofibrant, and $j\amalg r:X \amalg e \to Y$ is a cofibration.  Now consider
  this as a diagram in $\mathscr{C}$.  We take the pushout of $f$ and $j$:
  \begin{equation}
    \vcenter{\xymatrix{
      \emptyset \ar[d] \ar[r] & X \ar[r]^f \ar[d]_j & e \ar[d]^{d_0}
      \ar[r]^{\id_e} & e \ar[d]^{\id_e} \\
      e \ar@/_/[rr]_{d_1} \ar[r]^{r} & Y \ar[r]^\pi & I \ar[r]_{s} & e.
    }}
  \end{equation}
  Let $d_1 = \pi r$.  We have two things to verify:
  \begin{enumerate}
  \item \emph{$I$ is a cubical monoid.}  By Proposition
    \ref{prop:reidbarton}, it is sufficient to show that $e\to I$ is a
    monoid in $\mathscr{C}^{[1]}$, that $d_1$ is the unit, and
    that $(\id_e, s)$ is a monoid map.  This is entirely formal.
  \item \emph{$d_0 \amalg d_1$ is a cofibration and $s$ a weak
      equivalence.} First, note $j$ is a cofibration with cofibrant
    source and $f$ a weak equivalence, so $\pi$ is a weak equivalence
    \cite[Proposition 13.1.2]{hirschhorn} (see also \cite{reedy}).
    Since $s\pi = g$ is a weak equivalence, $s$ is a weak equivalence.
    Now observe that
      \begin{equation*}
        \vcenter{\xymatrix{
            X \amalg e \ar[d]_{f \amalg \id_e} \ar[r]^{j\amalg r} &
            Y \ar[d]^\pi \\
            e \amalg e \ar[r]_{d_0\amalg d_1} & I
          }}
      \end{equation*}
      is a pushout square; since $j\amalg r$ is a cofibration,
      $d_0\amalg d_1$ is a cofibration as well.\qedhere
  \end{enumerate}
\end{proof}

\bibliographystyle{amsplain}
\bibliography{master}

\end{document}